\pdfoutput=1
\documentclass[11pt]{article}
\usepackage{epsfig,epsf,fancybox}
\usepackage{amsmath}
\usepackage{mathrsfs}
\usepackage{amssymb}
\usepackage{graphicx}
\usepackage{color}
\usepackage{multirow}
\usepackage{paralist}
\usepackage{verbatim}
\usepackage{galois}
\usepackage{algorithm}
\usepackage[toc,page,header]{appendix}
\usepackage{titletoc}
\newcommand{\insertseparator}{%
  \par
  \noindent\rule{\linewidth}{1.5pt} 
  \par
}



\usepackage{algpseudocode}

\providecommand{\tabularnewline}{\\}
\floatstyle{ruled}
\newfloat{algorithm}{tbp}{loa}
\providecommand{\algorithmname}{Algorithm}
\floatname{algorithm}{\protect\algorithmname}
\usepackage{boxedminipage}
\usepackage{booktabs}
\usepackage{accents}
\usepackage{stmaryrd}
\usepackage[table]{xcolor}
\usepackage{hhline}
\DeclareSymbolFont{extraup}{U}{zavm}{m}{n}
\DeclareMathSymbol{\varheart}{\mathalpha}{extraup}{86}
\usepackage{subfig}

\usepackage{natbib}

\usepackage{url}
\usepackage[colorlinks,linkcolor=magenta,citecolor=blue, pagebackref=true,backref=true]{hyperref}
\renewcommand*{\backrefalt}[4]{%
    \ifcase #1 \footnotesize{(Not cited.)}%
    \or        \footnotesize{(Cited on page~#2.)}%
    \else      \footnotesize{(Cited on pages~#2.)}%
    \fi}

\textheight 8.5truein
\topmargin 0.25in
\headheight 0in
\headsep 0in
\textwidth 6.8truein
\oddsidemargin  0in
\evensidemargin 0in

\newtheorem{lem}{Lemma}[section]

\newtheorem{prop}{Proposition}[section]

\newtheorem{defn}{Definition}[section]

\newtheorem{thm}{Theorem}[section]

\newtheorem{remcou}{Remark}
\newtheorem{rem}[remcou]{Remark}
\newtheorem{assumption}{Assumption}

\numberwithin{equation}{section}

\title{\bf{\LARGE{Decentralized Gradient-Free Methods for Stochastic Non-Smooth Non-Convex Optimization}}}

\author{Zhenwei Lin\footnote{Equal Contribution} \thanks{zhenweilin@163.sufe.edu.cn, Shanghai University of Finance and Economics} \quad\quad\quad  Jingfan Xia$^*$\thanks{jf.xia@163.sufe.edu.cn, Shanghai University of Finance and Economics} \quad\quad\quad  Qi Deng\thanks{qideng@sufe.edu.cn, Shanghai University of Finance and Economics} \quad\quad\quad Luo Luo\thanks{luoluo@fudan.edu.cn, Fudan University}
}

\date{}
\usepackage{bm}
\global\long\def\dgfm{\text{DGFM}$^{+}$}%
\global\long\def\dgfmlin{\text{DGFM}}%
\global\long\def\mnist{\text{MNIST}}%
\global\long\def\fashion{\text{Fashion-MNIST}}%
\global\long\def\binprod#1#2{\big\langle #1,#2\big\rangle }%
\global\long\def\Binprod#1#2{\Big\langle #1,#2\Big\rangle }%
\global\long\def\norm#1{\big\Vert#1\big\Vert}%
\global\long\def\bnorm#1{\big\Vert#1\big\Vert}%
\global\long\def\Bnorm#1{\Big\Vert#1\Big\Vert}%
\global\long\def\brbra#1{\big(#1\big)}%
\global\long\def\Brbra#1{\Big(#1\Big)}%
\global\long\def\lrrbra#1{\left(#1\right)}%
\global\long\def\bsbra#1{\big[#1\big]}%
\global\long\def\lrsbra#1{\left[#1\right]}

\global\long\def\Bsbra#1{\Big[#1\Big]}%
\global\long\def\abs#1{\vert#1\vert}%
\global\long\def\iid{\textit{i.i.d}}%
\global\long\def\bcbra#1{\big\{#1\big\}}%
\global\long\def\Bcbra#1{\Big\{#1\Big\}}%
\global\long\def\vertiii#1{\left\vert \kern-0.25ex  \left\vert \kern-0.25ex  \left\vert #1\right\vert \kern-0.25ex  \right\vert \kern-0.25ex  \right\vert }%
\global\long\def\til#1{\tilde{#1}}%
\global\long\def\mcal#1{\mathcal{#1}}%
\global\long\def\mbb#1{\mathbb{#1}}%
\global\long\def\onebf{\mathbf{1}}%
\global\long\def\zerobf{\mathbf{0}}%
%
%
\global\long\def\Expe{\mathbb{E}}%
\global\long\def\and{\mathrm{and}}%
\global\long\def\aleq{\overset{(a)}{\leq}}%
\global\long\def\aeq{\overset{(a)}{=}}%
\global\long\def\ageq{\overset{(a)}{\geq}}%
\global\long\def\bleq{\overset{(b)}{\leq}}%
\global\long\def\beq{\overset{(b)}{=}}%
\global\long\def\bgeq{\overset{(b)}{\geq}}%
\global\long\def\Var{\operatornamewithlimits{Var}}%
\global\long\def\conv{\operatorname{conv}}%
\global\long\def\vep{\varepsilon}%
\global\long\def\tsum{{\textstyle {\sum}}}%
\global\long\def\Rbb{\mathbb{R}}%
\global\long\def\Pbb{\mathbb{P}}%
\global\long\def\Fcal{\mathcal{F}}%
\global\long\def\Ocal{\mathcal{O}}%
\global\long\def\Tcal{\mathcal{T}}%
%
%
%
%
\global\long\def\abf{\mathbf{a}}%
\global\long\def\bbf{\mathbf{b}}%
\global\long\def\gbf{\mathbf{g}}%
\global\long\def\vbf{\mathbf{v}}%
\global\long\def\xbf{\mathbf{x}}%
\global\long\def\ybf{\mathbf{y}}%
\global\long\def\zbf{\mathbf{z}}%
\global\long\def\Ibf{\mathbf{I}}%
%
%
%
%
%
%
%
%
%
%
%
%
%
%
%
%
%
%
%

%
%
%
%
%
%
%
%
%
%
%
%
%
%
%
%
%
%
%
%
%
%
%
%
%
%
%
%
%
%
%
%
%
%
\global\long\def\rone{\textrm{I}}%
\global\long\def\rtwo{\textrm{II}}%
\global\long\def\rthree{\textrm{III}}%
\global\long\def\rfour{\textrm{IV}}%
\global\long\def\rfive{\textrm{V}}%
\global\long\def\rsix{\textrm{VI}}%
\begin{document}
\maketitle


\begin{abstract}    
We consider decentralized gradient-free optimization of minimizing Lipschitz continuous functions that satisfy neither smoothness nor convexity assumption. We propose two novel gradient-free algorithms, the Decentralized Gradient-Free Method (DGFM) and its variant, the Decentralized Gradient-Free Method$^+$ (DGFM$^{+}$).
Based on the techniques of randomized smoothing and gradient tracking, DGFM requires the computation of the zeroth-order oracle of a single sample in each iteration, making it less demanding in terms of computational resources for individual computing nodes. Theoretically, DGFM achieves a complexity of $\mathcal O(d^{3/2}\delta^{-1}\varepsilon^{-4})$ for obtaining an $(\delta,\varepsilon)$-Goldstein stationary point.
DGFM$^{+}$, an advanced version of DGFM, incorporates variance reduction to further improve the convergence behavior. It samples a mini-batch at each iteration and periodically draws a larger batch of data, which improves the complexity to $\mathcal O(d^{3/2}\delta^{-1}\varepsilon^{-3})$. Moreover, experimental results underscore the empirical advantages of our proposed algorithms when applied to real-world datasets.
\end{abstract}

\section{Introduction}
In this paper, we consider decentralized optimization where the data are distributed among multiple agents, also known as nodes or entities.
For a network with $m$ agents, the optimization
problem can be written in the following form:
\vspace{-0.5em}
\begin{equation}
\min\limits_{x\in\mathbb{R}^{d}}\ \ f(x)=\frac{1}{m}\sum_{i=1}^{m}f^{i}(x),\label{eq:P}
\end{equation}
where $f^{i}(x)=\mbb E_{\xi}[f^{i}(x;\xi)]$ is a local
cost function on the $i$-th node and $\xi$ is the index of the random sample. 
Instead of having a central server, each node $i$ makes decisions based on its local data and information received from its neighbors. 
Throughout the paper, we do not require any smoothness or convexity assumption but only suppose that each $f^i(\cdot,\xi)$ is Lipschitz continuous. 
Moreover, we focus on the gradient-free methods that exclusively rely on function values, avoiding access for any first-order information.

Decentralized optimization has found extensive applications in signal processing and machine learning~\citep{ling2010decentralized,giannakis2017decentralized,vogels2021relaysum}.
In the context of smooth non-convex objective that each $f^{i}(x)$ has the finite-sum structure, a variety of deterministic methods have been proposed~\citep{zeng2018nonconvex,hong2017prox,sun2019distributed,scutari2019distributed,xin2022fast,luo2022optimal}. 
Notably,  \citet{xin2022fast} achieved a network topology-independent
convergence rate in a big-data regime. 
\citet{luo2022optimal}
integrated variance reduction, gradient tracking, and multi-consensus techniques, yielding an algorithm that meets the tighter communication requirement and complexity level of first-order oracle algorithms.  
In the realm of stochastic decentralized optimization, a significant body of literature has explored acceleration techniques that incorporate variance reduction~\citep{pan2020d,sun2020improving,xin2021hybrid,xin2021improved}.

Moreover, a substantial volume of literature exists regarding resolving non-convex non-smooth optimization~\citep{di2016next,scutari2019distributed,wang2021distributed,xin2021stochastic,mancino2022proximal,xiao2023one,chen2021distributed,wang2023decentralized}. However, most existing works require the objective to adhere to a specific structure. 
Predominantly, studies focus on composite optimization, where the objective sums up a smooth non-convex part and a possibly non-smooth part. In this vein, \citet{scutari2019distributed} introduced a decentralized algorithmic framework for minimization of the sum of a smooth non-convex function and a non-smooth difference-of-convex function over a time-varying directed graph. \citet{mancino2022proximal} introduced a single-loop algorithm with a small batch size which achieved a network topology-independent complexity. 
Conversely, other recent investigations have focused on the decentralized optimization of non-smooth weakly-convex functions~\citep{chen2021distributed,wang2023decentralized}. 


Previous decentralized algorithms still require gradient computation, while this oracle may be computationally prohibitive~\citep{liu2020primer}, such as sensor selection~\citep{liu2018zeroth}. Moreover, the gradient-free method has a promising application in adversarial machine learning, especially in black-box adversarial attacks~\citep{chen2020hopskipjumpattack,moosavi2017universal}. 
Recently, some works studied decentralized optimization problem with zeroth-order  methods~\citep{tang2020distributed,sahu2018distributed,yu2021distributed,hajinezhad2019zone,tang2023zeroth}. 
For example, \citet{sahu2018distributed} considered convex problems, while~\citet{hajinezhad2019zone} focused on non-convex problems. 
Moreover, some attention has been paid to applying zero-order decentralized algorithms to constrained optimization~\citep{yu2021distributed,tang2023zeroth}.
However, these researches only focus on smooth optimization.

The decentralized algorithms described above can be classified into smooth non-convex and non-smooth non-convex with specific structures.
This leads us to raise the following question: 
\emph{{Can we develop a decentralized gradient-free algorithm that has provable complexity guarantees for non-smooth, non-convex but Lipschitz continuous problems?}}
To address this research problem, a natural idea
is to extend the centralized algorithms designed for non-smooth non-convex problems to the decentralized
setting. 
\citet{zhang2020complexity} introduced $(\delta,\vep)$-Goldstein stationarity as a valid criterion for non-smooth non-convex optimization, which makes it possible to analyze the non-asymptotic convergence. 
They utilize a random sampling approach to choose an interpolation point on the segment connecting two iterates. This method guarantees a substantial descent of the objective function, given the assumption that the function is Hadamard directionally differentiable and access to a generalized gradient oracle is available.
This algorithm can achieve complexity 
$\Ocal(\Delta L_f^3 \delta^{-1} \vep^{-4})$, where $L_f$ is the Lipschitz continuous constant of the objective and $\Delta$ is the inital function value gap.
Later, \citet{davis2022gradient} and~\citet{tian2022finite} relaxed the subgradient selection oracle assumption and Hadamard directionally differentiable assumption by adding random perturbation.
More recently,~\citet{cutkosky2023optimal} found a connection between
non-convex stochastic optimization and online learning and established a stochastic first-order oracle
complexity of $\Ocal(\Delta L_f^2 \delta^{-1} \vep^{-3})$, which is the optimal in the case of $\vep\leq\Ocal(\delta)$. 
In light of recent advances in designing zeroth-order algorithms for non-smooth non-convex problems, an effective way is by applying the randomized smoothing technique~\cite{nesterov2017random,shamir2017optimal}. 
This approach constructs a smooth surrogate function to which algorithms for smooth functions can be applied.
\citet{lin2022gradient} established the relationship between Goldstein stationarity of the original objective function and $\vep$-stationarity of the surrogate function, 
and presented an algorithm for finding a $(\delta,\vep)$-Goldstein stationary point within at most $\Ocal(d^{3/2}L_f^4\vep^{-4}+ d^{3/2}\Delta L_f^3 \delta^{-1}\vep^{-4})$ stochastic zeroth-order oracle calls. Later, \citet{chen2023faster} constructed stochastic
recursive gradient estimators to accelerate and achieve a stochastic zeroth-order oracle complexity of $\Ocal(d^{3/2}L_f^3 \vep^{-3}+ d^{3/2}\Delta L_f^2 \delta^{-1}\vep^{-3})$.
All above work applied the first-order or zero-order methods for finding the approximate stationary point of the smooth surrogate function. Very recently, \citet{kornowski2023algorithm} replaced the goal of finding an $\vep$-stationary point of the smoothed function with that of finding a Goldstein stationary point and then usd a stochastic first-order nonsmooth nonconvex algorithm.
This change led to an improved dependence on the dimension, reducing it from $d^{3/2}$ to $d$.



\subsection{Contributions}
In this work, we propose two gradient-free decentralized algorithms for non-smooth non-convex optimization: the Decentralized Gradient Free Method ({\dgfmlin}) and the Decentralized Gradient Free Method$^{+}$ ({\dgfm}). 

{\dgfmlin}  is a decentralized approach that leverages randomized smoothing and gradient tracking techniques. {\dgfmlin} only requires the computation of the zeroth-order oracle of a single sample, thus reducing the computational demands on individual computing nodes and enhancing practicality.  Theoretically, {\dgfmlin} achieves a complexity bound of $\mcal O(d^{3/2}\delta^{-1}\vep^{-4})$ for reaching a $(\delta, \vep)$-Goldstein stationary solution in expectation. 
Furthermore, {\dgfmlin} requires the same number of communication rounds as the number of iterations. To the best of our knowledge, \dgfmlin{} is the first decentralized algorithm for general non-smooth non-convex optimization problems. Our complexity result matches the complexity bound of the standard gradient-free stochastic method~\citep{lin2022gradient}.

We also propose an enhanced algorithm {\dgfm}  by incorporating the variance reduction technique SPIDER~\citep{fang2018spider}. 
{\dgfm} samples a mini-batch at each iteration and periodically samples a mega-batch of data. 
This strategy improve the zeroth-order oracle complexity to $\mcal O(d^{3/2}\delta^{-1} \vep^{-3})$ for reaching $(\delta, \vep)$-Goldstein stationarity in expectation. 
In comparison to {\dgfmlin}, {\dgfm} not only employs randomized smoothing and gradient tracking but also introduces a multi-communication module. 
This addition ensures that the order of communication complexity is on par with that of the iteration complexity.

\section{Preliminaries}
We give some notations and introduce basic concepts in non-smooth and non-convex analysis.

\paragraph{Notations.}
We use subscripts to indicate the nodes to which the variables belong
and superscripts to indicate the iteration numbers. We use bold letters
such as $\xbf$ to represent stack variables e.g., $\xbf^k = [(x_1^k)^{\top},\cdots,(x_m^k)^\top]^{\top}\in\mbb R^{md}$.
We denote $\|x\|_{q}=(\sum_{i=1}^{d}\abs{x_{(i)}}^{q})^{1/q}$, $q>0$
for the $\ell_{q}$-norm, where~$x_{(i)}$ denote the $i$-th
element of $x$. For brevity, $\norm x$ stands for $\ell_{2}$-norm.
For a matrix $A$, we denote its spectral norm as $\norm A$.  
The notation $\mcal B_{\nu}(x)$ presents a closed Euclidean ball centered at $x$
with radius~$\nu>0$, i.e., $\mcal B_{\nu}(x)=\{y : \|y-x\|\leq\nu\}$. 
We use $\mbb S^{d-1}=\{x\in \mbb R^d : \|x\|=1\}$ to denote the sphere of the unit ball in $\ell_2$-norm. We work with a probability space $\bcbra{\Omega,  \Fcal,\Pbb}$, where $\Omega$ is a sample space containing all possible outcomes, $\Fcal$ is the sigma-algebra on $\Omega$ representing the set of events, and $\Pbb$ is the probability measure. In this paper, we use $\Pbb^d$
to denote the uniform distribution on $\mbb S^{d-1}$. 
We consider decentralized algorithms that generate a sequence  $\{x_i^k\}_{k\geq 0}$ to approximate the stationary point of $f(\cdot)$. 
At each iteration $k$, each node $i$ observes a random vector set $S_i^k = \{(\xi_i^{k,j},w_{i}^{k,j})\}_{j=1}^{b}$, where $\xi$ is the data sample, $w$ is a random vector sample for gradient estimation and $b$ is the batch size. We introduce a natural filtration induced by these random vector sets observed sequentially by these nodes: $\mcal{F}_0:=\bcbra{\Omega,\varnothing}$ and $\mcal F_k:=\sigma(\bcbra{S_i^0,S_i^1,\cdots,S_i^{k-1}:i\in[m]})$ for any $k\geq 1$.
   

\paragraph{Stationary condition.}
In the non-convex setting, Clarke's subdifferential (or generalized gradient) \citep{clarke1990optimization} is perhaps the most natural and well-known extension of the standard convex subdifferential. 
\begin{defn}
Given a point $x\in\mbb R^{d}$
and the direction $v\in\mbb R^{d}$, the generalized directional derivative
of a Lipschitz continuous function $f$ is given by
\vspace{-0.5em}
\begin{equation*}
    Df(x;v):=\limsup_{y\to x,t\downarrow0}\frac{f(y+tv)-f(y)}{t}.
\end{equation*}
The generalized gradient of $f$ is defined as
\vspace{-0.5em}
\[\partial f(x):=\{g\in\mbb R^{d}:g^{\top}v\leq Df(x;v),\forall v\in\mbb R^{d}\}.\]
\end{defn}
We need to properly define the approximate stationary condition for the efficiency analysis. An intuitive choice is to consider the $\vep$-Clarke's stationary point which is defined by $\textrm{dist}(0,\partial f(x))\le \vep$. However, \citet{kornowski2021oracle} demonstrated that  accessing such approximate stationarity for sufficiently small $\vep$ tends to be generally intractable. 
Therefore, as suggested by \citet{lin2022gradient,zhang2020complexity}, it is more sensible to target a ($\delta,\vep$)-Goldstein stationary point~\citep{goldstein1977optimization}.
\begin{defn}
[($\delta,\vep$)-Goldstein Stationary Point] Given $\delta>0$, the $\delta$-Goldstein subdifferential of Lipschitz
function $f(\cdot)$ at $x$ is given by $\partial_{\delta}f(x):=\conv(\cup_{y\in\mbb B_{\delta}(x)}\partial f(y)).$
Then we say point $x$ is a ($\delta,\vep$)-Goldstein
stationary point if $\min\{\norm g:g\in\partial_{\delta}f(x)\}\leq\vep$.
\end{defn}

\paragraph{Randomized Smoothing.}
For non-smooth problems, a natural idea
is first to apply smoothing techniques to these problems and then minimize the resulting smoothed surrogate function. We highlight some key properties and refer to \citep{lin2022gradient} for more details.
\begin{defn}
[Randomized smoothing] 
We say function $f_{\delta}(x)$
is a randomized smooth approximation of the non-smooth function $f(x)$ if $f_{\delta}(x):=\mbb E_{w\sim\Pbb}[f(x+\delta\cdot w)].$
\end{defn}

 The randomized smooth approximation requires the objective function $f(x)=\frac{1}{m}\sum_{i=1}^{m}\mbb E_{\xi}[f^{i}(x;\xi)]$ to satisfy the following assumption to have good properties.
%
\begin{assumption}\label{assu:2.2}For  $\forall i\in[m]$, assume 
condition $\norm{f^{i}(x;\xi)-f^{i}(y;\xi)}\leq L(\xi)\|x-y\|$ holds, namely,~$f^{i}(\,\cdot\,;\xi)$ is $L(\xi)$-Lipschitz continuous.
Furthermore,  assume there exists a constant $L_{f}$ such that
$\mbb E[L(\xi)^{2}]\leq~L_{f}^{2}$. 
Moreover, we assume $f(\cdot)$ is lower bounded and define $f^*=\inf_{x\in \mbb R^d} f(x)$.
\end{assumption}
Then we give a specific gradient-free method to approximate the gradient.
\begin{defn}[Zeroth-order oracle estimators]\label{def:Given-a-stochastic}Given
a stochastic component $f(\cdot;\xi):\mbb R^{d}\to\mbb R$, we define its zeroth-order oracle estimator at $x\in\mbb R^{d}$
by:
\begin{align*}
g(x;w,\xi)=\frac{d}{2\delta}(f(x+\delta\cdot w;\xi)-f(x-\delta\cdot w;\xi))w,   
\end{align*}
where $w$ is uniformly sampled from $\mbb S^{d-1}$.  Let $S=\{(\xi_{i},w_{i})\}_{i=1}^{b}$,
where vectors $w_{1},\cdots,w_{b}\in\mbb R^{d}$ are $\iid$ sampled
from $\mbb S^{d-1}$ and random indices
$\xi_{1},\cdots,\xi_{b}$ are $\iid$. We define the mini-batch zeroth-order
gradient estimator of $f(\cdot;\xi)$ in terms of
$S$ at $x\in\mbb R^{d}$ by 
\begin{align*}
g(x;S)=\frac{1}{b}\sum_{i=1}^{b}g(x;w^{i},\xi^{i}).    
\end{align*}
\end{defn}
\begin{prop}[Lemma D.1 \citep{lin2022gradient}]\label{prop:zeroth-order-estimator}
For the zeroth-order oracle estimator in Definition~\ref{def:Given-a-stochastic},
we have $\mbb E_{w,\xi}[g(x;w,\xi)]=\nabla f_{\delta}(x)$ and
$\mbb E_{w,\xi}\bsbra{\norm{g(x;w,\xi)}^{2}}\leq16\sqrt{2\pi}dL_{f}^{2}$.
\end{prop}
In the remains of this paper, we use the notation $\sigma^2=16\sqrt{2\pi}dL_{f}^{2}$. 
%

%

We summarize the main properties of randomized smoothing in the following proposition. 
\begin{prop}[Proposition 2.2~\citep{chen2023faster}]
Suppose Assumption~\ref{assu:2.2} holds,
then for the local function $f^{i}(x)=\mbb E[f^{i}(x;\xi)]$, we have
\begin{enumerate}
\item $\abs{f^{i}(\cdot)-f_{\delta}^{i}(\cdot)}\leq\delta L_{f}$.
\item $f_{\delta}^{i}(x)$ is $L_{f}$-Lipschitz continuous and $L_{\delta}$-smooth,
where $L_{\delta}={cL_{f}\sqrt{d}}/{\delta}$ and $c$ is a constant.
\item $\nabla f_{\delta}^{i}(\cdot)\in\partial_{\delta}f^i(\cdot)$. 
\item There exists $\Delta$ such that for any $x\in{\mathbb R}^d$, $f_{\delta}(x)-f^*\leq\Delta_{\delta}$, where $\Delta_{\delta}=\Delta+\delta L_{f}$.
\end{enumerate}
\end{prop}

\noindent\textbf{Graph. }
Consider a time-varying network $(\mcal V, \mcal E_k)$ of agents, where $\mcal V$ denotes the set of nodes and $\mcal E_k$ is the set of links connecting nodes at time $k$.  Let $A(k)\triangleq(a_{i,j}(k))_{i,j=1}^{m}$ denote the matrix of weights associated with links in the graph at time $k>0$. In addition, we will use $A^{\tau}(k)\triangleq(a_{i,j}^{\tau}(k))_{i,j=1}^{m}$ to denote the weighted matrix (mixing matrix) for the $\tau$-th repetition of communication in $k$-th iterations.
We make the following standard assumption on the
graph topology.
\begin{assumption}
[Graph property]\label{assu:doubly_sto}The weighted matrix $A(k)$
has a sparsity pattern compliant with~$\mcal G$ that is 
\begin{enumerate}
\item $a_{ii}(k)>0$, for any $i$ and $k$;
\item $a_{i,j}(k)>0$ if $(i,j)\in\mcal E_k$ and $a_{i,j}(k)=0$ otherwise;
\item $A(k)$ is doubly stochastic, which means $\onebf^{\top}A(k)=\onebf^{\top}$
and $A(k)\onebf=\onebf$
\end{enumerate}
Furthermore, all of above properties hold if we replace $A(k)$ with $A^{\tau}(k)$.
\end{assumption}

\begin{rem}
Several rules for selecting weights for local averaging have been proposed in the literature that satisfies
Assumption~\ref{assu:doubly_sto}~\citep{xiao2005scheme}. 
Examples include the Laplacian,
the Metropolis-Hasting, and the maximum-degree weights rules.
\end{rem}

The doubly stochastic matrix has some desirable properties~\citep{tsitsiklis1984problems,sun2022distributed} that will be used in our analysis. 
\begin{prop}\label{prop:mixing_matrix_prop}
Let $\tilde{A}(k)=A(k)\otimes\Ibf_{d},J=\frac{1}{m}\onebf_{m}\onebf_{m}^{\top}\otimes\Ibf_{d}$,
then:
\begin{enumerate}
\item $\tilde{A}(k)J=J=J\tilde{A}(k)$.
\item $\tilde{A}(k)\bar{\zbf}^{k}=\bar{\zbf}^{k}=J\bar{\zbf}^{k},\forall\bar{z}^{k}\in\mbb R^{d}$.
\item There exists $\rho$ such that $\max_{k}\bcbra{\norm{\tilde{A}(k)-J}}\leq\rho<1$.
\item $\norm{\tilde{A}(k)}\leq1$.
\end{enumerate}
\end{prop}
We remark that Proposition \ref{prop:mixing_matrix_prop} also hold by replacing $A(k)$ with $A^{\tau}(k)$.
\begin{algorithm}[htpb]
\caption{\label{alg:dgfmlin}{\dgfmlin} at each node $i$}
    \begin{algorithmic}[1]
        \Require{$x_{i}^{-1}=x_i^0=\bar{x}^0,\forall i \in [m]$, $K,\eta$}
        \State{\textbf{Initialize:} $y_i^0 = g_i(x_i^{-1};S_i^{-1})=\zerobf_{d}$}
        \For{$k=0,\ldots,K-1$} 
        \State{Sample $S_i^{k}=\bcbra{\xi_{i}^{k,1},w_i^{k,1}}$ and calculate $g_i(x_i^{k};S_i^k)$}
        \State{$y_i^{k+1}=\sum_{j=1}^{m}a_{i,j}(k)(y_j^k+g_j(x_j^k;S_j^k)-g_j(x_j^{k-1};S_j^{k-1}))$}
        \State{$x_i^{k+1}=\sum_{j=1}^{m}a_{i,j}(k)(x_j^k - \eta y_j^{k+1})$}
        \EndFor 
    \State{\textbf{Return:} Choose $x_{\text{out}}$ uniformly at random from $\bcbra{{x}^k_i}_{k=1,\cdots,K, i=1,\cdots,m}$}
    \end{algorithmic}
\end{algorithm}

\section{{\dgfmlin}}
In this section, we develop the decentralized Gradient-Free Method ({\dgfmlin}), an extension of the centralized zeroth-order method proposed by \citet{lin2022gradient}. 
In a multi-agent environment, a significant challenge lies in managing the consensus error among agents to match the order of the optimization error. 
To address this issue, \dgfmlin{} integrates a widely-used gradient tracking technique~\citep{di2016next,sun2022distributed,nedic2017achieving,lu2019gnsd} with the gradient-free method.
Specifically, for every node $i$, {\dgfmlin} contains the following three steps. Firstly, it sample a random direction $w_i^k$ and a data point $\xi_i^k$, and calculate the corresponding zeroth-order oracle estimator~$g_i(x_i^k; S_i^k)$, where $S_i^k=(w_i^k,\xi_i^k)$. 
Secondly, the gradient tracking technique is applied to monitor the zeroth-order oracle estimator of the overall function. Lastly, the primal variable is updated using a perturbed and locally weighted average. 
We present the details in Algorithm~\ref{alg:dgfmlin}.

We first establish some basic properties of the ergodic sequences, especially for the average sequences.
\begin{lem}\label{lem:dgfmlin}
Let $\{x_{i}^{k},y_{i}^{k},g_{i}(x_{i}^{k};S_{i}^{k})\}$ be the sequence
generated by {\dgfmlin} and $\{\xbf^{k},\ybf^{k},\gbf^{k}\}$ be
the corresponding stack variables, then we have
\begin{align*}
& \xbf^{k+1} =\tilde{A}(k)(\xbf^{k}-\eta\ybf^{k+1}),\ \ \bar{\xbf}^{k+1} =\bar{\xbf}^{k}-\eta\bar{\ybf}^{k+1},\ \ \bar{\ybf}^{k+1} =\bar{\ybf}^{k}+\bar{\gbf}^{k}-\bar{\gbf}^{k-1},  \\ 
& \bar{\ybf}^{k+1} =\bar{\gbf}^{k} \quad\text{and}\quad
\ybf^{k+1}  =\tilde{A}(k)(\ybf^{k}+\gbf^{k}-\gbf^{k-1}),
\end{align*}
where $\bar{x}^k=\frac{1}{m}\sum_{i=1}^m x_i^k, \bar{\xbf}^k=\onebf_m\otimes \bar{x}^k, \bar{y}^k=\frac{1}{m}\sum_{i=1}^m y_i^k$ and $\bar{\ybf}^k=\onebf_m\otimes \bar{y}^k$.
\end{lem}
Lemma~\ref{lem:dgfmlin} implies that {\dgfmlin} effectively executes an approximate gradient descent on the consensus
sequence, utilizing the variable $y$ to track gradients for approximating the overall gradient. 
Subsequently, we present the main convergence results of {\dgfmlin} and discuss the relevant features. The result of consensus error decay at exponential order is given in Lemma~\ref{lem:lin_consensus}.
\begin{lem}[Consensus error decay]\label{lem:lin_consensus}
Let $\{x_{i}^{k}\allowbreak,y_{i}^{k},\allowbreak g_{i}(x_{i}^{k};\allowbreak S_{i}^{k}\allowbreak)\allowbreak\}$ be the sequence
generated by {\dgfmlin} and $\{\xbf^{k},\ybf^{k},\gbf^{k}\}$ be
the corresponding stack variables, then for $k\geq0$, there exist positive $\alpha_1$ and $\alpha_2$ such that
\vspace{-0.5em}
\begin{align*}
\mbb E\bsbra{\norm{\xbf^{k+1}-\bar{\xbf}^{k+1}}^{2}}\leq  \rho^{2}(1+\alpha_{1})\mbb E\bsbra{\norm{\xbf^{k}-\bar{\xbf}^{k}}^{2}}
+\rho^{2}\eta^{2}(1+  \alpha_{1}^{-1})\mbb E\bsbra{\norm{\ybf^{k+1}-\bar{\ybf}^{k+1}}^{2}},
\end{align*}
and
\begin{align*}
&\mbb E\bsbra{\norm{\ybf^{k+1}-\bar{\ybf}^{k+1}}^{2}\mid\mcal F_{k}}\\
& \leq\rho^{2}(1+\alpha_{2})\mbb E\bsbra{\norm{\ybf^{k}-\bar{\ybf}^{k}}^{2}\mid\mcal F_{k}}+\rho^{2}(1+\alpha_{2}^{-1})  (6+36{\eta^{2}}L_{\delta}^{2})m\sigma^{2}\\
&\ \ +\theta_{1}  \mbb E\bsbra{\norm{\nabla f_{\delta}(\bar{x}^{k-1})}^{2}\mid\mcal F_{k}}+\theta_{2}\mbb E\bsbra{\norm{\xbf^{k}\!-\!\bar{\xbf}^{k}}^{2}\mid\mcal F_{k}}+\theta_{3} \mbb E\bsbra{\norm{\xbf^{k-1}\!-\!\bar{\xbf}^{k-1}}^{2}\mid\mcal F_{k}},
\end{align*}
where $\theta_{1}=18L_{\delta}^{2}\eta^{2}{\rho^{2}}(1+\alpha_{2}^{-1})$, $\theta_{2}=9\rho^{2}(1+\alpha_{2}^{-1})L_{\delta}^{2}$, 
 $\theta_{3}=9\rho^{2}(1+\alpha_{2}^{-1}L_{\delta}^{2}(1+4\eta^{2}L_{\delta}^{2})$ and $L_{\delta}={cL_{f}\sqrt{d}}{\delta}^{-1}$ is the smoothness of $f_{\delta}(\cdot)$.
\end{lem}
Lemma~\ref{lem:lin_consensus} indicates that if we omit some additional error term other than $\mbb E [\norm{\xbf^k - \bar{\xbf}^k}^2]$ and $\mbb E [\norm{\ybf^k - ~\bar{\ybf}^k}^2]$, there is a factor $\rho^2(1+\alpha_{i}),i=1,2,$ between two successive consensus error terms. Since $\rho$ is smaller than 1, we can choose a suitable $\alpha_i$ (such as $(1-\rho^2)/(2\rho^2)$) so that the factor $\rho^2(1+\alpha_{i})<1$, thereby achieving an exponential decrease in the consensus error. 
Therefore, we can show the descent property of  $\bcbra{\bar{x}^k}$ in the following Lemma~\ref{lem:decent_lin} and combine it with Lemma~\ref{lem:lin_consensus} to get the descent property of the overall sequence in Lemma~\ref{lem:linthm}.
\begin{lem}
\label{lem:decent_lin}Let $\{\xbf^{k},\ybf^{k},\bar{\xbf}^{k},\bar{\ybf}^{k}\}$
be the sequence generated by {\dgfmlin}, then we have
\vspace{-0.5em}
\begin{align*}
\mbb E\bsbra{f_{\delta}(\bar{x}^{k+1})}-\mbb E\bsbra{f_{\delta}(\bar{x}^{k})} \leq-\frac{\eta}{2}\mbb E\bsbra{\norm{\nabla f_{\delta}(\bar{x}^{k})}^{2}} 
+\frac{\eta L_{\delta}^{2}}{2m}\mbb E\bsbra{\norm{\xbf^{k}-\bar{\xbf}^{k}}^{2}}+L_{\delta}\eta^{2}(\sigma^{2}+L_{f}^{2}).
\end{align*}
\end{lem}
We obtain Lemma~\ref{lem:linthm} by multiplying the two consensus errors in Lemma~\ref{lem:lin_consensus} by their corresponding factors $\beta_x$ and $\beta_y$ and adding them to the result of Lemma~\ref{lem:decent_lin}.
\begin{lem}[Informal]\label{lem:linthm}
Let $\{\xbf^{k},\ybf^{k},\bar{\xbf}^{k},\bar{\ybf}^{k}\}$
be the sequence generated by {\dgfmlin}, then there exist positive constants  $\beta_{x}$ and $\beta_{y}$ such that 
\begin{align*}
 & \theta_{4}\sum_{k=0}^{K-1}\mbb E\bsbra{\norm{\nabla f_{\delta}(\bar{x}^{k})}^{2}}+\left(\theta_{5}-\theta_{6}\right)\sum_{k=0}^{K-1}\mbb E\bsbra{\norm{\xbf^{k}-\bar{\xbf}^{k}}^{2}}\\
 & \leq \mbb E\bsbra{f_{\delta}(\bar{x}^{0})} -\mbb E[f_{\delta}(\bar{x}^{K})] +(\theta_6-\beta_{x})\mbb E\bsbra{\norm{\xbf^{K}-\bar{\xbf}^{K}}^{2}}\\
 &\ \ -\theta_{7}\sum_{k=1}^{K}\mbb E\bsbra{\norm{\ybf^{k}-\bar{\ybf}^{k}}^{2}}+\theta_{8}-\beta_{y}\mbb E\bsbra{\norm{\ybf^{{K+1}}-\bar{\ybf}^{K+1}}^{2}-\norm{\ybf^{{1}}-\bar{\ybf}^{1}}^{2}},
\end{align*}
where constants $\theta_{4},\theta_{5},\theta_{6},\theta_{7}$ and $\theta_{8}$ depend on $\alpha_{1},\alpha_{2},\beta_{x},\beta_{y},d,\eta,L_{\delta},L_{f},m$ and $\rho$.
\end{lem}
Observe that Lemma~\ref{lem:linthm} can give an upper-bound of 
$\mbb E\bsbra{\norm{\nabla f_{\delta}(\bar{x}^k)}^2}+(\theta_5 - \theta_6){\theta_4}^{-1}\mbb E\bsbra{\norm{x^k-\bar{x}^k}^2}$ when we choose the proper parameters of $\alpha_1,\alpha_2,\beta_x,\beta_y,\eta$ and $\delta$ such that $\theta_i>0$ for $i=4,\cdots,8$ and it holds that $(\theta_5 - \theta_6){\theta_4}^{-1}=\mcal O(\delta^{-2})$. Since $\mbb E\bsbra{\norm{\nabla f_{\delta}
(x_i^k)}^2}\leq 2L_{\delta}^{2}\mbb E\bsbra{\norm{x_i^k - \bar{x}^k}^2} + 2\mbb E\bsbra{\norm{\nabla f_{\delta}(\bar{x}^k)}^2}$, then we can give an upper bound of $\mbb E\bsbra{\norm{\nabla f_{\delta}(x^k_i)}^2}$, which naturally leads to the complexity results of 
finding $(\delta,\vep)$-stationary points with simple calculation.
Next,  we present more specific parameter settings, leading to the optimal convergence rate.


\begin{thm}[Informal]\label{thm:lin_order}
{\dgfmlin} outputs a $(\delta,\vep)$-Goldstein stationary point of $f(\cdot)$ in expectation with the total stochastic zeroth-order complexity and total communication complexity  at most $\mcal O(\Delta_{\delta} \delta^{-1}\vep^{-4}d^{\frac{3}{2}})$ by setting 
    $\alpha_1 = \alpha_2 = {(1-\rho^2)}/{2\rho^2}$, $\delta = \mcal O(\vep)$, $\beta_x = \mcal O(\delta^{-1}),\beta_y = \mcal O(\vep^4\delta)$ and $\eta=\mcal O(\vep^2 \delta)$.
\end{thm}

\section{\label{sec:dgfm_plus}{\dgfm}}

In this section, we consider {\dgfm}.
First, we give some preliminaries in the following section.
\begin{algorithm}[ht]
\caption{\label{alg:dgfm}{\dgfm} at each node $i$}
    \begin{algorithmic}[1]
        \Require{$x_{i}^{-1}=x_i^0=\bar{x}^0,y_i^0 = v_i^{-1}=\zerobf_{d},\forall i \in [m]$, $K,\eta,b,b^{\prime}$}
        \For{$k=0,\ldots,K-1$}
            \If{$k$ mod $T$ = 0}
            \State Sample $S_i^{k\prime}=\bcbra{(\xi_i^{k\prime,j},w_i^{k\prime,j})}_{j=1}^{b^\prime}$
            \State Calculate $y_i^{k+1} = v_i^k = g_i(x_i^k;S_i^k)$
            \For{$\tau = 1,\cdots,\mcal T$}
            \State $y_i^{k+1}=\sum_{j=1}^m a_{i,j}^\tau(k)y_j^{k+1}$ 
        \EndFor
        \Else
        \State Sample $S_i^{k}=\bcbra{(\xi_i^{k,j},w_i^{k,j})}_{j=1}^{b}$
        \State  $v_i^k = v_i^{k-1}+g_i(x_i^k;S_i^k)-g_i(x_i^{k-1};S_i^k)$
        \State $y_i^{k+1}=\sum_{j=1}^{m}a_{i,j}(k)(y_j^k + v_j^k - v_j^{k-1})$ 
        \EndIf
        \State $x_i^{k+1}=\sum_{j=1}^{m}a_{i,j}(k)(x_j^k - \eta y_j^{k+1})$
        \EndFor
    \State{\textbf{Return:} Choose $x_{\text{out}}$ uniformly at random from $\bcbra{{x}^k_i}_{k=1,\cdots,K, i=1,\cdots,m}$}
        \end{algorithmic}
\end{algorithm}

\noindent\textbf{Preliminaries for {\dgfm}. }
Mini-batch zeroth-order oracle estimator plays a key role in {\dgfm}. 
The variance of the gradient estimator can be reduced by increasing the batch size.  Furthermore, the smoothness merit of randomized smoothing for mini-batch zeroth-order oracle estimator is still established. All these properties are stated in the following Proposition~\ref{prop:mini-batch-variance-reduce}.
%
\begin{prop}[Corollary 2.1 and Proposition 2.4~\citep{chen2023faster}]
\label{prop:mini-batch-variance-reduce}
Under Assumption~\ref{assu:2.2}, it holds that $\mbb E_{S}\bsbra{\norm{g_{i}(x;S)-\nabla f_{\delta}^{i}(x)}^{2}}\leq  \sigma^{2}/b$, where $\sigma^2 = 16\sqrt{2\pi}dL_{f}^{2}$. Furthermore, for any $w\in\mbb S^{d-1}$ and $x,y\in\mbb R^{d}$,
it holds that $
\mbb E_{\xi}\bsbra{\norm{g_{i}(x;w,\xi)-g_{i}(y;w,\xi)}^{2}}\leq d^{2}L_{f}^{2}\delta^{-2}\|x-y\|^{2}$.
\end{prop}
%

\noindent\textbf{Algorithm and Convergence Analysis. }In {\dgfm}, we divide all the $K$ iterations into $R$ cycles, each containing $T$ iterations. In
the first iteration of each cycle, we sample a mini-batch of size $b^{\prime}$
to compute the stochastic gradient, and then use batchsize of $b$  for the
rest iterations. In addition, we use the SPIDER~\citep{fang2018spider}
method to track the gradient for variance reduction. For the decentralized setting, we perform gradient
tracking on the obtained gradients to approximate the gradient of
the finite sum function and finally perform gradient descent on
the variable $x$. It is worth noting that we need to restart the gradient tracking at the beginning of each cycle.
To reduce the consensus error, we perform frequent fast communication $\mcal T$ times. {\dgfm} is given in Algorithm~\ref{alg:dgfm}.

\begin{lem}\label{lem:iterative_dgfmplus}
Let $\{x_{i}^{k},y_{i}^{k},g_{i}^{k},v_{i}^{k}\}$ be the sequence
generated by {\dgfm} and $\{\xbf^{k},\ybf^{k},\gbf^{k},\vbf^{k}\}$
be the corresponding stack variables, then for $k\geq0$, we have
\begin{align*}
\xbf^{k+1} =\tilde{A}(k)(\xbf^{k}-\eta\ybf^{k+1}), \quad
\bar{\xbf}^{k+1} =\bar{\xbf}^{k}-\eta\bar{\ybf}^{k+1} \quad \text{and}  \quad
\bar{\ybf}^{k+1}  =\bar{\vbf}^{k}.
\end{align*}
Furthermore, for $rT<k<(r+1)T$, $r=0,\cdots,R-1$, we have 
\vspace{-0.5em}
\begin{align*}
\ybf^{k+1} =\tilde{A}(k)(\ybf^{k}+\vbf^{k}-\vbf^{k-1}) \quad \text{and}  \quad
\bar{\ybf}^{k+1} =\bar{\ybf}^{k}+\bar{\vbf}^{k}-\bar{\vbf}^{k-1}.
\end{align*}
\end{lem}
The purpose of restart gradient tracking is to ensure that $\bar{\ybf}^{k+1}=\bar{\vbf}^k$ holds throughout the entire sequence. Additionally, it helps to truncate the accumulation of the variance bound described in Lemma~\ref{lem:spider_variance_bound}. However, this operation may introduce a consensus error at the beginning of each cycle. Inspired from~\citep{luo2022optimal,chen2022simple}, we perform multiple rounds of communication before the start of each cycle. This will be further explained in detail in Lemma~\ref{lem:consensus-1}.
Next, we give some consensus results for {\dgfm}.
\begin{lem}
\label{lem:consensus-1}For sequence $\{\xbf^{k},\ybf^{k}\}$ generated
by {\dgfm}, we have
\begin{equation*}
    \begin{aligned}
        &\mbb E\bsbra{\norm{\ybf^{k+1}-\bar{\ybf}^{k+1}}^{2}}\\ & \leq  \rho^{2}(1+\alpha_{1})\mbb E\bsbra{\norm{\ybf^{k}-\bar{\ybf}^{k}}^{2}}
+3\eta^{2}\Brbra{\frac{\rho^{2}L_{\delta}^{2}}{\alpha_{1}} +\frac{\rho^{2}d^{2}L_{f}^{2}}{b\delta^{2}}}\mbb E\bsbra{\norm{\bar{\vbf}^{k-1}}^{2}}\\
&\ \  +\theta_{9}\mbb E\bsbra{\norm{\xbf^{k}-\bar{\xbf}^{k}}^{2}+\norm{\bar{\xbf}^{k-1}-\xbf^{k-1}}^{2}},\\ 
&\ \ rT+1\leq k<(r+1)T,\forall r=0,1,\cdots,R-1, \\[0.1cm]
    \end{aligned}
\end{equation*}
and 
\begin{equation*}
\begin{aligned}
    & \mbb E\bsbra{\norm{\xbf^{k+1}-\bar{\xbf}^{k+1}}^{2}}\leq (1+\alpha_{2})\rho^{2}\mbb E\bsbra{\norm{\xbf^{k}-\bar{\xbf}^{k}}^{2}}
+\theta_{10} \mbb E\bsbra{\norm{\ybf^{k+1}-\bar{\ybf}^{k+1}}^{2}},\forall k\geq0,\\[0.1cm]
&\mbb E\bsbra{\norm{\ybf^{rT+1}-\bar{\ybf}^{rT+1}}^{2}} \leq{2\rho^{\Tcal}m(\sigma^{2}+L_{f}^{2})},
\forall r= 0,\cdots,R-1,
\end{aligned}    
\end{equation*}
where $\theta_{9}=3({\rho^{2}L_{\delta}^{2}}/{\alpha_{1}}+{\rho^{2}d^{2}L_{f}^{2}}/{\delta^{2}})$
and $\theta_{10}=(1+\alpha_{2}^{-1})\rho^{2}\eta^{2}$.
\end{lem}

Similar to the proof process of {\dgfmlin}, we will present the descent properties of the average sequence in Lemma~\ref{lem:dgfm_one_step_improve}. Since SPIDER is used here, we also provide an upper bound for variance in Lemma~\ref{lem:spider_variance_bound}.
\begin{lem}\label{lem:dgfm_one_step_improve}
For the sequence
$\{\bar{x}^{k},\bar{v}^{k}\}$ generated by {\dgfm}
and $f_{\delta}(x)=~\frac{1}{m}\sum_{i=1}^{m}f_{\delta}^{i}(x)$,
we have
\vspace{-0.5em}
\begin{align*}
f_{\delta}(\bar{x}^{k+1})\leq f_{\delta}(\bar{x}^{k}) -\frac{\eta}{2}\norm{\nabla f_{\delta}(\bar{x}^{k})}^{2}
-\big(\frac{\eta}{2}-\frac{L_{\delta}\eta^{2}}{2}\big) \norm{\bar{v}^{k}}^{2}+\frac{\eta}{2}\norm{\nabla f_{\delta}(\bar{x}^{k})-\bar{v}^{k}}^{2}.
\end{align*}
\end{lem}

\begin{lem}\label{lem:spider_variance_bound}
Let $\{\bar{x}^{k},\bar{v}^{k}\}$ be the sequence 
generated by {\dgfm}, then for $rT\leq k^{\prime}<k\leq(r+1)T-1$,
we have
\vspace{-0.5em}
\begin{align*}
\mbb E\big[\norm{\bar{v}^{k}-\nabla f_{\delta}(\bar{x}^{k})}^{2}\big]  \leq & \frac{2L_{\delta}^{2}}{m}\mbb E\bsbra{\norm{\xbf^{k}-\bar{\xbf}^{k}}^{2}}
+\frac{6d^{2}L_{f}^{2}}{m^{2}\delta^{2}b}\sum_{j=k^{\prime}}^{k}\mbb E\big[\|\xbf^{j} -\bar{\xbf}^{j}\|^{2}+\norm{\xbf^{j-1}-\bar{\xbf}^{j-1}}^{2}\big]\\
&+2\mbb E\lrsbra{\Bnorm{\bar{v}^{k^{\prime}}  -\frac{1}{m}\sum_{i=1}^{m}\nabla f_{\delta}^{i}(x_{i}^{k^{\prime}})}^{2}}
+\frac{6\eta^{2}d^{2}L_{f}^{2}}{m^{2}\delta^{2}b} \sum_{j=k^{\prime}}^{k}\mbb E\big[\norm{\bar{\vbf}^{j-1}}^{2}\big].
\end{align*}
Moreover, for $k=rT$, $r=0,\cdots,R-1$, we have $
\mbb E\bsbra{\bnorm{\bar{v}^{k}-\frac{1}{m}\sum_{i=1}^{m}\nabla f_{\delta}^{i}(x_{i}^{k})}^{2}}\leq {\sigma^{2}}/{b^{\prime}}.
$
\end{lem}

By multiplying the consensus error descent of $x,y$ shown in Lemma~\ref{lem:consensus-1} with their corresponding coefficients, $\beta_x$ and $\beta_y$, and combining it with the results of mean sequence (Lemma~\ref{lem:dgfm_one_step_improve}) and variance upper bound (Lemma~\ref{lem:spider_variance_bound}), we can obtain the following Lemma~\ref{lem:dgfm_convergence_thm}.
\begin{lem}[Informal]\label{lem:dgfm_convergence_thm}
For the sequence $\{\xbf^{k},\ybf^{k}\}$ generated by {\dgfm},
we have
\begin{align*}
& \frac{\eta}{2}\sum_{k=0}^{RT-1}\mbb E\bsbra{\bnorm{\nabla f_{\delta}(\bar{x}^{k})}^{2}}+\theta_{11}\sum_{k=0}^{RT-1}\mbb E\bsbra{\norm{\xbf^{k}-\bar{\xbf}^{k}}^{2}}\\[0.1cm]
&\leq-\theta_{12}\sum_{k=0}^{RT-1} \mbb E\bsbra{\norm{\bar{v}^{k}}^{2}}-\beta_{x}\mbb E\bsbra{\norm{\xbf^{RT}-\bar{\xbf}^{RT}}^{2}}
-\theta_{13}\sum_{k=0}^{RT-1}\mbb E\bsbra{\norm{\ybf^{k}-\bar{\ybf}^{k}}^{2}}-\theta_{14}\mbb E\bsbra{\norm{\ybf^{RT}-\bar{\ybf}^{RT}}^{2}}\\
&\ \ -\mbb E\bsbra{f_{\delta}(\bar{x}^{RT})} +\mbb E[f_{\delta}(\bar{x}^{0})]+{2\rho^{\Tcal}m(\sigma^{2}+L_{f}^{2})}\cdot\beta_{y}R+\theta_{15},
\end{align*}
with constants    $\left(\theta_{11},\theta_{12},\theta_{13},\theta_{14},\theta_{15}\right)$
depend on
$\left(\alpha_{1},\alpha_{2},b,\beta_{x},\beta_{y},d,\eta,L_{\delta},L_{f},R,m,\rho,T\right)$.
\end{lem}
Similar to Lemma~\ref{lem:linthm}, Lemma~\ref{lem:dgfm_convergence_thm} gives an upper bound of $\mbb  E\bsbra{\norm{\nabla f_{\delta}(\bar{x}^k)}^2}+({\theta_{11}}/{\eta})\cdot \mbb E\bsbra{\norm{\xbf^k-\bar{\xbf}^k}^2},$
which can be used to derive the complexity of Goldstein stationary point $x_i^k$ in expectation. We present specific parameter setting in the following Theorem.

\begin{thm}[Informal]\label{thm:dgfm_cor}
{
    {\dgfm} can output a $(\delta,\vep)$-Goldstein stationary point of $f(\cdot)$ in expectation with total stochastic zeroth-order complexity at most
$\mcal O(\Delta_{\delta}\delta^{-1}\vep^{-2}d^{\frac{1}{2}}m^{\frac{1}{2}}\cdot \max\{1,\frac{d}{m\vep}\})$ and the total communication rounds is at most \
$\mcal O\brbra{(\vep^{-1}+\log (\vep^{-1}d))\cdot \Delta_{\delta}\vep^{-2}d^{\frac{1}{2}}m^{\frac{1}{2}}}$
by setting
    $\alpha_1 = \alpha_2 = (1-\rho^2)/{2\rho^2}$, $\delta=\mcal O(\vep)$, $\beta_x = \Ocal(\delta^{-1})$, $\beta_y = \Ocal(\delta)$, $b^{\prime} = \Ocal(\vep^{-2}), b=\mcal O(\vep^{-1})$, $\eta = \mcal O(\vep)$, $T=\mcal O(\vep^{-1})$, $R=\mcal O(\Delta_{\delta}\vep^{-2})$, $\mcal{T} = \mcal O(\log(\vep^{-1}))$.}
\end{thm}

\begin{rem}
    Compared to {\dgfmlin}, {\dgfm} requires less evaluations of zeroth-order oracles to achieve the same level of accuracy, which is consistent with the findings of~\citep{chen2023faster}. Additionally, {\dgfm} involves significantly fewer communication rounds than {\dgfmlin}, and the number of communication rounds required is of the same order as the number of iterations $K$.
\end{rem}

\begin{rem}
    If we do not use multiple rounds of communication during the restart of gradient tracking and only perform communication once, i.e., $\mcal T = 1$, we can achieve the same complexity result to {\dgfmlin} by setting the parameters appropriately, i.e., $\alpha_1=\alpha_2 = (1-\rho^2)/2\rho^2, \delta = \Ocal(\vep), \beta_x = \Ocal (\vep^{-2}), \beta_y = \mcal{O} (\vep^{2})$, $b^{\prime} = \mcal{O}(\vep^{-2})$, $b = \Ocal(1)$,  $T = \Ocal (\vep^{-2})$, $R=\Ocal (\vep^{-2})$ and $\eta = \Ocal (\vep^{2})$. However,   this parameter setting will increase the number of communication rounds to $\mcal O(\delta^{-1}\vep^{-3})$, which is one order of magnitude higher than the current result in Theorem~\ref{thm:dgfm_cor}.
\end{rem}

\section{Numerical Study}

In this section, we show the outperformance of  {\dgfmlin} and 
{\dgfm}  via some numerical experiments.

\subsection{Nonconvex SVM with Capped-$\ell_1$ Penalty}

The first experiment considers the model of penalized Support Vector Machines (SVM). We aim to find a hyperplane to separate data points into two categories. To enhance the robustness of the classifier, we introduce the non-convex and non-smooth regularizers.

\paragraph{Data:}We evaluate our proposed algorithms using several standard datasets in LIBSVM~\citep{chang2011libsvm}, which are described in Table~\ref{tab:dataset_descript}. The feature vectors of all datasets are normalized before optimization.

\begin{table}[t]
  \centering
    \begin{tabular}{crr|crr}
    \toprule
    Dataset & \multicolumn{1}{c}{$n$}     & \multicolumn{1}{c|}{$d$}     & Dataset & \multicolumn{1}{c}{$n$}     & \multicolumn{1}{c}{$d$} \\
    \midrule
    a9a   & 32,561 & 123   & w8a   & 49,749 & 300 \\
    HIGGS & 11,000,000 & 28    & covtype & 581,012 & 54 \\
    rcv   & 20,242 & 47,236 & SUSY  & 5,000,000 & 18 \\
    ijcnn1 & 49,990 & 22    & skin\_nonskin & 245,057 & 3 \\
    \bottomrule
    \end{tabular}%
    \vspace{-0.5em}  \caption{\label{tab:dataset_descript}Descriptions of datasets used in our experiments.}
  \vspace{-0.5em}
\end{table}%

\paragraph{Model:}We consider the nonconvex penalized SVM with capped-$\ell_1$ regularizer~\citep{zhang2010analysis}. The model aims at training a binary classifier $x\in \mbb R^d$ on the training data $\{a_i,b_i\}_{i=1}^n$, where $a_i\in \mbb R^d$ and $b_i\in \{1,-1\}$ are the feature of the $i$-th sample and its label, respectively. For {\dgfm} and {\dgfmlin}, the objective function can be written as
\vspace{-0.5em}
\begin{equation*}
    \begin{aligned}
        \min_{x\in \mbb R^d} f(x)&=\frac{1}{m}\sum_{i=1}^m f^{i}(x),
    \end{aligned}
\end{equation*}
where $f^{i}(x)=\tfrac{1}{n_i}\tsum_{j=1}^{n_i} \ell(b_i^j (a_i^j)^{\top}x)+\gamma(x)$, $\ell(x)=\max\{1-x,0 \allowbreak \}\allowbreak,\gamma(x)\allowbreak=\lambda \allowbreak \tsum_{j=1}^d \allowbreak \min \allowbreak \{\abs{x_j}\allowbreak,\alpha\}$, $n=\tsum_{i=1}^{m}n_i$ and $\lambda,\alpha>0$. Similar to~\cite{chen2023faster}, we take $\lambda = 10^{-5}/n$ and $\alpha=2$.

\paragraph{Network topology:} We consider a simple ring-based topology of the communication network. We set the number of worker nodes to $m=20$. The setting can be found in~\cite{xian2021faster}.

\paragraph{Performance measures:} We measure the performance of the decentralized algorithms by the decrease of the global cost function value $f(\bar{x})$, to which we refer as loss versus zeroth-order gradient calls.

\paragraph{Comparison:} We compare the proposed {\dgfmlin} and {\dgfm} with GFM~\citep{lin2022gradient} and GFM$^{+}$~\citep{chen2023faster}. Throughout all the experiments, we set $\delta = 0.001$ and tune the stepsize $\eta$ from $\{0.0005, 0.001, 0.005, 0.01\}$ for all four algorithms and $b^{\prime}$ from $\{10,100,500\}$, $T$ from $\{10,50,100\}$ for {\dgfm} and GFM$^{+}$, $\mcal{T}$ from $\{1,5,10\}$ for {\dgfm}, $m=20$ for two decentralized algorithms.  We run all the algorithms with the same number of calls to the zeroth-order oracles. We plot the average of five runs in Figure~\ref{fig:res}.

In most of the experiments we conducted, our decentralized algorithms significantly outperform their serial counterparts. While DGFM sometimes exhibits slow convergence due to high consensus error, {\dgfm} consistently demonstrates more robust performance and often achieves the best results. In larger sample size test cases (SUSY, HIGGS), {\dgfm} often demonstrates notably faster convergence than \dgfmlin{}.  These outcomes further corroborate our theoretical analysis of \dgfmlin{} and \dgfm{}.
Additionally, we note that both {\dgfm} and GFM$^{+}$ have exhibited significantly stable performance, as seen in their smoother convergence curves, while DGFM and GFM show more fluctuation. This observation aligns with the theoretical advantage offered by the variance reduction technique.
\begin{figure}[t]
\begin{centering}
\begin{minipage}[t]{0.25\columnwidth}%
\includegraphics[width=4.15cm]{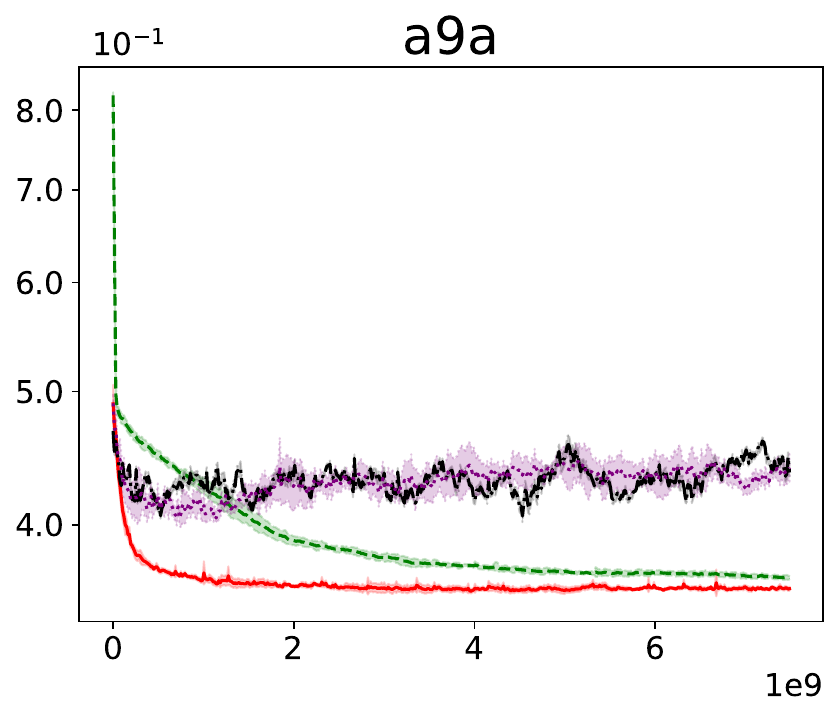}%
\end{minipage}\hfill
\begin{minipage}[t]{0.25\columnwidth}%
\includegraphics[width=4.15cm]{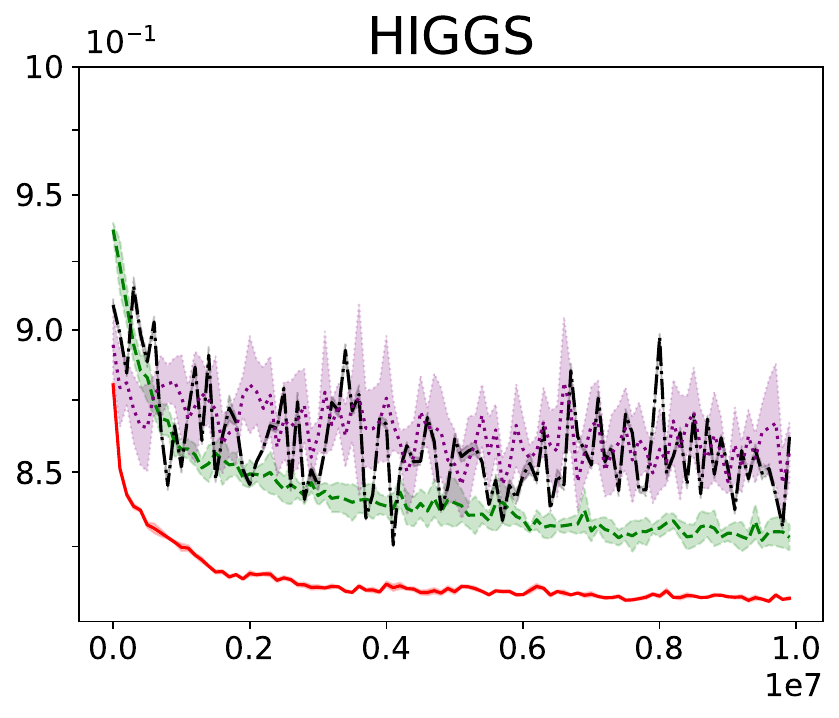}%
\end{minipage}\hfill
\begin{minipage}[t]{0.25\columnwidth}%
\includegraphics[width=4.15cm]{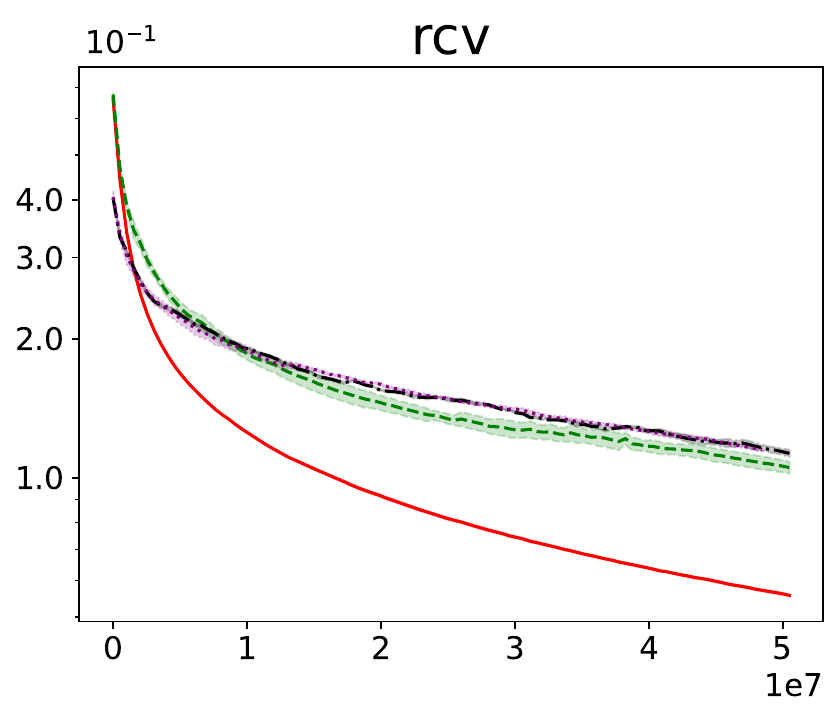}%
\end{minipage}\hfill
\begin{minipage}[t]{0.25\columnwidth}%
\includegraphics[width=4.15cm]{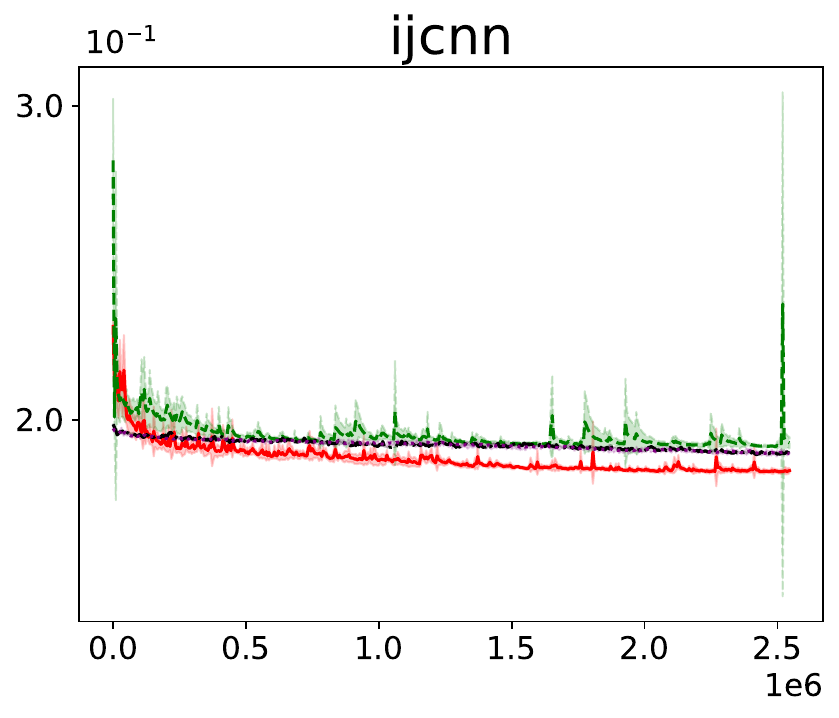}%
\end{minipage}
\\
\begin{minipage}[t]{0.25\columnwidth}%
\includegraphics[width=4.15cm]{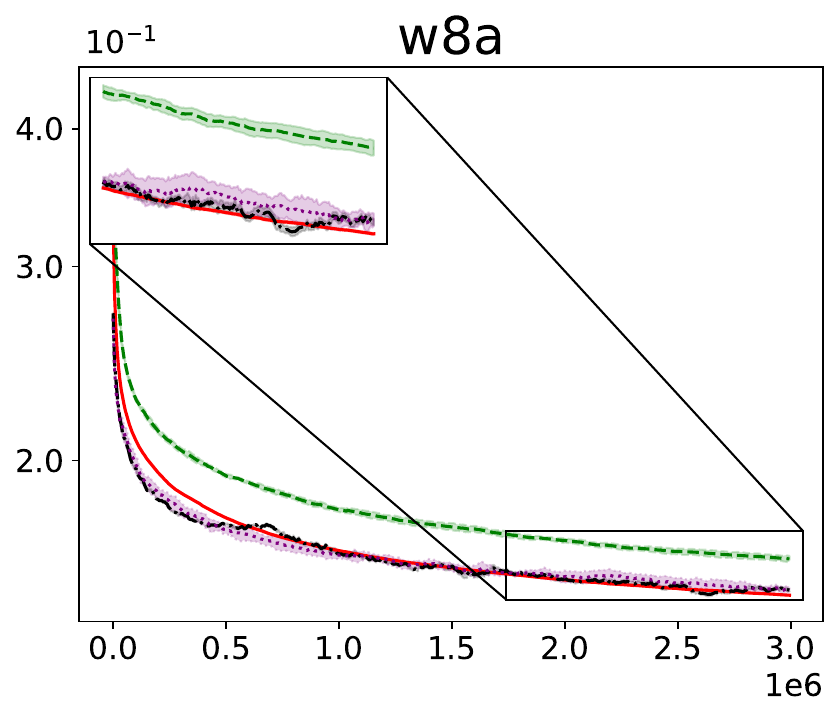}%
\end{minipage}\hfill
\begin{minipage}[t]{0.25\columnwidth}%
\includegraphics[width=4.15cm]{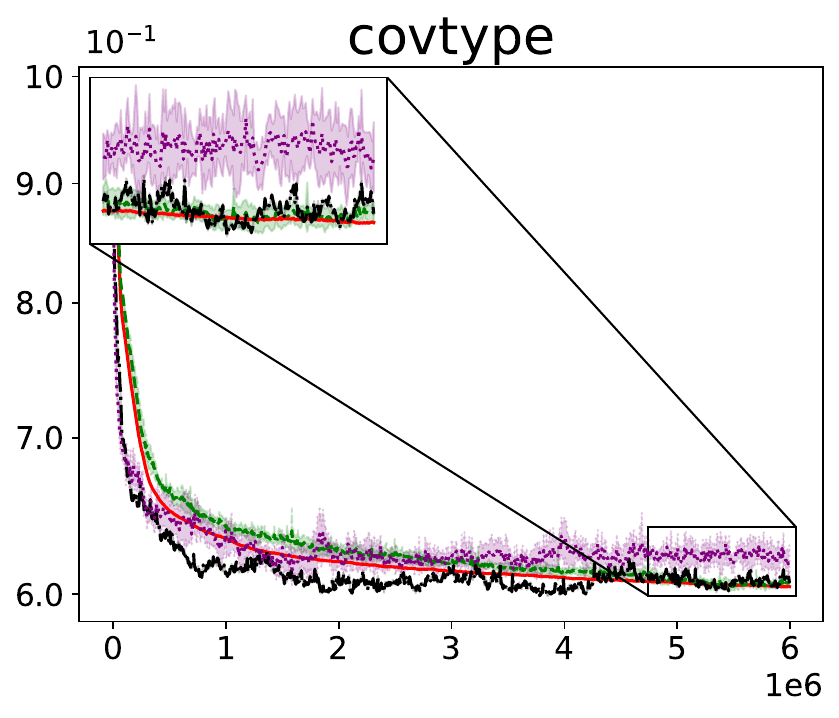}%
\end{minipage}\hfill
\begin{minipage}[t]{0.25\columnwidth}%
\includegraphics[width=4.15cm]{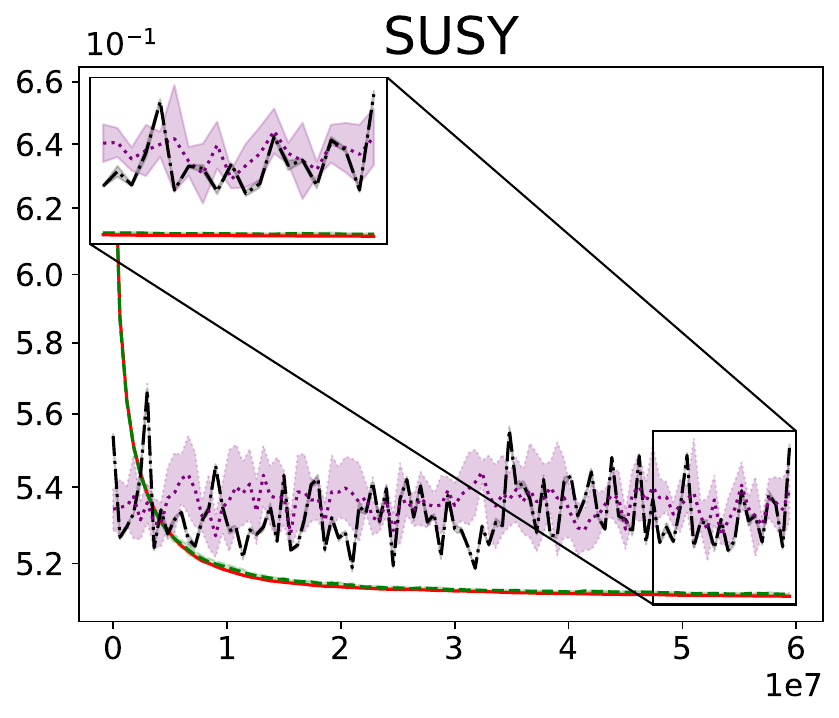}%
\end{minipage}\hfill
\begin{minipage}[t]{0.25\columnwidth}%
\includegraphics[width=4.15cm]{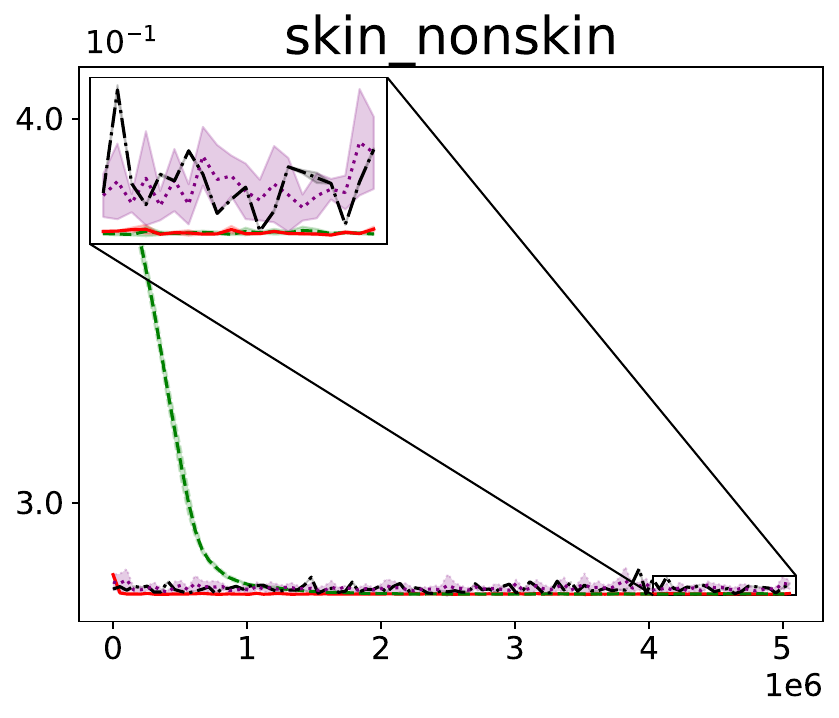}%
\end{minipage}
\\
\vspace{-0.8em}
    \begin{minipage}[t]{1\columnwidth}%
    \centering
    \includegraphics[width=10cm]{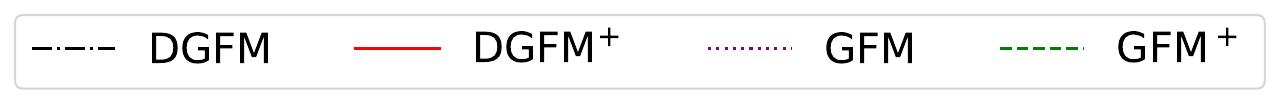}
\end{minipage}
\end{centering}
\vspace{-1.2em}
\caption{\label{fig:res}We assess the convergence performance of four algorithms by plotting the objective function value on the $y$-axis against the number of zeroth-order calls on the $x$-axis.}
\end{figure}


\begin{figure}[t]
\begin{centering}
\begin{minipage}[t]{0.25\columnwidth}%
\includegraphics[width=4.15cm]{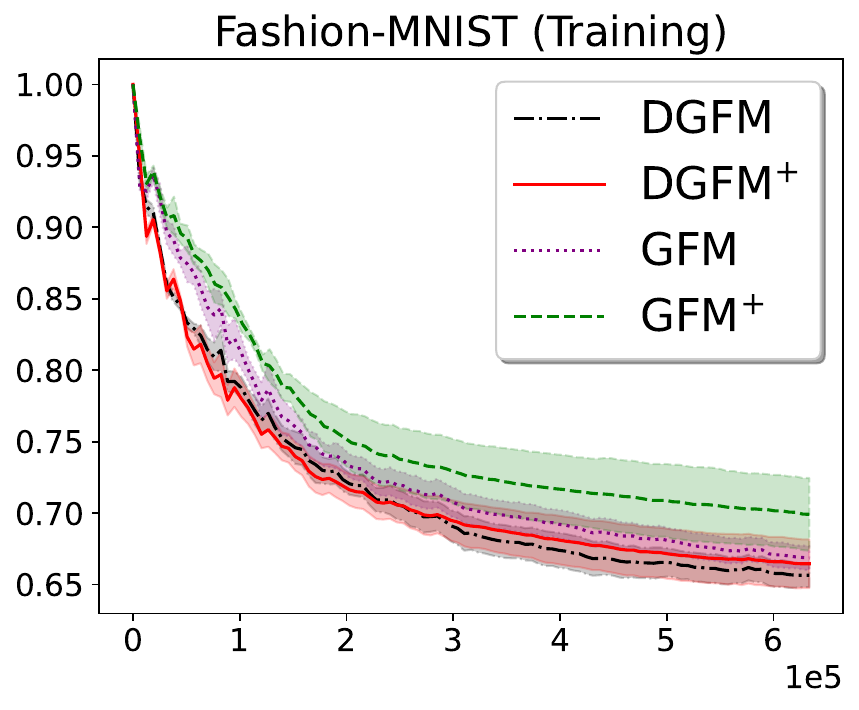}%
\end{minipage}\hfill
\begin{minipage}[t]{0.25\columnwidth}%
\includegraphics[width=4.15cm]{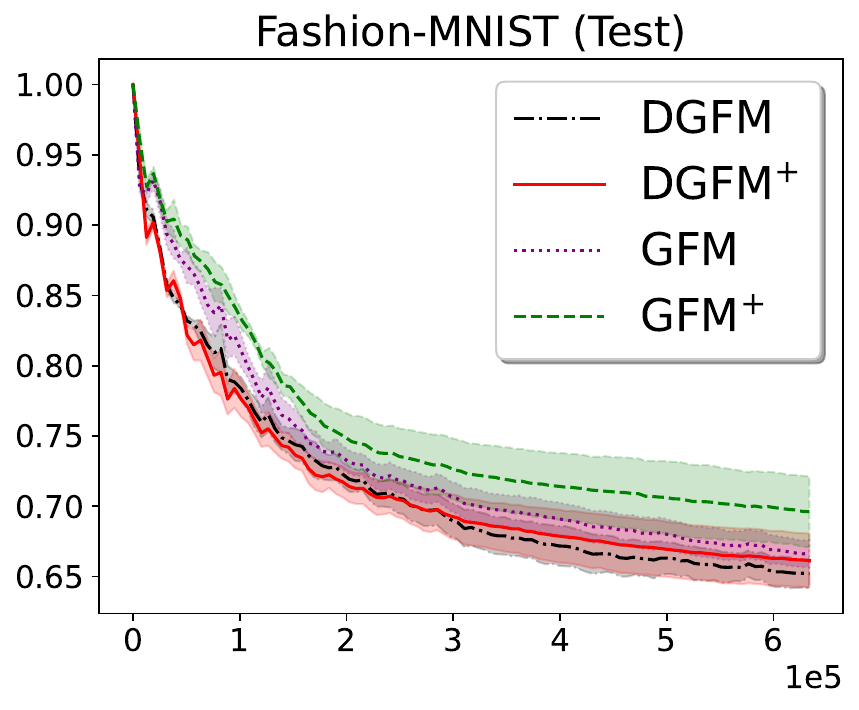}%
\end{minipage}\hfill
\begin{minipage}[t]{0.25\columnwidth}%
\includegraphics[width=4.15cm]{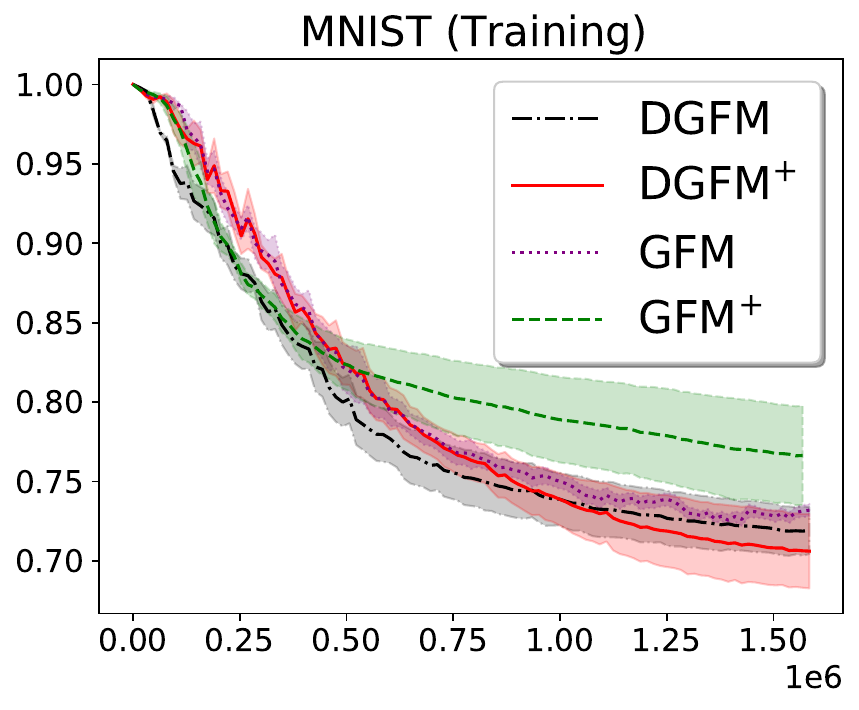}%
\end{minipage}\hfill
\begin{minipage}[t]{0.25\columnwidth}%
\includegraphics[width=4.15cm]{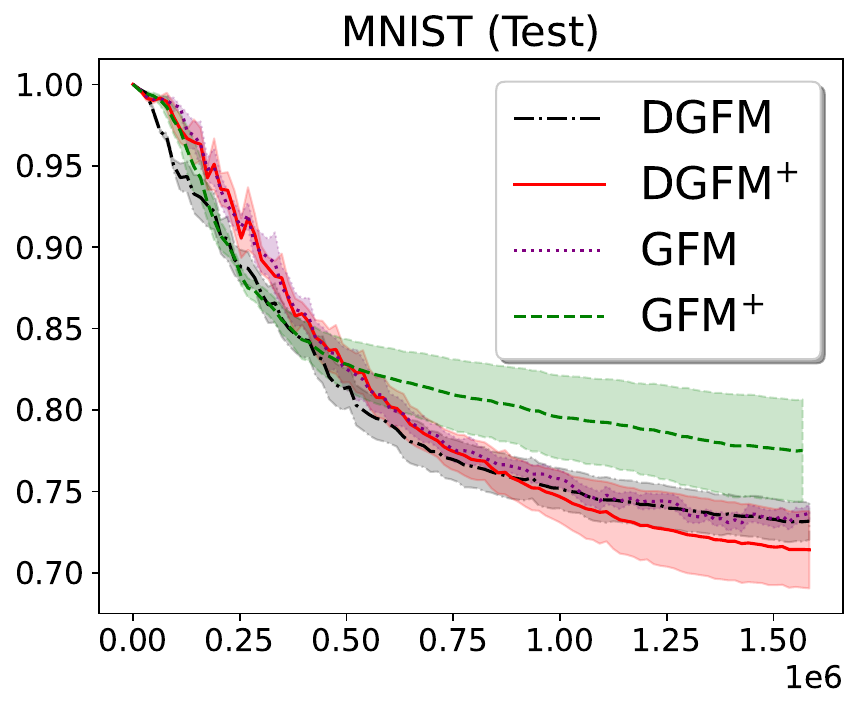}%
\end{minipage}
\end{centering}
\vspace{-1.2em}
\caption{\label{fig:res_attack}We assess the attacking performance of four algorithms by plotting the accuracy after attacking on the $y$-axis against the number of zeroth-order calls on the $x$-axis.}
\end{figure}

\subsection{Universal Attack}
We consider the black-box adversarial attack on image classification with LeNet~\citep{lecun1998gradient}. Our objective is to discover a universal adversarial perturbation~\citep{moosavi2017universal}. When applied to the original image, this perturbation induces misclassification in machine learning models while remaining inconspicuous to human observers. Network topology and Performance measures are the same as in the previous experiment. However, we set $m=8$ for two decentralized algorithms here.
\begin{table}[htbp]
  \centering
    \begin{tabular}{crrrr}
    \toprule
    \multirow{2}[4]{*}{Dataset} & \multicolumn{2}{c}{Fashion-MNIST} & \multicolumn{2}{c}{MNIST} \\
\cmidrule{2-5}          & \multicolumn{1}{c}{Training} & \multicolumn{1}{c}{Test}  & \multicolumn{1}{c}{Training} & \multicolumn{1}{c}{Test} \\
    \midrule
    DGFM  & \textbf{65.66}(\textbf{0.88}) & \textbf{65.22}(1.07) & 71.92(1.56) & 73.18(1.21) \\
    DGFM$^+$ & 66.46(1.80) & 66.14(2.03) & \textbf{70.62}(2.47) & \textbf{71.42}(2.50) \\
    GFM   & 66.86(0.90) & 66.64(\textbf{1.02}) & 73.20(\textbf{0.41}) & 73.66(\textbf{0.55}) \\
    GFM$^{+}$ & 69.92(2.67) & 69.62(2.63) & 76.64(3.29) & 77.52(3.29) \\
    \bottomrule
    \end{tabular}%
  \caption{\label{tab:accuracy_attack}Attacking result (Accuracy/std\%)}
\end{table}
\vspace{-0.5em}
\paragraph{Data:}We evaluate our proposed algorithms using two standard datasets, {\mnist} and {\fashion}.
\paragraph{Model:}The problem can be formulated as the following nonsmooth nonconvex problem:
\vspace{-0.5em}
\begin{equation*}
    \min_{\|\zeta\|_{\infty}\leq \kappa} \frac{1}{m}\sum_{i=1}^{m}\bigg(\frac{1}{|\mcal D_i|}\sum_{j=1}^{|\mcal D_i|}-\ell(a_i^j+\zeta, b_i^j)\bigg),
\end{equation*}
where the dataset $\mcal D_i=\{a_i^j,b_i^j\}$, $a_i^j$ represents the image features, $b_i^j\in \mbb R^{C}$ is the  one-hot encoding label, $C$ is the number of classes, $|\mcal D_i|$ is the cardinality of dataset $\mcal D_i$, $\kappa$ is the constraint level of the distortion, $\zeta$ is the perturbation vector and $\ell(\cdot, \cdot)$ is the cross entropy function. We set $\kappa = 0.25$ for {\fashion} and $\kappa=0.5$ for {\mnist}. Following the setup in the work of \cite{chen2023faster}, we iteratively perform an additional projection step for constraint satisfaction.

\paragraph{Comparison:}We use two pre-trained models with 99.29\% accuracy on {\mnist} and 92.30\% accuracy on {\fashion}, respectively. We use GFM, GFM$^{+}$, {\dgfmlin} and {\dgfm} to attack the pre-trained LeNet on $59577$ images of {\mnist} and $55384$ images of {\fashion}, which are classified correctly on the train set for training LeNet. Furthermore, we evaluate the perturbation on a dataset comprising 9885 MNIST images and 8975 Fashion-MNIST images. These images have been accurately classified during the testing phase of the LetNet training. Throughout all the experiments, we set $\delta=0.01$, $b=\{16, 32, 64\}$. For {\dgfm} and GFM$^{+}$, we tune $b^{\prime}$ from $\{40, 80, 800, 1600\}$, $T$ from $\{2, 5, 10, 20\}$. Additionally, tune $\mcal T$ from $\{1, 10, 20\}$ for {\dgfm}. For all algorithms, we tune the stepsize $\eta$ from $\{0.05, 0.1, 0.5, 1\}$ and multiply a decay factor $0.6$ if no improvement in $300$ iterations. For all experiments, we set the initial perturbation as $\zerobf$. The results of  the average of five runs are shown in Figure~\ref{fig:res_attack} and Table~\ref{tab:accuracy_attack}.  On the Fashion-MNIST dataset, {\dgfmlin} achieves the lowest accuracy after attacking and small variance. For MNIST, {\dgfm} achieves the lowest accuracy after attacking, but its variance is larger. In general, we can observe that our algorithms perform better than the serial counterparts.

\section{Conclusion}

We proposed the first decentralized gradient-free algorithm which has a provable complexity guarantee for non-smooth non-convex optimization over a multi-agent network. We showed that this method obtains an $\mcal O(d^{3/2}\delta^{-1}\vep^{-4})$ complexity for obtaining an $\mcal O(\delta, \vep)$-Goldstein stationary point. Further, we introduced a decentralized variance-reduced method that enhances the complexity to $\mcal O(d^{3/2}\delta^{-1}\vep^{-3})$, thereby achieving the best complexity rate for non-smooth and non-convex optimization in the serial setting. As a future direction, it would be interesting to investigate whether the best complexity bound can be further improved. Additionally, expanding our algorithms to accommodate more challenging scenarios, such as asynchronous distributed optimization, presents an intriguing avenue for further exploration.

\bibliographystyle{plainnat}
\bibliography{ref}

\appendix
\clearpage

\part{Appendix} 
\insertseparator
\startcontents[appendix] 
\printcontents[appendix]{}{1}{}
\insertseparator

In the appendix, we compile a comprehensive list of all Propositions, Lemmas, and Theorems mentioned in the paper, along with their proofs, for improved readability of the main text. We also restate these results in the appendix.

 To further support the proof of Lemma~\ref{lem:main_thm1} and~\ref{lem:main_theorem}, we provide the following auxiliary lemmas which are not mentioned in the main text: Lemma~\ref{lem:aux_lem_1} in Section~\ref{subsec:aux1} and Lemma~\ref{lem:aux_lem_2} in Section~\ref{subsec:aux2}.

To facilitate the understanding of our proofs, we first present two algorithms in Section~\ref{sec:algorithms}. We then introduce the notation used in the proof and a frequently used proposition in Section~\ref{sec:preliminaries}. The proofs of the convergence of {\dgfmlin} and {\dgfm} can be found in Sections~\ref{sec:GDFMlin} and~\ref{sec:GDFM}, respectively. To illustrate the practicality and effectiveness of our algorithms, we also provide specific examples of parameters for both algorithms that can lead to the optimal convergence rate.

\section{Algorithms}\label{sec:algorithms}

We give the two decentralized algorithms here for completeness.

\begin{algorithm}[htpb]
\caption{\label{alg:dgfmlin_appendix}{\dgfmlin} at each node $i$}
    \begin{algorithmic}[1]
        \Require{$x_{i}^{-1}=x_i^0=\bar{x}^0,\forall i \in [m]$, $y_i^0 = g_i(x_i^{-1};S_i^{-1})=\zerobf_{d},K,\eta$}
        \For{$k=0,\ldots,K-1$}
        \State{Sample $S_i^{k}=\bcbra{\xi_{i}^{k,1},w_i^{k,1}}$ and calculate $g_i(x_i^{k};S_i^k)$}
        \State{$y_i^{k+1}=\sum_{j=1}^{m}a_{i,j}(k)(y_j^k+g_j(x_j^k;S_j^k)-g_j(x_j^{k-1};S_j^{k-1}))$}
        \State{$x_i^{k+1}=\tsum_{j=1}^{m}a_{i,j}(k)(x_j^k - \eta y_j^{k+1})$}
        \EndFor
        
    \State{\textbf{Return:} Choose $x_{\text{out}}$ uniformly at random from $\bcbra{{x}^k_i}_{k=1,\cdots,K, i=1,\cdots,m}$}
        \end{algorithmic}
\end{algorithm}

\begin{algorithm}[htbp]
\caption{\label{alg:dgfm_appendix}{\dgfm} at each node $i$}
    \begin{algorithmic}[1]
        \Require{$x_{i}^{-1}=x_i^0=\bar{x}^0,\forall i \in [m]$, $y_i^0 = v_i^{-1}=\zerobf_{d},K,\eta,b,b^{\prime}$}
        \For{$k=0,\ldots,K-1$}
            \If{$k$ mod $T$ = 0}
            \State Sample $S_i^{k\prime}=\bcbra{(\xi_i^{k\prime,j},w_i^{k\prime,j})}_{j=1}^{b^\prime}$
            \State Calculate $y_i^{k+1} = v_i^k = g_i(x_i^k;S_i^k)$
            \For{$\tau = 1,\cdots,\mcal T$}
            \State $y_i^{k+1}=\tsum_{j=1}^m a_{i,j}^\tau(k)y_j^{k+1}$
        \EndFor
        \Else
        \State Sample $S_i^{k}=\bcbra{(\xi_i^{k,j},w_i^{k,j})}_{j=1}^{b}$
        \State  $v_i^k = v_i^{k-1}+g_i(x_i^k;S_i^k)-g_i(x_i^{k-1};S_i^k)$
        \State $y_i^{k+1}=\tsum_{j=1}^{m}a_{i,j}(k)(y_j^k + v_j^k - v_j^{k-1})$
        \EndIf
        \State $x_i^{k+1}=\tsum_{j=1}^{m}a_{i,j}(k)(x_j^k - \eta y_j^{k+1})$
        \EndFor
        
    \State{\textbf{Return: }Choose $x_{\text{out}}$ uniformly at random from $\bcbra{{x}^k_i}_{k=1,\cdots,K, i=1,\cdots,m}$}
        \end{algorithmic}
\end{algorithm}

\section{Preliminaries}\label{sec:preliminaries}

In this section, we introduce the notations listed in Table~\ref{tab:Notations}, which will be used consistently throughout the document. We also provide a commonly used proposition, Proposition~\ref{prop:common_used}, along with its proof for easy reference. 

\begin{table}
\caption{\label{tab:Notations}Notations for {\dgfmlin} and {\dgfm}}
\renewcommand{\arraystretch}{1.3}
\centering{}%
{\small
\begin{tabular}{|c|c|c|c|}
\hline 
Notations & Specific Formulation & Specific Meaning & Dimension\tabularnewline
\hline 
\hline 
$\xbf^{k}$ & $[(x_{1}^{k})^{\top},\cdots,(x_{m}^{k})^{\top}]^{\top}$ & Stack all local variables $x_{i}^{k}$ & $\mbb R^{md}$\tabularnewline
\hline 
$\tilde{\xbf}^{k}$ & $[(\tilde{x}_{1}^{k})^{\top},\cdots,(\tilde{x}_{m}^{k})^{\top}]^{\top}$ & Stack all local variables $\tilde{x}_{i}^{k}$ & $\mbb R^{md}$\tabularnewline
\hline 
$\bar{\xbf}^{k}$ & $\onebf_{m}\otimes\bar{x}^{k}$ & Copy variable $\bar{x}^{k}$ and concatenate & $\mbb R^{md}$\tabularnewline
\hline 
$\ybf^{k}$ & $[(y_{1}^{k})^{\top},\cdots,(y_{m}^{k})^{\top}]^{\top}$ & Stack all local variables $y_{i}^{k}$ & $\Rbb^{md}$\tabularnewline
\hline 
$\bar{\ybf}^{k}$ & $\onebf_{m}\otimes\bar{y}^{k}$ & Copy variable $\bar{y}^{k}$ and concatenate & $\mbb R^{md}$\tabularnewline
\hline 
$\vbf^{k}$ & $[(v_{1}^{k})^{\top},\cdots,(v_{m}^{k})^{\top}]^{\top}$ & Stack all local variables $v_{i}^{k}$ & $\Rbb^{md}$\tabularnewline
\hline 
$\bar{\vbf}^{k}$ & $\onebf_{m}\otimes\bar{v}^{k}$ & Copy variable $\bar{v}^{k}$ and concatenate & $\mbb R^{md}$\tabularnewline
\hline 
$\tilde{A}(k)$ & $A(k)\otimes\Ibf_{d}$ & Doubly Stochastic Matrix for nodes & $\mbb R^{md\times md}$\tabularnewline
\hline 
$J$ & $\frac{1}{m}\onebf_{m}\onebf_{m}^{\top}\otimes\Ibf_{d}$ & Mean Matrix & $\mbb R^{md\times md}$\tabularnewline
\hline 
$\gbf(\xbf^{k};S^{k})$ & $[(g_{1}(x_{1}^{k};S_{1}^{k}))^{\top},\cdots,(g_{m}(x_{m}^{k};S_{m}^{k}))^{\top}]^{\top}$ & Stack all local variables $g(x_{i}^{k};S_{i}^{k})$ & $\Rbb^{md}\to\Rbb^{md}$\tabularnewline
\hline 
$\nabla f_{\delta}^{i}(x)$ & $\mbb E_{\xi}[\nabla f_{\delta}^{i}(x;\xi)]=\mbb E_{S_{i}}[g(x_{i};S_{i})]$ & Expectation of stochastic gradient & $\mbb R^{d}\to\mbb R^{d}$\tabularnewline
\hline 
\end{tabular}
}
\end{table}

\begin{prop}
\label{prop:common_used}Let $\tilde{A}(k)=A(k)\otimes\Ibf_{d}$ and $J=\frac{1}{m}\onebf_{m}\onebf_{m}^{\top}\otimes\Ibf_{d}$,
then we have
\end{prop}
\begin{enumerate}
\item \label{enu:.1property}$\tilde{A}(k)J=J=J\tilde{A}(k)$.
\item \label{enu:.2property}$\tilde{A}(k)\bar{\zbf}^{k}=\bar{\zbf}^{k}=J\bar{\zbf}^{k},\forall\bar{z}^{k}\in\mbb R^{d}$.
\item \label{enu:3property}There exists a constant $\rho>0$ such that
$\max_{k}\{\norm{\tilde{A}(k)-J}\}\leq\rho<1$.
\item \label{enu:.4spectral_norm}$\norm{\tilde{A}(k)}\leq1$.
\end{enumerate}
\begin{proof}
First, we have
\[
\tilde{A}(k)J=(A(k)\otimes\Ibf_{m})\cdot(\frac{1}{m}\onebf_{m}\onebf_{m}^{\top}\otimes\Ibf_{m})=(\frac{1}{m}A(k)\onebf_{m}\onebf_{m}^{\top}\otimes\Ibf_{m})\aeq\frac{1}{m}\onebf_{m}\onebf_{m}^{T}\otimes\Ibf_{m}=J\beq J\tilde{A}(k),
\]
where $(a)$ holds by doubly stochasticity and $(b)$ follows from
the similar argument in above $\tilde{A}(k)J=J$ based on $\onebf_{m}^{\top}\tilde{A}(k)=\onebf_{m}^{\top}$.

Second, it follows from $\bar{\zbf}^{k}=\onebf\otimes\bar{z}^{k}$
that
\[
\tilde{A}(k)\bar{\zbf}^{k}=(A(k)\otimes\Ibf_{d})(\onebf\otimes\bar{z}^{k})=\onebf\otimes\bar{z}^{k}=\bar{\zbf}^{k}=(\frac{1}{m}\onebf_{m}\onebf_{m}^{\top}\otimes\Ibf_{d})\otimes(\onebf\otimes\bar{z}^{k})=J\bar{\zbf}^{k}.
\]

Third, $\norm{\tilde{A}(k)-J}\leq\rho$ is a well known conclusion
(see e.g.,~\cite{tsitsiklis1984problems}).

Last, $\norm{\tilde{A}(k)}\leq1$ is a well known result (see e.g.,~\cite{sun2022distributed}).
\end{proof}

\section{Convergence Analysis for {\dgfmlin}}\label{sec:GDFMlin}
In this section, we present the proofs for the lemmas and theorems that are used in the convergence analysis of the {\dgfmlin} algorithm.
\subsection{Proof of Lemma~\ref{lem:dgfmlin}}
\begin{lem}
Let $\{x_{i}^{k},y_{i}^{k},g_{i}(x_{i}^{k};S_{i}^{k})\}$ be the sequence
generated by {\dgfmlin} and $\{\xbf^{k},\ybf^{k},\gbf^{k}\}$ be
the corresponding stack variables, then for $k\geq0$, we have
\begin{align}
\xbf^{k+1} & =\tilde{A}(k)(\xbf^{k}-\eta\ybf^{k+1}),\label{eq:x_update_dgfm}\\
\ybf^{k+1} & =\tilde{A}(k)(\ybf^{k}+\gbf^{k}-\gbf^{k-1}),\label{eq:y_update_dgfm}\\
\bar{\xbf}^{k+1} & =\bar{\xbf}^{k}-\eta\bar{\ybf}^{k+1},\label{eq:mean_update_x_dgfm}\\
\bar{\ybf}^{k+1} & =\bar{\ybf}^{k}+\bar{\gbf}^{k}-\bar{\gbf}^{k-1},\label{eq:mean_y_update}\\
\bar{\ybf}^{k+1} & =\bar{\gbf}^{k}.\label{eq:y_g_mean}
\end{align}
\end{lem}
\begin{proof}
It follows from $x_{i}^{k+1}=\sum_{j=1}^{m}a_{i,j}(k)(x_{j}^{k}-\eta y_{j}^{k+1})$
and the definition of $\tilde{A}(k)$, $\xbf^{k}$ and $\ybf^{k}$ that~\eqref{eq:x_update_dgfm}.
Similar result~\eqref{eq:y_update_dgfm} holds by $y_{i}^{k+1}=\sum_{j=1}^{m}a_{i,j}(k)(y_{j}^{k}+g_{j}^{k}-g_{j}^{k-1})$.
It follows from $J\til A(k)=J$ in~\ref{enu:.1property} of Proposition~\ref{prop:common_used},~\eqref{eq:x_update_dgfm} and~\eqref{eq:y_update_dgfm} that~\eqref{eq:mean_update_x_dgfm}
and~\eqref{eq:mean_y_update} hold. Furthermore, combining $y_{i}^{0}=g_{i}^{-1}=\zerobf_{d}$
and~\eqref{eq:mean_y_update} yields~\eqref{eq:y_g_mean}.
\end{proof}
\subsection{Proof of Lemma~\ref{lem:lin_consensus}}
\begin{lem}[Consensus error decay]
\label{lem:Let--generated}Let $\{\bar{\xbf}^{k},\bar{\ybf}^{k}\}$
be the sequence generated by~{\dgfmlin}, then for $k\geq0$, we have
\begin{equation}
\begin{aligned} & \mbb E\bsbra{\norm{\xbf^{k+1}-\bar{\xbf}^{k+1}}^{2}}\\
& \leq\  \rho^{2}(1+\alpha_{1})\mbb E\bsbra{\norm{\xbf^{k}-\bar{\xbf}^{k}}^{2}}+\rho^{2}\eta^{2}(1+\alpha_{1}^{-1})\mbb E\bsbra{\norm{\ybf^{k+1}-\bar{\ybf}^{k+1}}^{2}},
\end{aligned}
\label{eq:consensus_x_dgfmlin}
\end{equation}
and 
\begin{equation}
\begin{aligned} & \mbb E\bsbra{\norm{\ybf^{k+1}-\bar{\ybf}^{k+1}}^{2}\mid\mcal F_{k}}\\
& \leq\ \rho^{2}(1+\alpha_{2})\mbb E\bsbra{\norm{\ybf^{k}-\bar{\ybf}^{k}}^{2}\mid\mcal F_{k}}+\rho^{2}(1+\alpha_{2}^{-1})(6+36{ \eta^{2}}L_{\delta}^{2})m\sigma^{2}\\
 &\ \  +9\rho^{2}(1+\alpha_{2}^{-1})L_{\delta}^{2}\mbb E\bsbra{\norm{\xbf^{k}-\bar{\xbf}^{k}}^{2}\mid\mcal F_{k}}\\
 &\ \  +9\rho^{2}(1+\alpha_{2}^{-1}L_{\delta}^{2}(1+4\eta^{2}L_{\delta}^{2})\mbb E\bsbra{\norm{\xbf^{k-1}-\bar{\xbf}^{k-1}}^{2}\mid\mcal F_{k}}\\
 &\ \  +18L_{\delta}^{2}\eta^{2}{ \rho^{2}}(1+\alpha_{2}^{-1})\mbb E\bsbra{\norm{\nabla f_{\delta}(\bar{x}^{k-1})}^{2}\mid\mcal F_{k}}.
\end{aligned}
\label{eq:consensus_y_dgfmlin}
\end{equation}
\end{lem}
\begin{proof}
We consider sequence $\{\xbf^{k}\},\{\ybf^{k}\}$, separately.

\textbf{Sequence $\{\xbf^{k}\}$: }It follows from~\eqref{eq:x_update_dgfm}
that
\[
\xbf^{k+1}-\bar{\xbf}^{k+1}=(\tilde{A}(k)-J)(\xbf^{k}-\eta\ybf^{k+1}).
\]
Combining the above inequality and~\eqref{enu:.2property},~\eqref{enu:3property}
in Proposition~\ref{prop:common_used}, we have
\begin{align*}
\norm{\xbf^{k+1}-\bar{\xbf}^{k+1}} & \leq\rho\norm{\xbf^{k}-\bar{\xbf}^{k}}+\rho\eta\norm{\ybf^{k+1}-\bar{\ybf}^{k+1}}.
\end{align*}
Combining the above inequality and Young's inequality and taking the expectation,
then we complete the proof of~\eqref{eq:consensus_x_dgfmlin}.

\textbf{Sequence $\{\ybf^{k}\}$:} It follows from~\eqref{eq:y_update_dgfm}
that 
$
\norm{\ybf^{k+1}-\bar{\ybf}^{k+1}}\leq\rho\norm{\ybf^{k}-\bar{\ybf}^{k}}+\rho\norm{\gbf^{k}-\gbf^{k-1}}.
$
Combining the above inequality, Young's inequality, and taking the expectation
on natural field $\mcal F_{k}$, then we have
\begin{align*}
\mbb E\bsbra{\norm{\ybf^{k+1}-\bar{\ybf}^{k+1}}^{2}\mid\mcal F_{k}} & \leq\rho^{2}(1+\alpha_{2})\mbb E\bsbra{\norm{\ybf^{k}-\bar{\ybf}^{k}}^{2}\mid\mcal F_{k}}+\rho^{2}(1+\alpha_{2}^{-1})\mbb E\bsbra{\norm{\gbf^{k}-\gbf^{k-1}}^{2}\mid\mcal F_{k}}.
\end{align*}
Note that 
\[
\begin{aligned}\mbb E\bsbra{\norm{\gbf^{k}-\gbf^{k-1}}^{2}\mid\mcal F_{k}}& = \sum_{i=1}^{m}\mbb E\bsbra{\norm{g_{i}(x_{i}^{k};S_{i}^{k})\pm\nabla f_{\delta}^{i}(x_{i}^{k})\pm\nabla f_{\delta}^{i}(x_{i}^{k-1})-g_{i}(x_{i}^{k};S_{i}^{k-1})}^{2}\mid\mcal F_{k}}\\
& \leq 3\sum_{i=1}^{m}\left(2\sigma^{2}+L_{\delta}^{2}\mbb E\bsbra{\norm{x_{i}^{k}-x_{i}^{k-1}}^{2}\mid\mcal F_{k}}\right)\\
& = 6m\sigma^{2}+3L_{\delta}^{2}\mbb E\bsbra{\norm{\xbf^{k}-\xbf^{k-1}}^{2}\mid\mcal F_{k}}\\
& \leq 6m\sigma^{2}+9L_{\delta}^{2}\brbra{\mbb E\bsbra{\norm{\xbf^{k}-\bar{\xbf}^{k}}^{2}+\norm{\bar{\xbf}^{k}-\bar{\xbf}^{k-1}}^{2}+\norm{\xbf^{k-1}-\bar{\xbf}^{k-1}}^{2}\mid\mcal F_{k}}}\\
& = 6m\sigma^{2}+9L_{\delta}^{2}\brbra{\mbb E\bsbra{\norm{\xbf^{k}-\bar{\xbf}^{k}}^{2}+\eta^{2}\norm{\bar{\ybf}^{k}}^{2}+\norm{\xbf^{k-1}-\bar{\xbf}^{k-1}}^{2}\mid\mcal F_{k}}},
\end{aligned}
\]
and
\[
\begin{aligned}
&\mbb E\bsbra{\norm{\bar{\ybf}^{k}}^{2}\mid\mcal F_{k}}\\
&\leq m\cdot\mbb E\bigg[2\Bnorm{\frac{1}{m}\sum_{i=1}^{m}(g_{i}(x_{i}^{k-1};S_{i}^{k-1})\pm\nabla f_{\delta}^{i}(x_{i}^{k-1})-\nabla f_{\delta}^{i}(\bar{x}^{k-1})}^{2}+2\Bnorm{\frac{1}{m}\sum_{i=1}^{m}\nabla f_{\delta}^{i}(\bar{x}^{k-1})}^{2}\mid \mcal F_{k}\bigg]\\
 & \ \ +4m\mbb E\bigg[\Bnorm{\frac{1}{m}\sum_{i=1}^{m}f_{\delta}^{i}(x_{i}^{k-1})-\nabla f_{\delta}^{i}(\bar{x}^{k-1})}^{2}\mid\mcal F_{k}\bigg]+2\mbb E\bsbra{\norm{\nabla f_{\delta}(\bar{x}^{k-1})}^{2}\mid\mcal F_{k}}\\
& \leq  4\mbb E\bigg[\sum_{i=1}^{m}\norm{g_{i}(x_{i}^{k-1};S_{i}^{k-1})-\nabla f_{\delta}^{i}(x_{i}^{k-1})}^{2}\bigg]+4\mbb E\bigg[\sum_{i=1}^{m}\norm{\nabla f_{\delta}^{i}(x_{i}^{k-1})-\nabla f_{\delta}^{i}(\bar{x}^{k-1})}^{2}\mid\mcal F_{k}\bigg]\\
 &\ \  +2\mbb E\bsbra{\norm{\nabla f_{\delta}(\bar{x}^{k-1})}^{2}\mid\mcal F_{k}}\\
&\leq 4m\sigma^{2}+4L_{\delta}^{2}\mbb E\bsbra{\norm{\xbf^{k-1}-\bar{\xbf}^{k-1}}^{2}\mid\mcal F_{k}}+2\mbb E\bsbra{\norm{\nabla f_{\delta}(\bar{x}^{k-1})}^{2}\mid\mcal F_{k}}.
\end{aligned}
\]

Combining the above three inequalities and taking expectation with
all random variables yield
\[
\begin{aligned} &\mbb E\bsbra{\norm{\ybf^{k+1}-\bar{\ybf}^{k+1}}^{2}\mid\mcal F_{k}}\\ & \leq \rho^{2}(1+\alpha_{2})\mbb E\bsbra{\norm{\ybf^{k}-\bar{\ybf}^{k}}^{2}\mid\mcal F_{k}}+\rho^{2}(1+\alpha_{2}^{-1})\mbb E\bsbra{\norm{\gbf^{k}-\gbf^{k-1}}^{2}\mid\mcal F_{k}}\\
& \leq  \rho^{2}(1+\alpha_{2})\mbb E\bsbra{\norm{\ybf^{k}-\bar{\ybf}^{k}}^{2}\mid\mcal F_{k}}\\
 &\ \  +\rho^{2}(1+\alpha_{2}^{-1})\left(6m\sigma^{2}+9L_{\delta}^{2}(\mbb E\bsbra{\norm{\xbf^{k}-\bar{\xbf}^{k}}^{2}+\eta^{2}\norm{\bar{\ybf}^{k}}^{2}+\norm{\xbf^{k-1}-\bar{\xbf}^{k-1}}^{2}\mid\mcal F_{k}})\right)\\
& \leq \rho^{2}(1+\alpha_{2})\mbb E\bsbra{\norm{\ybf^{k}-\bar{\ybf}^{k}}^{2}\mid\mcal F_{k}}\\
 &\ \  +\rho^{2}(1+\alpha_{2}^{-1})\left(6m\sigma^{2}+9L_{\delta}^{2}(\mbb E\bsbra{\norm{\xbf^{k}-\bar{\xbf}^{k}}^{2}+\norm{\xbf^{k-1}-\bar{\xbf}^{k-1}}^{2}\mid\mcal F_{k}})\right.\\
 & \ \  \left.+9L_{\delta}^{2}\eta^{2}(4m\sigma^{2}+4L_{\delta}^{2}\mbb E\bsbra{\norm{\xbf^{k-1}-\bar{\xbf}^{k-1}}^{2}\mid\mcal F_{k}}+2\mbb E\bsbra{\norm{\nabla f_{\delta}(\bar{x}^{k-1})}^{2}\mid\mcal F_{k}}\right)\\
& = \rho^{2}(1+\alpha_{2})\cdot \mbb E\bsbra{\norm{\ybf^{k}-\bar{\ybf}^{k}}^{2}\mid\mcal F_{k}}+\rho^{2}(1+\alpha_{2}^{-1})(6+36{ \eta^{2}}L_{\delta}^{2})m\sigma^{2}\\
 &\ \  +9\rho^{2}(1+\alpha_{2}^{-1})L_{\delta}^{2}\cdot \mbb E\bsbra{\norm{\xbf^{k}-\bar{\xbf}^{k}}^{2}\mid\mcal F_{k}}\\
 & \ \ +9\rho^{2}(1+\alpha_{2}^{-1}L_{\delta}^{2}(1+4\eta^{2}L_{\delta}^{2})\cdot \mbb E\bsbra{\norm{\xbf^{k-1}-\bar{\xbf}^{k-1}}^{2}\mid\mcal F_{k}}\\
 & \ \  +18L_{\delta}^{2}\eta^{2}{ \rho^{2}}(1+\alpha_{2}^{-1})\cdot \mbb E\bsbra{\norm{\nabla f_{\delta}(\bar{x}^{k-1})}^{2}\mid\mcal F_{k}},
\end{aligned}
\]
which complete the proof of~\eqref{eq:consensus_y_dgfmlin}.
\end{proof}

\subsection{Proof of Lemma~\ref{lem:decent_lin}}
\begin{lem}
[Descent Property of {\dgfmlin}]\label{lem:Let--be}Let $\{\xbf^{k},\ybf^{k},\bar{\xbf}^{k},\bar{\ybf}^{k}\}$
be the sequence generated by {\dgfmlin}, then we have
\begin{equation}\label{eq:3.3}
\mbb E\bsbra{f_{\delta}(\bar{x}^{k+1})}-\mbb E\bsbra{f_{\delta}(\bar{x}^{k})}\leq-\frac{\eta}{2}\mbb E\bsbra{\norm{\nabla f_{\delta}(\bar{x}^{k})}^{2}}+\frac{\eta L_{\delta}^{2}}{2m}\mbb E\bsbra{\norm{\xbf^{k}-\bar{\xbf}^{k}}^{2}}+L_{\delta}\eta^{2}(\sigma^{2}+L_{f}^{2}).
\end{equation}
\end{lem}
\begin{proof}
It follows from $f_{\delta}^{i}(x)$ is differentiable and $L_{f}$-Lipschitz
with $L_{\delta}$-Lipschitz gradient that
\begin{align*}
f_{\delta}^{i}(\bar{x}^{k+1}) & \leq f_{\delta}^{i}(\bar{x}^{k})-\binprod{\nabla f_{\delta}^{i}(\bar{x}^{k})}{\bar{x}^{k+1}-\bar{x}^{k}}+\frac{L_{\delta}}{2}\norm{\bar{x}^{k+1}-\bar{x}^{k}}^{2}\\
 & =f_{\delta}^{i}(\bar{x}^{k})-\eta\binprod{\nabla f_{\delta}^{i}(\bar{x}^{k})}{\bar{g}^{k}}+\frac{L_{\delta}\eta^{2}}{2}\norm{\bar{g}^{k}}^{2}.
\end{align*}
Summing the above inequality over $i=1$ to $m$, dividing by $m$
and taking expectation on $\mcal F_{k}$, then we have
\begin{equation}
\mbb E\bsbra{f_{\delta}(\bar{x}^{k+1})\mid\mcal F_{k}}\leq f_{\delta}(\bar{x}^{k})-\eta\binprod{\nabla f_{\delta}(\bar{x}^{k})}{\mbb E\bsbra{\bar{g}^{k}\mid\mcal F_{k}}}+\frac{L_{\delta}\eta^{2}}{2}\mbb E\bsbra{\norm{\bar{g}^{k}}^{2}\mid\mcal F_{k}}.\label{eq:templin01}
\end{equation}
By $\mbb E\left[\bar{g}^{k}\mid\mcal F_{k}\right]=\frac{1}{m}\sum_{i=1}^{m}\nabla f_{\delta}^{i}(x_{i}^{k})$
and the above inequality, we have
\begin{align}
\binprod{\nabla f_{\delta}(\bar{x}^{k})}{\mbb E\bsbra{\bar{g}^{k}\mid\mcal F_{k}}} & =\Binprod{\nabla f_{\delta}(\bar{x}^{k})}{\frac{1}{m}\sum_{i=1}^{m}\nabla f_{\delta}^{i}(x_{i}^{k})-\nabla f_{\delta}(\bar{x}^{k})}+\norm{\nabla f_{\delta}(\bar{x}^{k})}^{2}\label{eq:templin02}\\
\mbb E\bsbra{\norm{\bar{g}^{k}}^{2}\mid\mcal F_{k}} & \leq2\mbb E\bigg[\Bnorm{\bar{g}^{k}-\frac{1}{m}\sum_{i=1}^{m}\nabla f_{\delta}^{i}(x_{i}^{k})}^{2}+2\Bnorm{\frac{1}{m}\sum_{i=1}^{m}\nabla f_{\delta}^{i}(x_{i}^{k})}^{2}\mid\mcal F_{k}\bigg]\label{eq:templin03}\\
 & \leq2(\sigma^{2}+L_{f}^{2}),\nonumber 
\end{align}

Combining~\eqref{eq:templin01},~\eqref{eq:templin02} and~\eqref{eq:templin03}
yields
\begin{equation*}
\begin{aligned}&\mbb E\bsbra{f_{\delta}(\bar{x}^{k+1})\mid\mcal F_{k}}  \\ &\leq\mbb E\Bsbra{f_{\delta}(\bar{x}^{k})-\eta\Brbra{\Binprod{\nabla f_{\delta}(\bar{x}^{k})}{\frac{1}{m}\sum_{i=1}^{m}\nabla f_{\delta}^{i}(x_{i}^{k})-\nabla f_{\delta}(\bar{x}^{k})}+\norm{\nabla f_{\delta}(\bar{x}^{k})}^{2}}\mid\mcal F_{k}}+L_{\delta}\eta^{2}(\sigma^{2}+L_{f}^{2})\\
  & \aleq\mbb E\bigg[f_{\delta}(\bar{x}^{k})-\eta\norm{\nabla f_{\delta}(\bar{x}^{k})}^{2}+\frac{\eta}{2}\norm{\nabla f_{\delta}(\bar{x}^{k})}^{2}+\frac{\eta}{2}\bnorm{\frac{1}{m}\sum_{i=1}^{m}\nabla f_{\delta}^{i}(x_{i}^{k})-\nabla f_{\delta}(\bar{x}^{k})}^{2}\mid\mcal F_{k}\bigg]\\
  &\ \ +L_{\delta}\eta^{2}(\sigma^{2}+L_{f}^{2})\\
  &\leq\mbb E\bigg[f_{\delta}(\bar{x}^{k})-\frac{\eta}{2}\norm{\nabla f_{\delta}(\bar{x}^{k})}^{2}+\frac{\eta L_{\delta}^{2}}{2m}\norm{\xbf^{k}-\bar{\xbf}^{k}}^{2}\mid\mcal F_{k}\bigg]+L_{\delta}\eta^{2}(\sigma^{2}+L_{f}^{2}),
\end{aligned}
\end{equation*}
where $(a)$ holds by $\binprod{\abf}{\bbf}\leq\frac{1}{2}\norm{\abf}^{2}+\frac{1}{2}\norm{\bbf}^{2}$.
Taking the expectation of both sides and rearranging yields
\[
\mbb E\bsbra{f_{\delta}(\bar{x}^{k+1})}-\mbb E\bsbra{f_{\delta}(\bar{x}^{k})}\leq-\frac{\eta}{2}\mbb E\bsbra{\norm{\nabla f_{\delta}(\bar{x}^{k})}^{2}}+\frac{\eta L_{\delta}^{2}}{2m}\mbb E\bsbra{\norm{\xbf^{k}-\bar{\xbf}^{k}}^{2}}+L_{\delta}\eta^{2}(\sigma^{2}+L_{f}^{2}).
\]
\end{proof}
\subsection{Proof of Lemma~\ref{lem:linthm}}\label{subsec:aux1}
\begin{lem}\label{lem:aux_lem_1}
Let $\{\xbf^{k},\ybf^{k},\bar{\xbf}^{k},\bar{\ybf}^{k}\}$
be the sequence generated by~{\dgfmlin}, then we have

\begin{equation}\label{lem:Let--be-1}
\begin{aligned} & \beta_{x}\mbb E\bsbra{\norm{\xbf^{k+1}-\bar{\xbf}^{k+1}}^{2}-\norm{\xbf^{k}-\bar{\xbf}^{k}}^{2}}+\beta_{y}\mbb E\bsbra{\norm{\ybf^{k+1}-\bar{\ybf}^{k+1}}^{2}-\norm{\ybf^{k}-\bar{\ybf}^{k}}^{2}}\\
& \leq -(\beta_{x}(1-\rho^{2}(1+\alpha_{1}))-9\beta_{y}L_{\delta}^{2}\cdot\rho^{2}(1+\alpha_{2}^{-1}))\mbb E\bsbra{\norm{\xbf^{k}-\bar{\xbf}^{k}}^{2}}\\
 &\ \  +9\beta_{y}\rho^{2}(1+\alpha_{2}^{-1})L_{\delta}^{2}(1+4\eta^{2}L_{\delta}^{2})\mbb E\bsbra{\norm{\xbf^{k-1}-\bar{\xbf}^{k-1}}^{2}}\\
 &\ \  -\beta_{y}(1-\rho^{2}(1+\alpha_{2}))\mbb E\bsbra{\norm{\ybf^{k}-\bar{\ybf}^{k}}^{2}}+\beta_{x}\rho^{2}\eta^{2}(1+\alpha_{1}^{-1})\mbb E\bsbra{\norm{\ybf^{k+1}-\bar{\ybf}^{k+1}}^{2}}\\
 &\ \  +18\beta_{y}\eta^{2}{ \rho^{2}}L_{\delta}^{2}(1+\alpha_{2}^{-1})\mbb E\bsbra{\norm{\nabla f_{\delta}(\bar{x}^{k-1})}^{2}}+\beta_{y}\rho^{2}m(1+\alpha_{2}^{-1})(6+36L_{\delta}^{2}{ \eta^{2}})\sigma^{2},
\end{aligned}
\end{equation}
where $\beta_{x}$ and $\beta_{y}>0$.
\end{lem}

\begin{proof}
It follows from Lemma~\ref{lem:Let--generated} that
\begin{equation}
\begin{aligned}
    &\beta_{x}\mbb E\bsbra{\norm{\xbf^{k+1}-\bar{\xbf}^{k+1}}^{2}-\norm{\xbf^{k}-\bar{\xbf}^{k}}^{2}} \\ &\leq\beta_{x}\brbra{\rho^{2}(1+\alpha_{1})-1}\mbb E\bsbra{\norm{\xbf^{k}-\bar{\xbf}^{k}}^{2}}+\beta_{x}\rho^{2}\eta^{2}(1+\alpha_{1}^{-1})\mbb E\bsbra{\norm{\ybf^{k+1}-\bar{\ybf}^{k+1}}^{2}},\label{eq:beta_x}
\end{aligned}
\end{equation}
and
\begin{equation}\label{eq:eq_y}
    \begin{aligned}
        &\beta_{y}\mbb E\bsbra{\norm{\ybf^{k+1}-\bar{\ybf}^{k+1}}^{2}-\norm{\ybf^{k}-\bar{\ybf}^{k}}^{2}}\\ & \leq \beta_{y}\brbra{\rho^{2}(1+\alpha_{2})-1}\mbb E\bsbra{\norm{\ybf^{k}-\bar{\ybf}^{k}}^{2}}+\beta_{y}\rho^{2}(1+\alpha_{2}^{-1})(6m+36m{ \eta^{2}}L_{\delta}^{2})\sigma^{2} \\
 &\ \  +9\beta_{y}\rho^{2}(1+\alpha_{2}^{-1})L_{\delta}^{2}\mbb E\bsbra{\norm{\xbf^{k}-\bar{\xbf}^{k}}^{2}} \\
 &\ \  +9\beta_{y}\rho^{2}(1+\alpha_{2}^{-1})L_{\delta}^{2}(1+4\eta^{2}L_{\delta}^{2})\mbb E\bsbra{\norm{\xbf^{k-1}-\bar{\xbf}^{k-1}}^{2}}\\
 &\ \  +18\beta_{y}\eta^{2}{ \rho^{2}}L_{\delta}^{2}(1+\alpha_{2}^{-1})\mbb E\bsbra{\norm{\nabla f_{\delta}(\bar{x}^{k-1})}^{2}},
    \end{aligned}
\end{equation}
where $\beta_{x}$ and $\beta_{y}>0$. Summing the two inequalities up yields
the result.
\end{proof}

\begin{lem}
[Convergence Result for {\dgfmlin}]\label{lem:main_thm1}Let $\{\xbf^{k},\ybf^{k},\bar{\xbf}^{k},\bar{\ybf}^{k}\}$
be the sequence generated by {\dgfmlin}, then for any $\beta_{x},\beta_{y}>0$
we have

\begin{equation*}
\begin{aligned} & \beta_{x}\mbb E\bsbra{\norm{\xbf^{K}-\bar{\xbf}^{K}}^{2}-\norm{\xbf^{0}-\bar{\xbf}^{0}}^{2}}+\beta_{y}\mbb E\bsbra{\norm{\ybf^{{ K+1}}-\bar{\ybf}^{K+1}}^{2}-\norm{\ybf^{{ 1}}-\bar{\ybf}^{1}}^{2}}\\
 &\ \  +\mbb E\bsbra{f_{\delta}(\bar{x}^{K})}-\mbb E\bsbra{f_{\delta}(\bar{x}^{0})}\\
& \leq -\brbra{\frac{\eta}{2}-18\beta_{y}{ \rho^{2}}\eta^{2}L_{\delta}^{2}(1+\alpha_{2}^{-1})}\sum_{k=0}^{K-1}\mbb E\bsbra{\norm{\nabla f_{\delta}(\bar{x}^{k})}^{2}}\\
 &\ \  -\brbra{\beta_{x}(1-\rho^{2}(1+\alpha_{1}))-9\beta_{y}\rho^{2}(1+\alpha_{2}^{-1})L_{\delta}^{2}(1+4\eta^{2}L_{\delta}^{2})-\frac{\eta L_{\delta}^{2}}{2m}}\sum_{k=0}^{K-1}\mbb E\bsbra{\norm{\xbf^{k}-\bar{\xbf}^{k}}^{2}}\\
 &\ \  +9\beta_{y}\rho^{2}(1+\alpha_{2}^{-1})L_{\delta}^{2}\sum_{k=1}^{K}\mbb E\bsbra{\norm{\xbf^{k}-\bar{\xbf}^{k}}^{2}}\\
 &\ \  -\brbra{\beta_{y}(1-\rho^{2}(1+\alpha_{2}))-\beta_{x}\eta^{2}\cdot\rho^{2}(1+\alpha_{1}^{-1})}\sum_{k=1}^{K}\mbb E\bsbra{\norm{\ybf^{k}-\bar{\ybf}^{k}}^{2}}\\
 &\ \  +\brbra{\beta_{y}\rho^{2}(1+\alpha_{2}^{-1})(6m+36m{ \eta^{2}}L_{\delta}^{2})\sigma^{2}+L_{\delta}\eta^{2}(\sigma^{2}+L_{f}^{2})}\cdot K.
\end{aligned}
\end{equation*}
\end{lem}
\begin{proof}
Summing the combination of~\eqref{lem:Let--be-1} and~\eqref{eq:3.3} from $k=0$ to $K-1$ completes our proof.
\end{proof}

\subsection{Proof of Theorem~\ref{thm:lin_order}}
\begin{thm}
{\dgfmlin} can output a $(\delta,\vep)$-Goldstein stationary point of $f(\cdot)$ in expectation with the total stochastic zeroth-order complexity and total communication complexity  at most $\mcal O(\Delta_{\delta} \delta^{-1}\vep^{-4})$ by setting 
    $\alpha_1 = \alpha_2 = (1-\rho^2)(2\rho^2)^{-1}$, $\delta = \mcal O(\vep)$, $\beta_x = \mcal O(\delta^{-1}),\beta_y = \mcal O(\vep^4\delta)$ and $\eta=\mcal O(\vep^2 \delta)$. Moreover, a specific example of parameters is given as follows:
\begin{align*}
\beta_{y} & =\frac{(1-\rho^{2})\vep^{2}}{384\sigma^{2}\rho^{2}(1+\rho^{2})}\cdot\frac{\eta}{m},\ \ \beta_{x}=\frac{1152\sigma^{2}\rho^{2}(1+\rho^{2})}{(1-\rho^{2})^{2}}\cdot L_{\delta}^{2}\vep^{-2}\beta_{y},\\
\eta& = \min\left\{ \frac{(1-\rho^{2})^{2}}{48\sigma(1+\rho^{2})\rho^{2}}\cdot\vep L_{\delta}^{-1},\frac{1}{32}L_{\delta}^{-1}(\sigma^{2}+L_{f})^{-1}\cdot\vep^{2},\frac{8\sqrt{6m\sigma^{2}}}{\vep L_{\delta}}\right\},
\end{align*}
then {\dgfmlin} can output a $(\delta,\vep)$-Goldstein stationary point of $f(\cdot)$ in expectation with the total stochastic zeroth-order complexity and total communication complexity at most 
\begin{align*}
\mcal O\left(\max\left\{\Delta_{\delta}\vep^{-4}\delta^{-1}d^{\frac{3}{2}},\frac{(1+L_{f}^{2}/\sigma^{2})(1-\rho^{2})}{m(1+\rho^{2})}\right\}\right).    
\end{align*}
\end{thm}
\begin{proof}
    It follows from Lemma~\ref{lem:main_thm1} that
\[
\begin{aligned} & \beta_{x}\mbb E\bsbra{\norm{\xbf^{K}-\bar{\xbf}^{K}}^{2}-\norm{\xbf^{0}-\bar{\xbf}^{0}}^{2}}+\beta_{y}\mbb E\bsbra{\norm{\ybf^{{ K+1}}-\bar{\ybf}^{K+1}}^{2}-\norm{\ybf^{{ 1}}-\bar{\ybf}^{1}}^{2}}\\
 &\ \  +\mbb E\bsbra{f_{\delta}(\bar{x}^{K})}-\mbb E\bsbra{f_{\delta}(\bar{x}^{0})}\\
& \leq -\brbra{\frac{\eta}{2}-18\beta_{y}{ \rho^{2}}\eta^{2}L_{\delta}^{2}(1+\alpha_{2}^{-1})}\sum_{k=0}^{K-1}\mbb E\bsbra{\norm{\nabla f_{\delta}(\bar{x}^{k})}^{2}}\\
 &\ \  -\brbra{\beta_{x}(1-\rho^{2}(1+\alpha_{1}))-18\beta_{y}\rho^{2}(1+\alpha_{2}^{-1})L_{\delta}^{2}({1}+2\eta^{2}L_{\delta}^{2})-\frac{\eta L_{\delta}^{2}}{2m}}\sum_{k=1}^{K-1}\mbb E\bsbra{\norm{\xbf^{k}-\bar{\xbf}^{k}}^{2}}\\
 &\ \  -\brbra{\beta_{x}(1-\rho^{2}(1+\alpha_{1}))-9\beta_{y}\rho^{2}(1+\alpha_{2}^{-1})L_{\delta}^{2}(1+4\eta^{2}L_{\delta}^{2})-\frac{\eta L_{\delta}^{2}}{2m}}\mbb E\bsbra{\norm{\xbf^{0}-\bar{\xbf}^{0}}^{2}}\\
 &\ \  +9\beta_{y}\rho^{2}(1+\alpha_{2}^{-1})L_{\delta}^{2}\mbb E\bsbra{\norm{\xbf^{K}-\bar{\xbf}^{K}}^{2}}\\
 &\ \  -\brbra{\beta_{y}(1-\rho^{2}(1+\alpha_{2}))-\beta_{x}\eta^{2}\cdot\rho^{2}(1+\alpha_{1}^{-1})}\sum_{k=1}^{K}\mbb E\bsbra{\norm{\ybf^{k}-\bar{\ybf}^{k}}^{2}}\\
 &\ \  +\brbra{\beta_{y}\rho^{2}(1+\alpha_{2}^{-1})(6m+36m{ \eta^{2}}L_{\delta}^{2})\sigma^{2}+L_{\delta}\eta^{2}(\sigma^{2}+L_{f}^{2})}\cdot K.
\end{aligned}
\]
which implies
\begin{equation}\label{eq:key_ineq}
\begin{aligned} & \underbrace{\brbra{\frac{\eta}{2}-18\beta_{y}{ \rho^{2}}\eta^{2}L_{\delta}^{2}(1+\alpha_{2}^{-1})}}_{(\rone)}\sum_{k=0}^{K-1}\mbb E\left[\norm{\nabla f_{\delta}(\bar{x}^{k})}^{2}\right]\\
& \aleq  -\left(\mbb E\left[f_{\delta}(\bar{x}^{K})\right]-\mbb E\left[f_{\delta}(\bar{x}^{0})\right]\right)+\beta_{y}\rho^{2}(\sigma^{2}+L_{f}^{2})\\
 &\ \  -\brbra{\underbrace{\beta_{x}(1-\rho^{2}(1+\alpha_{1}))-18\beta_{y}\rho^{2}(1+\alpha_{2}^{-1})L_{\delta}^{2}(1+2\eta^{2}L_{\delta}^{2})-\frac{\eta L_{\delta}^{2}}{2m}}_{(\rtwo)}}\sum_{k=1}^{K-1}\mbb E\left[\norm{\xbf^{k}-\bar{\xbf}^{k}}^{2}\right]\\
 &\ \  -\brbra{\underbrace{\beta_{x}-9\beta_{y}\rho^{2}(1+\alpha_{2}^{-1})L_{\delta}^{2}}_{(\rthree)}}\mbb E\left[\norm{\xbf^{K}-\bar{\xbf}^{K}}^{2}\right]\\
 &\ \  -\brbra{\underbrace{\beta_{y}(1-\rho^{2}(1+\alpha_{2}))-\beta_{x}\eta^{2}\cdot\rho^{2}(1+\alpha_{1}^{-1})}_{(\rfour)}}\sum_{k=1}^{K}\mbb E\left[\norm{\ybf^{k}-\bar{\ybf}^{k}}^{2}\right]\\
 &\ \  +\brbra{\underbrace{\beta_{y}\rho^{2}(1+\alpha_{2}^{-1})(6m+36m{ \eta^{2}}L_{\delta}^{2}\sigma^{2})}_{(\rfive)}+\underbrace{L_{\delta}\eta^{2}(\sigma^{2}+L_{f}^{2})}_{(\rsix)}}\cdot K.
\end{aligned}
\end{equation}
where (a) is by $\mbb E\bsbra{\norm{\ybf^{{ 1}}-\bar{\ybf}^{1}}^{2}}=\mbb E\bsbra{\norm{(\tilde{A}(0)-J)(\ybf^{{ 0}}+\gbf^{0}-\gbf^{-1})}^{2}}
=\mbb E\bsbra{\norm{(\tilde{A}(0)-J)\gbf^{0}}^{2}}
\leq\rho^{2}(\sigma^{2}+L_{f}^{2})$.

By setting $\alpha_1 = \alpha_2 = (1-\rho^2)(2\rho^2)^{-1}$, we have 
\begin{align*}
 1-\rho^{2}(1+\alpha_{1})=1-\rho^{2}(1+\alpha_{2})=\frac{1-\rho^{2}}{2},
\end{align*}
and
\begin{align*}
1+\alpha_{1}^{-1}=1+\alpha_{2}^{-1}=\frac{1+\rho^{2}}{1-\rho^{2}}.
\end{align*}
Combining the parameters setting of $\beta_x,\beta_y$ and $\eta$ yields $(\rone)$-$(\rsix)>0$ and
\begin{equation}\label{eq:order1}
    (\rone) = \Ocal(\vep^2\delta),\ (\rtwo) = \Ocal(\delta^{-1}),\ (\rthree) = \Ocal(\delta^{-1}),\ (\rfour) = \Ocal (\vep^4 \delta), \  (\rfive) = \Ocal(\vep^4\delta),\ (\rsix) = \Ocal(\vep^4\delta).
\end{equation}
Rearranging~\eqref{eq:key_ineq} and combining $(\rthree)>(\rtwo)$ yields
\begin{equation}\label{eq:res1}
\begin{aligned}
    &(\rone)\sum_{k=0}^{K-1}\mbb E\bsbra{\norm{\nabla f_{\delta}(\bar{x}^{k})}^{2}}+(\rtwo)\sum_{k=1}^{K}\mbb E\bsbra{\norm{\xbf^{k}-\bar{\xbf}^{k}}^{2}}+(\rfour)\sum_{k=1}^{K}\mbb E\bsbra{\norm{\ybf^{k}-\bar{\ybf}^{k}}^{2}} \\
&\leq \Delta_{\delta}+\beta_{y}\rho^{2}(\sigma^{2}+L_{f}^{2})+(\rfive)\cdot K+(\rsix)\cdot K.
\end{aligned}
\end{equation}
Divide $(\rone)\cdot K$ on both sides of~\eqref{eq:res1}, we have
\begin{equation}\label{eq:res2}
\begin{aligned}
    &\frac{1}{K}\sum_{k=0}^{K-1}\mbb E\bsbra{\norm{\nabla f_{\delta}(\bar{x}^{k})}^{2}}+\frac{(\rtwo)}{K\cdot(\rone)}\sum_{k=1}^{K}\mbb E\bsbra{\norm{\xbf^{k}-\bar{\xbf}^{k}}^{2}}+\frac{(\rfour)}{K\cdot(\rone)}\sum_{k=1}^{K}\mbb E\bsbra{\norm{\ybf^{k}-\bar{\ybf}^{k}}^{2}} \\ &\leq\frac{\Delta_{\delta}+\beta_{y}\rho^{2}(\sigma^{2}+L_{f}^{2})}{K\cdot(\rone)}+\frac{(\rfive)+(\rsix)}{(\rone)}.
\end{aligned}
\end{equation}
By using~\eqref{eq:order1}, we deduce that the right-hand side of~\eqref{eq:res2} is at order $\mcal O(\vep^{2})$ when $K=~\mathcal{O}(\Delta_{\delta}\delta^{-1}\vep^{-4})$. Since $(\rtwo)/(\rone)=\Ocal(\vep^{-2}\delta^{-2})$, $\nabla f_{\delta}(x)\in \partial_{\delta}f(x)$ and $\norm{\nabla f_{\delta}(x_i^k)}^2\leq 2\norm{\nabla f_{\delta}(\bar{x}^k)}^2 + 2L_{\delta}^{2}\norm{x_i^k-\bar{x}^k}^2$, then we have $\mcal O(\Delta_{\delta}\delta^{-1}\vep^{-4})$.

Now, suppose $\delta\leq\vep$ and $\vep$ is small enough such that
\begin{equation}\label{eq:temp_proof_media}
    {1152\sigma^{2}(1+\rho^{2})^{2}\rho^{4}} + {(1-\rho^{2})^{4}\vep^{2}}\leq  {1152\sigma^{2}(1+\rho^{2})^{2}\rho^{4}} \cdot 16\sigma^{2}\vep^{-2}
\end{equation}
and $${(1-\rho^{2})^{4}\vep^{2}}\leq{384\sigma^{2}(1+\rho^{2})^{2}\rho^{4}}$$
hold. Now, suppose
\begin{equation}
K\geq\max\left\{ \frac{2(\sigma^{2}+L_{f}^{2})(1-\rho^{2})}{3m\sigma^{2}(1+\rho^{2})},32\Delta_{\delta}\vep^{-2}\eta^{-1}\right\} ,\label{eq:K_large}
\end{equation}
we give specific parameter settings to obtain $(\delta,\vep)$-Goldstein
stationary point in expectation, which can be summarized as follows:
\begin{align}
\beta_{y} & =\frac{(1-\rho^{2})\vep^{2}}{384\sigma^{2}\rho^{2}(1+\rho^{2})}\cdot\frac{\eta}{m},\ \ \beta_{x}=\frac{1152\sigma^{2}\rho^{2}(1+\rho^{2})}{(1-\rho^{2})^{2}}\cdot L_{\delta}^{2}\vep^{-2}\beta_{y},\label{eq:beta_xy}\\
\eta& = \min\left\{ \frac{(1-\rho^{2})^{2}}{48\sigma(1+\rho^{2})\rho^{2}}\cdot\vep L_{\delta}^{-1},\frac{1}{32}L_{\delta}^{-1}(\sigma^{2}+L_{f})^{-1}\cdot\vep^{2},\frac{8\sqrt{6m\sigma^{2}}}{\vep L_{\delta}}\right\} .\label{eq:eta_defn}
\end{align}
By $\eta\leq{8\sqrt{6m\sigma^{2}}}/({\vep L_{\delta}})$
and the definition of $\beta_{y}$ in~\eqref{eq:beta_xy}, we
have $72\eta\rho^{2}L_{\delta}^{2}(1+\rho^{2}) \beta_{y}\leq {1-\rho^{2}}$,
which implies $(\rone)\geq {\eta}/{4}$. By the definition of
$\beta_{x}$ in~\eqref{eq:beta_xy}, then we have
\[
\begin{aligned}(\rfour) & =\frac{1-\rho^{2}}{2}\beta_{y}-\frac{\rho^{2}(1+\rho^{2})}{1-\rho^{2}}\eta^{2}\beta_{x}\\
 & =\frac{1-\rho^{2}}{2}\beta_{y}-\frac{\rho^{2}(1+\rho^{2})}{1-\rho^{2}}\eta^{2}\cdot \frac{1152\sigma^{2}\rho^{2}(1+\rho^{2})}{\left(1-\rho^{2}\right)^{2}}L_{\delta}^{2}\vep^{-2}\beta_{y}\\
 & =\Brbra{\frac{1-\rho^{2}}{2}-\frac{1152\sigma^{2}\rho^{4}(1+\rho^{2})^{2}}{\left(1-\rho^{2}\right)^{3}}\eta^{2}L_{\delta}^{2}\vep^{-2}}\beta_{y}.\\
 & \ageq\Brbra{\frac{1-\rho^{2}}{2}-\frac{1-\rho^{2}}{2}}\beta_{y}\\
 & =0,
\end{aligned}
\]
where $(a)$ is by ${2304\sigma^{2}(1+\rho^{2})^{2}\rho^{4}} \cdot \eta^{2}L_{\delta}^{2}\vep^{-2}\leq{(1-\rho^{2})^{4}}$,
which is derived from 
${48\sigma(1+\rho^{2})\rho^{2}}L_{\delta}\eta\leq~{(1-\rho^{2})^{2}\vep}$.
By the definition of $\beta_{y}$ and ${48\sigma(1+\rho^{2})\rho^{2}}L_{\delta}\eta\leq~{(1-\rho^{2})^{2}\vep}$,
we have
\[
\begin{aligned}(\rtwo) & =\frac{1-\rho^{2}}{2}\beta_{x}-\frac{18\rho^{2}(1+\rho^{2})}{1-\rho^{2}}L_{\delta}^{2}(1+2\eta^{2}L_{\delta}^{2})\beta_{y}-\frac{\eta L_{\delta}^{2}}{2m}\\
 & =\Brbra{\frac{576\sigma^{2}\rho^{2}(1+\rho^{2})}{1-\rho^{2}}\cdot L_{\delta}^{2}\vep^{-2}-\frac{18\rho^{2}(1+\rho^{2})}{1-\rho^{2}}L_{\delta}^{2}(1+2\eta^{2}L_{\delta}^{2})}\beta_{y}-\frac{\eta L_{\delta}^{2}}{2m}\\
 & \ageq\frac{288\sigma^{2}\rho^{2}(1+\rho^{2})}{1-\rho^{2}}\cdot L_{\delta}^{2}\vep^{-2}\beta_{y}-\frac{\eta L_{\delta}^{2}}{2m}\\
 & =\frac{288\sigma^{2}\rho^{2}(1+\rho^{2})}{1-\rho^{2}}\cdot\frac{(1-\rho^{2})\cdot\vep^{2}}{384\sigma^{2}\rho^{2}(1+\rho^{2})}\cdot\frac{\eta}{m}\cdot L_{\delta}^{2}\vep^{-2}-\frac{\eta L_{\delta}^{2}}{2m}\\
 & \geq\frac{\eta L_{\delta}^{2}}{4m},
\end{aligned}
\]
where $(a)$ is from $(1+2\eta^{2}L_{\delta}^{2})\leq1+{(1-\rho^{2})^{4}\vep^{2}}{(1152\sigma^{2}(1+\rho^{2})^{2}\rho^{4})^{-1}}\leq16\sigma^{2}\vep^{-2}$
and $$\frac{18\rho^{2}(1+\rho^{2})}{1-\rho^{2}}L_{\delta}^{2}(1+2\eta^{2}L_{\delta}^{2})\leq\frac{288\sigma^{2}\rho^{2}(1+\rho^{2})}{1-\rho^{2}}L_{\delta}^{2}\vep^{-2},$$
which is derived by ${48L_{\delta}\sigma(1+\rho^{2})\rho^{2}} \eta\leq{(1-\rho^{2})^{2}}\vep $
and~\eqref{eq:temp_proof_media}.
Similarly, we have $(\rthree)\geq(\rtwo)\geq~{\eta L_{\delta}^{2}}/{4m}$.
It follows from~\eqref{eq:key_ineq} that
\begin{equation}
\frac{1}{K}\sum_{k=0}^{K-1}\Expe\bsbra{\norm{\nabla f_{\delta}(\bar{x}_{i}^{k})}^{2}}+\frac{L_{\delta}^{2}}{Km}\sum_{k=0}^{K-1}\Expe\bsbra{\norm{\xbf_{i}^{k}-\bar{\xbf}_{i}^{k}}^{2}}\leq\frac{4\brbra{\Delta_{\delta}+\beta_{y}\rho^{2}(\sigma^{2}+L_{f}^{2})}}{K\eta}+\frac{4\brbra{(\rfive)+(\rsix)}}{\eta}.\label{eq:con_3}
\end{equation}
Next, we claim that the RHS of~\eqref{eq:con_3} can be bounded by
${\vep^{2}}/{2}$ if~\eqref{eq:K_large},~\eqref{eq:beta_xy}
and~\eqref{eq:eta_defn} hold. It follows from the definition of
$K$ and $\beta_{y}$ that
\begin{equation*}
    \frac{4\Delta_{\delta}}{K\eta}\aleq\frac{\vep^{2}}{8},
\end{equation*}
and
\begin{equation*}
\frac{4\beta_{y}\rho^{2}(\sigma^{2}+L_{f}^{2})}{K\eta}=\frac{\sigma^{2}+L_{f}^{2}}{K}\cdot\frac{(1-\rho^{2})\vep^{2}}{12m\sigma^{2}(1+\rho^{2})}\leq\frac{3m\sigma^{2}(\sigma^{2}+L_{f}^{2})(1+\rho^{2})}{\brbra{\sigma^{2}+L_{f}^{2}}(1-\rho^{2})}\cdot\frac{(1-\rho^{2})\vep^{2}}{24m\sigma^{2}(1+\rho^{2})}\leq\frac{\vep^{2}}{8},
\end{equation*}
where $(a)$ holds by $K\geq32\Delta_{\delta}\vep^{-2}\eta^{-1}$,
Similarly, by the definition of $\beta_{x}$, $\beta_{y}$ and $\eta,$we
have
\begin{equation*}
\begin{aligned}\frac{4\brbra{(\rfive)+(\rsix)}}{\eta} & =\frac{4}{\eta}\big(\frac{6m\sigma^{2}\rho^{2}(1+\rho^{2})}{1-\rho^{2}}(1+6\eta^{2}L_{\delta}^{2})\cdot\beta_{y}+L_{\delta}\eta^{2}(\sigma^{2}+L_{f}^{2})\big)\\
 & =\frac{4}{\eta}\big(\frac{6m\sigma^{2}\rho^{2}(1+\rho^{2})}{1-\rho^{2}}(1+6\eta^{2}L_{\delta}^{2})\cdot\frac{(1-\rho^{2})\vep^{2}}{384\sigma^{2}\rho^{2}(1+\rho^{2})}\cdot\frac{\eta}{m}+L_{\delta}\eta^{2}(\sigma^{2}+L_{f}^{2})\big)\\
 & =4\big((1+6\eta^{2}L_{\delta}^{2})\cdot\frac{\vep^{2}}{64}+L_{\delta}\eta(\sigma^{2}+L_{f}^{2})\big)\\
 & \leq4\Brbra{\big(1+\frac{(1-\rho^{2})^{4}\vep^{2}}{384\sigma^{2}(1+\rho^{2})^{2}\rho^{4}}\big)\cdot\frac{\vep^{2}}{64}+L_{\delta}\eta(\sigma^{2}+L_{f}^{2})}\aleq4\brbra{\frac{\vep^{2}}{32}+\frac{\vep^{2}}{32}}\leq\frac{\vep^{2}}{4},
\end{aligned}
\end{equation*}
where (a) is from the claim that ${(1-\rho^{2})^{4}\vep^{2}}\leq {384\sigma^{2}(1+\rho^{2})^{2}\rho^{4}}$
and $32\eta\leq {\vep^{2}}L_{\delta}^{-1}(\sigma^{2}+L_{f})^{-1}$.
Note that
\[
\begin{aligned}\frac{1}{mK}\sum_{k=0}^{K-1}\sum_{i=1}^{m}\Expe\bsbra{\norm{\nabla f_{\delta}(x_{i}^{k})}^{2}} & =\frac{1}{mK}\sum_{k=0}^{K-1}\sum_{i=1}^{m}\Expe\bsbra{\norm{\nabla f_{\delta}(x_{i}^{k}-\bar{x}^{k}+\bar{x}^{k})}^{2}}\\
 & \leq\frac{2}{mK}\sum_{k=0}^{K-1}\sum_{i=1}^{m}\Expe\bsbra{\norm{\nabla f_{\delta}(x_{i}^{k}-\bar{x}^{k})}^{2}}+\frac{2}{mK}\sum_{k=0}^{K-1}\sum_{i=1}^{m}\Expe\bsbra{\norm{\nabla f_{\delta}(\bar{x}^{k})}^{2}}\\
 & \leq\frac{2L_{\delta}^{2}}{mK}\sum_{k=0}^{K-1}\sum_{i=1}^{m}\Expe\bsbra{\norm{x_{i}^{k}-\bar{x}^{k}}^{2}}+\frac{2}{K}\sum_{k=0}^{K-1}\Expe\bsbra{\norm{\nabla f_{\delta}(\bar{x}^{k})}^{2}}\\
 & =\frac{2L_{\delta}^{2}}{mK}\sum_{k=0}^{K-1}\Expe\sum_{i=1}^{m}\bsbra{\norm{x_{i}^{k}-\bar{x}^{k}}^{2}}+\frac{2}{K}\sum_{k=0}^{K-1}\Expe\left[\norm{\nabla f_{\delta}(\bar{x}^{k})}^{2}\right]\\
 & =\frac{2L_{\delta}^{2}}{mK}\sum_{k=0}^{K-1}\Expe\bsbra{\norm{\xbf^{k}-\bar{\xbf}^{k}}^{2}}+\frac{2}{K}\sum_{k=0}^{K-1}\Expe\bsbra{\norm{\nabla f_{\delta}(\bar{x}^{k})}^{2}}\leq\vep^{2},
\end{aligned}
\]
and $\sigma^{2}=16\sqrt{\pi}dL_{f}^{2},$ we have total complexity
$\mcal O\brbra{\max\{\Delta_{\delta}\vep^{-4}\delta^{-1}d^{\frac{3}{2}},{m^{-1}(1+L_{f}^{2}/\sigma^{2})(1-\rho^{2})}/{(1+\rho^{2})}\}}$.
\end{proof}

\section{Convergence Analysis for {\dgfm}}\label{sec:GDFM}
In this section, we present the proofs for the lemmas and theorems that are used in the convergence analysis of the {\dgfm} algorithm.
\subsection{Proof of Lemma~\ref{lem:iterative_dgfmplus}}
\begin{lem}
Let $\{x_{i}^{k},y_{i}^{k},g_{i}^{k},v_{i}^{k}\}$ be the sequence
generated by Algorithm~\ref{alg:dgfm_appendix} and $\{\xbf^{k},\ybf^{k},\gbf^{k},\vbf^{k}\}$
be the corresponding stack variables, then for $k\geq0$, we have
\begin{align}
\xbf^{k+1} & =\tilde{A}(k)(\xbf^{k}-\eta\ybf^{k+1}),\label{eq:x_update-1}\\
\bar{\xbf}^{k+1} & =\bar{\xbf}^{k}-\eta\bar{\ybf}^{k+1},\label{eq:x_bar_update-1}\\
\bar{\ybf}^{k+1} & =\bar{\vbf}^{k}.\label{eq:y_bar_v_bar_dgfm}
\end{align}
Furthermore, for $rT<k<(r+1)T$, $r=0,\cdots,R-1$, we have 
\begin{align}
\ybf^{k+1} & =\tilde{A}(k)(\ybf^{k}+\vbf^{k}-\vbf^{k-1}),\label{eq:y_update-1}\\
\bar{\ybf}^{k+1} & =\bar{\ybf}^{k}+\bar{\vbf}^{k}-\bar{\vbf}^{k-1}.\label{eq:y_bar_dgfm}
\end{align}
\end{lem}
\begin{proof}
It follows from $x_{i}^{k+1}=\sum_{j=1}^{m}a_{i,j}(k)(x_{j}^{k}-\eta y_{j}^{k+1})$
and the definitions of $\xbf^{k},\ybf^{k}$ and $\tilde{A}(k)$ that~\eqref{eq:x_update-1}
holds. Similar result~\eqref{eq:y_update-1} holds by $y_{i}^{k+1}=\sum_{j=1}^{m}a_{i,j}(k)(y_{j}^{k}+v_{j}^{k}-v_{j}^{k-1})$
for $rT<k<(r+1)T,\ r=0,\cdots,R-1$. It follows from $J\tilde{A}(k)=J$
and~\eqref{eq:x_update-1},~\eqref{eq:y_update-1} that~\eqref{eq:x_bar_update-1},~\eqref{eq:y_bar_dgfm}
holds. By $y_{i}^{0}=v_{i}^{-1}$, we have $\bar{y}^{0}=\bar{v}^{-1}$.
Similarly, for $k=rT,r=1,\cdots,R-1$, $\bar{y}^{rT+1}=\bar{v}^{rT}$.
Suppose $\bar{y}^{k}=\bar{v}^{k-1}$ holds for any $k\geq0$, then
it follows from~\eqref{eq:y_bar_v_bar_dgfm} holds for any $k\geq0$
that the desired result~\eqref{eq:y_bar_v_bar_dgfm} holds.
\end{proof}
\subsection{Proof of Lemma~\ref{lem:consensus-1}}
\begin{lem}
\label{lem:consensus-dgfm_plus}For sequence $\{\xbf^{k},\ybf^{k}\}$ generated
by Algorithm~\ref{alg:dgfm_appendix}, we have
\begin{equation}
\begin{aligned}\mbb E\bsbra{\norm{\ybf^{k+1}-\bar{\ybf}^{k+1}}^{2}}& \leq  \rho^{2}(1+\alpha_{1})\mbb E\bsbra{\norm{\ybf^{k}-\bar{\ybf}^{k}}^{2}}\\ &\ \  + 3\Brbra{\frac{\rho^{2}L_{\delta}^{2}}{\alpha_{1}}+\frac{\rho^{2}d^{2}L_{f}^{2}}{b\delta^{2}}}\mbb E\bsbra{\norm{\xbf^{k}-\bar{\xbf}^{k}}^{2}+\norm{\bar{\xbf}^{k-1}-\xbf^{k-1}}^{2}}\\
 &\ \  +3\eta^{2}\Brbra{\frac{\rho^{2}L_{\delta}^{2}}{\alpha_{1}}+\frac{\rho^{2}d^{2}L_{f}^{2}}{b\delta^{2}}}\mbb E\bsbra{\norm{\bar{\vbf}^{k-1}}^{2}},\ \ rT+1\leq k<(r+1)T,\\ &\ \  \forall  r=0,1,\cdots,R-1.
\end{aligned}
\label{eq:y_consensus4-1}
\end{equation}
\begin{equation}
\mbb E\bsbra{\norm{\xbf^{k+1}-\bar{\xbf}^{k+1}}^{2}}\leq(1+\alpha_{2})\rho^{2}\mbb E\bsbra{\norm{\xbf^{k}-\bar{\xbf}^{k}}^{2}}+(1+\alpha_{2}^{-1})\rho^{2}\eta^{2}\mbb E\bsbra{\norm{\ybf^{k+1}-\bar{\ybf}^{k+1}}^{2}},\ \ \forall k\geq0.\label{eq:x_consensus}
\end{equation}
Furthermore, we have
\begin{equation}
\mbb E\bsbra{\norm{\ybf^{rT+1}-\bar{\ybf}^{rT+1}}^{2}}\leq{2\rho^{\Tcal}m(\sigma^{2}+L_{f}^{2})},\ \ \forall r=0,\cdots,R-1.\label{eq:cycle_consensus}
\end{equation}
\end{lem}
\begin{proof}
We consider the two sequence, separately.

\textbf{Sequence $\{\xbf^{k}\}$: } It follows from~\eqref{eq:x_update-1}
and $J\tilde{A}(k)=J=J\tilde{A}(k)$, 
and $J\bar{\ybf}^{k+1}=\tilde{A}(k)\bar{\ybf}^{k+1}$ in Proposition~\ref{prop:common_used}
that
\begin{align*}
\xbf^{k+1}-\bar{\xbf}^{k+1} & =(I-J)\tilde{A}(k)(\xbf^{k}-\eta\ybf^{k+1})\\
 & =(\tilde{A}(k)-J)(\xbf^{k}-\bar{\xbf}^{k}-\eta(\ybf^{k+1}-\bar{\ybf}^{k+1})).
\end{align*}
Then we have 
\[
\norm{\xbf^{k+1}-\bar{\xbf}^{k+1}}\leq\rho\norm{\xbf^{k}-\bar{\xbf}^{k}}+\rho\eta\norm{\ybf^{k+1}-\bar{\ybf}^{k+1}},\forall k\geq0,
\]
which implies the result~\eqref{eq:x_consensus} by taking expectation
and combining Young's inequality.

\textbf{Sequence $\{\ybf^{k}\}$: }For $rT+1\leq k<(r+1)T$, we have $\ybf^{k+1}=\tilde{A}(k)(\ybf^{k}+\vbf^{k}-\vbf^{k-1})$,
which implies 
\begin{equation}
\begin{aligned}&\norm{\ybf^{k+1}-\bar{\ybf}^{k+1}}^{2}\\ & = \norm{(I-J)\tilde{A}(k)(\ybf^{k}+\vbf^{k}-\vbf^{k-1})}^{2}=\norm{(\tilde{A}(k)-J)(\ybf^{k}+\vbf^{k}-\vbf^{k-1})}^{2}\\
& = \norm{(\tilde{A}(k)-J)(\ybf^{k}-\bar{\ybf}^{k})}^{2}+2\binprod{(\tilde{A}(k)-J)\ybf^{k}}{(\tilde{A}(k)-J)(\vbf^{k}-\vbf^{k-1})}+\rho^{2}\norm{\vbf^{k}-\vbf^{k-1}}^{2}\\
& \leq \rho^{2}\norm{\ybf^{k}-\bar{\ybf}^{k}}^{2}+2\binprod{(\tilde{A}(k)-J)\ybf^{k}}{(\tilde{A}(k)-J)(\vbf^{k}-\vbf^{k-1})}+\rho^{2}\norm{\vbf^{k}-\vbf^{k-1}}^{2}.
\end{aligned}
\label{eq:y_consensus1}
\end{equation}
In the following, we bound $\binprod{(\tilde{A}(k)-J)\ybf^{k}}{(\tilde{A}(k)-J)(\vbf^{k}-\vbf^{k-1})}$
and $\norm{\vbf^{k}-\vbf^{k-1}}^{2}$, respectively. Since $\vbf^{k}-\vbf^{k-1}=\gbf(\xbf^{k};S^{k})-\gbf(\xbf^{k-1};S^{k})$
for $rT+1\leq k<(r+1)T$, we have
\begin{equation}
\mbb E\bsbra{\vbf^{k}-\vbf^{k-1}\mid\mcal F^{k}}=\nabla f_{\delta}(\xbf^{k})-\nabla f_{\delta}(\xbf^{k-1}).\label{eq:expectation_v}
\end{equation}
Then we have
\begin{equation}
\begin{aligned} & 2\mbb E\left[\binprod{(\tilde{A}(k)-J)\ybf^{k}}{(\tilde{A}(k)-J)(\vbf^{k}-\vbf^{k-1})}\mid\mcal F^{k}\right]\\
& = 2\Binprod{(\tilde{A}(k)-J)\ybf^{k}}{(\tilde{A}(k)-J)\mbb E\bsbra{\vbf^{k}-\vbf^{k-1}\mid\mcal F^{k}}}\\
& \overset{\eqref{eq:expectation_v}}{=}  2\binprod{(\tilde{A}(k)-J)\ybf^{k}}{(\tilde{A}(k)-J)(\nabla f_{\delta}(\xbf^{k})-\nabla f_{\delta}(\xbf^{k-1}))}\\
& \leq 2\rho\norm{\ybf^{k}-\bar{\ybf}^{k}}\cdot\rho\norm{\nabla f_{\delta}(\xbf^{k})-\nabla f_{\delta}(\xbf^{k-1})}\\
& \leq \alpha_{1}(\rho\norm{\ybf^{k}-\bar{\ybf}^{k}})^{2}+\alpha_{1}^{-1}(\rho L_{\delta}\norm{\xbf^{k}-\xbf^{k-1}})^{2}.
\end{aligned}
\label{eq:y_consensus2}
\end{equation}
Moreover, we have $\mbb E\bsbra{\norm{v_{i}^{k}-v_{i}^{k-1}}^{2}}=\mbb E\bsbra{\norm{g_{i}(x_{i}^{k};S_{i}^{k})-g_{i}(x_{i}^{k-1};S_{i}^{k})}^{2}}\leq {d^{2}L_{f}^{2}}/{(b\delta^{2})}\mbb E\bsbra{\norm{x_{i}^{k}-x_{i}^{k-1}}^{2}}$.
Summing it over $1$ to $m$, we have
\begin{equation}
\rho^{2}\mbb E\bsbra{\norm{\vbf^{k}-\vbf^{k-1}}^{2}}\leq\frac{\rho^{2}d^{2}L_{f}^{2}}{b\delta^{2}}\mbb E\bsbra{\norm{\xbf^{k}-\xbf^{k-1}}^{2}}.\label{eq:y_consensus3}
\end{equation}

Taking expectation of~\eqref{eq:y_consensus1} with all random variable
and combining it with~\eqref{eq:y_consensus2} and~\eqref{eq:y_consensus3}
yields
\begin{equation}
\begin{aligned} & \mbb E\bsbra{\norm{\ybf^{k+1}-\bar{\ybf}^{k+1}}^{2}}\\
&\leq  \rho^{2}(1+\alpha_{1})\mbb E\bsbra{\norm{\ybf^{k}-\bar{\ybf}^{k}}^{2}}+\Brbra{\frac{\rho^{2}L_{\delta}^{2}}{\alpha_{1}}+\frac{\rho^{2}d^{2}L_{f}^{2}}{b\delta^{2}}}\mbb E\bsbra{\norm{\xbf^{k}-\bar{\xbf}^{k}+\bar{\xbf}^{k}-\bar{\xbf}^{k-1}+\bar{\xbf}^{k-1}-\xbf^{k-1}}^{2}}\\
& \aleq \rho^{2}(1+\alpha_{1})\mbb E\bsbra{\norm{\ybf^{k}-\bar{\ybf}^{k}}^{2}}+\Brbra{\frac{\rho^{2}L_{\delta}^{2}}{\alpha_{1}}+\frac{\rho^{2}d^{2}L_{f}^{2}}{b\delta^{2}}}\mbb E\bsbra{3\norm{\xbf^{k}-\bar{\xbf}^{k}}^{2}+3\eta^{2}\norm{\bar{\vbf}^{k-1}}^{2}+3\norm{\bar{\xbf}^{k-1}-\xbf^{k-1}}^{2}},
\end{aligned}
\label{eq:y_consensus4}
\end{equation}
where $(a)$ holds by $\bar{\ybf}^{k+1}=\bar{\vbf}^{k},\bar{\xbf}^{k}=\bar{\xbf}^{k-1}-\eta\bar{\ybf}^{k}$.
Furthermore, for $k=rT$, $r=0,\cdots,R-1$, we have
\begin{equation*}
\begin{aligned} & \mbb E\bsbra{\norm{\ybf^{rT+1}-\bar{\ybf}^{rT+1}}^{2}}\\
 & =\mbb E\bsbra{\bnorm{\big(\tilde{A^{1}}(k)\tilde{A}^{2}(k)\dots\tilde{A}^{\Tcal}(k)-J\big)\gbf(\xbf^{rT};S^{rT\prime})}^{2}}\\
 & \aeq\mbb E\Bsbra{\Bnorm{\Big(\big(\tilde{A^{1}}(k)-J\big)\big(\tilde{A^{2}}(k)-J\big)\dots\big(\tilde{A}^{\Tcal}(k)-J\big)\Big)\gbf\brbra{\xbf^{rT};S^{rT\prime}}}^{2}}\\
 & \leq2\rho^{\Tcal}m(\sigma^{2}+L_{f}^{2}),
\end{aligned}
\end{equation*}
where (a) comes from that $J^{n}=J$ for any $n=1,2, \cdots$. 
\end{proof}

\subsection{Proof of Lemma~\ref{lem:dgfm_one_step_improve}}
\begin{lem}
[One Step Improvement]\label{lem:For-the-sequence}For the sequence
$\{\bar{x}^{k},\bar{v}^{k}\}$ generated by Algorithm~\ref{alg:dgfm_appendix}
and function $f_{\delta}(x)=\frac{1}{m}\sum_{i=1}^{m}f_{\delta}^{i}(x)$,
we have
\[
f_{\delta}(\bar{x}^{k+1})\leq f_{\delta}(\bar{x}^{k})-\frac{\eta}{2}\norm{\nabla f_{\delta}(\bar{x}^{k})}^{2}+\frac{\eta}{2}\norm{\nabla f_{\delta}(\bar{x}^{k})-\bar{v}^{k}}^{2}-\big(\frac{\eta}{2}-\frac{L_{\delta}\eta^{2}}{2}\big)\norm{\bar{v}^{k}}^{2}.
\]
\end{lem}
\begin{proof}
Letting $\hat{x}^{k+1}=\bar{x}^{k}-\eta\nabla f_{\delta}(\bar{x}^{k})$
and combining $\bar{x}^{k+1}=\bar{x}^{k}-\eta\bar{y}^{k+1}$, then
we have
\[
\begin{aligned} & f_{\delta}(\bar{x}^{k+1}) \\
&\leq  f_{\delta}(\bar{x}^{k})+\binprod{\nabla f_{\delta}(\bar{x}^{k})}{\bar{x}^{k+1}-\bar{x}^{k}}+\frac{L_{\delta}}{2}\norm{\bar{x}^{k+1}-\bar{x}^{k}}^{2}\\
& = f_{\delta}(\bar{x}^{k})+\binprod{\nabla f_{\delta}(\bar{x}^{k})\pm\bar{y}^{k+1}}{\bar{x}^{k+1}-\bar{x}^{k}}+\frac{L_{\delta}}{2}\norm{\bar{x}^{k+1}-\bar{x}^{k}}^{2}\\
& = f_{\delta}(\bar{x}^{k})+\binprod{\nabla f_{\delta}(\bar{x}^{k})-\bar{y}^{k+1}}{-\eta\bar{y}^{k+1}\pm\eta\nabla f_{\delta}(\bar{x}^{k})}-\brbra{{\eta}^{-1}-\frac{1}{2}{L_{\delta}}}\norm{\bar{x}^{k+1}-\bar{x}^{k}}^{2}\\
& = f_{\delta}(\bar{x}^{k})+\eta\norm{\nabla f_{\delta}(\bar{x}^{k})-\bar{y}^{k+1}}^{2}-\eta\binprod{\nabla f_{\delta}(\bar{x}^{k})-\bar{y}^{k+1}}{\nabla f_{\delta}(\bar{x}^{k})}-\brbra{{\eta}^{-1}-\frac{1}{2}{L_{\delta}}}\norm{\bar{x}^{k+1}-\bar{x}^{k}}^{2}\\
& = f_{\delta}(\bar{x}^{k})+\eta\norm{\nabla f_{\delta}(\bar{x}^{k})-\bar{y}^{k+1}}^{2}-\eta^{-1}\binprod{\bar{x}^{k+1}-\hat{x}^{k+1}}{\bar{x}^{k}-\hat{x}^{k+1}}-\brbra{{\eta}^{-1}-\frac{1}{2}{L_{\delta}}}\norm{\bar{x}^{k+1}-\bar{x}^{k}}^{2}\\
& = f_{\delta}(\bar{x}^{k})+\eta\norm{\nabla f_{\delta}(\bar{x}^{k})-\bar{y}^{k+1}}^{2}-\brbra{{\eta}^{-1}-\frac{1}{2}{L_{\delta}}}\norm{\bar{x}^{k+1}-\bar{x}^{k}}^{2}\\
 &\ \  -\frac{1}{2\eta}(\norm{\bar{x}^{k+1}-\hat{x}^{k+1}}^{2}+\norm{\bar{x}^{k}-\hat{x}^{k+1}}^{2}-\norm{\bar{x}^{k+1}-\bar{x}^{k}}^{2})\\
& = f_{\delta}(\bar{x}^{k})+\eta\norm{\nabla f_{\delta}(\bar{x}^{k})-\bar{y}^{k+1}}^{2}-\brbra{{\eta}^{-1}-\frac{1}{2}{L_{\delta}}}\norm{\bar{x}^{k+1}-\bar{x}^{k}}^{2}\\
 &\ \  -\frac{1}{2\eta}(\eta^{2}\norm{\nabla f_{\delta}(\bar{x}^{k})-\bar{y}^{k+1}}^{2}+\eta^{2}\norm{\nabla f_{\delta}(\bar{x}^{k})}^{2}-\norm{\bar{x}^{k+1}-\bar{x}^{k}}^{2})\\
& = f_{\delta}(\bar{x}^{k})-\frac{\eta}{2}\norm{\nabla f_{\delta}(\bar{x}^{k})}^{2}+\frac{\eta}{2}\norm{\nabla f_{\delta}(\bar{x}^{k})-\bar{y}^{k+1}}^{2}-\frac{1}{2}\brbra{{\eta}^{-1}-{L_{\delta}}}\norm{\bar{x}^{k+1}-\bar{x}^{k}}^{2}.
\end{aligned}
\]

Combining the above result and $\bar{y}^{k+1}=\bar{v}^{k},\bar{x}^{k+1}-\bar{x}^{k}=\eta\bar{y}^{k+1}$
completes our proof.
\end{proof}
\subsection{Proof of Lemma~\ref{lem:spider_variance_bound}}
\begin{lem}\label{lem:For-the-sequence-1}
For the sequence $\{\bar{x}^{k},\bar{v}^{k}\}$
generated by Algorithm~\ref{alg:dgfm_appendix}, for any $rT\leq k^{\prime}<k\leq(r+1)T-1$,
we have
\begin{equation}
\begin{aligned} & \mbb E\bsbra{\norm{\bar{v}^{k}-\nabla f_{\delta}(\bar{x}^{k})}^{2}}\\
& \leq \frac{2L_{\delta}^{2}}{m}\mbb E\bsbra{\norm{\xbf^{k}-\bar{\xbf}^{k}}^{2}}+2\mbb E\lrsbra{\Bnorm{\bar{v}^{k^{\prime}}-\frac{1}{m}\sum_{i=1}^{m}\nabla f_{\delta}^{i}(x_{i}^{k^{\prime}})}^{2}}\\
 &\ \  +\frac{6d^{2}L_{f}^{2}}{m^{2}\delta^{2}b}\sum_{j=k^{\prime}}^{k}{\mbb E\bsbra{\norm{\xbf^{j}-\bar{\xbf}^{j}}^{2}+\norm{\xbf^{j-1}-\bar{\xbf}^{j-1}}^{2}}}+\frac{6\eta^{2}d^{2}L_{f}^{2}}{m^{2}\delta^{2}b}\sum_{j=k^{\prime}}^{k}\mbb E\bsbra{\norm{\bar{\vbf}^{j-1}}^{2}}.
\end{aligned}
\label{eq:variance_bound}
\end{equation}
Moreover, for $k=rT$, $r=0,\cdots,R-1$, we have 
$
\mbb E\left[\Bnorm{\bar{v}^{k}-\frac{1}{m}\sum_{i=1}^{m}\nabla f_{\delta}^{i}(x_{i}^{k})}^{2}\right]\leq{\sigma^{2}}/{b^{\prime}}.
$
\end{lem}
\begin{proof}
For $rT+1\leq k\leq(r+1)T$, the update of $\bar{v}^{k}=\bar{v}^{k-1}+\frac{1}{m}\sum_{i=1}^{m}g_{i}(x_{i}^{k};S_{i}^{k})-g_{i}(x_{i}^{k-1};S_{i}^{k})$
leads to 
\[\begin{aligned} & \mbb E\bigg[\Bnorm{\bar{v}^{k}-\frac{1}{m}\sum_{i=1}^{m}\nabla f_{\delta}^{i}(x_{i}^{k})}^{2}\bigg]\\
& = \mbb E\bigg[\Bnorm{\bar{v}^{k-1}+\frac{1}{m}\sum_{i=1}^{m}g_{i}(x_{i}^{k};S_{i}^{k})-g_{i}(x_{i}^{k-1};S_{i}^{k})-\nabla f_{\delta}^{i}(x_{i}^{k})}^{2}\bigg]\\
& = \mbb E\bigg[\Bnorm{\bar{v}^{k-1}-\frac{1}{m}\sum_{i=1}^{m}\nabla f_{\delta}^{i}(x_{i}^{k-1})}^{2}+\Bnorm{\frac{1}{m}\sum_{i=1}^{m}\nabla f_{\delta}^{i}(x_{i}^{k-1})+g_{i}(x_{i}^{k};S_{i}^{k})-g_{i}(x_{i}^{k-1};S_{i}^{k})-\nabla f_{\delta}^{i}(x_{i}^{k})}^{2}\bigg].
\end{aligned}\]
Rearrange the above inequality, we have
\[
\small\begin{aligned} & \mbb E\bigg[\Bnorm{\bar{v}^{k}-\frac{1}{m}\sum_{i=1}^{m}\nabla f_{\delta}^{i}(x_{i}^{k})}^{2}\bigg]-\mbb E\bigg[\Bnorm{\bar{v}^{k-1}-\frac{1}{m}\sum_{i=1}^{m}\nabla f_{\delta}^{i}(x_{i}^{k-1})}^{2}\bigg]\\
& \leq \mbb E\bigg[\Bnorm{\frac{1}{m}\sum_{i=1}^{m}\nabla f_{\delta}^{i}(x_{i}^{k-1})+g_{i}(x_{i}^{k};S_{i}^{k})-g_{i}(x_{i}^{k-1};S_{i}^{k})-\nabla f_{\delta}^{i}(x_{i}^{k})}^{2}\bigg]\\
& = \frac{1}{m^{2}}\sum_{i=1}^{m}\mbb E\bigg[\bnorm{\nabla f_{\delta}^{i}(x_{i}^{k-1})+g_{i}(x_{i}^{k};S_{i}^{k})-g_{i}(x_{i}^{k-1};S_{i}^{k})-\nabla f_{\delta}^{i}(x_{i}^{k})}^{2}\bigg]\\
& \aleq  \frac{1}{m^{2}}\sum_{i=1}^{m}\mbb E\bsbra{\norm{g_{i}(x_{i}^{k};S_{i}^{k})-g_{i}(x_{i}^{k-1};S_{i}^{k})}^{2}}\\
& = \frac{1}{m^{2}b}\sum_{i=1}^{m}\mbb E\bsbra{\norm{g_{i}(x_{i}^{k};w_{i}^{k,1},\xi_{i}^{k,1})-g_{i}(x_{i}^{k-1};w_{i}^{k,1},\xi_{i}^{k,1})}^{2}}\\
& \leq  {\frac{d^{2}L_{f}^{2}}{m^{2}\delta^{2}b}}\mbb E\bsbra{\norm{\xbf^{k}-\xbf^{k-1}}^{2}}= {\frac{d^{2}L_{f}^{2}}{m^{2}\delta^{2}b}}\mbb E\bsbra{\norm{\xbf^{k}\pm\bar{\xbf}^{k}\pm\bar{\xbf}^{k-1}-\xbf^{k-1}}^{2}}\\
& \leq  {\frac{3d^{2}L_{f}^{2}}{m^{2}\delta^{2}b}}{\mbb E\bsbra{\norm{\xbf^{k}-\bar{\xbf}^{k}}^{2}+\norm{\xbf^{k-1}-\bar{\xbf}^{k-1}}^{2}}}+\frac{3\eta^{2}d^{2}L_{f}^{2}}{m^{2}\delta^{2}b}\mbb E\bsbra{\norm{\bar{\vbf}^{k-1}}^{2}},
\end{aligned}
\]
where $(a)$ holds by $\Var(X)\leq\mbb E[\norm X^{2}]$ for any random
vector $X$. Then, for any $rT\leq k^{\prime}<k<(r+1)T$,
we have
\begin{equation}
\begin{aligned}  &\mbb E\bigg[\Bnorm{\bar{v}^{k}-\frac{1}{m}  \sum_{i=1}^{m}\nabla f_{\delta}^{i}(x_{i}^{k})}^{2}\bigg] \\
& \leq \mbb E\bigg[\Bnorm{\bar{v}^{k^{\prime}}-\frac{1}{m}\sum_{i=1}^{m}\nabla f_{\delta}^{i}(x_{i}^{k^{\prime}})}^{2}\bigg]\\
 &\ \  + {\frac{3d^{2}L_{f}^{2}}{m^{2}\delta^{2}b}}\sum_{j=k^{\prime}}^{k-1} {\mbb E\bsbra{\norm{\xbf^{j+1}-\bar{\xbf}^{j+1}}^{2}+\norm{\xbf^{j}-\bar{\xbf}^{j}}^{2}}}+{ \frac{3\eta^{2}d^{2}L_{f}^{2}}{m^{2}\delta^{2}b}}\sum_{j=k^{\prime}}^{k-1}\mbb E\bsbra{\norm{\bar{\vbf}^{k^{\prime}}}^{2}}.
\end{aligned}
\label{eq:variance_bound01}
\end{equation}
Furthermore, we have
\begin{equation}
\mbb E\bigg[\Bnorm{\frac{1}{m}\sum_{i=1}^{m}\nabla f_{\delta}^{i}(x_{i}^{k})-\nabla f_{\delta}^{i}(\bar{x}^{k})}^{2}\bigg]\aleq\frac{L_{\delta}^{2}}{m}\mbb E\bigg[\sum_{i=1}^{m}\norm{x_{i}^{k}-\bar{x}^{k}}^{2}\bigg]=\frac{L_{\delta}^{2}}{m}\mbb E\bsbra{\norm{\xbf^{k}-\bar{\xbf}^{k}}^{2}},\label{eq:variance_bound02}
\end{equation}
where $(a)$ holds by $\norm{\nabla f_{\delta}^{i}(x_{i}^{k})-\nabla f_{\delta}^{i}(\bar{x}^{k})}\leq L_{\delta}\norm{x_{i}^{k}-\bar{x}^{k}}$
and $\norm x_{1}\leq\sqrt{m}\norm x_{2},\forall x\in\mbb R^{m}$.
Hence, combining~\eqref{eq:variance_bound01}
and~\eqref{eq:variance_bound02} yields
\begin{align*}\small
\begin{split}    
 & \mbb E\bsbra{\norm{\bar{v}^{k}-\nabla f_{\delta}(\bar{x}^{k})}^{2}}=\mbb E\lrsbra{\Bnorm{\bar{v}^{k}-\frac{1}{m}\sum_{i=1}^{m}\nabla f_{\delta}^{i}(\bar{x}^{k})}^{2}}\\
& \leq 2\mbb E\bigg[\Bnorm{\bar{v}^{k}-\frac{1}{m}\sum_{i=1}^{m}\nabla f_{\delta}^{i}(x_{i}^{k})}^{2}\bigg]+2\mbb E\lrsbra{\Bnorm{\frac{1}{m}\sum_{i=1}^{m}\nabla f_{\delta}^{i}(x_{i}^{k})-\nabla f_{\delta}^{i}(\bar{x}^{k})}^{2}}\\
& \leq \frac{2L_{\delta}^{2}}{m}\mbb E\bsbra{\norm{\xbf^{k}-\bar{\xbf}^{k}}^{2}}+2\mbb E\bigg[\Bnorm{\bar{v}^{k^{\prime}}-\frac{1}{m}\sum_{i=1}^{m}\nabla f_{\delta}^{i}(x_{i}^{k^{\prime}})}^{2}\bigg]\\
 &\ \ + {\frac{6d^{2}L_{f}^{2}}{m^{2}\delta^{2}b}}\sum_{j=k^{\prime}}^{k-1}{\mbb E\bsbra{\norm{\xbf^{j+1}-\bar{\xbf}^{j+1}}^{2}+\norm{\xbf^{j}-\bar{\xbf}^{j}}^{2}}}+{ \frac{6\eta^{2}d^{2}L_{f}^{2}}{m^{2}\delta^{2}b}}\sum_{j=k^{\prime}}^{k-1}\mbb E\bsbra{\norm{\bar{\vbf}^{j}}^{2}}
\end{split} 
\end{align*}
for any $rT\leq k^{\prime}<k<(r+1)T$.

Furthermore, for $k=rT$, we have
\[
\mbb E\left[\Bnorm{\bar{v}^{k}-\frac{1}{m}\sum_{i=1}^{m}\nabla f_{\delta}^{i}(x_{i}^{k})}^{2}\right]=\mbb E\left[\Bnorm{\frac{1}{m}\sum_{i=1}^{m}g_{i}(x_{i}^{k};S_{i}^{k\prime})-\nabla f_{\delta}^{i}(x_{i}^{k})}^{2}\right]\leq\frac{\sigma^{2}}{b^{\prime}}.
\]
\end{proof}
\begin{lem}
\label{lem:one_epoch_progress}Suppose
$K=R T$, then 
we
have
\begin{equation}
\begin{aligned} & \sum_{k=0}^{RT-1}\mbb E\bsbra{\norm{\bar{v}^{k}-\nabla f_{\delta}(\bar{x}^{k})}^{2}}\\
& \leq \frac{2L_{\delta}^{2}}{m}\sum_{k=0}^{RT-1}\mbb E\bsbra{\norm{\xbf^{k}-\bar{\xbf}^{k}}^{2}}+2RT\cdot {\frac{\sigma^{2}}{b^{\prime}}}\\
 &\ \  +\frac{6d^{2}L_{f}^{2}T}{m^{2}\delta^{2}b}\sum_{k=1}^{RT-1}{\mbb E\bsbra{\norm{\xbf^{k}-\bar{\xbf}^{k}}^{2}+\norm{\xbf^{k-1}-\bar{\xbf}^{k-1}}^{2}}}+\frac{6\eta^{2}d^{2}L_{f}^{2}T}{m^{2}\delta^{2}b}\sum_{k=0}^{RT-1}\mbb E\bsbra{\norm{\bar{v}^{k}}^{2}}.
\end{aligned}
\label{eq:the_whole_sequence_progress}
\end{equation}
\end{lem}
\begin{proof}
Taking $k^{\prime}=rT$ in~\eqref{eq:variance_bound}, then for any
$k=rT+t$, $t=1,\cdots,T-1$, we have
\begin{equation}
\begin{aligned} & \mbb E\bsbra{\norm{\bar{v}^{rT+t}-\nabla f_{\delta}(\bar{x}^{rT+t})}^{2}}\\
& \leq \frac{2L_{\delta}^{2}}{m}\mbb E\bsbra{\norm{\xbf^{rT+t}-\bar{\xbf}^{rT+t}}^{2}}+2\mbb E\bigg[\Bnorm{\bar{v}^{rT}-\frac{1}{m}\sum_{i=1}^{m}\nabla f_{\delta}^{i}(x_{i}^{rT})}^{2}\bigg]\\
 &\ \  +\frac{6d^{2}L_{f}^{2}}{m^{2}\delta^{2}b}\sum_{j=k^{\prime}}^{k}{\mbb E\bsbra{\norm{\xbf^{j}-\bar{\xbf}^{j}}^{2}+\norm{\xbf^{j-1}-\bar{\xbf}^{j-1}}^{2}}}+\frac{6\eta^{2}d^{2}L_{f}^{2}}{m^{2}\delta^{2}b}\sum_{j=k^{\prime}}^{k}\mbb E\bsbra{\norm{\bar{\vbf}^{j-1}}^{2}}\\
& \aleq  \frac{2L_{\delta}^{2}}{m}\mbb E\bsbra{\norm{\xbf^{rT+t}-\bar{\xbf}^{rT+t}}^{2}}+2\mbb E\bigg[\Bnorm{\bar{v}^{rT}-\frac{1}{m}\sum_{i=1}^{m}\nabla f_{\delta}^{i}(x_{i}^{rT})}^{2}\bigg]\\
 &\ \  +\frac{6d^{2}L_{f}^{2}}{m^{2}\delta^{2}b}\sum_{k=rT}^{(r+1)T-2}{\mbb E\bsbra{\norm{\xbf^{k+1}-\bar{\xbf}^{k+1}}^{2}+\norm{\xbf^{k}-\bar{\xbf}^{k}}^{2}}}+\frac{6\eta^{2}d^{2}L_{f}^{2}}{m^{2}\delta^{2}b}\sum_{k=rT}^{(r+1)T-2}\mbb E\bsbra{\norm{\bar{\vbf}^{k}}^{2}},
\end{aligned}
\label{eq:variance_bound-1}
\end{equation}
where $(a)$ holds by replacing index from $j$ to $k$ and $$\sum_{j=rT}^{rT+t-1}{\mbb E\bsbra{\norm{\xbf^{j+1}-\bar{\xbf}^{j+1}}^{2}+\norm{\xbf^{j}-\bar{\xbf}^{j}}^{2}}}\leq\sum_{k=rT}^{(r+1)T-2}{\mbb E\bsbra{\norm{\xbf^{k+1}-\bar{\xbf}^{k+1}}^{2}+\norm{\xbf^{k}-\bar{\xbf}^{k}}^{2}}}.$$
Summing $\mbb E\bsbra{\norm{\bar{v}^{rT}-\frac{1}{m}\sum_{i=1}^{m}\nabla f_{\delta}^{i}(x_{i}^{rT})}^{2}}$
and the above inequality over $t=1$ to $T-1$, and combining the
fact $\norm{\bar{\vbf}^{j}}^{2}=m\norm{\bar{v}^{j}}^{2}$ yields
\begin{equation*}
    \begin{aligned}
&\sum_{k=rT}^{(r+1)T-1}\mbb E\bsbra{ \norm{\bar{v}^{k}-\nabla f_{\delta}(\bar{x}^{k})}^{2}} \\ & \leq \frac{2L_{\delta}^{2}}{m}\sum_{k=rT+1}^{(r+1)T-1}\mbb E\bsbra{\norm{\xbf^{k}-\bar{\xbf}^{k}}^{2}}+(2(T-1)+1)\cdot\mbb E\bigg[\Bnorm{\bar{v}^{rT}-\frac{1}{m}\sum_{i=1}^{m}\nabla f_{\delta}^{i}(x_{i}^{rT})}^{2}\bigg]\\
 & \ \ +\frac{6d^{2}L_{f}^{2}T}{m^{2}\delta^{2}b}\sum_{k=rT}^{(r+1)T-2}{\mbb E\bsbra{\norm{\xbf^{k+1}-\bar{\xbf}^{k+1}}^{2}+\norm{\xbf^{k}-\bar{\xbf}^{k}}^{2}}}+\frac{6\eta^{2}d^{2}L_{f}^{2}T}{m^{2}\delta^{2}b}\sum_{k=rT}^{(r+1)T-2}\mbb E\bsbra{\norm{\bar{\vbf}^{k}}^{2}}.
    \end{aligned}
\end{equation*}
Summing the above inequality over $r=0$ to $R-1$ and combining $\mbb E\bsbra{\norm{\bar{v}^{rT}-\frac{1}{m}\sum_{i=1}^{m}\nabla f_{\delta}^{i}(x_{i}^{rT})}^{2}}\leq{\sigma^{2}}/{b^{\prime}}$,
we have
\[
\begin{aligned} & \sum_{k=0}^{RT-1}\mbb E\bsbra{\norm{\bar{v}^{k}-\nabla f_{\delta}(\bar{x}^{k})}^{2}}\\
& \leq \frac{2L_{\delta}^{2}}{m}\sum_{r=0}^{R-1}\sum_{k=rT+1}^{(r+1)T-1}\mbb E\bsbra{\norm{\xbf^{k}-\bar{\xbf}^{k}}^{2}}+2RT\cdot\frac{\sigma^{2}}{b^{\prime}}\\
 &\ \  +\frac{6d^{2}L_{f}^{2}T}{m^{2}\delta^{2}b}\sum_{r=0}^{R-1}\sum_{k=rT}^{(r+1)T-2}{\mbb E\bsbra{\norm{\xbf^{k+1}-\bar{\xbf}^{k+1}}^{2}+\norm{\xbf^{k}-\bar{\xbf}^{k}}^{2}}}+\frac{6\eta^{2}d^{2}L_{f}^{2}T}{m^{2}\delta^{2}b}\sum_{r=0}^{R-1}\sum_{k=rT}^{(r+1)T-2}\mbb E\bsbra{\norm{\bar{\vbf}^{k}}^{2}},
\end{aligned}
\]
which implies the desired result.
\end{proof}

\subsection{Proof of Lemma~\ref{lem:dgfm_convergence_thm}}\label{subsec:aux2}
\begin{lem}\label{lem:aux_lem_2}
For sequence $\{\xbf^{k},\ybf^{k},\vbf^{k}\}$
generated by Algorithm~\ref{alg:dgfm_appendix} and any positive $\beta_{x}>0,\beta_{y}>0$,
we have
\begin{equation}
\begin{aligned} & \beta_{y}\sum_{k=0}^{RT-1}\mbb E\bsbra{\norm{\ybf^{k+1}-\bar{\ybf}^{k+1}}^{2}-\norm{\ybf^{k}-\bar{\ybf}^{k}}^{2}}+\beta_{x}\sum_{k=0}^{RT-1}\mbb E\bsbra{\norm{\xbf^{k+1}-\bar{\xbf}^{k+1}}^{2}-\norm{\xbf^{k}-\bar{\xbf}^{k}}^{2}}\\
& \leq \Brbra{\beta_{x}\brbra{(1+\alpha_{2})\rho^{2}-1}+6\beta_{y}\brbra{\frac{\rho^{2}L_{\delta}^{2}}{\alpha_{1}}+\frac{\rho^{2}d^{2}L_{f}^{2}}{b\delta^{2}}}}\sum_{k=0}^{RT-1}\mbb E\bsbra{\norm{\xbf^{k}-\bar{\xbf}^{k}}^{2}}\\
&\ \ +3\beta_{y}\eta^{2}\brbra{\frac{\rho^{2}L_{\delta}^{2}}{\alpha_{1}}+\frac{\rho^{2}d^{2}L_{f}^{2}}{b\delta^{2}}}\sum_{k=0}^{RT-1}\mbb E\bsbra{\norm{\bar{\vbf}^{k-1}}^{2}}+\beta_{x}(1+\alpha_{2}^{-1})\rho^{2}\eta^{2}\sum_{k=0}^{RT-1}\mbb E\bsbra{\norm{\ybf^{k+1}-\bar{\ybf}^{k+1}}^{2}}\\
&\ \  +{2\rho^{\Tcal}m(\sigma^{2}+L_{f}^{2})}\cdot\beta_{y}R + \beta_{y}\brbra{\rho^{2}(1+\alpha_{1})-1}\sum_{k=0}^{RT-1}\mbb E\bsbra{\norm{\ybf^{k}-\bar{\ybf}^{k}}^{2}}.
\end{aligned}
\label{eq:all_consensus}
\end{equation}
\end{lem}
\begin{proof}
By~\eqref{eq:cycle_consensus} in Lemma~\ref{lem:consensus-dgfm_plus},
we have
\[
\mbb E\bsbra{\norm{\ybf^{rT+1}-\bar{\ybf}^{rT+1}}^{2}-\norm{\ybf^{rT}-\bar{\ybf}^{rT}}^{2}}\leq-\mbb E\bsbra{\norm{\ybf^{rT}-\bar{\ybf}^{rT}}^{2}}+{2\rho^{\Tcal}m(\sigma^{2}+L_{f}^{2})},\forall r=0,\cdots,R-1.
\]
By~\eqref{eq:y_consensus4-1} in Lemma~\ref{lem:consensus-dgfm_plus}, we
have
\[
\begin{aligned} & \mbb E\bsbra{\norm{\ybf^{k+1}-\bar{\ybf}^{k+1}}^{2}-\norm{\ybf^{k}-\bar{\ybf}^{k}}^{2}}\\
& \leq \brbra{\rho^{2}(1+\alpha_{1})-1}\mbb E\bsbra{\norm{\ybf^{k}-\bar{\ybf}^{k}}^{2}}+3\Brbra{\frac{\rho^{2}L_{\delta}^{2}}{\alpha_{1}}+\frac{\rho^{2}d^{2}L_{f}^{2}}{b\delta^{2}}}\mbb E\bsbra{\norm{\xbf^{k}-\bar{\xbf}^{k}}^{2}+\norm{\bar{\xbf}^{k-1}-\xbf^{k-1}}^{2}}\\
 &\ \  +3\eta^{2}\Brbra{\frac{\rho^{2}L_{\delta}^{2}}{\alpha_{1}}+\frac{\rho^{2}d^{2}L_{f}^{2}}{b\delta^{2}}}\mbb E\bsbra{\norm{\bar{\vbf}^{k-1}}^{2}},\ \ rT+1\leq k<(r+1)T,\forall r=0,1,\cdots,R-1.
\end{aligned}
\]

The two inequalities above implies that 
\begin{equation}
\small\begin{aligned} & \sum_{k=rT}^{(r+1)T-1}\mbb E\bsbra{\norm{\ybf^{k+1}-\bar{\ybf}^{k+1}}^{2}-\norm{\ybf^{k}-\bar{\ybf}^{k}}^{2}}\\
& \leq \brbra{\rho^{2}(1+\alpha_{1})-1}\sum_{k=rT}^{(r+1)T-1}\mbb E\bsbra{\norm{\ybf^{k}-\bar{\ybf}^{k}}^{2}}+3\Brbra{\frac{\rho^{2}L_{\delta}^{2}}{\alpha_{1}}+\frac{\rho^{2}d^{2}L_{f}^{2}}{b\delta^{2}}}\sum_{k=rT}^{(r+1)T-1}\mbb E\bsbra{\norm{\xbf^{k}-\bar{\xbf}^{k}}^{2}+\norm{\bar{\xbf}^{k-1}-\xbf^{k-1}}^{2}}\\
 &\ \  +3\eta^{2}\Brbra{\frac{\rho^{2}L_{\delta}^{2}}{\alpha_{1}}+\frac{\rho^{2}d^{2}L_{f}^{2}}{b\delta^{2}}}\sum_{k=rT}^{(r+1)T-1}\mbb E\bsbra{\norm{\bar{\vbf}^{k-1}}^{2}}+2\rho^{\Tcal}m(\sigma^{2}+L_{f}^{2}).
\end{aligned}
\label{eq:multi_beta_y}
\end{equation}

Furthermore, by~\eqref{eq:x_consensus} in Lemma~\ref{lem:consensus-dgfm_plus},
we have
\begin{equation}
\begin{aligned} & \sum_{k=rT}^{(r+1)T-1}\mbb E\bsbra{\norm{\xbf^{k+1}-\bar{\xbf}^{k+1}}^{2}-\norm{\xbf^{k}-\bar{\xbf}^{k}}^{2}}\\
& \leq \brbra{(1+\alpha_{2})\rho^{2}-1}\sum_{k=rT}^{(r+1)T-1}\mbb E\bsbra{\norm{\xbf^{k}-\bar{\xbf}^{k}}^{2}}+(1+\alpha_{2}^{-1})\rho^{2}\eta^{2}\sum_{k=rT}^{(r+1)T-1}\mbb E\bsbra{\norm{\ybf^{k+1}-\bar{\ybf}^{k+1}}^{2}}.
\end{aligned}
\label{eq:multi_beta_x}
\end{equation}

Summing~\eqref{eq:multi_beta_x} times $\beta_{x}$ and~\eqref{eq:multi_beta_y}
times $\beta_{y}$, we have 
\begin{equation}
\begin{aligned} & \beta_{y}\sum_{k=rT}^{(r+1)T-1}\mbb E\bsbra{\norm{\ybf^{k+1}-\bar{\ybf}^{k+1}}^{2}-\norm{\ybf^{k}-\bar{\ybf}^{k}}^{2}}+\beta_{x}\sum_{k=rT}^{(r+1)T-1}\mbb E\bsbra{\norm{\xbf^{k+1}-\bar{\xbf}^{k+1}}^{2}-\norm{\xbf^{k}-\bar{\xbf}^{k}}^{2}}\\
& \leq \beta_{x}\brbra{(1+\alpha_{2})\rho^{2}-1}\sum_{k=rT}^{(r+1)T-1}\mbb E\bsbra{\norm{\xbf^{k}-\bar{\xbf}^{k}}^{2}}+\beta_{x}(1+\alpha_{2}^{-1})\rho^{2}\eta^{2}\sum_{k=rT}^{(r+1)T-1}\mbb E\bsbra{\norm{\ybf^{k+1}-\bar{\ybf}^{k+1}}^{2}}\\ &\ \ +3\beta_{y}\Brbra{\frac{\rho^{2}L_{\delta}^{2}}{\alpha_{1}}+\frac{\rho^{2}d^{2}L_{f}^{2}}{b\delta^{2}}}\sum_{k=rT}^{(r+1)T-1}\mbb E\bsbra{\norm{\xbf^{k}-\bar{\xbf}^{k}}^{2}+\norm{\bar{\xbf}^{k-1}-\xbf^{k-1}}^{2}}\\
 &\ \  +3\beta_{y}\eta^{2}\Brbra{\frac{\rho^{2}L_{\delta}^{2}}{\alpha_{1}}+\frac{\rho^{2}d^{2}L_{f}^{2}}{b\delta^{2}}}\sum_{k=rT}^{(r+1)T-1}\mbb E\bsbra{\norm{\bar{\vbf}^{k-1}}^{2}}+2\rho^{\Tcal}m(\sigma^{2}+L_{f}^{2})\cdot\beta_{y}\\
 &\ \  + \beta_{y}\brbra{\rho^{2}(1+\alpha_{1})-1}\sum_{k=rT}^{(r+1)T-1}\mbb E\bsbra{\norm{\ybf^{k}-\bar{\ybf}^{k}}^{2}}.
\end{aligned}
\label{eq:one_epoch_consensus}
\end{equation}

Now, summing it over $r=0$ to $R-1$ and combining $\sum_{k=0}^{RT-1}\norm{\bar{\xbf}^{k-1}-\xbf^{k-1}}^{2}\leq\sum_{k=0}^{RT-1}\norm{\bar{\xbf}^{k}-\xbf^{k}}^{2}$
complete our proof.
\end{proof}

\begin{lem}
\label{lem:main_theorem}For sequence $\{\xbf^{k},\ybf^{k}\}$
generated by {\dgfm}, we have

\begin{equation}
\begin{aligned} &\ \  -\beta_{y}\mbb E\bsbra{\norm{\ybf^{0}-\bar{\ybf}^{0}}^{2}}+\beta_{x}\mbb E\bsbra{\norm{\xbf^{RT}-\bar{\xbf}^{RT}}^{2}-\norm{\xbf^{0}-\bar{\xbf}^{0}}^{2}}+\mbb E\bsbra{f_{\delta}(\bar{x}^{RT})}-\mbb E\bsbra{f_{\delta}(\bar{x}^{0})}\\
& \leq -\frac{\eta}{2}\sum_{k=0}^{RT-1}\mbb E\bsbra{\norm{\nabla f_{\delta}(\bar{x}^{k})}^{2}}+\eta RT\cdot {\frac{\sigma^{2}}{b^{\prime}}}\\
 &\ \  -\lrrbra{\frac{\eta}{2}-\frac{L_{\delta}\eta^{2}}{2}-\frac{3\eta^{3}d^{2}L_{f}^{2}T}{m^{2}\delta^{2}b}-3\beta_{y}m\eta^{2}\Brbra{\frac{\rho^{2}L_{\delta}^{2}}{\alpha_{1}}+\frac{\rho^{2}d^{2}L_{f}^{2}}{b\delta^{2}}}}\sum_{k=0}^{RT-1}\mbb E\bsbra{\norm{\bar{v}^{k}}^{2}}\\
 &\ \  -\lrrbra{\beta_{x}\brbra{1-(1+\alpha_{2})\rho^{2}}-6\beta_{y}\Brbra{\frac{\rho^{2}L_{\delta}^{2}}{\alpha_{1}}+\frac{\rho^{2}d^{2}L_{f}^{2}}{b\delta^{2}}}-\Brbra{\frac{\eta L_{\delta}^{2}}{m}+\frac{6\eta d^{2}L_{f}^{2}T}{m^{2}\delta^{2}b}}}\sum_{k=0}^{RT-1}\mbb E\bsbra{\norm{\xbf^{k}-\bar{\xbf}^{k}}^{2}}\\
 &\ \  -\Brbra{\beta_{y}\brbra{1-\rho^{2}(1+\alpha_{1})}-\beta_{x}(1+\alpha_{2}^{-1})\rho^{2}\eta^{2}}\sum_{k=0}^{RT-1}\mbb E\bsbra{\norm{\ybf^{k}-\bar{\ybf}^{k}}^{2}}\\
 &\ \  -\brbra{\beta_{y}-\beta_{x}(1+\alpha_{2}^{-1})\rho^{2}\eta^{2}}\mbb E\bsbra{\norm{\ybf^{RT}-\bar{\ybf}^{RT}}^{2}}+{ 2\rho^{\Tcal}m(\sigma^{2}+L_{f}^{2})}\cdot\beta_{y}R.\text{}
\end{aligned}
\label{eq:all_consensus-1-1-1}
\end{equation}
\end{lem}
\begin{proof}
By Lemma~\ref{lem:For-the-sequence}, for all $k$, we have
\[
f_{\delta}(\bar{x}^{k+1})\leq f_{\delta}(\bar{x}^{k})-\frac{\eta}{2}\bnorm{\nabla f_{\delta}(\bar{x}^{k})}^{2}+\frac{\eta}{2}\bnorm{\nabla f_{\delta}(\bar{x}^{k})-\bar{v}^{k}}^{2}-\brbra{\frac{\eta}{2}-\frac{L_{\delta}\eta^{2}}{2}}\norm{\bar{v}^{k}}^{2}.
\]

Taking expectations of both sides of the above inequality and summing
it over $k=0$ to $RT-1$, we have 
\[
\begin{aligned} & \mbb E\bsbra{f_{\delta}(\bar{x}^{RT})}-\mbb E\bsbra{f_{\delta}(\bar{x}^{0})}\\
& \leq -\frac{\eta}{2}\sum_{k=0}^{RT-1}\mbb E\bsbra{\norm{\nabla f_{\delta}(\bar{x}^{k})}^{2}}-\Big(\frac{\eta}{2}-\frac{L_{\delta}\eta^{2}}{2}\Big)\sum_{k=0}^{RT-1}\mbb E\bsbra{\norm{\bar{v}^{k}}^{2}}\\
 &\ \  +\frac{\eta}{2}\sum_{k=0}^{RT-1}\mbb E\bsbra{\norm{\nabla f_{\delta}(\bar{x}^{k})-\bar{v}^{k}}^{2}}\\
& \leq -\frac{\eta}{2}\sum_{k=0}^{RT-1}\mbb E\bsbra{\norm{\nabla f_{\delta}(\bar{x}^{k})}^{2}}-\Big(\frac{\eta}{2}-\frac{L_{\delta}\eta^{2}}{2}\Big)\sum_{k=0}^{RT-1}\mbb E\bsbra{\norm{\bar{v}^{k}}^{2}}\\
 &\ \  +\frac{\eta L_{\delta}^{2}}{m}\sum_{k=0}^{RT-1}\mbb E\bsbra{\norm{\xbf^{k}-\bar{\xbf}^{k}}^{2}}+\eta RT\cdot {\frac{\sigma^{2}}{b^{\prime}}}\\
 &\ \  +\frac{3\eta d^{2}L_{f}^{2}}{m^{2}\delta^{2}b}\sum_{k=1}^{RT-1}\brbra{\mbb E\bsbra{\norm{\xbf^{k}-\bar{\xbf}^{k}}^{2}+\norm{\xbf^{k-1}-\bar{\xbf}^{k-1}}^{2}}}+\frac{3\eta^{3}d^{2}L_{f}^{2}}{m^{2}\delta^{2}b}\sum_{k=0}^{RT-1}\mbb E\bsbra{\norm{\bar{v}^{k}}^{2}}.\\
& \aleq  -\frac{\eta}{2}\sum_{k=0}^{RT-1}\mbb E\bsbra{\norm{\nabla f_{\delta}(\bar{x}^{k})}^{2}}-\Big(\frac{\eta}{2}-\frac{L_{\delta}\eta^{2}}{2}-\frac{3\eta^{3}d^{2}L_{f}^{2}T}{m^{2}\delta^{2}b}\Big)\sum_{k=0}^{RT-1}\mbb E\bsbra{\norm{\bar{v}^{k}}^{2}}+\eta RT\cdot {\frac{\sigma^{2}}{b^{\prime}}}\\
 &\ \  +\Big(\frac{\eta L_{\delta}^{2}}{m}+\frac{6\eta d^{2}L_{f}^{2}T}{m^{2}\delta^{2}b}\Big)\sum_{k=0}^{RT-1}\mbb E\bsbra{\norm{\xbf^{k}-\bar{\xbf}^{k}}^{2}},
\end{aligned}
\]
where $(a)$ holds by $\sum_{k=0}^{RT-1}\mbb E\bsbra{\norm{\xbf^{k-1}-\bar{\xbf}^{k-1}}^{2}}\leq\sum_{k=0}^{RT-1}\mbb E\bsbra{\norm{\xbf^{k}-\bar{\xbf}^{k}}^{2}}$.
Summing the above inequality and~\eqref{eq:all_consensus}, we have

\begin{equation}
\begin{aligned} &\ \  -\beta_{y}\mbb E\bsbra{\norm{\ybf^{0}-\bar{\ybf}^{0}}^{2}}+\beta_{x}\mbb E\bsbra{\norm{\xbf^{RT}-\bar{\xbf}^{RT}}^{2}-\norm{\xbf^{0}-\bar{\xbf}^{0}}^{2}}+\mbb E\bsbra{f_{\delta}(\bar{x}^{RT})}-\mbb E\bsbra{f_{\delta}(\bar{x}^{0})}\\
& \aleq  -\frac{\eta}{2}\sum_{k=0}^{RT-1}\mbb E\bsbra{\norm{\nabla f_{\delta}(\bar{x}^{k})}^{2}}-\lrrbra{\frac{\eta}{2}-\frac{L_{\delta}\eta^{2}}{2}-\frac{3\eta^{3}d^{2}L_{f}^{2}T}{m^{2}\delta^{2}b}-3\beta_{y}m\eta^{2}\Brbra{\frac{\rho^{2}L_{\delta}^{2}}{\alpha_{1}}+\frac{\rho^{2}d^{2}L_{f}^{2}}{b\delta^{2}}}}\sum_{k=0}^{RT-1}\mbb E\bsbra{\norm{\bar{v}^{k}}^{2}}\\
 &\ \  -\left(\beta_{x}(1-(1+\alpha_{2})\rho^{2})-6\beta_{y}\Brbra{\frac{\rho^{2}L_{\delta}^{2}}{\alpha_{1}}+\frac{\rho^{2}d^{2}L_{f}^{2}}{b\delta^{2}}}-\Brbra{\frac{\eta L_{\delta}^{2}}{m}+\frac{6\eta d^{2}L_{f}^{2}T}{m^{2}\delta^{2}b}}\right)\sum_{k=0}^{RT-1}\mbb E\bsbra{\norm{\xbf^{k}-\bar{\xbf}^{k}}^{2}}\\
 &\ \  -\brbra{\beta_{y}(1-\rho^{2}(1+\alpha_{1}))-\beta_{x}(1+\alpha_{2}^{-1})\rho^{2}\eta^{2}}\sum_{k=0}^{RT-1}\mbb E\bsbra{\norm{\ybf^{k}-\bar{\ybf}^{k}}^{2}} +\eta RT\cdot {\frac{\sigma^{2}}{b^{\prime}}} \\
 &\ \  -\brbra{\beta_{y}-\beta_{x}(1+\alpha_{2}^{-1})\rho^{2}\eta^{2}}\mbb E\bsbra{\norm{\ybf^{RT}-\bar{\ybf}^{RT}}^{2}}+{ 2\rho^{\Tcal}m(\sigma^{2}+L_{f}^{2})}\cdot\beta_{y}R,
\end{aligned}
\label{eq:all_consensus-1}
\end{equation}
where $(a)$ holds by $ {\sum_{k=0}^{RT-1}}\mbb E[\norm{\bar{\vbf}^{k-1}}^{2}]\leq m {\sum_{k=0}^{RT-1}}\mbb E[\norm{\bar{v}^{k}}^{2}]$.
Then we complete our proof.
\end{proof}

\subsection{Proof of Theorem~\ref{thm:dgfm_cor}}
\begin{thm}
    {\dgfm} can output a $(\delta,\vep)$-Goldstein stationary point of $f(\cdot)$ in expectation with the total stochastic zeroth-order complexity is at most $\mcal O(\Delta_{\delta}\delta^{-1}\vep^{-3})$ and the total number of communication rounds is at most $\Ocal(\Delta_{\delta}\vep^{-2}(\vep^{-1}+\log(\vep^{-1})))$ by setting
    $\alpha_1 = \alpha_2 = ({1-\rho^2})/{2\rho^2}$, $\delta=\mcal O(\vep)$, $\beta_x = \Ocal(\delta^{-1})$, $\beta_y = \Ocal(\delta)$, $b^{\prime} = \Ocal(\vep^{-2}), b=\mcal O(\vep^{-1})$, $\eta = \mcal O(\vep)$, $T=\mcal O(\vep^{-1})$, $R=\mcal O(\Delta_{\delta}\vep^{-2})$, $\mcal{T} = \mcal O(\log(\vep^{-1}))$. Moreover, a specific example of parameters is given as follows. Define $\eta_1 = (1-\rho^2)^{\frac{3}{2}}\delta^{\frac{1}{2}}\rho^{-2}(1+\rho^2)^{-\frac{1}{2}}d^{-\frac{1}{2}}/24^{\frac{1}{2}}$ and set
\begin{equation*}
\begin{aligned}
        &\beta_{y}  =\frac{(1-\rho^{2})\delta\eta}{2\rho^{4}c^{2}m}\cdot\brbra{\frac{c^{2}}{2\delta}+2T},\ \ \beta_{x}=\frac{(1-\rho^{2})^{2}}{2\rho^{2}(1+\rho^{2})\eta^{2}}\cdot\beta_{y},\ \ b=\frac{d}{m\vep},\\
        &b^{\prime}=\frac{\sigma^{2}}{12}\vep^{-2},\ \ \mcal T=\brbra{\log c^{2}\vep - \log 36(\sigma^{2}+L_{f}^{2})(1-\rho^{2})} \cdot ({\log\rho})^{-1}+2,\\
        &\eta = \min \Bcbra{\eta_1, \frac{1}{2\sqrt{3dT}}\Brbra{\frac{L_{f}^{2}}{m\vep}+\frac{3(1-\rho^{2})}{2c^{2}}\cdot\brbra{\frac{c^{2}L_{f}^{2}}{\left(1-\rho^{2}\right)\delta}+2\rho^{2}L_{f}^{2}m}}^{-\frac{1}{2}}, \frac{1}{2}L_{\delta}^{-1}},
\end{aligned}
\end{equation*}
then {\dgfm} output a $(\delta,\vep)$-Goldstein stationary point of~$f(\cdot)$ in expectation with total stochastic zeroth-order complexity at most
$\mcal O\brbra{\max\{\Delta_{\delta}\delta^{-1}\vep^{-3}d^{\frac{3}{2}}m^{-\frac{1}{2}},\Delta_{\delta}\vep^{-\frac{7}{2}}d^{\frac{3}{2}}m^{-\frac{1}{2}}\}}$ and the total communication rounds is at most
$\mcal O\brbra{(\vep^{-1}+\log (\vep^{-1}d))\cdot \max\{\Delta_{\delta}\vep^{-\frac{3}{2}}d^{\frac{1}{2}}m^{\frac{1}{2}},\Delta_{\delta}\vep^{-2}d^{\frac{1}{2}}m^{\frac{1}{2}}\}}.$
\end{thm}

\begin{proof}
It follows from Lemma~\ref{lem:main_theorem} that 
\begin{equation*}
\begin{aligned} & \frac{\eta}{2}\sum_{k=0}^{RT-1}\mbb E\bsbra{\norm{\nabla f_{\delta}(\bar{x}^{k})}^{2}}\\
& \leq \beta_{y}\mbb E\bsbra{\norm{\ybf^{0}-\bar{\ybf}^{0}}^{2}}+\beta_{x}\mbb E[\norm{\xbf^{0}-\bar{\xbf}^{0}}^{2}-\norm{\xbf^{RT}-\bar{\xbf}^{RT}}^{2}]+\mbb E\bsbra{f_{\delta}(\bar{x}^{0})}-\mbb E\bsbra{f_{\delta}(\bar{x}^{RT})}+\eta RT\frac{\sigma^{2}}{b^{\prime}}\\
 &\ \  -\underbrace{\Brbra{\frac{\eta}{2}-\frac{L_{\delta}\eta^{2}}{2}-\frac{3\eta^{3}d^{2}L_{f}^{2}T}{m^{2}\delta^{2}b}-3\beta_{y}m\eta^{2}\brbra{\frac{\rho^{2}L_{\delta}^{2}}{\alpha_{1}}+\frac{\rho^{2}d^{2}L_{f}^{2}}{b\delta^{2}}}}}_{\clubsuit}\sum_{k=0}^{RT-1}\mbb E\bsbra{\norm{\bar{v}^{k}}^{2}}\\
 &\ \  -\underbrace{\Brbra{\beta_{x}\brbra{1-(1+\alpha_{2})\rho^{2}}-6\beta_{y}\brbra{\frac{\rho^{2}L_{\delta}^{2}}{\alpha_{1}}+\frac{\rho^{2}d^{2}L_{f}^{2}}{b\delta^{2}}}-\brbra{\frac{\eta L_{\delta}^{2}}{m}+\frac{6\eta d^{2}L_{f}^{2}T}{m^{2}\delta^{2}b}}}}_{\spadesuit}\sum_{k=0}^{RT-1}\mbb E\bsbra{\norm{\xbf^{k}-\bar{\xbf}^{k}}^{2}}\\
 &\ \  -\underbrace{\brbra{\beta_{y}(1-\rho^{2}(1+\alpha_{1}))-\beta_{x}(1+\alpha_{2}^{-1})\rho^{2}\eta^{2}}}_{\blacklozenge}\sum_{k=0}^{RT-1}\mbb E\bsbra{\norm{\ybf^{k}-\bar{\ybf}^{k}}^{2}}\\
 &\ \  -\underbrace{\brbra{\beta_{y}-\beta_{x}(1+\alpha_{2}^{-1})\rho^{2}\eta^{2}}}_{\varheart}\mbb E\bsbra{\norm{\ybf^{RT}-\bar{\ybf}^{RT}}^{2}}+{ 2\rho^{\Tcal}m(\sigma^{2}+L_{f}^{2})}\cdot\beta_{y}R.
\end{aligned}
\end{equation*}
Since we choose proper initial point $y_i^0$ and $x_i^0$, then  we simplify the above inequality as 
\begin{equation}
    \begin{aligned}
         & \clubsuit\sum_{k=0}^{RT-1}\mbb E\bsbra{\norm{\bar{v}^{k}}^{2}}+\spadesuit\sum_{k=0}^{RT-1}\mbb E\bsbra{\norm{\xbf^{k}-\bar{\xbf}^{k}}^{2}}+\blacklozenge\sum_{k=0}^{RT-1}\mbb E\bsbra{\norm{\ybf^{k}-\bar{\ybf}^{k}}^{2}}\\&+\varheart \cdot \mbb E\bsbra{\norm{\ybf^{RT}-\bar{\ybf}^{RT}}^{2}}+\frac{\eta}{2}\sum_{k=0}^{RT-1}\mbb E\bsbra{\norm{\nabla f_{\delta}(\bar{x}^{k})}^{2}}\\
& \leq \mbb E[f_{\delta}(\bar{x}^{0})]-\mbb E[f_{\delta}(\bar{x}^{RT})]+{ 2\rho^{\Tcal}m(\sigma^{2}+L_{f}^{2})}\cdot\beta_{y}R.\label{eq:dgfmres1}
    \end{aligned}
\end{equation}
Combining the parameters setting of $\alpha_1,R,T,\beta_x$, $\beta_y$ and $\eta$ yields $\clubsuit,\spadesuit,\blacklozenge,\varheart>0$ and 
\begin{equation}\label{eq:dgfmorder}
    \clubsuit = \mcal O(\vep),\ \spadesuit = \Ocal (\delta^{-1}),\ \blacklozenge=\Ocal (\delta),\ \varheart = \Ocal(\delta).
\end{equation}
By~\eqref{eq:dgfmorder}, we can simplify~\eqref{eq:dgfmres1} as 
\begin{equation}\label{eq:dgfmres2}
\spadesuit\sum_{k=0}^{RT-1}\mbb E[\norm{\xbf^{k}-\bar{\xbf}^{k}}^{2}]+\frac{\eta}{2}\sum_{k=0}^{RT-1}\mbb E[\norm{\nabla f_{\delta}(\bar{x}^{k})}^{2}]\leq \Delta_{\delta} + { 2\rho^{\Tcal}m(\sigma^{2}+L_{f}^{2})}\cdot\beta_{y}R.
\end{equation}
Divide ${\eta \cdot RT}/{2}$ on both sides of~\eqref{eq:dgfmres2}, we have
\begin{equation*}
    \frac{2 \cdot \spadesuit}{\eta \cdot RT}\sum_{k=0}^{RT-1}\mbb E[\norm{\xbf^{k}-\bar{\xbf}^{k}}^{2}]+\frac{1}{ RT}\sum_{k=0}^{RT-1}\mbb E[\norm{\nabla f_{\delta}(\bar{x}^{k})}^{2}]\leq \frac{2\Delta_{\delta}}{\eta \cdot RT} + \frac{4\rho^{\Tcal}m(\sigma^{2}+L_{f}^{2})\cdot\beta_{y}}{\eta \cdot T}.
\end{equation*}
Combining the parameter setting of $\mcal T, \eta, R, T$ and $\delta$,  we have ${\spadesuit}/{\eta} = \mcal O(\delta^{-2})$ and
\begin{equation*}
    \frac{2 \cdot \spadesuit}{\eta \cdot RT}\sum_{k=0}^{RT-1}\mbb E[\norm{\xbf^{k}-\bar{\xbf}^{k}}^{2}]+\frac{1}{ RT}\sum_{k=0}^{RT-1}\mbb E[\norm{\nabla f_{\delta}(\bar{x}^{k})}^{2}] = \mcal O(\vep^2).
\end{equation*}
By ${\spadesuit}/{\eta} = \mcal O(\delta^{-2})$, $\nabla f_{\delta}(x)\in \partial_{\delta}f(x)$ and $\norm{\nabla f_{\delta}(x_i^k)}^2\leq 2\norm{\nabla f_{\delta}(\bar{x}^k)}^2 + 2L_{\delta}^{2}\norm{x_i^k-\bar{x}^k}^2$, then we have a complexity $\mcal O(\Delta_{\delta} \delta^{-1}\vep^{-3})$ for a $(\delta,\vep)$-Goldstein stationary point in expectation.

Now, suppose $\delta\leq\vep$, $T\geq{c^{2}}/{2\delta},$ and $R\geq{24\Delta_{\delta}\delta}\vep^{-2}\eta^{-1}{c^{-2}}$
hold, we give specific parameter settings to obtain $(\delta,\vep)$-Goldstein
stationary point in expectation, which can be summarized as follows:
\begin{equation}
\begin{aligned}
        &\beta_{y}  =\frac{(1-\rho^{2})\delta\eta}{2\rho^{4}c^{2}m}\cdot\brbra{\frac{c^{2}}{2\delta}+2T},\ \ \beta_{x}=\frac{(1-\rho^{2})^{2}}{2\rho^{2}(1+\rho^{2})\eta^{2}}\cdot\beta_{y},\ \ \\        
        &b=\max\{1,\lceil\frac{d}{m\vep}\rceil\},\ \ b^{\prime}=\max\{1,\lceil\frac{\sigma^{2}}{12}\vep^{-2}\rceil\},\ \ \mcal T={\big(\log{c^{2}\vep}-\log{36(\sigma^{2}+L_{f}^{2})(1-\rho^{2})}}\big)\big({\log\rho}\big)^{-1}+2,\\
        & R=\mcal O(\Delta_{\delta}\vep^{-2}d^{\frac{1}{2}}m^{\frac{1}{2}})\\
        &\eta =\min\Bcbra{\eta_1,\frac{1}{2}L_{\delta}^{-1},\frac{1}{2\sqrt{3dT}}\Brbra{\frac{L_{f}^{2}}{m\vep}+\frac{3(1-\rho^{2})}{2c^{2}}\cdot\brbra{\frac{c^{2}L_{f}^{2}}{\left(1-\rho^{2}\right)\delta}+2\rho^{2}L_{f}^{2}m}}^{-\tfrac{1}{2}}}.
        \label{eq:betayxbb}
\end{aligned}
\end{equation}
By the definition of $\beta_{x}$ in~\eqref{eq:betayxbb}, we have
\begin{equation}
\begin{aligned}
\varheart \geq\blacklozenge & =\frac{1-\rho^{2}}{2}\beta_{y}-\frac{\rho^{2}(1+\rho^{2})}{1-\rho^{2}}\cdot\eta^{2}\beta_{x}\geq\frac{1-\rho^{2}}{2}\beta_{y}-\frac{\rho^{2}(1+\rho^{2})}{1-\rho^{2}}\cdot\frac{\left(1-\rho^{2}\right)^{2}}{2\rho^{2}(1+\rho^{2})\eta^{2}}\eta^{2}\beta_{y}=0.
\end{aligned}
\label{eq:triangle_ineq}
\end{equation}
Since $L_{\delta}={cL_{f}\sqrt{d}}/{\delta},b={d}/{m\vep},\delta\leq\vep$ and 
\begin{equation}\label{eq:temp_eta_ineq}
    \eta^{2}\leq\frac{(1-\rho^{2})^{3}\delta}{24\rho^{4}(1+\rho^{2})d}\cdot\Brbra{\frac{2\rho^{2}c^{2}L_{f}^{2}}{1-\rho^{2}}+2mL_{f}^{2}}^{-1},
\end{equation}
then we have
\begin{equation}
\begin{aligned}
& \frac{(1-\rho^{2})^{3}}{4\rho^{2}(1+\rho^{2})\eta^{2}}-6\Brbra{\frac{2\rho^{4}L_{\delta}^{2}}{1-\rho^{2}}+\frac{\rho^{2}d^{2}L_{f}^{2}}{\delta^{2}b}}\\
& \geq  \frac{(1-\rho^{2})^{3}}{4\rho^{2}(1+\rho^{2})}\cdot\frac{24\rho^{4}(1+\rho^{2})}{(1-\rho^{2})^{3}}\cdot \Brbra{\frac{3\rho^{2}L_{\delta}^{2}}{1-\rho^{2}}+\frac{mdL_{f}^{2}}{\delta}} 
 -6\Brbra{\frac{2\rho^{4}L_{\delta}^{2}}{1-\rho^{2}}+\frac{\rho^{2}mdL_{f}^{2}}{\delta}} =  \frac{6\rho^{4}L_{\delta}^{2}}{1-\rho^{2}}\geq0.
\end{aligned}
\label{eq:temp01}
\end{equation}
 Based on the above inequality, for term $\spadesuit$, we have
\begin{equation}
\begin{aligned}
\spadesuit & =\frac{1-\rho^{2}}{2}\beta_{x}-6\beta_{y}\Brbra{\frac{2\rho^{4}L_{\delta}^{2}}{1-\rho^{2}}+\frac{\rho^{2}d^{2}L_{f}^{2}}{\delta^{2}b}}-\Brbra{\frac{\eta L_{\delta}^{2}}{m}+\frac{6\eta d^{2}L_{f}^{2}T}{m^{2}\delta^{2}b}}\\
 & =\Brbra{\frac{(1-\rho^{2})^{3}}{4\rho^{2}(1+\rho^{2})\eta^{2}}-6\Brbra{\frac{2\rho^{4}L_{\delta}^{2}}{1-\rho^{2}}+\frac{\rho^{2}d^{2}L_{f}^{2}}{\delta^{2}b}}}\beta_{y}-\Brbra{\frac{\eta L_{\delta}^{2}}{m}+\frac{6\eta d^{2}L_{f}^{2}T}{m^{2}\delta^{2}b}}\\
 & \ageq\frac{6\rho^{2}L_{\delta}^{2}}{1-\rho^{2}}\Brbra{\frac{6\rho^{4}L_{\delta}^{2}}{1-\rho^{2}}}^{-1}\Brbra{\frac{3L_{\delta}^{2}}{2m}+\frac{6dL_{f}^{2}T}{m\delta}}\eta-\Brbra{\frac{L_{\delta}^{2}}{m}+\frac{6dL_{f}^{2}T}{m\delta}}\eta=\frac{\eta c^{2}L_{f}^{2}d}{2m\delta},
\end{aligned}
\label{eq:spade_ineq_appendix}
\end{equation}
where $(a)$ holds by 
$\beta_{y}={(1-\rho^{2})\delta\eta}\cdot({c^{2}}/{2\delta}+2T)/{2\rho^{4}c^{2}m}$
,
$b={d}/{m\vep}$ and~\eqref{eq:temp01}. It follows from~\eqref{eq:temp_eta_ineq}
that 
\begin{equation}
\begin{aligned}\clubsuit & =\frac{\eta}{2}-\frac{L_{\delta}\eta^{2}}{2}-\frac{3\eta^{3}d^{2}L_{f}^{2}T}{m^{2}\delta^{2}b}-3\beta_{y}m\eta^{2}\Brbra{\frac{\rho^{2}L_{\delta}^{2}}{1-\rho^{2}}+\frac{2\rho^{4}d^{2}L_{f}^{2}}{b\delta^{2}}}\\
 & \ageq\frac{\eta}{2}-\frac{L_{\delta}\eta^{2}}{2}-\frac{3\eta^{3}d^{2}L_{f}^{2}T}{m^{2}\delta^{2}b}-\frac{3(1-\rho^{2})}{2c^{2}}\cdot\brbra{\frac{c^{2}}{2\delta}+2T}\Brbra{\frac{c^{2}L_{f}^{2}d}{\left(1-\rho^{2}\right)\delta}+\frac{2\rho^{2}d^{2}L_{f}^{2}}{b\delta}}\eta^{3}\\
 & \bgeq\frac{\eta}{4}-\Brbra{\frac{3dL_{f}^{2}T}{m\vep}+\big(\frac{c^{2}L_{f}^{2}d}{\left(1-\rho^{2}\right)\delta}+2\rho^{2}L_{f}^{2}md\big)\cdot\frac{9\left(1-\rho^{2}\right)T}{2c^{2}}}\eta^{3}\geq0,
\end{aligned}
\label{eq:plum_ineq}
\end{equation}
where $(a)$ is by the definition of $\beta_{y}$ in~\eqref{eq:betayxbb}
and $(b)$ follows from $b={d}/{m\vep},\delta=\vep,2\eta\leq L_{\delta}^{-1},{c^{2}}/{2\delta}\leq T$.
Now, we give a upper bound of $\frac{1}{mRT}\sum_{k=0}^{RT-1}\sum_{i=1}^{m}\mbb E\bsbra{\norm{\nabla f_{\delta}(x_{i}^{k})}^{2}}$:
\begin{equation*}
\begin{aligned}
&\frac{1}{mRT}\sum_{k=0}^{RT-1}\sum_{i=1}^{m}\mbb E\bsbra{\norm{\nabla f_{\delta}(x_{i}^{k})}^{2}} \\
& =  \frac{1}{mRT}\sum_{k=0}^{RT-1}\sum_{i=1}^{m}\mbb E\bsbra{\norm{\nabla f_{\delta}(x_{i}^{k}-\bar{x}^{k}+\bar{x}^{k})}^{2}}\\
& \leq \frac{2}{mRT}\sum_{k=0}^{RT-1}\sum_{i=1}^{m}\mbb E\bsbra{\norm{\nabla f_{\delta}(x_{i}^{k}-\bar{x}^{k})}^{2}}+\frac{2}{mRT}\sum_{k=0}^{RT-1}\sum_{i=1}^{m}\mbb E\bsbra{\norm{\nabla f_{\delta}(\bar{x}^{k})}^{2}}\\
& \leq \frac{2L_{\delta}^{2}}{mRT}\sum_{k=0}^{RT-1}\sum_{i=1}^{m}\mbb E\bsbra{\norm{x_{i}^{k}-\bar{x}^{k}}^{2}}+\frac{2}{RT}\sum_{k=0}^{RT-1}\mbb E\bsbra{\norm{\nabla f_{\delta}(\bar{x}^{k})}^{2}}\\
& =  \frac{2L_{\delta}^{2}}{mRT}\sum_{k=0}^{RT-1}\mbb E\bsbra{\norm{\xbf^{k}-\bar{\xbf}^{k}}^{2}}+\frac{2}{RT}\sum_{k=0}^{RT-1}\mbb E\bsbra{\norm{\nabla f_{\delta}(\bar{x}^{k})}^{2}}\\
& \aleq  \frac{2\Delta_{\delta}}{\eta RT}+\frac{4}{\eta T}\rho^{\mcal T}m(\sigma^{2}+L_{f}^{2})\cdot\beta_{y}+\frac{\sigma^{2}}{2b^{\prime}}\\
& \leq \frac{2\Delta_{\delta}}{\eta RT}+6\rho^{\mcal T-2}(\sigma^{2}+L_{f}^{2})\cdot\frac{(1-\rho^{2})\delta}{c^{2}}+\frac{\sigma^{2}}{2b^{\prime}}\bleq  \vep^{2}
\end{aligned}
\end{equation*}
where $(a)$ holds by combining~\eqref{eq:triangle_ineq},~\eqref{eq:spade_ineq_appendix},~\eqref{eq:plum_ineq}
and $(b)$ follows from the definition of $b^{\prime},R,T$ and $\mcal T$
in~\eqref{eq:betayxbb}. Hence, the total zeroth-order
complexity of {\dgfm} for $(\delta,\vep)$-Goldstein stationary point
in expectation is $RTb+Rb^{\prime}=\mcal O(\Delta_{\delta}\delta^{-1}\vep^{-2}d^{\frac{1}{2}}m^{\frac{1}{2}}\cdot \max\{1,\frac{d}{m\vep}\})$ and the total communication rounds is at most \
$RT+R\mcal T=\mcal O((\vep^{-1}+\log (\vep^{-1}d))\cdot \Delta_{\delta}\vep^{-2}d^{\frac{1}{2}}m^{\frac{1}{2}}).$
\end{proof}
\end{document}


\global\long\def\dgfm{\text{DGFM}$^{+}$}%
\global\long\def\dgfmlin{\text{DGFM}}%
\global\long\def\mnist{\text{MNIST}}%
\global\long\def\fashion{\text{Fashion-MNIST}}%
\onecolumn
\begin{center}
\Huge Supplementary of Decentralized Gradient Free Methods for Stochastic
Non-Smooth Non-Convex Optimization
\end{center}

In this supplementary, we list all the Propositions, Lemmas, and Theorems
mentioned in the paper and proved them.

\section{Additional Literature Review}

\paragraph{Review of non-smooth non-convex optimization}
\citet{zhang2020complexity} introduced $(\delta,\epsilon)$-Goldstein stationarity as a valid criterion for non-smooth non-convex optimization, which makes it possible to analyze the non-asymptotic convergence. They also provided novel algorithms with complexity 
$\Ocal(\Delta L_f^3 \delta^{-1} \epsilon^{-4})$, where $\Delta$ is defined as $f(x_0)-\inf_{x\in \mbb R^d} f(x)\leq \Delta$.
Later, by adding random perturbation, \citet{davis2022gradient} and~\citet{tian2022finite} relaxed the subgradient selection oracle assumption.
More recently,~\citet{cutkosky2023optimal} found a connection between
non-convex stochastic optimization and online learning and established a stochastic first-order oracle
complexity of $\Ocal(\Delta L_f^2 \delta^{-1} \epsilon^{-3})$, which is the optimal rate for all $\epsilon\leq\Ocal(\delta)$. These preliminary attempts are first-order algorithms. 
In light of recent advances in designing zeroth-order algorithms for non-smooth non-convex problems, an effective way is by applying the randomized smoothing technique~\cite{nesterov2017random,shamir2017optimal}. This approach constructs a smooth surrogate function to which algorithms for smooth functions can be applied.
\citet{lin2022gradient} established the relationship between Goldstein stationarity of the original function and $\epsilon$-stationarity of the surrogate function constructed via randomized smoothing and presented an algorithm for computing a $(\delta,\epsilon)$-Goldstein stationary point within at most $\Ocal(L_f^4\epsilon^{-4}+ \Delta L_f^3 \delta^{-1}\epsilon^{-4})$ stochastic zeroth-order oracle calls. Later, \citet{chen2023faster} constructed stochastic
recursive gradient estimators to accelerate and achieve a stochastic zeroth-order oracle complexity of $\Ocal(L_f^3 \epsilon^{-3}+ \Delta L_f^2 \delta^{-1}\epsilon^{-3})$.

\section{Algorithms}

We give the two decentralized algorithms here for completeness.

\begin{algorithm}[htpb]
\caption{\label{alg:dgfmlin_appendix}{\dgfmlin} at each node $i$}
    \begin{algorithmic}[1]
        \Require{$x_{i}^{-1}=x_i^0=\bar{x}^0,\forall i \in [m]$, $y_i^0 = g_i(x_i^{-1};S_i^{-1})=\zerobf_{d},K,\eta$}
        \For{$k=0,\ldots,K-1$}
        \State{Sample $S_i^{k}=\bcbra{\xi_{i}^{k,1},w_i^{k,1}}$ and calculate $g_i(x_i^{k};S_i^k)$}
        \State{$y_i^{k+1}=\sum_{j=1}^{m}a_{i,j}(k)(y_j^k+g_j(x_j^k;S_j^k)-g_j(x_j^{k-1};S_j^{k-1}))$}
        \State{$x_i^{k+1}=\tsum_{j=1}^{m}a_{i,j}(k)(x_j^k - \eta y_j^{k+1})$}
        \EndFor
    \Return{Choose $x_{\text{out}}$ uniformly at random from $\bcbra{{x}^k_i}_{k=1,\cdots,K, i=1,\cdots,m}$}
        \end{algorithmic}
\end{algorithm}

















\begin{algorithm}[h]
\caption{\label{alg:dgfm_appendix}{\dgfm} at each node $i$}
    \begin{algorithmic}[1]
        \Require{$x_{i}^{-1}=x_i^0=\bar{x}^0,\forall i \in [m]$, $y_i^0 = v_i^{-1}=\zerobf_{d},K,\eta,b,b^{\prime}$}
        \For{$k=0,\ldots,K-1$}
            \If{$k$ mod $T$ = 0}
            \State Sample $S_i^{k\prime}=\bcbra{(\xi_i^{k\prime,j},w_i^{k\prime,j})}_{j=1}^{b^\prime}$
            \State Calculate $y_i^{k+1} = v_i^k = g_i(x_i^k;S_i^k)$
            \For{$\tau = 1,\cdots,\mcal T$}
            \State $y_i^{k+1}=\tsum_{j=1}^m a_{i,j}^\tau(k)y_j^{k+1}$
        \EndFor
        \Else
        \State Sample $S_i^{k}=\bcbra{(\xi_i^{k,j},w_i^{k,j})}_{j=1}^{b}$
        \State  $v_i^k = v_i^{k-1}+g_i(x_i^k;S_i^k)-g_i(x_i^{k-1};S_i^k)$
        \State $y_i^{k+1}=\tsum_{j=1}^{m}a_{i,j}(k)(y_j^k + v_j^k - v_j^{k-1})$
        \EndIf
        \State $x_i^{k+1}=\tsum_{j=1}^{m}a_{i,j}(k)(x_j^k - \eta y_j^{k+1})$
        \EndFor
    \Return{Choose $x_{\text{out}}$ uniformly at random from $\bcbra{{x}^k_i}_{k=1,\cdots,K, i=1,\cdots,m}$}
        \end{algorithmic}
\end{algorithm}

\section{Notations}

In this section, we list the notations in Table~\ref{tab:Notations},
which will be used repeatedly.

\begin{table}
[htp]\caption{\label{tab:Notations}Notations for {\dgfmlin} and {\dgfm}}
\renewcommand{\arraystretch}{1.3}
\centering{}%
\begin{tabular}{|c|c|c|c|}
\hline 
Notations & Specific Formulation & Specific Meaning & Dimension\tabularnewline
\hline 
\hline 
$\xbf^{k}$ & $[(x_{1}^{k})^{\top},\cdots,(x_{m}^{k})^{\top}]^{\top}$ & Stack all local variables $x_{i}^{k}$ & $\mbb R^{md}$\tabularnewline
\hline 
$\tilde{\xbf}^{k}$ & $[(\tilde{x}_{1}^{k})^{\top},\cdots,(\tilde{x}_{m}^{k})^{\top}]^{\top}$ & Stack all local variables $\tilde{x}_{i}^{k}$ & $\mbb R^{md}$\tabularnewline
\hline 
$\bar{\xbf}^{k}$ & $\onebf_{m}\otimes\bar{x}^{k}$ & Copy variable $\bar{x}^{k}$ and concatenate & $\mbb R^{md}$\tabularnewline
\hline 
$\ybf^{k}$ & $[(y_{1}^{k})^{\top},\cdots,(y_{m}^{k})^{\top}]^{\top}$ & Stack all local variables $y_{i}^{k}$ & $\Rbb^{md}$\tabularnewline
\hline 
$\bar{\ybf}^{k}$ & $\onebf_{m}\otimes\bar{y}^{k}$ & Copy variable $\bar{y}^{k}$ and concatenate & $\mbb R^{md}$\tabularnewline
\hline 
$\vbf^{k}$ & $[(v_{1}^{k})^{\top},\cdots,(v_{m}^{k})^{\top}]^{\top}$ & Stack all local variables $v_{i}^{k}$ & $\Rbb^{md}$\tabularnewline
\hline 
$\bar{\vbf}^{k}$ & $\onebf_{m}\otimes\bar{v}^{k}$ & Copy variable $\bar{v}^{k}$ and concatenate & $\mbb R^{md}$\tabularnewline
\hline 
$\tilde{A}(k)$ & $A(k)\otimes\Ibf_{d}$ & Doubly Stochastic Matrix for nodes & $\mbb R^{md\times md}$\tabularnewline
\hline 
$J$ & $\tfrac{1}{m}\onebf_{m}\onebf_{m}^{\top}\otimes\Ibf_{d}$ & Mean Matrix & $\mbb R^{md\times md}$\tabularnewline
\hline 
$\gbf(\xbf^{k};S^{k})$ & $[(g_{1}(x_{1}^{k};S_{1}^{k}))^{\top},\cdots,(g_{m}(x_{m}^{k};S_{m}^{k}))^{\top}]^{\top}$ & Stack all local variables $g(x_{i}^{k};S_{i}^{k})$ & $\Rbb^{md}\to\Rbb^{md}$\tabularnewline
\hline 
$\nabla f_{\delta}^{i}(x)$ & $\mbb E_{\xi}[\nabla f_{\delta}^{i}(x;\xi)]=\mbb E_{S_{i}}[g(x_{i};S_{i})]$ & Expectation of stochastic gradient & $\mbb R^{d}\to\mbb R^{d}$\tabularnewline
\hline 
$\nabla F_{\delta}(\xbf^{k})$ & $[(\nabla f_{\delta}^{1}(x_{1}^{k}))^{\top},\cdots,(\nabla f_{\delta}^{m}(x_{m}^{k}))^{\top}]^{\top}$ & Stack all expectation of stochastic gradient & $\mbb R^{md}\to\mbb R^{md}$\tabularnewline
\hline 
\end{tabular}
\end{table}

\section{Preliminaries}
\begin{prop}
\label{prop:common_used}Let $\tilde{A}(k)=A(k)\otimes\Ibf_{d},J=\tfrac{1}{m}\onebf_{m}\onebf_{m}^{\top}\otimes\Ibf_{d}$,
then we have
\end{prop}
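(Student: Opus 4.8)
The plan is to reduce every claimed identity to the two defining properties of a doubly stochastic gossip matrix, namely the unit row sums $A(k)\onebf_{m}=\onebf_{m}$ and the unit column sums $\onebf_{m}^{\top}A(k)=\onebf_{m}^{\top}$, combined with the mixed-product rule for Kronecker products, $(P\otimes Q)(R\otimes S)=(PR)\otimes(QS)$, and the transpose rule $(P\otimes Q)^{\top}=P^{\top}\otimes Q^{\top}$. Because the identity factor $\Ibf_{d}$ multiplies trivially in every Kronecker slot, all the algebra happens in the $\Rbb^{m\times m}$ block and then lifts verbatim to $\Rbb^{md\times md}$; this observation is what makes the whole proposition essentially mechanical.

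First I would establish the absorbing identities $J\tilde{A}(k)=\tilde{A}(k)J=J$. Writing $\tilde{A}(k)J=(A(k)\otimes\Ibf_{d})(\tfrac{1}{m}\onebf_{m}\onebf_{m}^{\top}\otimes\Ibf_{d})=\tfrac{1}{m}(A(k)\onebf_{m}\onebf_{m}^{\top})\otimes\Ibf_{d}$, the row-sum property collapses $A(k)\onebf_{m}$ to $\onebf_{m}$ and returns $J$; the symmetric computation using the column-sum property gives $J\tilde{A}(k)=J$. The idempotence $J^{2}=J$ follows the same way from $\onebf_{m}^{\top}\onebf_{m}=m$. From these it is immediate that the deviation matrix satisfies $(\tilde{A}(k)-J)J=\tilde{A}(k)J-J^{2}=J-J=\zerobf$ and likewise $J(\tilde{A}(k)-J)=\zerobf$, so products of such factors continue to annihilate the consensus subspace; this is precisely the form needed later to control the consensus error $\norm{\xbf^{k}-\bar{\xbf}^{k}}$.

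The main obstacle, should the proposition also assert a contraction estimate of the form $\norm{\tilde{A}(k)\zbf-J\zbf}\le\rho\,\norm{\zbf-J\zbf}$ with $\rho$ the second-largest singular value of $A(k)$, is that this step is no longer pure Kronecker bookkeeping: it must invoke the connectivity/mixing assumption on the gossip matrices to guarantee $\rho<1$. Here I would split any $\zbf$ as $\zbf=J\zbf+(\Ibf-J)\zbf$ into its consensus component and its orthogonal part, use the absorbing identities to show $\tilde{A}(k)$ leaves $J\zbf$ fixed while acting only on the orthogonal complement, and then bound that action by the spectral norm of $A(k)$ restricted to $\onebf_{m}^{\perp}$. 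Since the Kronecker factor $\Ibf_{d}$ preserves singular values, the scalar mixing rate of $A(k)$ transfers unchanged to $\tilde{A}(k)$, completing the estimate.

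Overall I expect the identity portions to require only a few lines of Kronecker algebra, with the only genuine content being the transfer of the spectral gap from $A(k)$ to $\tilde{A}(k)$, which rests on the stated doubly-stochastic and connectivity assumptions rather than on any new computation.
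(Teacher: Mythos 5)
Your proposal is correct where it engages the proposition, and on the algebraic claims it follows essentially the same route as the paper: the identities $\tilde{A}(k)J=J=J\tilde{A}(k)$ via the mixed-product rule and the row/column-sum properties of $A(k)$, and the fixed-point property $\tilde{A}(k)(\onebf_{m}\otimes\bar{z})=\onebf_{m}\otimes\bar{z}=J(\onebf_{m}\otimes\bar{z})$, which your ``absorbing identities leave $J\zbf$ fixed'' observation delivers (the paper computes it directly from $(A(k)\otimes\Ibf_{d})(\onebf_{m}\otimes\bar{z})=(A(k)\onebf_{m})\otimes\bar{z}$). The genuine divergence is on the spectral bound $\max_{k}\norm{\tilde{A}(k)-J}\leq\rho<1$: the paper simply cites this as a known result (Tsitsiklis 1984), whereas you sketch an actual proof---decompose $\zbf=J\zbf+(\Ibf-J)\zbf$, note $\tilde{A}(k)-J$ annihilates the consensus subspace and acts as $A(k)$ restricted to $\onebf_{m}^{\perp}$ (tensored with $\Ibf_{d}$, which preserves singular values), and invoke the mixing assumption for the gap. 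Your sketch is sound, and you correctly flag the one subtlety the citation hides: double stochasticity alone does not give $\rho<1$ (a permutation matrix is a counterexample), and the uniformity in $k$ demanded by the $\max_{k}$ requires a uniform mixing/connectivity hypothesis on the family $\{A(k)\}$. What your proof buys is self-containment and an explicit identification of where the network assumption enters; what the paper's citation buys is brevity.

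One omission: the proposition has a fourth claim, $\norm{\tilde{A}(k)}\leq1$, which your proposal never addresses (the paper again cites it). It is recoverable in one line inside your own framework: for a doubly stochastic $A(k)$ one has $\norm{A(k)}_{2}\leq\sqrt{\norm{A(k)}_{1}\norm{A(k)}_{\infty}}=1$, and since tensoring with $\Ibf_{d}$ preserves singular values, $\norm{\tilde{A}(k)}=\norm{A(k)}\leq1$. You should add this sentence so that all four enumerated items are covered.
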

\begin{enumerate}
\item \label{enu:.1property}$\tilde{A}(k)J=J=J\tilde{A}(k)$.
\item \label{enu:.2property}$\tilde{A}(k)\bar{\zbf}^{k}=\bar{\zbf}^{k}=J\bar{\zbf}^{k},\forall\bar{z}^{k}\in\mbb R^{d}$.
\item \label{enu:3property}There exists a constant $\rho>0$ such that
$\max_{k}\{\norm{\tilde{A}(k)-J}\}\leq\rho<1$.
\item \label{enu:.4spectral_norm}$\norm{\tilde{A}(k)}\leq1$.
\end{enumerate}
%
\begin{proof}
First, we have
\[
\tilde{A}(k)J=(A(k)\otimes\Ibf_{m})\cdot(\tfrac{1}{m}\onebf_{m}\onebf_{m}^{\top}\otimes\Ibf_{m})=(\tfrac{1}{m}A(k)\onebf_{m}\onebf_{m}^{\top}\otimes\Ibf_{m})\aeq\tfrac{1}{m}\onebf_{m}\onebf_{m}^{T}\otimes\Ibf_{m}=J\beq J\tilde{A}(k),
\]
where $(a)$ holds by doubly stochasticity and $(b)$ follows from
the similar argument in above $\tilde{A}(k)J=J$ based on $\onebf_{m}^{T}\tilde{A}(k)=\onebf_{m}^{T}$.

Second, it follows from $\bar{\zbf}^{k}=\onebf\otimes\bar{z}^{k}$
that
\[
\tilde{A}(k)\bar{\zbf}^{k}=(A(k)\otimes\Ibf_{d})(\onebf\otimes\bar{z}^{k})=\onebf\otimes\bar{z}^{k}=\bar{\zbf}^{k}=(\tfrac{1}{m}\onebf_{m}\onebf_{m}^{T}\otimes\Ibf_{d})\otimes(\onebf\otimes\bar{z}^{k})=J\bar{\zbf}^{k}.
\]

Third, $\norm{\tilde{A}(k)-J}\leq\rho$ is a well known conclusion
(see e.g.,~\cite{tsitsiklis1984problems}).

Last, $\norm{\tilde{A}(k)}\leq1$ is a well known result (see e.g.,~\cite{sun2022distributed}).
\end{proof}

\section{Convergence Analysis for {\dgfmlin}}
\begin{lem}
Let $\{x_{i}^{k},y_{i}^{k},g_{i}(x_{i}^{k};S_{i}^{k})\}$ be the sequence
generated by {\dgfmlin} and $\{\xbf^{k},\ybf^{k},\gbf^{k}\}$ be
the corresponding stack variables, then for $k\geq0$, we have
\begin{align}
\xbf^{k+1} & =\tilde{A}(k)(\xbf^{k}-\eta\ybf^{k+1}),\label{eq:x_update_dgfm}\\
\ybf^{k+1} & =\tilde{A}(k)(\ybf^{k}+\gbf^{k}-\gbf^{k-1}),\label{eq:y_update_dgfm}\\
\bar{\xbf}^{k+1} & =\bar{\xbf}^{k}-\eta\bar{\ybf}^{k+1},\label{eq:mean_update_x_dgfm}\\
\bar{\ybf}^{k+1} & =\bar{\ybf}^{k}+\bar{\gbf}^{k}-\bar{\gbf}^{k-1},\label{eq:mean_y_update}\\
\bar{\ybf}^{k+1} & =\bar{\gbf}^{k}.\label{eq:y_g_mean}
\end{align}
\end{lem}
%
\begin{proof}
It follows from $x_{i}^{k+1}=\sum_{j=1}^{m}a_{i,j}(k)(x_{j}^{k}-\eta y_{j}^{k+1})$
and the definition of $\tilde{A}(k)$ and $\xbf^{k},\ybf^{k}$ that~\eqref{eq:x_update_dgfm}.
Similar result~\eqref{eq:y_update_dgfm} holds by $y_{i}^{k+1}=\sum_{j=1}^{m}a_{i,j}(k)(y_{j}^{k}+g_{j}^{k}-g_{j}^{k-1})$.
It follows from $J\til A(k)=J$ in~\ref{enu:.1property} of Proposition~\ref{prop:common_used}
andt~\eqref{eq:x_update_dgfm},~\eqref{eq:y_update_dgfm} that~\eqref{eq:mean_update_x_dgfm}
and~\eqref{eq:mean_y_update} hold. Furthermore, combining $y_{i}^{0}=g_{i}^{-1}=\zerobf_{d}$
and~\eqref{eq:mean_y_update} yields~\eqref{eq:y_g_mean}.
\end{proof}
%
\begin{lem}[Consensus Error Decay]
\label{lem:Let--generated}Let $\{\bar{\xbf}^{k},\bar{\ybf}^{k}\}$
be the sequence generated by~{\dgfmlin}, then for $k\geq0$, we have
\begin{equation}
\begin{split} & \mbb E[\norm{\xbf^{k+1}-\bar{\xbf}^{k+1}}^{2}]\\
\leq &\  \rho^{2}(1+\alpha_{1})\mbb E[\norm{\xbf^{k}-\bar{\xbf}^{k}}^{2}]+\rho^{2}\eta^{2}(1+\alpha_{1}^{-1})\mbb E[\norm{\ybf^{k+1}-\bar{\ybf}^{k+1}}^{2}],
\end{split}
\label{eq:consensus_x_dgfmlin}
\end{equation}
and 
\begin{equation}
\begin{split} & \mbb E[\norm{\ybf^{k+1}-\bar{\ybf}^{k+1}}^{2}\mid\mcal F_{k}]\\
\leq &\ \rho^{2}(1+\alpha_{2})\mbb E[\norm{\ybf^{k}-\bar{\ybf}^{k}}^{2}\mid\mcal F_{k}]+\rho^{2}(1+\alpha_{2}^{-1})(6+36{ \eta^{2}}L(\delta)^{2})m\sigma^{2}\\
 & +9\rho^{2}(1+\alpha_{2}^{-1})L(\delta)^{2}\mbb E[\norm{\xbf^{k}-\bar{\xbf}^{k}}^{2}\mid\mcal F_{k}]\\
 & +9\rho^{2}(1+\alpha_{2}^{-1}L(\delta)^{2}(1+4\eta^{2}L(\delta)^{2})\mbb E[\norm{\xbf^{k-1}-\bar{\xbf}^{k-1}}^{2}\mid\mcal F_{k}]\\
 & +18L(\delta)^{2}\eta^{2}{ \rho^{2}}(1+\alpha_{2}^{-1})\mbb E[\norm{\nabla f_{\delta}(\bar{x}^{k-1})}^{2}\mid\mcal F_{k}].
\end{split}
\label{eq:consensus_y_dgfmlin}
\end{equation}
\end{lem}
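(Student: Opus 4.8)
The plan is to prove both estimates in parallel, since they rest on the same mechanism: the mixing-error matrix $\tilde{A}(k)-J$ annihilates the consensus (mean) direction. For the first bound, I would start from the update $\xbf^{k+1}=\tilde{A}(k)(\xbf^{k}-\eta\ybf^{k+1})$ of the preceding lemma and use $\bar{\xbf}^{k+1}=J\xbf^{k+1}=J(\xbf^{k}-\eta\ybf^{k+1})$ to write $\xbf^{k+1}-\bar{\xbf}^{k+1}=(\tilde{A}(k)-J)(\xbf^{k}-\eta\ybf^{k+1})$. Items~\ref{enu:.1property}--\ref{enu:.2property} of Proposition~\ref{prop:common_used} give $(\tilde{A}(k)-J)\bar{\zbf}=\zero$ for any mean vector, so $\xbf^{k}$ and $\ybf^{k+1}$ may be replaced by their deviations $\xbf^{k}-\bar{\xbf}^{k}$ and $\ybf^{k+1}-\bar{\ybf}^{k+1}$. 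Taking norms with $\norm{\tilde{A}(k)-J}\le\rho$ (item~\ref{enu:3property}) and applying Young's inequality $\norm{a-b}^{2}\le(1+\alpha_{1})\norm{a}^{2}+(1+\alpha_{1}^{-1})\norm{b}^{2}$, then taking expectations, yields~\eqref{eq:consensus_x_dgfmlin}. This step is essentially mechanical.

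For the second bound I begin identically from $\ybf^{k+1}=\tilde{A}(k)(\ybf^{k}+\gbf^{k}-\gbf^{k-1})$. Using $\tilde{A}(k)J=J=J^{2}$ one checks $\tilde{A}(k)-J=(\tilde{A}(k)-J)(\Ibf-J)$, so that $\ybf^{k+1}-\bar{\ybf}^{k+1}=(\tilde{A}(k)-J)\big[(\ybf^{k}-\bar{\ybf}^{k})+(\Ibf-J)(\gbf^{k}-\gbf^{k-1})\big]$. Then $\norm{\tilde{A}(k)-J}\le\rho$ and Young's inequality with $\alpha_{2}$ produce the leading term $\rho^{2}(1+\alpha_{2})\mbb E[\norm{\ybf^{k}-\bar{\ybf}^{k}}^{2}\mid\mcal F_{k}]$ together with a remainder $\rho^{2}(1+\alpha_{2}^{-1})\mbb E[\norm{(\Ibf-J)(\gbf^{k}-\gbf^{k-1})}^{2}\mid\mcal F_{k}]$. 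Since every extra term in~\eqref{eq:consensus_y_dgfmlin} carries the common factor $\rho^{2}(1+\alpha_{2}^{-1})$, they must all come from bounding this gradient-difference remainder, which is the heart of the argument.

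To control it I would insert the population gradients and split, using $\norm{\Ibf-J}\le1$, as $(\Ibf-J)(\gbf^{k}-\gbf^{k-1})=(\Ibf-J)\big[(\gbf^{k}-\nabla F_{\delta}(\xbf^{k}))-(\gbf^{k-1}-\nabla F_{\delta}(\xbf^{k-1}))+(\nabla F_{\delta}(\xbf^{k})-\nabla F_{\delta}(\xbf^{k-1}))\big]$, and bound the squared norm by three pieces via $\norm{a+b+c}^{2}\le3\norm{a}^{2}+3\norm{b}^{2}+3\norm{c}^{2}$. The two stochastic pieces are each controlled by the bounded-variance assumption, $\mbb E[\norm{\gbf^{k}-\nabla F_{\delta}(\xbf^{k})}^{2}]\le m\sigma^{2}$ (here the conditional independence of the fresh samples $S^{k},S^{k-1}$ across nodes and steps is what makes the noise combine cleanly), while the deterministic piece is bounded by $L(\delta)^{2}\norm{\xbf^{k}-\xbf^{k-1}}^{2}$ through Lipschitzness of each $\nabla f_{\delta}^{i}$. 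It then remains to estimate the one-step movement $\norm{\xbf^{k}-\xbf^{k-1}}^{2}$: I would decompose it into the consensus errors at $k$ and $k-1$ plus the mean displacement $\bar{\xbf}^{k}-\bar{\xbf}^{k-1}=-\eta\bar{\ybf}^{k}=-\eta\bar{\gbf}^{k-1}$ (using~\eqref{eq:mean_update_x_dgfm} and~\eqref{eq:y_g_mean}), and finally bound $\norm{\bar{\gbf}^{k-1}}^{2}$ by splitting $\bar{g}^{k-1}$ into its noise part, a Lipschitz/consensus part $L(\delta)^{2}\norm{\xbf^{k-1}-\bar{\xbf}^{k-1}}^{2}$, and $\nabla f_{\delta}(\bar{x}^{k-1})$. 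Collecting the three resulting groups — the variance contributions, the consensus errors at $k$ and $k-1$, and the mean-gradient norm — and multiplying by $\rho^{2}(1+\alpha_{2}^{-1})$ reproduces~\eqref{eq:consensus_y_dgfmlin}.

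I expect the main obstacle to be the bookkeeping in this last step: choosing the Young/triangle weights (the repeated factors of $3$ and $2$) so that the recursive consensus error $\norm{\xbf^{k-1}-\bar{\xbf}^{k-1}}^{2}$ accumulates with exactly the stated coefficient $9\rho^{2}(1+\alpha_{2}^{-1})L(\delta)^{2}(1+4\eta^{2}L(\delta)^{2})$ and the mean-gradient term with $18L(\delta)^{2}\eta^{2}\rho^{2}(1+\alpha_{2}^{-1})$, while keeping the conditioning on $\mcal F_{k}$ consistent so that the two fresh-sample variances combine into the stated $(6+36\eta^{2}L(\delta)^{2})m\sigma^{2}$. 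The mathematical structure is routine; the delicacy is entirely in propagating the constants through the nested inequalities.
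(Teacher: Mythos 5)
Your proposal is correct and follows essentially the same route as the paper's proof: the same annihilation of the mean direction by $\tilde{A}(k)-J$ with Young's inequality for both recursions, the same three-way split of $\gbf^{k}-\gbf^{k-1}$ into two noise pieces (giving $6m\sigma^{2}$) plus a Lipschitz piece $3L(\delta)^{2}\norm{\xbf^{k}-\xbf^{k-1}}^{2}$, the same decomposition of the one-step movement via $\bar{\xbf}^{k}-\bar{\xbf}^{k-1}=-\eta\bar{\ybf}^{k}$ with $\bar{\ybf}^{k}=\bar{\gbf}^{k-1}$, and the same final bound $\mbb E[\norm{\bar{\ybf}^{k}}^{2}\mid\mcal F_{k}]\leq 4m\sigma^{2}+4L(\delta)^{2}\mbb E[\norm{\xbf^{k-1}-\bar{\xbf}^{k-1}}^{2}\mid\mcal F_{k}]+2\mbb E[\norm{\nabla f_{\delta}(\bar{x}^{k-1})}^{2}\mid\mcal F_{k}]$, which reproduces all the stated constants. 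Your extra insertion of $(\Ibf-J)$ before bounding the gradient-difference remainder is only a cosmetic variant of the paper's direct estimate $\norm{(\tilde{A}(k)-J)(\gbf^{k}-\gbf^{k-1})}\leq\rho\norm{\gbf^{k}-\gbf^{k-1}}$.
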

%
\begin{proof}
We consider sequence $\{\xbf^{k}\},\{\ybf^{k}\}$, separately.

\textbf{Sequence $\{\xbf^{k}\}$: }It follows from~\eqref{eq:x_update_dgfm}
that
\[
\xbf^{k+1}-\bar{\xbf}^{k+1}=(\tilde{A}(k)-J)(\xbf^{k}-\eta\ybf^{k+1}).
\]
Combining the above inequality and~\eqref{enu:.2property},~\eqref{enu:3property}
in Proposition~\ref{prop:common_used}, we have
\begin{align*}
\norm{\xbf^{k+1}-\bar{\xbf}^{k+1}} & \leq\rho\norm{\xbf^{k}-\bar{\xbf}^{k}}+\rho\eta\norm{\ybf^{k+1}-\bar{\ybf}^{k+1}}.
\end{align*}
Combining the above inequality and Young's inequality and taking the expectation,
then we complete the proof of~\eqref{eq:consensus_x_dgfmlin}.

\textbf{Sequence $\{\ybf^{k}\}$:} It follows from~\eqref{eq:y_update_dgfm}
that 
\[
\norm{\ybf^{k+1}-\bar{\ybf}^{k+1}}\leq\rho\norm{\ybf^{k}-\bar{\ybf}^{k}}+\rho\norm{\gbf^{k}-\gbf^{k-1}}.
\]
Combining the above inequality, Young's inequality, and taking the expectation
on natural field $\mcal F_{k}$, then we have
\begin{align*}
\mbb E[\norm{\ybf^{k+1}-\bar{\ybf}^{k+1}}^{2}\mid\mcal F_{k}] & \leq\rho^{2}(1+\alpha_{2})\mbb E[\norm{\ybf^{k}-\bar{\ybf}^{k}}^{2}\mid\mcal F_{k}]+\rho^{2}(1+\alpha_{2}^{-1})\mbb E[\norm{\gbf^{k}-\gbf^{k-1}}^{2}\mid\mcal F_{k}].
\end{align*}
Note that 
\[
\begin{split}\mbb E[\norm{\gbf^{k}-\gbf^{k-1}}^{2}\mid\mcal F_{k}]= & \sum_{i=1}^{m}\mbb E[\norm{g_{i}(x_{i}^{k};S_{i}^{k})\pm\nabla f_{\delta}^{i}(x_{i}^{k})\pm\nabla f_{\delta}^{i}(x_{i}^{k-1})-g_{i}(x_{i}^{k};S_{i}^{k-1})}^{2}\mid\mcal F_{k}]\\
\leq & 3\sum_{i=1}^{m}(2\sigma^{2}+L(\delta)^{2}\mbb E[\norm{x_{i}^{k}-x_{i}^{k-1}}^{2}\mid\mcal F_{k}])\\
= & 6m\sigma^{2}+3L(\delta)^{2}\mbb E[\norm{\xbf^{k}-\xbf^{k-1}}^{2}\mid\mcal F_{k}]\\
\leq & 6m\sigma^{2}+9L(\delta)^{2}(\mbb E[\norm{\xbf^{k}-\bar{\xbf}^{k}}^{2}+\norm{\bar{\xbf}^{k}-\bar{\xbf}^{k-1}}^{2}+\norm{\xbf^{k-1}-\bar{\xbf}^{k-1}}^{2}\mid\mcal F_{k}])\\
= & 6m\sigma^{2}+9L(\delta)^{2}(\mbb E[\norm{\xbf^{k}-\bar{\xbf}^{k}}^{2}+\eta^{2}\norm{\bar{\ybf}^{k}}^{2}+\norm{\xbf^{k-1}-\bar{\xbf}^{k-1}}^{2}\mid\mcal F_{k}]),
\end{split}
\]
and 
\[
\begin{split}\mbb E[\norm{\bar{\ybf}^{k}}^{2}\mid\mcal F_{k}]\leq & m\cdot\mbb E[(2\norm{\tfrac{1}{m}\sum_{i=1}^{m}(g_{i}(x_{i}^{k-1};S_{i}^{k-1})\pm\nabla f_{\delta}^{i}(x_{i}^{k-1})-\nabla f_{\delta}^{i}(\bar{x}^{k-1})}^{2}+2\norm{\tfrac{1}{m}\sum_{i=1}^{m}\nabla f_{\delta}^{i}(\bar{x}^{k-1})}^{2})\mid\mcal F_{k}]\\
= & 4m\mbb E[\norm{\tfrac{1}{m}\sum_{i=1}^{m}g_{i}(x_{i}^{k-1};S_{i}^{k-1})-\nabla f_{\delta}^{i}(x_{i}^{k-1})}^{2}\mid\mcal F_{k}]\\
 & +4m\mbb E[\norm{\tfrac{1}{m}\sum_{i=1}^{m}f_{\delta}^{i}(x_{i}^{k-1})-\nabla f_{\delta}^{i}(\bar{x}^{k-1})}^{2}\mid\mcal F_{k}]+2\mbb E[\norm{\nabla f_{\delta}(\bar{x}^{k-1})}^{2}\mid\mcal F_{k}]\\
\leq & 4\mbb E[\sum_{i=1}^{m}\norm{g_{i}(x_{i}^{k-1};S_{i}^{k-1})-\nabla f_{\delta}^{i}(x_{i}^{k-1})}^{2}]+4\mbb E[\sum_{i=1}^{m}\norm{\nabla f_{\delta}^{i}(x_{i}^{k-1})-\nabla f_{\delta}^{i}(\bar{x}^{k-1})}^{2}\mid\mcal F_{k}]\\
 & +2\mbb E[\norm{\nabla f_{\delta}(\bar{x}^{k-1})}^{2}\mid\mcal F_{k}]\\
\leq & 4m\sigma^{2}+4L(\delta)^{2}\mbb E[\norm{\xbf^{k-1}-\bar{\xbf}^{k-1}}^{2}\mid\mcal F_{k}]+2\mbb E[\norm{\nabla f_{\delta}(\bar{x}^{k-1})}^{2}\mid\mcal F_{k}].
\end{split}
\]

Combining the above three inequalities and taking expectation with
all random variables yields
\[
\begin{aligned}\mbb E[\norm{\ybf^{k+1}-\bar{\ybf}^{k+1}}^{2}\mid\mcal F_{k}]\leq & \rho^{2}(1+\alpha_{2})\mbb E[\norm{\ybf^{k}-\bar{\ybf}^{k}}^{2}\mid\mcal F_{k}]+\rho^{2}(1+\alpha_{2}^{-1})\mbb E[\norm{\gbf^{k}-\gbf^{k-1}}^{2}\mid\mcal F_{k}]\\
\leq & \rho^{2}(1+\alpha_{2})\mbb E[\norm{\ybf^{k}-\bar{\ybf}^{k}}^{2}\mid\mcal F_{k}]\\
 & +\rho^{2}(1+\alpha_{2}^{-1})\left(6m\sigma^{2}+9L(\delta)^{2}(\mbb E[\norm{\xbf^{k}-\bar{\xbf}^{k}}^{2}+\eta^{2}\norm{\bar{\ybf}^{k}}^{2}+\norm{\xbf^{k-1}-\bar{\xbf}^{k-1}}^{2}\mid\mcal F_{k}])\right)\\
\leq & \rho^{2}(1+\alpha_{2})\mbb E[\norm{\ybf^{k}-\bar{\ybf}^{k}}^{2}\mid\mcal F_{k}]\\
 & +\rho^{2}(1+\alpha_{2}^{-1})\left(6m\sigma^{2}+9L(\delta)^{2}(\mbb E[\norm{\xbf^{k}-\bar{\xbf}^{k}}^{2}+\norm{\xbf^{k-1}-\bar{\xbf}^{k-1}}^{2}\mid\mcal F_{k}])\right.\\
 & \left.+9L(\delta)^{2}\eta^{2}(4m\sigma^{2}+4L(\delta)^{2}\mbb E[\norm{\xbf^{k-1}-\bar{\xbf}^{k-1}}^{2}\mid\mcal F_{k}]+2\mbb E[\norm{\nabla f_{\delta}(\bar{x}^{k-1})}^{2}\mid\mcal F_{k}]\right)\\
= & \rho^{2}(1+\alpha_{2})\mbb E[\norm{\ybf^{k}-\bar{\ybf}^{k}}^{2}\mid\mcal F_{k}]+\rho^{2}(1+\alpha_{2}^{-1})(6+36{ \eta^{2}}L(\delta)^{2})m\sigma^{2}\\
 & +9\rho^{2}(1+\alpha_{2}^{-1})L(\delta)^{2}\mbb E[\norm{\xbf^{k}-\bar{\xbf}^{k}}^{2}\mid\mcal F_{k}]\\
 & +9\rho^{2}(1+\alpha_{2}^{-1}L(\delta)^{2}(1+4\eta^{2}L(\delta)^{2})\mbb E[\norm{\xbf^{k-1}-\bar{\xbf}^{k-1}}^{2}\mid\mcal F_{k}]\\
 & +18L(\delta)^{2}\eta^{2}{ \rho^{2}}(1+\alpha_{2}^{-1})\mbb E[\norm{\nabla f_{\delta}(\bar{x}^{k-1})}^{2}\mid\mcal F_{k}],
\end{aligned}
\]
which complete the proof of~\eqref{eq:consensus_y_dgfmlin}.
\end{proof}
%
\begin{nonumber_lem}
Let $\{\xbf^{k},\ybf^{k},\bar{\xbf}^{k},\bar{\ybf}^{k}\}$
be the sequence generated by~{\dgfmlin}, then we have

\begin{equation}\label{lem:Let--be-1}
\begin{split} & \beta_{x}\mbb E[\norm{\xbf^{k+1}-\bar{\xbf}^{k+1}}^{2}-\norm{\xbf^{k}-\bar{\xbf}^{k}}^{2}]+\beta_{y}\mbb E[\norm{\ybf^{k+1}-\bar{\ybf}^{k+1}}^{2}-\norm{\ybf^{k}-\bar{\ybf}^{k}}^{2}]\\
\leq & -(\beta_{x}(1-\rho^{2}(1+\alpha_{1}))-9\beta_{y}L(\delta)^{2}\cdot\rho^{2}(1+\alpha_{2}^{-1}))\mbb E[\norm{\xbf^{k}-\bar{\xbf}^{k}}^{2}]\\
 & +9\beta_{y}\rho^{2}(1+\alpha_{2}^{-1})L(\delta)^{2}(1+4\eta^{2}L(\delta)^{2})\mbb E[\norm{\xbf^{k-1}-\bar{\xbf}^{k-1}}^{2}]\\
 & -\beta_{y}(1-\rho^{2}(1+\alpha_{2}))\mbb E[\norm{\ybf^{k}-\bar{\ybf}^{k}}^{2}]+\beta_{x}\rho^{2}\eta^{2}(1+\alpha_{1}^{-1})\mbb E[\norm{\ybf^{k+1}-\bar{\ybf}^{k+1}}^{2}]\\
 & +18\beta_{y}\eta^{2}{ \rho^{2}}L(\delta)^{2}(1+\alpha_{2}^{-1})\mbb E[\norm{\nabla f_{\delta}(\bar{x}^{k-1})}^{2}]+\beta_{y}\rho^{2}m(1+\alpha_{2}^{-1})(6+36L(\delta)^{2}{ \eta^{2}})\sigma^{2},
\end{split}
\end{equation}
where $\beta_{x},\beta_{y}>0$.
\end{nonumber_lem}
%
\begin{proof}
It follows from Lemma~\ref{lem:Let--generated} that
\begin{equation}
\beta_{x}\mbb E[\norm{\xbf^{k+1}-\bar{\xbf}^{k+1}}^{2}-\norm{\xbf^{k}-\bar{\xbf}^{k}}^{2}]\leq\beta_{x}(\rho^{2}(1+\alpha_{1})-1)\mbb E[\norm{\xbf^{k}-\bar{\xbf}^{k}}^{2}]+\beta_{x}\rho^{2}\eta^{2}(1+\alpha_{1}^{-1})\mbb E[\norm{\ybf^{k+1}-\bar{\ybf}^{k+1}}^{2}],\label{eq:beta_x}
\end{equation}
and
\begin{align}
\beta_{y}\mbb E[\norm{\ybf^{k+1}-\bar{\ybf}^{k+1}}^{2}-\norm{\ybf^{k}-\bar{\ybf}^{k}}^{2}]\leq & \beta_{y}(\rho^{2}(1+\alpha_{2})-1)\mbb E[\norm{\ybf^{k}-\bar{\ybf}^{k}}^{2}]+\beta_{y}\rho^{2}(1+\alpha_{2}^{-1})(6m+36m{ \eta^{2}}L(\delta)^{2})\sigma^{2}\nonumber \\
 & +9\beta_{y}\rho^{2}(1+\alpha_{2}^{-1})L(\delta)^{2}\mbb E[\norm{\xbf^{k}-\bar{\xbf}^{k}}^{2}]\nonumber \\
 & +9\beta_{y}\rho^{2}(1+\alpha_{2}^{-1})L(\delta)^{2}(1+4\eta^{2}L(\delta)^{2})\mbb E[\norm{\xbf^{k-1}-\bar{\xbf}^{k-1}}^{2}]\label{eq:eq_y}\\
 & +18\beta_{y}\eta^{2}{ \rho^{2}}L(\delta)^{2}(1+\alpha_{2}^{-1})\mbb E[\norm{\nabla f_{\delta}(\bar{x}^{k-1})}^{2}],\nonumber 
\end{align}
where $\beta_{x},\beta_{y}>0$. Summing the two inequalities up yields
the result.
\end{proof}
%
\begin{lem}
[Descent Property of {\dgfmlin}]\label{lem:Let--be}Let $\{\xbf^{k},\ybf^{k},\bar{\xbf}^{k},\bar{\ybf}^{k}\}$
be the sequence generated by {\dgfmlin}, then we have
\begin{equation}
\mbb E[f_{\delta}(\bar{x}^{k+1})]-\mbb E[f_{\delta}(\bar{x}^{k})]\leq-\tfrac{\eta}{2}\mbb E[\norm{\nabla f_{\delta}(\bar{x}^{k})}^{2}]+\tfrac{\eta L(\delta)^{2}}{2m}\mbb E[\norm{\xbf^{k}-\bar{\xbf}^{k}}^{2}]+L(\delta)\eta^{2}(\sigma^{2}+L_{f}^{2}).
\end{equation}
\end{lem}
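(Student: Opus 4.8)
The plan is to derive a one-step descent inequality by combining the $L(\delta)$-smoothness of the surrogate $f_{\delta}$ with the mean dynamics already established. From~\eqref{eq:mean_update_x_dgfm} and~\eqref{eq:y_g_mean}, the averaged iterate obeys $\bar{x}^{k+1}=\bar{x}^{k}-\eta\bar{\gbf}^{k}$ with $\bar{\gbf}^{k}=\tfrac{1}{m}\tsum_{i=1}^{m}g_{i}(x_{i}^{k};S_{i}^{k})$. First I would apply the standard descent inequality for a function with $L(\delta)$-Lipschitz gradient at the pair $\bar{x}^{k},\bar{x}^{k+1}$, obtaining
\[
f_{\delta}(\bar{x}^{k+1}) \le f_{\delta}(\bar{x}^{k}) - \eta\inner{\nabla f_{\delta}(\bar{x}^{k})}{\bar{\gbf}^{k}} + \tfrac{L(\delta)\eta^{2}}{2}\norm{\bar{\gbf}^{k}}^{2}.
\]

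Next I would condition on $\mcal F_{k}$ and use unbiasedness of the local estimators, $\mbb E[g_{i}(x_{i}^{k};S_{i}^{k})\mid\mcal F_{k}]=\nabla f_{\delta}^{i}(x_{i}^{k})$, so that $\mbb E[\bar{\gbf}^{k}\mid\mcal F_{k}]=\tfrac{1}{m}\tsum_{i}\nabla f_{\delta}^{i}(x_{i}^{k})$. The key step is to insert $\nabla f_{\delta}(\bar{x}^{k})=\tfrac{1}{m}\tsum_{i}\nabla f_{\delta}^{i}(\bar{x}^{k})$ and split the cross term as
\[
-\eta\inner{\nabla f_{\delta}(\bar{x}^{k})}{\tfrac{1}{m}\tsum_{i}\nabla f_{\delta}^{i}(x_{i}^{k})} = -\eta\norm{\nabla f_{\delta}(\bar{x}^{k})}^{2} -\eta\inner{\nabla f_{\delta}(\bar{x}^{k})}{\tfrac{1}{m}\tsum_{i}(\nabla f_{\delta}^{i}(x_{i}^{k})-\nabla f_{\delta}^{i}(\bar{x}^{k}))}.
\]
Applying Young's inequality to the residual inner product and then the $L(\delta)$-Lipschitz estimate $\norm{\nabla f_{\delta}^{i}(x_{i}^{k})-\nabla f_{\delta}^{i}(\bar{x}^{k})}\le L(\delta)\norm{x_{i}^{k}-\bar{x}^{k}}$, together with Jensen's inequality across the average, produces exactly the leading $-\tfrac{\eta}{2}\norm{\nabla f_{\delta}(\bar{x}^{k})}^{2}$ term and the consensus-error term $\tfrac{\eta L(\delta)^{2}}{2m}\norm{\xbf^{k}-\bar{\xbf}^{k}}^{2}$.

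For the quadratic term, I would decompose $\bar{\gbf}^{k}$ into its conditional mean $\tfrac{1}{m}\tsum_{i}\nabla f_{\delta}^{i}(x_{i}^{k})$ and the stochastic deviation, use $\norm{a+b}^{2}\le 2\norm{a}^{2}+2\norm{b}^{2}$, and bound the two pieces by the uniform gradient bound $\norm{\nabla f_{\delta}^{i}}\le L_{f}$ (inherited from the $L_{f}$-Lipschitz continuity of the original objective, preserved under randomized smoothing) and the variance bound $\mbb E[\norm{g_{i}(x_{i}^{k};S_{i}^{k})-\nabla f_{\delta}^{i}(x_{i}^{k})}^{2}\mid\mcal F_{k}]\le\sigma^{2}$; Jensen's inequality makes the averaging harmless, so $\mbb E[\norm{\bar{\gbf}^{k}}^{2}\mid\mcal F_{k}]\le 2(L_{f}^{2}+\sigma^{2})$, contributing the residual $L(\delta)\eta^{2}(\sigma^{2}+L_{f}^{2})$. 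Taking the full expectation and rearranging then gives the claim. I expect the main obstacle to be the bookkeeping in the cross term: one must route the gradient heterogeneity across nodes through $\nabla f_{\delta}(\bar{x}^{k})=\tfrac{1}{m}\tsum_{i}\nabla f_{\delta}^{i}(\bar{x}^{k})$ so that the Lipschitz step yields the consensus error with the precise constant $\tfrac{1}{2m}$, while Young's inequality is balanced so that the coefficient of $\norm{\nabla f_{\delta}(\bar{x}^{k})}^{2}$ ends at exactly $\tfrac{\eta}{2}$.
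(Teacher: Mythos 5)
Your proposal is correct and follows essentially the same route as the paper's proof: the $L(\delta)$-smoothness descent inequality applied to the averaged iterate with $\bar{x}^{k+1}=\bar{x}^{k}-\eta\bar{g}^{k}$, unbiasedness conditioned on $\mcal F_{k}$, the identical cross-term split through $\tfrac{1}{m}\sum_{i}\nabla f_{\delta}^{i}(x_{i}^{k})-\nabla f_{\delta}(\bar{x}^{k})$ with balanced Young's inequality and the Lipschitz-plus-Jensen bound yielding the $\tfrac{\eta L(\delta)^{2}}{2m}$ consensus term, and the same second-moment bound $\mbb E[\norm{\bar{g}^{k}}^{2}\mid\mcal F_{k}]\leq 2(\sigma^{2}+L_{f}^{2})$. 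The only cosmetic difference is that the paper writes the descent inequality for each $f_{\delta}^{i}$ and averages over $i$, whereas you apply it directly to $f_{\delta}$, which is equivalent since $f_{\delta}$ is an average of $L(\delta)$-smooth functions.
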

%
\begin{proof}
It follows from $f_{\delta}^{i}(x)$ is differentiable and $L_{f}$-Lipschitz
with $L(\delta)$-Lipschitz gradient that
\begin{align*}
f_{\delta}^{i}(\bar{x}^{k+1}) & \leq f_{\delta}^{i}(\bar{x}^{k})-\inprod{\nabla f_{\delta}^{i}(\bar{x}^{k})}{\bar{x}^{k+1}-\bar{x}^{k}}+\tfrac{L(\delta)}{2}\norm{\bar{x}^{k+1}-\bar{x}^{k}}^{2}\\
 & =f_{\delta}^{i}(\bar{x}^{k})-\eta\inprod{\nabla f_{\delta}^{i}(\bar{x}^{k})}{\bar{g}^{k}}+\tfrac{L(\delta)\eta^{2}}{2}\norm{\bar{g}^{k}}^{2}.
\end{align*}
Summing the above inequality over $i=1$ to $m$, dividing by $m$
and taking expectation on $\mcal F_{k}$, then we have
\begin{equation}
\mbb E[f_{\delta}(\bar{x}^{k+1})\mid\mcal F_{k}]\leq f_{\delta}(\bar{x}^{k})-\eta\inprod{\nabla f_{\delta}(\bar{x}^{k})}{\mbb E[\bar{g}^{k}\mid\mcal F_{k}]}+\tfrac{L(\delta)\eta^{2}}{2}\mbb E[\norm{\bar{g}^{k}}^{2}\mid\mcal F_{k}].\label{eq:templin01}
\end{equation}
By $\mbb E[\bar{g}^{k}\mid\mcal F_{k}]=\tfrac{1}{m}\sum_{i=1}^{m}\nabla f_{\delta}^{i}(x_{i}^{k})$
and the above inequality, we have
\begin{align}
\inprod{\nabla f_{\delta}(\bar{x}^{k})}{\mbb E[\bar{g}^{k}\mid\mcal F_{k}]} & =\inprod{\nabla f_{\delta}(\bar{x}^{k})}{\tfrac{1}{m}\sum_{i=1}^{m}\nabla f_{\delta}^{i}(x_{i}^{k})-\nabla f_{\delta}(\bar{x}^{k})}+\norm{\nabla f_{\delta}(\bar{x}^{k})}^{2}\label{eq:templin02}\\
\mbb E[\norm{\bar{g}^{k}}^{2}\mid\mcal F_{k}] & \leq2\mbb E[\norm{\bar{g}^{k}-\tfrac{1}{m}\sum_{i=1}^{m}\nabla f_{\delta}^{i}(x_{i}^{k})}^{2}+2\norm{\tfrac{1}{m}\sum_{i=1}^{m}\nabla f_{\delta}^{i}(x_{i}^{k})}^{2}\mid\mcal F_{k}]\label{eq:templin03}\\
 & \leq2(\sigma^{2}+L_{f}^{2}),\nonumber 
\end{align}

Combining~\eqref{eq:templin01},~\eqref{eq:templin02} and~\eqref{eq:templin03}
yields
\begin{equation}
\begin{split}\mbb E[f_{\delta}(\bar{x}^{k+1})\mid\mcal F_{k}] & \leq\mbb E[f_{\delta}(\bar{x}^{k})-\eta\brbra{\inprod{\nabla f_{\delta}(\bar{x}^{k})}{\tfrac{1}{m}\sum_{i=1}^{m}\nabla f_{\delta}^{i}(x_{i}^{k})-\nabla f_{\delta}(\bar{x}^{k})}+\norm{\nabla f_{\delta}(\bar{x}^{k})}^{2}}\mid\mcal F_{k}]+L(\delta)\eta^{2}(\sigma^{2}+L_{f}^{2})\\
 & \aleq\mbb E[f_{\delta}(\bar{x}^{k})-\eta\norm{\nabla f_{\delta}(\bar{x}^{k})}^{2}+\tfrac{\eta}{2}\norm{\nabla f_{\delta}(\bar{x}^{k})}^{2}+\tfrac{\eta}{2}\norm{\tfrac{1}{m}\sum_{i=1}^{m}\nabla f_{\delta}^{i}(x_{i}^{k})-\nabla f_{\delta}(\bar{x}^{k})}^{2}\mid\mcal F_{k}]+L(\delta)\eta^{2}(\sigma^{2}+L_{f}^{2})\\
 & \leq\mbb E[f_{\delta}(\bar{x}^{k})-\tfrac{\eta}{2}\norm{\nabla f_{\delta}(\bar{x}^{k})}^{2}+\tfrac{\eta L(\delta)^{2}}{2m}\norm{\xbf^{k}-\bar{\xbf}^{k}}^{2}\mid\mcal F_{k}]+L(\delta)\eta^{2}(\sigma^{2}+L_{f}^{2}),
\end{split}
\end{equation}
where $(a)$ holds by $\inprod{\abf}{\bbf}\leq\tfrac{1}{2}\norm{\abf}^{2}+\tfrac{1}{2}\norm{\bbf}^{2}$.
Taking the expectation of both sides and rearranging yields that
\[
\mbb E[f_{\delta}(\bar{x}^{k+1})]-\mbb E[f_{\delta}(\bar{x}^{k})]\leq-\tfrac{\eta}{2}\mbb E[\norm{\nabla f_{\delta}(\bar{x}^{k})}^{2}]+\tfrac{\eta L(\delta)^{2}}{2m}\mbb E[\norm{\xbf^{k}-\bar{\xbf}^{k}}^{2}]+L(\delta)\eta^{2}(\sigma^{2}+L_{f}^{2}).
\]
\end{proof}
\begin{thm}
[Convergence Result for {\dgfmlin}]\label{thm:main_thm1}Let $\{\xbf^{k},\ybf^{k},\bar{\xbf}^{k},\bar{\ybf}^{k}\}$
be the sequence generated by {\dgfmlin}, then for any $\beta_{x},\beta_{y}>0$
we have

\begin{equation}
\begin{split} & \beta_{x}\mbb E[\norm{\xbf^{K}-\bar{\xbf}^{K}}^{2}-\norm{\xbf^{0}-\bar{\xbf}^{0}}^{2}]+\beta_{y}\mbb E[\norm{\ybf^{{ K+1}}-\bar{\ybf}^{K+1}}^{2}-\norm{\ybf^{{ 1}}-\bar{\ybf}^{1}}^{2}]\\
 & +\mbb E[f_{\delta}(\bar{x}^{K})]-\mbb E[f_{\delta}(\bar{x}^{0})]\\
\leq & -(\tfrac{\eta}{2}-18\beta_{y}{ \rho^{2}}\eta^{2}L(\delta)^{2}(1+\alpha_{2}^{-1}))\sum_{k=0}^{K-1}\mbb E[\norm{\nabla f_{\delta}(\bar{x}^{k})}^{2}]\\
 & -(\beta_{x}(1-\rho^{2}(1+\alpha_{1}))-9\beta_{y}\rho^{2}(1+\alpha_{2}^{-1})L(\delta)^{2}(1+4\eta^{2}L(\delta)^{2})-\tfrac{\eta L(\delta)^{2}}{2m})\sum_{k=0}^{K-1}\mbb E[\norm{\xbf^{k}-\bar{\xbf}^{k}}^{2}]\\
 & +9\beta_{y}\rho^{2}(1+\alpha_{2}^{-1})L(\delta)^{2}\sum_{k=1}^{K}\mbb E[\norm{\xbf^{k}-\bar{\xbf}^{k}}^{2}]\\
 & -(\beta_{y}(1-\rho^{2}(1+\alpha_{2}))-\beta_{x}\eta^{2}\cdot\rho^{2}(1+\alpha_{1}^{-1}))\sum_{k=1}^{K}\mbb E[\norm{\ybf^{k}-\bar{\ybf}^{k}}^{2}]\\
 & +(\beta_{y}\rho^{2}(1+\alpha_{2}^{-1})(6m+36m{ \eta^{2}}L(\delta)^{2})\sigma^{2}+L(\delta)\eta^{2}(\sigma^{2}+L_{f}^{2}))\cdot K.
\end{split}
\end{equation}
\end{thm}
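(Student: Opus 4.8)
The plan is to convert the three per-step estimates already in hand—the $\xbf$-consensus bound~\eqref{eq:beta_x}, the $\ybf$-consensus bound~\eqref{eq:eq_y}, and the descent inequality of Lemma~\ref{lem:Let--be}—into a single telescoping sum over $k=0,\dots,K-1$. Summing the descent inequality alone collapses its left-hand side to $\Expe[f_\delta(\bar{x}^{K})]-\Expe[f_\delta(\bar{x}^{0})]$ and yields $-\tfrac{\eta}{2}\sum_{k=0}^{K-1}\Expe[\norm{\nabla f_\delta(\bar{x}^{k})}^2]$, the penalty $\tfrac{\eta L(\delta)^2}{2m}\sum_{k=0}^{K-1}\Expe[\norm{\xbf^{k}-\bar{\xbf}^{k}}^2]$, and the additive constant $L(\delta)\eta^2(\sigma^2+L_f^2)K$, which are exactly the function-value contributions in the statement. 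The remaining task is to add the weighted consensus differences so that their left-hand sides also telescope.

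The one subtlety that makes everything align is the index at which the $\ybf$-consensus bound is applied. I would use~\eqref{eq:beta_x} and the descent bound at index $k$, but apply~\eqref{eq:eq_y} at the shifted index $k+1$; equivalently, I would work with the shifted Lyapunov potential $\Phi_k=\beta_x\Expe[\norm{\xbf^{k}-\bar{\xbf}^{k}}^2]+\beta_y\Expe[\norm{\ybf^{k+1}-\bar{\ybf}^{k+1}}^2]+\Expe[f_\delta(\bar{x}^{k})]$ and bound $\Phi_{k+1}-\Phi_k$. Under this shift the $\ybf$-difference telescopes to $\beta_y\Expe[\norm{\ybf^{K+1}-\bar{\ybf}^{K+1}}^2-\norm{\ybf^{1}-\bar{\ybf}^{1}}^2]$, matching the statement, while the $\xbf$-difference still telescopes to $\beta_x\Expe[\norm{\xbf^{K}-\bar{\xbf}^{K}}^2-\norm{\xbf^{0}-\bar{\xbf}^{0}}^2]$. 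Crucially, the shift turns the $\norm{\nabla f_\delta(\bar{x}^{k-1})}^2$ and $\norm{\xbf^{k-1}-\bar{\xbf}^{k-1}}^2$ appearing in~\eqref{eq:eq_y} into $\norm{\nabla f_\delta(\bar{x}^{k})}^2$ and $\norm{\xbf^{k}-\bar{\xbf}^{k}}^2$, so the gradient term coming from the shifted $\ybf$-bound (now indexed $0,\dots,K-1$) merges with the descent term $-\tfrac{\eta}{2}\norm{\nabla f_\delta(\bar{x}^{k})}^2$ over the very same range.

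After the shift, I would simply collect like terms. The $\norm{\ybf^{k+1}-\bar{\ybf}^{k+1}}^2$ contributions—the $-\beta_y(1-\rho^2(1+\alpha_2))$ piece from~\eqref{eq:eq_y} and the $+\beta_x\rho^2\eta^2(1+\alpha_1^{-1})$ piece from~\eqref{eq:beta_x}—both sit on $k=0,\dots,K-1$, so after reindexing they fuse into the single $\sum_{k=1}^{K}$ $\ybf$-term of the statement. The $\xbf$-consensus errors split into a ``current'' group $\sum_{k=0}^{K-1}\norm{\xbf^{k}-\bar{\xbf}^{k}}^2$ (gathering the $\beta_x$ decay, the descent penalty $\tfrac{\eta L(\delta)^2}{2m}$, and the $9\beta_y\rho^2(1+\alpha_2^{-1})L(\delta)^2(1+4\eta^2L(\delta)^2)$ coefficient) and an ``advanced'' group $\sum_{k=1}^{K}\norm{\xbf^{k}-\bar{\xbf}^{k}}^2$ (from the shifted $\norm{\xbf^{k+1}-\bar{\xbf}^{k+1}}^2$ coefficient), exactly as the two separate $\xbf$-sums of the statement; the $\sigma^2$ constants accumulate the factor $K$. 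The main obstacle is therefore not any individual estimate but the bookkeeping of these index shifts: one must check that applying~\eqref{eq:eq_y} one step ahead of~\eqref{eq:beta_x} and the descent bound is consistent with the nested expectations over $\mcal F_k$, and that the two summation ranges $[0,K-1]$ and $[1,K]$ are attached to the correct terms. Since $\beta_x,\beta_y,\alpha_1,\alpha_2$ are left free, no parameter tuning is required at this stage.
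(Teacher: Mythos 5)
Your proposal is correct and follows essentially the same route as the paper: it combines the two per-step consensus recursions~\eqref{eq:beta_x} and~\eqref{eq:eq_y} with the descent bound of Lemma~\ref{lem:Let--be} and telescopes, which is precisely the paper's one-line proof via the unnumbered combined lemma. In fact, your explicit one-step shift of~\eqref{eq:eq_y} (applying it at index $k+1$, i.e.\ summing over $k=1,\dots,K$, via the potential $\Phi_k$) is exactly what the paper's terse ``summing from $k=0$ to $K-1$'' must implicitly mean for the boundary terms $\beta_{y}\mbb E[\norm{\ybf^{K+1}-\bar{\ybf}^{K+1}}^{2}-\norm{\ybf^{1}-\bar{\ybf}^{1}}^{2}]$, the gradient sum starting at $k=0$, and the split ranges $[0,K-1]$ versus $[1,K]$ to come out as stated, so your bookkeeping is, if anything, more precise than the paper's.
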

%
\begin{proof}
Summing the combination of~\eqref{lem:Let--be-1} and Lemma~\ref{lem:Let--be} from $k=0$ to $K-1$ completes our proof.
\end{proof}

%




%

\begin{cor}
    Set $\alpha_1 = \alpha_2 = \tfrac{1-\rho^2}{2\rho^2}$, $\delta = \mcal O(\vep)$, $\beta_x = \mcal O(\delta^{-1}),\beta_y = \mcal O(\vep^4\delta)$ and $\eta=\mcal O(\vep^2 \delta)$ in {\dgfmlin}. Then it outputs a $(\delta,\vep)$-Goldstein stationary point of $f(\cdot)$ in expectation and the total stochastic zeroth-order complexity is at most $\mcal O(\Delta_{\delta} \delta^{-1}\vep^{-4})$.
\end{cor}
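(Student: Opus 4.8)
The plan is to specialize the master inequality of Theorem~\ref{thm:main_thm1} to the stated parameters, zero out the consensus terms by a coefficient analysis, telescope to bound the averaged smoothed gradient, and convert this into $(\delta,\vep)$-Goldstein stationarity via randomized smoothing. First I would substitute $\alpha_1=\alpha_2=\tfrac{1-\rho^2}{2\rho^2}$, so that $1-\rho^2(1+\alpha_1)=1-\rho^2(1+\alpha_2)=\tfrac{1-\rho^2}{2}$ and $1+\alpha_1^{-1}=1+\alpha_2^{-1}=\tfrac{1+\rho^2}{1-\rho^2}$; thus every contraction factor is separated from $1$ by a constant depending only on the mixing parameter $\rho$. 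Writing $L(\delta)=\mcal O(\delta^{-1})$ for the smoothing gradient-Lipschitz constant, I would then insert $\beta_x=\mcal O(\delta^{-1})$, $\beta_y=\mcal O(\vep^4\delta)$, $\eta=\mcal O(\vep^2\delta)$ and examine the three coefficients multiplying the ``bad'' sums on the right-hand side of the theorem.

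The crux is a simultaneous feasibility check on these coefficients. After combining the two consensus-$x$ sums (which overlap on $k=1,\dots,K-1$), the net per-term coefficient is $C_1-C_2$ with $C_1=\beta_x\tfrac{1-\rho^2}{2}-\tfrac{\eta L(\delta)^2}{2m}-9\beta_y\rho^2(1+\alpha_2^{-1})L(\delta)^2(1+4\eta^2L(\delta)^2)$ and $C_2=9\beta_y\rho^2(1+\alpha_2^{-1})L(\delta)^2$; the consensus-$y$ coefficient is $\beta_y\tfrac{1-\rho^2}{2}-\beta_x\eta^2\rho^2(1+\alpha_1^{-1})$; and the gradient coefficient is $\tfrac{\eta}{2}-18\beta_y\rho^2\eta^2L(\delta)^2(1+\alpha_2^{-1})$. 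Under the scalings these read, respectively, $\mcal O(\delta^{-1})\ge\mcal O(\vep^4\delta^{-1})$, $\mcal O(\vep^4\delta)\ge\mcal O(\vep^4\delta)$, and $\tfrac12\ge\mcal O(\vep^6)$, so all are nonnegative for small $\vep$ once the hidden constants are fixed, and the gradient coefficient is at least $c\,\eta$ with $c\to\tfrac12$. I expect this balancing to be the main obstacle: the consensus-$y$ condition is order-tight in both $\vep$ and $\delta$, so it holds only after pinning down the admissible ratio of the $\mcal O(\cdot)$ constants in $\beta_y$ relative to $\beta_x$ and $\eta$.

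With these coefficients nonnegative, I would use $\norm{\xbf^0-\bar{\xbf}^0}^2=0$, the nonnegativity of the terminal consensus terms, and $\mbb E[f_\delta(\bar{x}^K)]\ge\inf_x f_\delta(x)$ to obtain
\[
c\,\eta\sum_{k=0}^{K-1}\mbb E[\norm{\nabla f_\delta(\bar{x}^k)}^2]\ \le\ \Delta_\delta+\big(\beta_y\rho^2(1+\alpha_2^{-1})(6m+36m\eta^2L(\delta)^2)\sigma^2+L(\delta)\eta^2(\sigma^2+L_f^2)\big)K,
\]
where $\Delta_\delta$ collects the bounded initialization quantities $\mbb E[f_\delta(\bar{x}^0)]-\inf_x f_\delta(x)$ and $\beta_y\mbb E[\norm{\ybf^1-\bar{\ybf}^1}^2]$. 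Dividing by $c\,\eta K$, the scalings make the variance term $\mcal O(\vep^2)$ since $\beta_y/\eta=\mcal O(\vep^2)$ and $L(\delta)\eta=\mcal O(\vep^2)$, so choosing $K=\mcal O(\Delta_\delta\delta^{-1}\vep^{-4})$ also forces $\Delta_\delta/(c\eta K)=\mcal O(\vep^2)$, giving $\tfrac1K\sum_k\mbb E[\norm{\nabla f_\delta(\bar{x}^k)}^2]=\mcal O(\vep^2)$.

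Finally I would transfer this from the mean iterates to the returned node variable. Retaining (rather than discarding) the consensus-$x$ sum in the same inequality and using $C_1-C_2=\Theta(\delta^{-1})$ yields $\tfrac1K\sum_k\mbb E[\norm{\xbf^k-\bar{\xbf}^k}^2]=\mcal O(\delta^2\vep^4)$; then from $\norm{\nabla f_\delta(x_i^k)}^2\le 2\norm{\nabla f_\delta(\bar{x}^k)}^2+2L(\delta)^2\norm{x_i^k-\bar{x}^k}^2$ and averaging over $i$ and $k$, the extra term is $L(\delta)^2\cdot\mcal O(\delta^2\vep^4)=\mcal O(\vep^4)$, so $\mbb E[\norm{\nabla f_\delta(x_{\text{out}})}^2]=\tfrac{1}{mK}\sum_{i,k}\mbb E[\norm{\nabla f_\delta(x_i^k)}^2]=\mcal O(\vep^2)$. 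Jensen's inequality gives $\mbb E[\norm{\nabla f_\delta(x_{\text{out}})}]=\mcal O(\vep)$, and the randomized-smoothing inclusion $\nabla f_\delta(x)\in\partial_\delta f(x)$ (e.g.,~\citet{lin2022gradient}) upgrades a small expected $\norm{\nabla f_\delta}$ to a $(\delta,\vep)$-Goldstein stationary point in expectation after calibrating the hidden constants. Since {\dgfmlin} issues $\mcal O(1)$ zeroth-order evaluations per node per iteration, the total complexity is $\mcal O(K)=\mcal O(\Delta_\delta\delta^{-1}\vep^{-4})$.
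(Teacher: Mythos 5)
Your proposal is correct and follows essentially the same route as the paper: specialize Theorem~\ref{thm:main_thm1} with $\alpha_1=\alpha_2=\tfrac{1-\rho^2}{2\rho^2}$, verify positivity and the orders of the coefficients (your $C_1-C_2$, consensus-$y$, and gradient coefficients are exactly the paper's $(\rtwo)$--$(\rthree)$, $(\rfour)$, and $(\rone)$, including the observation that the consensus-$y$ condition is order-tight), absorb the initialization term $\beta_y\mbb E[\norm{\ybf^1-\bar{\ybf}^1}^2]$, divide by the gradient coefficient times $K$, and transfer to $x_{\text{out}}$ via $\nabla f_\delta(x)\in\partial_\delta f(x)$ and $\norm{\nabla f_\delta(x_i^k)}^2\le 2\norm{\nabla f_\delta(\bar{x}^k)}^2+2L(\delta)^2\norm{x_i^k-\bar{x}^k}^2$. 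The only cosmetic differences are that the paper explicitly computes $\mbb E[\norm{\ybf^1-\bar{\ybf}^1}^2]\le\rho^2(\sigma^2+L_f^2)$ where you assert boundedness, and your consensus transfer via $C_1-C_2=\Theta(\delta^{-1})$ matches the paper's ratio $(\rtwo)/(\rone)=\Ocal(\vep^{-2}\delta^{-2})$.
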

\begin{proof}
    It follows from Theorem~\ref{thm:main_thm1} that
\[
\begin{split} & \beta_{x}\mbb E[\norm{\xbf^{K}-\bar{\xbf}^{K}}^{2}-\norm{\xbf^{0}-\bar{\xbf}^{0}}^{2}]+\beta_{y}\mbb E[\norm{\ybf^{{ K+1}}-\bar{\ybf}^{K+1}}^{2}-\norm{\ybf^{{ 1}}-\bar{\ybf}^{1}}^{2}]\\
 & +\mbb E[f_{\delta}(\bar{x}^{K})]-\mbb E[f_{\delta}(\bar{x}^{0})]\\
\leq & -(\tfrac{\eta}{2}-18\beta_{y}{ \rho^{2}}\eta^{2}L(\delta)^{2}(1+\alpha_{2}^{-1}))\sum_{k=0}^{K-1}\mbb E[\norm{\nabla f_{\delta}(\bar{x}^{k})}^{2}]\\
 & -(\beta_{x}(1-\rho^{2}(1+\alpha_{1}))-18\beta_{y}\rho^{2}(1+\alpha_{2}^{-1})L(\delta)^{2}({1}+2\eta^{2}L(\delta)^{2})-\tfrac{\eta L(\delta)^{2}}{2m})\sum_{k=1}^{K-1}\mbb E[\norm{\xbf^{k}-\bar{\xbf}^{k}}^{2}]\\
 & -(\beta_{x}(1-\rho^{2}(1+\alpha_{1}))-9\beta_{y}\rho^{2}(1+\alpha_{2}^{-1})L(\delta)^{2}(1+4\eta^{2}L(\delta)^{2})-\tfrac{\eta L(\delta)^{2}}{2m})\mbb E[\norm{\xbf^{0}-\bar{\xbf}^{0}}^{2}]\\
 & +9\beta_{y}\rho^{2}(1+\alpha_{2}^{-1})L(\delta)^{2}\mbb E[\norm{\xbf^{K}-\bar{\xbf}^{K}}^{2}]\\
 & -(\beta_{y}(1-\rho^{2}(1+\alpha_{2}))-\beta_{x}\eta^{2}\cdot\rho^{2}(1+\alpha_{1}^{-1}))\sum_{k=1}^{K}\mbb E[\norm{\ybf^{k}-\bar{\ybf}^{k}}^{2}]\\
 & +(\beta_{y}\rho^{2}(1+\alpha_{2}^{-1})(6m+36m{ \eta^{2}}L(\delta)^{2})\sigma^{2}+L(\delta)\eta^{2}(\sigma^{2}+L_{f}^{2})\cdot K.
\end{split}
\]
which implies
\begin{equation}\label{eq:key_ineq}
\begin{split} & (\underbrace{\tfrac{\eta}{2}-18\beta_{y}{ \rho^{2}}\eta^{2}L(\delta)^{2}(1+\alpha_{2}^{-1})}_{(\rone)})\sum_{k=0}^{K-1}\mbb E[\norm{\nabla f_{\delta}(\bar{x}^{k})}^{2}]\\
\aleq & -(\mbb E[f_{\delta}(\bar{x}^{K})]-\mbb E[f_{\delta}(\bar{x}^{0})])+\beta_{y}\rho^{2}(\sigma^{2}+L_{f}^{2})\\
 & -(\underbrace{\beta_{x}(1-\rho^{2}(1+\alpha_{1}))-18\beta_{y}\rho^{2}(1+\alpha_{2}^{-1})L(\delta)^{2}(1+2\eta^{2}L(\delta)^{2})-\tfrac{\eta L(\delta)^{2}}{2m}}_{(\rtwo)})\sum_{k=1}^{K-1}\mbb E[\norm{\xbf^{k}-\bar{\xbf}^{k}}^{2}]\\
 & -(\underbrace{\beta_{x}-9\beta_{y}\rho^{2}(1+\alpha_{2}^{-1})L(\delta)^{2}}_{(\rthree)})\mbb E[\norm{\xbf^{K}-\bar{\xbf}^{K}}^{2}]\\
 & -(\underbrace{\beta_{y}(1-\rho^{2}(1+\alpha_{2}))-\beta_{x}\eta^{2}\cdot\rho^{2}(1+\alpha_{1}^{-1})}_{(\rfour)})\sum_{k=1}^{K}\mbb E[\norm{\ybf^{k}-\bar{\ybf}^{k}}^{2}]\\
 & +\underbrace{(\beta_{y}\rho^{2}(1+\alpha_{2}^{-1})(6m+36m{ \eta^{2}}L(\delta)^{2})\sigma^{2}}_{(\rfive)}+\underbrace{L(\delta)\eta^{2}(\sigma^{2}+L_{f}^{2}))}_{(\rsix)}\cdot K.
\end{split}
\end{equation}
where (a) is by $\mbb E[\norm{\ybf^{{ 1}}-\bar{\ybf}^{1}}^{2}]=\mbb E[\norm{(\tilde{A}(0)-J)(\ybf^{{ 0}}+\gbf^{0}-\gbf^{-1})}^{2}]
=\mbb E[\norm{(\tilde{A}(0)-J)\gbf^{0}}^{2}]
\leq\rho^{2}(\sigma^{2}+L_{f}^{2})$.

By setting $\alpha_1 = \alpha_2 = \tfrac{1}{2}(\rho^{-2}-1)$, we have $1-\rho^{2}(1+\alpha_{1})=1-\rho^{2}(1+\alpha_{2})=\frac{1-\rho^{2}}{2}$,$1+\alpha_{1}^{-1}=1+\alpha_{2}^{-1}=\frac{1+\rho^{2}}{1-\rho^{2}}$. Combining the parameters setting of $\beta_x,\beta_y, \eta$ yields $(\rone)$-$(\rsix)>0$ and
\begin{equation}\label{eq:order1}
    (\rone) = \Ocal(\vep^2\delta),\ (\rtwo) = \Ocal(\delta^{-1}),\ (\rthree) = \Ocal(\delta^{-1}),\ (\rfour) = \Ocal (\vep^4 \delta), \  (\rfive) = \Ocal(\vep^4\delta),\ (\rsix) = \Ocal(\vep^4\delta).
\end{equation}
Rearranging~\eqref{eq:key_ineq} and combining $(\rthree)>(\rtwo)$ yields
\begin{equation}\label{eq:res1}
(\rone)\sum_{k=0}^{K-1}\mbb E[\norm{\nabla f_{\delta}(\bar{x}^{k})}^{2}]+(\rtwo)\sum_{k=1}^{K}\mbb E[\norm{\xbf^{k}-\bar{\xbf}^{k}}^{2}]+(\rfour)\sum_{k=1}^{K}\mbb E[\norm{\ybf^{k}-\bar{\ybf}^{k}}^{2}]\leq\Delta_{\delta}+\beta_{y}\rho^{2}(\sigma^{2}+L_{f}^{2})+(\rfive)\cdot K+(\rsix)\cdot K.
\end{equation}
Divide $(\rone)\cdot K$ on both sides of~\eqref{eq:res1}, we have
\begin{equation}\label{eq:res2}
\tfrac{1}{K}\sum_{k=0}^{K-1}\mbb E[\norm{\nabla f_{\delta}(\bar{x}^{k})}^{2}]+\tfrac{(\rtwo)}{K\cdot(\rone)}\sum_{k=1}^{K}\mbb E[\norm{\xbf^{k}-\bar{\xbf}^{k}}^{2}]+\tfrac{(\rfour)}{K\cdot(\rone)}\sum_{k=1}^{K}\mbb E[\norm{\ybf^{k}-\bar{\ybf}^{k}}^{2}]\leq\tfrac{\Delta_{\delta}+\beta_{y}\rho^{2}(\sigma^{2}+L_{f}^{2})}{K\cdot(\rone)}+\tfrac{(\rfive)+(\rsix)}{(\rone)}.
\end{equation}
By using~\eqref{eq:order1}, we can deduce that the right-hand side of~\eqref{eq:res2} is at order $\mcal O(\vep^{2})$ when $K=\mathcal{O}(\Delta_{\delta}\delta^{-1}\vep^{-4})$. Since $(\rtwo)/(\rone)=\Ocal(\vep^{-2}\delta^{-2})$, $\nabla f_{\delta}(x)\in \partial_{\delta}f(x)$ and $\norm{\nabla f_{\delta}(x_i^k)}^2\leq 2\norm{\nabla f_{\delta}(\bar{x}^k)}^2 + 2L(\delta)^{2}\norm{x_i^k-\bar{x}^k}^2$, then we complete our proof.
\end{proof}

\section{Convergence Analysis for {\dgfm}}
\begin{lem}
Let $\{x_{i}^{k},y_{i}^{k},g_{i}^{k},v_{i}^{k}\}$ be the sequence
generated by Algorithm~\ref{alg:dgfm_appendix} and $\{\xbf^{k},\ybf^{k},\gbf^{k},\vbf^{k}\}$
be the corresponding stack variables, then for $k\geq0$, we have
\begin{align}
\xbf^{k+1} & =\tilde{A}(k)(\xbf^{k}-\eta\ybf^{k+1}),\label{eq:x_update-1}\\
\bar{\xbf}^{k+1} & =\bar{\xbf}^{k}-\eta\bar{\ybf}^{k+1},\label{eq:x_bar_update-1}\\
\bar{\ybf}^{k+1} & =\bar{\vbf}^{k}.\label{eq:y_bar_v_bar_dgfm}
\end{align}
Furthermore, for $lT<k<(l+1)T$, $l=0,\cdots,L-1$, we have 
\begin{align}
\ybf^{k+1} & =\tilde{A}(k)(\ybf^{k}+\vbf^{k}-\vbf^{k-1}),\label{eq:y_update-1}\\
\bar{\ybf}^{k+1} & =\bar{\ybf}^{k}+\bar{\vbf}^{k}-\bar{\vbf}^{k-1}.\label{eq:y_bar_dgfm}
\end{align}
\end{lem}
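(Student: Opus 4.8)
The plan is to mirror the structure of the lemma already established for {\dgfmlin}, deriving each stacked identity from the per-node update of Algorithm~\ref{alg:dgfm_appendix} and then projecting onto the consensus subspace with the mean matrix $J$. First I would dispatch the two identities that hold for every $k\ge0$ and are insensitive to which branch is taken. Equation~\eqref{eq:x_update-1} is immediate: stacking the last line $x_i^{k+1}=\sum_j a_{i,j}(k)(x_j^k-\eta y_j^{k+1})$ over $i$ and using $\tilde{A}(k)=A(k)\otimes\Ibf_d$ gives $\xbf^{k+1}=\tilde{A}(k)(\xbf^k-\eta\ybf^{k+1})$, exactly as in the linear case. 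Left-multiplying this by $J$ and invoking $J\tilde{A}(k)=J$ from part~\ref{enu:.1property} of Proposition~\ref{prop:common_used}, together with $\bar{\xbf}^k=J\xbf^k$ and $\bar{\ybf}^{k+1}=J\ybf^{k+1}$, yields~\eqref{eq:x_bar_update-1}.

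Next I would treat the non-checkpoint iterations $lT<k<(l+1)T$, where the \textbf{else} branch executes. Stacking $y_i^{k+1}=\sum_j a_{i,j}(k)(y_j^k+v_j^k-v_j^{k-1})$ gives~\eqref{eq:y_update-1}, and applying $J$ together with $J\tilde{A}(k)=J$ gives the mean recursion~\eqref{eq:y_bar_dgfm}, namely $\bar{\ybf}^{k+1}=\bar{\ybf}^k+\bar{\vbf}^k-\bar{\vbf}^{k-1}$. These are the direct analogues of~\eqref{eq:y_update_dgfm} and~\eqref{eq:mean_y_update}.

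The only identity requiring genuine care is~\eqref{eq:y_bar_v_bar_dgfm}, $\bar{\ybf}^{k+1}=\bar{\vbf}^k$, which I would prove by induction on $k$, splitting on the branch. When $k\bmod T=0$ the \textbf{if} branch initializes $y_i^{k+1}=v_i^k$ and then performs $\mcal T$ rounds of doubly-stochastic mixing; in stacked form $\ybf^{k+1}=\tilde{A}^{\mcal T}(k)\cdots\tilde{A}^{1}(k)\,\vbf^k$. Since each round uses a doubly-stochastic matrix, $J\tilde{A}^{\tau}(k)=J$ by the same argument as part~\ref{enu:.1property} of Proposition~\ref{prop:common_used}, so the composition is annihilated onto the consensus subspace and $\bar{\ybf}^{k+1}=J\vbf^k=\bar{\vbf}^k$ directly, with no appeal to history; this also furnishes the base case $\bar{\ybf}^1=\bar{\vbf}^0$ at $k=0$. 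When $lT<k<(l+1)T$, I would combine the mean recursion~\eqref{eq:y_bar_dgfm} with the inductive hypothesis $\bar{\ybf}^k=\bar{\vbf}^{k-1}$ to obtain $\bar{\ybf}^{k+1}=\bar{\ybf}^k+\bar{\vbf}^k-\bar{\vbf}^{k-1}=\bar{\vbf}^k$. The main obstacle is precisely this interaction with the periodic checkpoints: unlike {\dgfmlin}, where one global telescoping from the initialization $y_i^0=g_i^{-1}=\zerobf_d$ suffices, here the induction must be re-seeded at each multiple of $T$, and one must check that the inner mixing loop leaves the node average invariant so that the within-epoch telescoping starts from the correct value $\bar{\vbf}^{lT}$.
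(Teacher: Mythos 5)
Your proposal is correct and follows essentially the same route as the paper: stack the per-node updates to get \eqref{eq:x_update-1} and \eqref{eq:y_update-1}, left-multiply by $J$ using $J\tilde{A}(k)=J$ for the mean recursions, and prove \eqref{eq:y_bar_v_bar_dgfm} by an induction re-seeded at each checkpoint $k=lT$. If anything, your treatment of the checkpoint step is more explicit than the paper's (which disposes of $\bar{y}^{lT+1}=\bar{v}^{lT}$ with a terse ``similarly''), since you spell out that $J\tilde{A}^{\tau}(k)=J$ for each of the $\mcal{T}$ inner mixing rounds, so the average is preserved through the inner loop.
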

%
\begin{proof}
It follows from $x_{i}^{k+1}=\sum_{j=1}^{m}a_{i,j}(k)(x_{j}^{k}-\eta y_{j}^{k+1})$
and the definitions of $\xbf^{k},\ybf^{k}$ and $\tilde{A}(k)$ that~\eqref{eq:x_update-1}
holds. Similar result~\eqref{eq:y_update-1} holds by $y_{i}^{k+1}=\sum_{j=1}^{m}a_{i,j}(k)(y_{j}^{k}+v_{j}^{k}-v_{j}^{k-1})$
for $lT<k<(l+1)T,\ l=0,\cdots,L-1$. It follows from $J\tilde{A}(k)=J$
and~\eqref{eq:x_update-1},~\eqref{eq:y_update-1} that~\eqref{eq:x_bar_update-1},~\ref{eq:y_bar_dgfm}
holds. By $y_{i}^{0}=v_{i}^{-1}$, we have $\bar{y}^{0}=\bar{v}^{-1}$.
Similarly, for $k=lT,l=1,\cdots,L-1$, $\bar{y}^{lT+1}=\bar{v}^{lT}$.
Suppose $\bar{y}^{k}=\bar{v}^{k-1}$ holds for any $k\geq0$, then
it follows from~\eqref{eq:y_bar_v_bar_dgfm} holds for any $k\geq0$
that the desired result~\eqref{eq:y_bar_v_bar_dgfm} holds.
\end{proof}
%
\begin{lem}
\label{lem:consensus-dgfm_plus}For sequence $\{\xbf^{k},\ybf^{k}\}$ generated
by Algorithm~\ref{alg:dgfm_appendix}, we have
\begin{equation}
\begin{split}\mbb E[\norm{\ybf^{k+1}-\bar{\ybf}^{k+1}}^{2}]\leq & \rho^{2}(1+\alpha_{1})\mbb E[\norm{\ybf^{k}-\bar{\ybf}^{k}}^{2}]+3(\tfrac{\rho^{2}L(\delta)^{2}}{\alpha_{1}}+\tfrac{\rho^{2}d^{2}L_{f}^{2}}{\delta^{2}})\mbb E[\norm{\xbf^{k}-\bar{\xbf}^{k}}^{2}+\norm{\bar{\xbf}^{k-1}-\xbf^{k-1}}^{2}]\\
 & +3\eta^{2}(\tfrac{\rho^{2}L(\delta)^{2}}{\alpha_{1}}+\tfrac{\rho^{2}d^{2}L_{f}^{2}}{\delta^{2}})\mbb E[\norm{\bar{\vbf}^{k-1}}^{2}],\ \ lT+1\leq k<(l+1)T,\forall l=0,1,\cdots,L-1.
\end{split}
\label{eq:y_consensus4-1}
\end{equation}
\begin{equation}
\mbb E[\norm{\xbf^{k+1}-\bar{\xbf}^{k+1}}^{2}]\leq(1+\alpha_{2})\rho^{2}\mbb E[\norm{\xbf^{k}-\bar{\xbf}^{k}}^{2}]+(1+\alpha_{2}^{-1})\rho^{2}\eta^{2}\mbb E[\norm{\ybf^{k+1}-\bar{\ybf}^{k+1}}^{2}],\ \ \forall k\geq0.\label{eq:x_consensus}
\end{equation}
Furthermore, we have
\begin{equation}
\mbb E[\norm{\ybf^{lT+1}-\bar{\ybf}^{lT+1}}^{2}]\leq{2\rho^{\Tcal}m(\sigma^{2}+L_{f}^{2})},\ \ \forall l=0,\cdots,L-1.\label{eq:cycle_consensus}
\end{equation}
\end{lem}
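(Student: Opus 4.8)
The plan is to prove the three inequalities separately, following the same consensus-contraction template used in Lemma~\ref{lem:Let--generated} for {\dgfmlin}, but now accounting for the variance-reduced estimator $v_i^k$ and the periodic ``restart'' structure at iterations $k=lT$. For~\eqref{eq:x_consensus}, I would start from the update~\eqref{eq:x_update-1}, subtract the mean using $\bar{\xbf}^{k+1}=J\xbf^{k+1}$ together with $\tilde A(k)J=J$ from Proposition~\ref{prop:common_used}, to obtain $\xbf^{k+1}-\bar{\xbf}^{k+1}=(\tilde A(k)-J)(\xbf^k-\eta\ybf^{k+1})$. Since $(\tilde A(k)-J)\bar{\xbf}^k=\zerobf$ and $(\tilde A(k)-J)\bar{\ybf}^{k+1}=\zerobf$ by item~\ref{enu:.2property}, this equals $(\tilde A(k)-J)((\xbf^k-\bar{\xbf}^k)-\eta(\ybf^{k+1}-\bar{\ybf}^{k+1}))$. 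Applying the spectral bound $\norm{\tilde A(k)-J}\leq\rho$ from item~\ref{enu:3property}, the triangle inequality, and then Young's inequality with parameter $\alpha_2$ (to split the cross term $\norm{a-\eta b}^2\leq(1+\alpha_2)\norm{a}^2+(1+\alpha_2^{-1})\eta^2\norm{b}^2$) yields~\eqref{eq:x_consensus} directly after taking expectations. This step is routine and mirrors the {\dgfmlin} argument exactly.

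For the inner-loop bound~\eqref{eq:y_consensus4-1}, valid for $lT+1\leq k<(l+1)T$, I would use~\eqref{eq:y_update-1}, subtract the mean as above to get $\ybf^{k+1}-\bar{\ybf}^{k+1}=(\tilde A(k)-J)((\ybf^k-\bar{\ybf}^k)+(\vbf^k-\vbf^{k-1}))$, apply $\norm{\tilde A(k)-J}\leq\rho$, and then Young's inequality with $\alpha_1$ to separate $\norm{\ybf^k-\bar{\ybf}^k}^2$ from $\norm{\vbf^k-\vbf^{k-1}}^2$. The crux is to control $\mbb E[\norm{\vbf^k-\vbf^{k-1}}^2]$. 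Here the recursive estimator gives $v_i^k-v_i^{k-1}=g_i(x_i^k;S_i^k)-g_i(x_i^{k-1};S_i^k)$, so the variance decomposition splits into a gradient-difference term bounded by the $L(\delta)$-Lipschitz smoothness (producing the $\rho^2 L(\delta)^2/\alpha_1$ coefficient) and a residual bias/variance term of the smoothing estimator bounded via the $dL_f/\delta$ factor characteristic of randomized smoothing (producing the $\rho^2 d^2 L_f^2/\delta^2$ coefficient). I would then pass from $\norm{x_i^k-x_i^{k-1}}^2$ to the consensus and mean-drift quantities by inserting $\pm\bar{\xbf}^k,\pm\bar{\xbf}^{k-1}$ and using $\bar{\xbf}^k-\bar{\xbf}^{k-1}=-\eta\bar{\ybf}^k=-\eta\bar{\vbf}^{k-1}$ from~\eqref{eq:x_bar_update-1} and~\eqref{eq:y_bar_v_bar_dgfm}, which is what yields the $\norm{\xbf^{k-1}-\bar{\xbf}^{k-1}}^2$ and $\eta^2\norm{\bar{\vbf}^{k-1}}^2$ terms on the right.

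For the restart bound~\eqref{eq:cycle_consensus} at $k=lT$, the structure is different: the algorithm recomputes $y_i^{lT+1}=v_i^{lT}=g_i(x_i^{lT};S_i^{lT\prime})$ and then performs $\Tcal$ rounds of consensus mixing, so $\ybf^{lT+1}=\tilde A^{\Tcal}(lT)\vbf^{lT}$ in stacked form (product of the $\Tcal$ gossip steps). Subtracting the mean gives $\ybf^{lT+1}-\bar{\ybf}^{lT+1}=(\tilde A^{\Tcal}-J)\vbf^{lT}$, and since each factor contracts by $\rho$ on the orthogonal complement of $\onebf$, I expect $\norm{\tilde A^{\Tcal}-J}\leq\rho^{\Tcal}$; combined with $\norm{\vbf^{lT}}^2\leq\sum_i\norm{g_i(x_i^{lT};S_i^{lT\prime})}^2$ and the bound $\mbb E\norm{g_i}^2\leq\sigma^2+L_f^2$ (second-moment control from bounded variance and the $L_f$-Lipschitz property), together with the factor-of-two from a bias/variance split, this gives $2\rho^{\Tcal}m(\sigma^2+L_f^2)$. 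The main obstacle I anticipate is the estimator-difference bound in~\eqref{eq:y_consensus4-1}: correctly tracking the two distinct error sources of the zeroth-order randomized-smoothing gradient—the smoothness-driven term scaling as $L(\delta)^2$ and the dimension-dependent term scaling as $d^2 L_f^2/\delta^2$—and ensuring that the shared sample $S_i^k$ across consecutive evaluations makes the variance telescope correctly rather than contributing an unreduced $\sigma^2$.
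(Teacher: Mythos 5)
Your handling of \eqref{eq:x_consensus} and of the restart bound \eqref{eq:cycle_consensus} matches the paper's proof essentially step for step: the paper likewise writes $\xbf^{k+1}-\bar{\xbf}^{k+1}=(\tilde{A}(k)-J)(\xbf^{k}-\bar{\xbf}^{k}-\eta(\ybf^{k+1}-\bar{\ybf}^{k+1}))$ and applies Young's inequality, and for $k=lT$ it uses $J^{n}=J$ to factor $\tilde{A}^{1}(k)\cdots\tilde{A}^{\Tcal}(k)-J=(\tilde{A}^{1}(k)-J)\cdots(\tilde{A}^{\Tcal}(k)-J)$ and bounds $\mbb E[\norm{\gbf(\xbf^{lT};S^{lT\prime})}^{2}]\leq 2m(\sigma^{2}+L_{f}^{2})$ exactly as you sketch (your contraction even gives $\rho^{2\Tcal}\leq\rho^{\Tcal}$, which is harmless). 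The genuine gap is in \eqref{eq:y_consensus4-1}. You propose to apply Young's inequality with parameter $\alpha_{1}$ \emph{first}, obtaining $\norm{\ybf^{k+1}-\bar{\ybf}^{k+1}}^{2}\leq\rho^{2}(1+\alpha_{1})\norm{\ybf^{k}-\bar{\ybf}^{k}}^{2}+\rho^{2}(1+\alpha_{1}^{-1})\norm{\vbf^{k}-\vbf^{k-1}}^{2}$, and only then bound $\mbb E[\norm{\vbf^{k}-\vbf^{k-1}}^{2}]$. But then the factor $(1+\alpha_{1}^{-1})$ unavoidably multiplies the almost-sure two-point Lipschitz bound $\tfrac{d^{2}L_{f}^{2}}{\delta^{2}}\norm{\xbf^{k}-\xbf^{k-1}}^{2}$, yielding the coefficient $\rho^{2}(1+\alpha_{1}^{-1})\tfrac{d^{2}L_{f}^{2}}{\delta^{2}}$ rather than the claimed $\tfrac{\rho^{2}L(\delta)^{2}}{\alpha_{1}}+\tfrac{\rho^{2}d^{2}L_{f}^{2}}{\delta^{2}}$, in which only the smooth constant $L(\delta)^{2}$ (smaller than $d^{2}L_{f}^{2}/\delta^{2}$ by a factor of order $d$ for the randomized-smoothing surrogate) carries the $\alpha_{1}^{-1}$. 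No bias--variance decomposition of $\mbb E[\norm{\vbf^{k}-\vbf^{k-1}}^{2}]$ performed \emph{after} the Young step can repair this, since both the squared mean and the variance are then hit by $(1+\alpha_{1}^{-1})$; your stated attribution of coefficients is therefore inconsistent with your own proof mechanics.

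The missing idea is the paper's conditional-expectation (martingale) treatment of the cross term: expand the square exactly, $\norm{\ybf^{k+1}-\bar{\ybf}^{k+1}}^{2}\leq\rho^{2}\norm{\ybf^{k}-\bar{\ybf}^{k}}^{2}+2\inprod{(\tilde{A}(k)-J)\ybf^{k}}{(\tilde{A}(k)-J)(\vbf^{k}-\vbf^{k-1})}+\rho^{2}\norm{\vbf^{k}-\vbf^{k-1}}^{2}$, and take $\mbb E[\,\cdot\mid\mcal F^{k}]$ of the cross term. Since $\vbf^{k}-\vbf^{k-1}=\gbf(\xbf^{k};S^{k})-\gbf(\xbf^{k-1};S^{k})$ with $S^{k}$ sampled after $\mcal F^{k}$, while $\ybf^{k}$ is $\mcal F^{k}$-measurable, the cross term becomes $2\inprod{(\tilde{A}(k)-J)\ybf^{k}}{(\tilde{A}(k)-J)(\nabla f_{\delta}(\xbf^{k})-\nabla f_{\delta}(\xbf^{k-1}))}$, which by Cauchy--Schwarz and Young is at most $\alpha_{1}\rho^{2}\norm{\ybf^{k}-\bar{\ybf}^{k}}^{2}+\alpha_{1}^{-1}\rho^{2}L(\delta)^{2}\norm{\xbf^{k}-\xbf^{k-1}}^{2}$; the quadratic term is bounded \emph{separately} by $\tfrac{\rho^{2}d^{2}L_{f}^{2}}{\delta^{2}}\mbb E[\norm{\xbf^{k}-\xbf^{k-1}}^{2}]$ with no $\alpha_{1}$ factor. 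From there, your insertion of $\pm\bar{\xbf}^{k},\pm\bar{\xbf}^{k-1}$ with $\bar{\xbf}^{k}-\bar{\xbf}^{k-1}=-\eta\bar{\vbf}^{k-1}$ and the three-term Young inequality (factor $3$) finishes exactly as you describe. Since $\alpha_{1}$ is eventually set to the constant $\tfrac{1-\rho^{2}}{2\rho^{2}}$ and the final complexity suppresses $d$, your weaker coefficient would still push the corollary through, but it does not establish the inequality \eqref{eq:y_consensus4-1} as stated.
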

%
\begin{proof}
We consider the two sequence, separately.

\textbf{Sequence $\xbf$ :} It follows from~\eqref{eq:x_update-1}
and $J\tilde{A}(k)=J=J\tilde{A}(k)$, $J\bar{\xbf}^{k}=\tilde{A}(k)\bar{\xbf}^{k}$
and $J\bar{\ybf}^{k+1}=\tilde{A}(k)\bar{\ybf}^{k+1}$ in Proposition~\ref{prop:common_used}
that
\begin{align*}
\xbf^{k+1}-\bar{\xbf}^{k+1} & =(I-J)\tilde{A}(k)(\xbf^{k}-\eta\ybf^{k+1})\\
 & =(\tilde{A}(k)-J)(\xbf^{k}-\bar{\xbf}^{k}-\eta(\ybf^{k+1}-\bar{\ybf}^{k+1})).
\end{align*}
Then we have 
\[
\norm{\xbf^{k+1}-\bar{\xbf}^{k+1}}\leq\rho\norm{\xbf^{k}-\bar{\xbf}^{k}}+\rho\eta\norm{\ybf^{k+1}-\bar{\ybf}^{k+1}},\forall k\geq0,
\]
which implies the result~\eqref{eq:x_consensus} by taking expectation
and combining Young's inequality.

\textbf{Sequence $\ybf:$ }For $lT+1\leq k<(l+1)T$, we have $\ybf^{k+1}=\tilde{A}(k)(\ybf^{k}+\vbf^{k}-\vbf^{k-1})$,
which implies 
\begin{equation}
\begin{split}\norm{\ybf^{k+1}-\bar{\ybf}^{k+1}}^{2}= & \norm{(I-J)\tilde{A}(k)(\ybf^{k}+\vbf^{k}-\vbf^{k-1})}^{2}=\norm{(\tilde{A}(k)-J)(\ybf^{k}+\vbf^{k}-\vbf^{k-1})}^{2}\\
= & \norm{(\tilde{A}(k)-J)(\ybf^{k}-\bar{\ybf}^{k})}^{2}+2\inprod{(\tilde{A}(k)-J)\ybf^{k}}{(\tilde{A}(k)-J)(\vbf^{k}-\vbf^{k-1})}+\rho^{2}\norm{\vbf^{k}-\vbf^{k-1}}^{2}\\
\leq & \rho^{2}\norm{\ybf^{k}-\bar{\ybf}^{k}}^{2}+2\inprod{(\tilde{A}(k)-J)\ybf^{k}}{(\tilde{A}(k)-J)(\vbf^{k}-\vbf^{k-1})}+\rho^{2}\norm{\vbf^{k}-\vbf^{k-1}}^{2}.
\end{split}
\label{eq:y_consensus1}
\end{equation}
In the following, we bound $\inprod{(\tilde{A}(k)-J)\ybf^{k}}{(\tilde{A}(k)-J)(\vbf^{k}-\vbf^{k-1})}$
and $\norm{\vbf^{k}-\vbf^{k-1}}^{2}$, respectively. Since $\vbf^{k}-\vbf^{k-1}=\gbf(\xbf^{k};S^{k})-\gbf(\xbf^{k-1};S^{k})$
for $lT+1\leq k<(l+1)T$, we have
\begin{equation}
\mbb E[\vbf^{k}-\vbf^{k-1}\mid\mcal F^{k}]=\nabla f_{\delta}(\xbf^{k})-\nabla f_{\delta}(\xbf^{k-1}).\label{eq:expectation_v}
\end{equation}
Then we have
\begin{equation}
\begin{split} & 2\mbb E[\inprod{(\tilde{A}(k)-J)\ybf^{k}}{(\tilde{A}(k)-J)(\vbf^{k}-\vbf^{k-1})}\mid\mcal F^{k}]\\
= & 2\inprod{(\tilde{A}(k)-J)\ybf^{k}}{(\tilde{A}(k)-J)\mbb E[\vbf^{k}-\vbf^{k-1}\mid\mcal F^{k}]}\\
\overset{\eqref{eq:expectation_v}}{=} & 2\inprod{(\tilde{A}(k)-J)\ybf^{k}}{(\tilde{A}(k)-J)(\nabla f_{\delta}(\xbf^{k})-\nabla f_{\delta}(\xbf^{k-1}))}\\
\leq & 2\rho\norm{\ybf^{k}-\bar{\ybf}^{k}}\cdot\rho\norm{\nabla f_{\delta}(\xbf^{k})-\nabla f_{\delta}(\xbf^{k-1})}\\
\leq & \alpha_{1}(\rho\norm{\ybf^{k}-\bar{\ybf}^{k}})^{2}+\alpha_{1}^{-1}(\rho L(\delta)\norm{\xbf^{k}-\xbf^{k-1}})^{2}.
\end{split}
\label{eq:y_consensus2}
\end{equation}
Moreover, we have $\mbb E[\norm{v_{i}^{k}-v_{i}^{k-1}}^{2}]=\mbb E[\norm{g_{i}(x_{i}^{k};S_{i}^{k})-g_{i}(x_{i}^{k-1};S_{i}^{k})}^{2}]\leq\tfrac{d^{2}L_{f}^{2}}{\delta^{2}}\mbb E[\norm{x_{i}^{k}-x_{i}^{k-1}}^{2}${]}.
Summing it over $1$ to $m$, then we have
\begin{equation}
\rho^{2}\mbb E[\norm{\vbf^{k}-\vbf^{k-1}}^{2}]\leq\tfrac{\rho^{2}d^{2}L_{f}^{2}}{\delta^{2}}\mbb E[\norm{\xbf^{k}-\xbf^{k-1}}^{2}].\label{eq:y_consensus3}
\end{equation}

Taking expectation of~\eqref{eq:y_consensus1} with all random variable
and combining it with~\eqref{eq:y_consensus2} and~\eqref{eq:y_consensus3}
yields
\begin{equation}
\begin{split} & \mbb E[\norm{\ybf^{k+1}-\bar{\ybf}^{k+1}}^{2}]\\
\leq & \rho^{2}(1+\alpha_{1})\mbb E[\norm{\ybf^{k}-\bar{\ybf}^{k}}^{2}]+(\tfrac{\rho^{2}L(\delta)^{2}}{\alpha_{1}}+\tfrac{\rho^{2}d^{2}L_{f}^{2}}{\delta^{2}})\mbb E[\norm{\xbf^{k}-\bar{\xbf}^{k}+\bar{\xbf}^{k}-\bar{\xbf}^{k-1}+\bar{\xbf}^{k-1}-\xbf^{k-1}}^{2}]\\
\aleq & \rho^{2}(1+\alpha_{1})\mbb E[\norm{\ybf^{k}-\bar{\ybf}^{k}}^{2}]+(\tfrac{\rho^{2}L(\delta)^{2}}{\alpha_{1}}+\tfrac{\rho^{2}d^{2}L_{f}^{2}}{\delta^{2}})\mbb E[3\norm{\xbf^{k}-\bar{\xbf}^{k}}^{2}+3\eta^{2}\norm{\bar{\vbf}^{k-1}}^{2}+3\norm{\bar{\xbf}^{k-1}-\xbf^{k-1}}^{2}],
\end{split}
\label{eq:y_consensus4}
\end{equation}
where $(a)$ holds by $\bar{\ybf}^{k+1}=\bar{\vbf}^{k},\bar{\xbf}^{k}=\bar{\xbf}^{k-1}-\eta\bar{\ybf}^{k}$.
Furthermore, for $k=lT$, $l=0,\cdots,L-1$, we have
\begin{equation}
\begin{aligned} & \mbb E[\norm{\ybf^{lT+1}-\bar{\ybf}^{lT+1}}^{2}]\\
 & =\mbb E[\norm{\left(\tilde{A^{1}}(k)\tilde{A}^{2}(k)\dots\tilde{A}^{\Tcal}(k)-J\right)\gbf(\xbf^{lT};S^{lT\prime})}^{2}].\\
 & {\aeq\mbb E[\norm{\left(\left(\tilde{A^{1}}(k)-J\right)\left(\tilde{A^{2}}(k)-J\right)\dots\left(\tilde{A}^{\Tcal}(k)-J\right)\right)\gbf(\xbf^{lT};S^{lT\prime})}^{2}].}\\
 & {\leq2\rho^{\Tcal}m(\sigma^{2}+L_{f}^{2}),}
\end{aligned}
\end{equation}
where (a) comes from that $J^{n}=J$ for any $n=1,2, \cdots$. 
\end{proof}
%
\begin{nonumber_lem}
[Combining Consensus]For sequence $\{\xbf^{k},\ybf^{k},\vbf^{k}\}$
generated by Algorithm~\ref{alg:dgfm_appendix} and any positive $\beta_{x}>0,\beta_{y}>0$,
we have

\begin{equation}
\begin{split} & \beta_{y}\sum_{k=0}^{LT-1}\mbb E[\norm{\ybf^{k+1}-\bar{\ybf}^{k+1}}^{2}-\norm{\ybf^{k}-\bar{\ybf}^{k}}^{2}]+\beta_{x}\sum_{k=0}^{LT-1}\mbb E[\norm{\xbf^{k+1}-\bar{\xbf}^{k+1}}^{2}-\norm{\xbf^{k}-\bar{\xbf}^{k}}^{2}]\\
\leq & \beta_{y}(\rho^{2}(1+\alpha_{1})-1)\sum_{k=0}^{LT-1}\mbb E[\norm{\ybf^{k}-\bar{\ybf}^{k}}^{2}]+\brbra{\beta_{x}((1+\alpha_{2})\rho^{2}-1)+6\beta_{y}(\tfrac{\rho^{2}L(\delta)^{2}}{\alpha_{1}}+\tfrac{\rho^{2}d^{2}L_{f}^{2}}{\delta^{2}})}\sum_{k=0}^{LT-1}\mbb E[\norm{\xbf^{k}-\bar{\xbf}^{k}}^{2}]\\
 & +3\beta_{y}\eta^{2}(\tfrac{\rho^{2}L(\delta)^{2}}{\alpha_{1}}+\tfrac{\rho^{2}d^{2}L_{f}^{2}}{\delta^{2}})\sum_{k=0}^{LT-1}\mbb E[\norm{\bar{\vbf}^{k-1}}^{2}]+\beta_{x}(1+\alpha_{2}^{-1})\rho^{2}\eta^{2}\sum_{k=0}^{LT-1}\mbb E[\norm{\ybf^{k+1}-\bar{\ybf}^{k+1}}^{2}]+{2\rho^{\Tcal}m(\sigma^{2}+L_{f}^{2})}\cdot\beta_{y}L.
\end{split}
\label{eq:all_consensus}
\end{equation}
\end{nonumber_lem}
%
\begin{proof}
By~\eqref{eq:cycle_consensus} in Lemma~\ref{lem:consensus-dgfm_plus},
we have
\[
\mbb E[\norm{\ybf^{lT+1}-\bar{\ybf}^{lT+1}}^{2}-\norm{\ybf^{lT}-\bar{\ybf}^{lT}}^{2}]\leq-\mbb E[\norm{\ybf^{lT}-\bar{\ybf}^{lT}}^{2}]+{2\rho^{\Tcal}m(\sigma^{2}+L_{f}^{2})},\forall l=0,\cdots,L-1.
\]
By~\eqref{eq:y_consensus4-1} in Lemma~\ref{lem:consensus-dgfm_plus}, we
have
\[
\begin{split} & \mbb E[\norm{\ybf^{k+1}-\bar{\ybf}^{k+1}}^{2}-\norm{\ybf^{k}-\bar{\ybf}^{k}}^{2}]\\
\leq & (\rho^{2}(1+\alpha_{1})-1)\mbb E[\norm{\ybf^{k}-\bar{\ybf}^{k}}^{2}]+3(\tfrac{\rho^{2}L(\delta)^{2}}{\alpha_{1}}+\tfrac{\rho^{2}d^{2}L_{f}^{2}}{\delta^{2}})\mbb E[\norm{\xbf^{k}-\bar{\xbf}^{k}}^{2}+\norm{\bar{\xbf}^{k-1}-\xbf^{k-1}}^{2}]\\
 & +3\eta^{2}(\tfrac{\rho^{2}L(\delta)^{2}}{\alpha_{1}}+\tfrac{\rho^{2}d^{2}L_{f}^{2}}{\delta^{2}})\mbb E[\norm{\bar{\vbf}^{k-1}}^{2}],\ \ lT+1\leq k<(l+1)T,\forall l=0,1,\cdots,L-1.
\end{split}
\]

The two inequalities above implies that 
\begin{equation}
\begin{split} & \sum_{k=lT}^{(l+1)T-1}\mbb E[\norm{\ybf^{k+1}-\bar{\ybf}^{k+1}}^{2}-\norm{\ybf^{k}-\bar{\ybf}^{k}}^{2}]\\
\leq & (\rho^{2}(1+\alpha_{1})-1)\sum_{k=lT}^{(l+1)T-1}\mbb E[\norm{\ybf^{k}-\bar{\ybf}^{k}}^{2}]+3(\tfrac{\rho^{2}L(\delta)^{2}}{\alpha_{1}}+\tfrac{\rho^{2}d^{2}L_{f}^{2}}{\delta^{2}})\sum_{k=lT}^{(l+1)T-1}\mbb E[\norm{\xbf^{k}-\bar{\xbf}^{k}}^{2}+\norm{\bar{\xbf}^{k-1}-\xbf^{k-1}}^{2}]\\
 & +3\eta^{2}(\tfrac{\rho^{2}L(\delta)^{2}}{\alpha_{1}}+\tfrac{\rho^{2}d^{2}L_{f}^{2}}{\delta^{2}})\sum_{k=lT}^{(l+1)T-1}\mbb E[\norm{\bar{\vbf}^{k-1}}^{2}]+2\rho^{\Tcal}m(\sigma^{2}+L_{f}^{2}).
\end{split}
\label{eq:multi_beta_y}
\end{equation}

Furthermore, by~\eqref{eq:x_consensus} in Lemma~\ref{lem:consensus-dgfm_plus},
we have
\begin{equation}
\begin{split} & \sum_{k=lT}^{(l+1)T-1}\mbb E[\norm{\xbf^{k+1}-\bar{\xbf}^{k+1}}^{2}-\norm{\xbf^{k}-\bar{\xbf}^{k}}^{2}]\\
\leq & ((1+\alpha_{2})\rho^{2}-1)\sum_{k=lT}^{(l+1)T-1}\mbb E[\norm{\xbf^{k}-\bar{\xbf}^{k}}^{2}]+(1+\alpha_{2}^{-1})\rho^{2}\eta^{2}\sum_{k=lT}^{(l+1)T-1}\mbb E[\norm{\ybf^{k+1}-\bar{\ybf}^{k+1}}^{2}].
\end{split}
\label{eq:multi_beta_x}
\end{equation}

Summing~\eqref{eq:multi_beta_x} times $\beta_{x}$ and~\eqref{eq:multi_beta_y}
times $\beta_{y}$, we have 
\begin{equation}
\begin{split} & \beta_{y}\sum_{k=lT}^{(l+1)T-1}\mbb E[\norm{\ybf^{k+1}-\bar{\ybf}^{k+1}}^{2}-\norm{\ybf^{k}-\bar{\ybf}^{k}}^{2}]+\beta_{x}\sum_{k=lT}^{(l+1)T-1}\mbb E[\norm{\xbf^{k+1}-\bar{\xbf}^{k+1}}^{2}-\norm{\xbf^{k}-\bar{\xbf}^{k}}^{2}]\\
\leq & \beta_{y}(\rho^{2}(1+\alpha_{1})-1)\sum_{k=lT}^{(l+1)T-1}\mbb E[\norm{\ybf^{k}-\bar{\ybf}^{k}}^{2}]+3\beta_{y}(\tfrac{\rho^{2}L(\delta)^{2}}{\alpha_{1}}+\tfrac{\rho^{2}d^{2}L_{f}^{2}}{\delta^{2}})\sum_{k=lT}^{(l+1)T-1}\mbb E[\norm{\xbf^{k}-\bar{\xbf}^{k}}^{2}+\norm{\bar{\xbf}^{k-1}-\xbf^{k-1}}^{2}]\\
 & +3\beta_{y}\eta^{2}(\tfrac{\rho^{2}L(\delta)^{2}}{\alpha_{1}}+\tfrac{\rho^{2}d^{2}L_{f}^{2}}{\delta^{2}})\sum_{k=lT}^{(l+1)T-1}\mbb E[\norm{\bar{\vbf}^{k-1}}^{2}]+2\rho^{\Tcal}m(\sigma^{2}+L_{f}^{2})\cdot\beta_{y}\\
 & +\beta_{x}((1+\alpha_{2})\rho^{2}-1)\sum_{k=lT}^{(l+1)T-1}\mbb E[\norm{\xbf^{k}-\bar{\xbf}^{k}}^{2}]+\beta_{x}(1+\alpha_{2}^{-1})\rho^{2}\eta^{2}\sum_{k=lT}^{(l+1)T-1}\mbb E[\norm{\ybf^{k+1}-\bar{\ybf}^{k+1}}^{2}].
\end{split}
\label{eq:one_epoch_consensus}
\end{equation}

Now summing it over $l=0$ to $L-1$ and combining $\sum_{k=0}^{LT-1}\norm{\bar{\xbf}^{k-1}-\xbf^{k-1}}^{2}\leq\sum_{k=0}^{LT-1}\norm{\bar{\xbf}^{k}-\xbf^{k}}^{2}$
complete our proof.
\end{proof}
%
\begin{lem}
[One Step Improvement]\label{lem:For-the-sequence}For the sequence
$\{\bar{x}^{k},\bar{v}^{k}\}$ generated by Algorithm~\ref{alg:dgfm_appendix}
and function $f_{\delta}(x)=\tfrac{1}{m}\sum_{i=1}^{m}f_{\delta}^{i}(x)$,
we have
\[
f_{\delta}(\bar{x}^{k+1})\leq f_{\delta}(\bar{x}^{k})-\tfrac{\eta}{2}\norm{\nabla f_{\delta}(\bar{x}^{k})}^{2}+\tfrac{\eta}{2}\norm{\nabla f_{\delta}(\bar{x}^{k})-\bar{v}^{k}}^{2}-(\tfrac{\eta}{2}-\tfrac{L(\delta)\eta^{2}}{2})\norm{\bar{v}^{k}}^{2}.
\]
\end{lem}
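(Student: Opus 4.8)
The plan is to reduce the statement to the standard descent inequality for the $L(\delta)$-smooth function $f_\delta$ and then rewrite the resulting cross term with a single polarization identity. First I would record the mean-variable recursion: combining $\bar{\xbf}^{k+1}=\bar{\xbf}^{k}-\eta\bar{\ybf}^{k+1}$ from~\eqref{eq:x_bar_update-1} with $\bar{\ybf}^{k+1}=\bar{\vbf}^{k}$ from~\eqref{eq:y_bar_v_bar_dgfm} yields the clean update $\bar{x}^{k+1}=\bar{x}^{k}-\eta\bar{v}^{k}$, which holds for every $k\geq0$ (both strictly inside a communication cycle and at the synchronization steps $k=lT$). This is the only place where the structure of Algorithm~\ref{alg:dgfm_appendix} enters, so pinning down this recursion is the conceptual heart of the argument even though it is immediate from the preceding lemma.

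Next I would invoke smoothness. Since each $f_\delta^i$ has an $L(\delta)$-Lipschitz gradient, so does their average $f_\delta$, and the descent inequality gives
\[
f_{\delta}(\bar{x}^{k+1})\leq f_{\delta}(\bar{x}^{k})+\inprod{\nabla f_{\delta}(\bar{x}^{k})}{\bar{x}^{k+1}-\bar{x}^{k}}+\tfrac{L(\delta)}{2}\norm{\bar{x}^{k+1}-\bar{x}^{k}}^{2}.
\]
Substituting $\bar{x}^{k+1}-\bar{x}^{k}=-\eta\bar{v}^{k}$ turns the right-hand side into $f_{\delta}(\bar{x}^{k})-\eta\inprod{\nabla f_{\delta}(\bar{x}^{k})}{\bar{v}^{k}}+\tfrac{L(\delta)\eta^{2}}{2}\norm{\bar{v}^{k}}^{2}$.

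Finally I would rewrite the cross term via the polarization identity $-\inprod{a}{b}=\tfrac{1}{2}\norm{a-b}^{2}-\tfrac{1}{2}\norm{a}^{2}-\tfrac{1}{2}\norm{b}^{2}$ with $a=\nabla f_{\delta}(\bar{x}^{k})$ and $b=\bar{v}^{k}$, producing the three terms $\tfrac{\eta}{2}\norm{\nabla f_{\delta}(\bar{x}^{k})-\bar{v}^{k}}^{2}$, $-\tfrac{\eta}{2}\norm{\nabla f_{\delta}(\bar{x}^{k})}^{2}$, and $-\tfrac{\eta}{2}\norm{\bar{v}^{k}}^{2}$. Merging the latter with the smoothness remainder collapses the two $\norm{\bar{v}^{k}}^{2}$ contributions into $-(\tfrac{\eta}{2}-\tfrac{L(\delta)\eta^{2}}{2})\norm{\bar{v}^{k}}^{2}$, which is exactly the claimed bound. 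I expect no obstacle of substance here: unlike the consensus-error lemmas, this is a purely deterministic inequality with no expectations or conditioning, so the whole argument is one application of the descent lemma plus one algebraic identity; the only thing to watch is the correct sign bookkeeping when combining the $\norm{\bar{v}^{k}}^{2}$ terms.
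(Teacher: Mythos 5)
Your proposal is correct and in substance identical to the paper's argument: both rest on the descent lemma for the $L(\delta)$-smooth function $f_{\delta}$ applied to the mean update $\bar{x}^{k+1}=\bar{x}^{k}-\eta\bar{v}^{k}$ (obtained from~\eqref{eq:x_bar_update-1} and~\eqref{eq:y_bar_v_bar_dgfm}), followed by an exact polarization of the cross term $-\eta\inprod{\nabla f_{\delta}(\bar{x}^{k})}{\bar{v}^{k}}$. The paper performs the same algebra more circuitously --- introducing the auxiliary point $\hat{x}^{k+1}=\bar{x}^{k}-\eta\nabla f_{\delta}(\bar{x}^{k})$, invoking the three-point identity $\inprod{\bar{x}^{k+1}-\hat{x}^{k+1}}{\bar{x}^{k}-\hat{x}^{k+1}}=\tfrac{1}{2}\brbra{\norm{\bar{x}^{k+1}-\hat{x}^{k+1}}^{2}+\norm{\bar{x}^{k}-\hat{x}^{k+1}}^{2}-\norm{\bar{x}^{k+1}-\bar{x}^{k}}^{2}}$, and only substituting $\bar{y}^{k+1}=\bar{v}^{k}$ at the very end --- whereas your up-front substitution and single polarization identity reach the identical inequality by a shorter path.
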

%
\begin{proof}
Letting $\hat{x}^{k+1}=\bar{x}^{k}-\eta\nabla f_{\delta}(\bar{x}^{k})$
and combining $\bar{x}^{k+1}=\bar{x}^{k}-\eta\bar{y}^{k+1}$, then
we have
\[
\begin{split}f_{\delta}(\bar{x}^{k+1})\leq & f_{\delta}(\bar{x}^{k})+\inprod{\nabla f_{\delta}(\bar{x}^{k})}{\bar{x}^{k+1}-\bar{x}^{k}}+\tfrac{L(\delta)}{2}\norm{\bar{x}^{k+1}-\bar{x}^{k}}^{2}\\
= & f_{\delta}(\bar{x}^{k})+\inprod{\nabla f_{\delta}(\bar{x}^{k})\pm\bar{y}^{k+1}}{\bar{x}^{k+1}-\bar{x}^{k}}+\tfrac{L(\delta)}{2}\norm{\bar{x}^{k+1}-\bar{x}^{k}}^{2}\\
= & f_{\delta}(\bar{x}^{k})+\inprod{\nabla f_{\delta}(\bar{x}^{k})-\bar{y}^{k+1}}{-\eta\bar{y}^{k+1}\pm\eta\nabla f_{\delta}(\bar{x}^{k})}-(\tfrac{1}{\eta}-\tfrac{L(\delta)}{2})\norm{\bar{x}^{k+1}-\bar{x}^{k}}^{2}\\
= & f_{\delta}(\bar{x}^{k})+\eta\norm{\nabla f_{\delta}(\bar{x}^{k})-\bar{y}^{k+1}}^{2}-\eta\inprod{\nabla f_{\delta}(\bar{x}^{k})-\bar{y}^{k+1}}{\nabla f_{\delta}(\bar{x}^{k})}-(\tfrac{1}{\eta}-\tfrac{L(\delta)}{2})\norm{\bar{x}^{k+1}-\bar{x}^{k}}^{2}\\
= & f_{\delta}(\bar{x}^{k})+\eta\norm{\nabla f_{\delta}(\bar{x}^{k})-\bar{y}^{k+1}}^{2}-\eta^{-1}\inprod{\bar{x}^{k+1}-\hat{x}^{k+1}}{\bar{x}^{k}-\hat{x}^{k+1}}-(\tfrac{1}{\eta}-\tfrac{L(\delta)}{2})\norm{\bar{x}^{k+1}-\bar{x}^{k}}^{2}\\
= & f_{\delta}(\bar{x}^{k})+\eta\norm{\nabla f_{\delta}(\bar{x}^{k})-\bar{y}^{k+1}}^{2}-(\tfrac{1}{\eta}-\tfrac{L(\delta)}{2})\norm{\bar{x}^{k+1}-\bar{x}^{k}}^{2}\\
 & -\tfrac{1}{2\eta}(\norm{\bar{x}^{k+1}-\hat{x}^{k+1}}^{2}+\norm{\bar{x}^{k}-\hat{x}^{k+1}}^{2}-\norm{\bar{x}^{k+1}-\bar{x}^{k}}^{2})\\
= & f_{\delta}(\bar{x}^{k})+\eta\norm{\nabla f_{\delta}(\bar{x}^{k})-\bar{y}^{k+1}}^{2}-(\tfrac{1}{\eta}-\tfrac{L(\delta)}{2})\norm{\bar{x}^{k+1}-\bar{x}^{k}}^{2}\\
 & -\tfrac{1}{2\eta}(\eta^{2}\norm{\nabla f_{\delta}(\bar{x}^{k})-\bar{y}^{k+1}}^{2}+\eta^{2}\norm{\nabla f_{\delta}(\bar{x}^{k})}^{2}-\norm{\bar{x}^{k+1}-\bar{x}^{k}}^{2})\\
= & f_{\delta}(\bar{x}^{k})-\tfrac{\eta}{2}\norm{\nabla f_{\delta}(\bar{x}^{k})}^{2}+\tfrac{\eta}{2}\norm{\nabla f_{\delta}(\bar{x}^{k})-\bar{y}^{k+1}}^{2}-(\tfrac{1}{2\eta}-\tfrac{L(\delta)}{2})\norm{\bar{x}^{k+1}-\bar{x}^{k}}^{2}.
\end{split}
\]

Combining the above result and $\bar{y}^{k+1}=\bar{v}^{k},\bar{x}^{k+1}-\bar{x}^{k}=\eta\bar{y}^{k+1}$
completes our proof.
\end{proof}
%
\begin{lem}
[Variance Bound]\label{lem:For-the-sequence-1}For the sequence $\{\bar{x}^{k},\bar{v}^{k}\}$
generated by Algorithm~\ref{alg:dgfm_appendix}, for any $lT\leq k^{\prime}<k\leq(l+1)T-1$,
we have
\begin{equation}
\begin{split} & \mbb E[\norm{\bar{v}^{k}-\nabla f_{\delta}(\bar{x}^{k})}^{2}]\\
\leq & \tfrac{2L(\delta)^{2}}{m}\mbb E[\norm{\xbf^{k}-\bar{\xbf}^{k}}^{2}]+2\mbb E[\norm{\bar{v}^{k^{\prime}}-\tfrac{1}{m}\sum_{i=1}^{m}\nabla f_{\delta}^{i}(x_{i}^{k^{\prime}})}^{2}]\\
 & +\tfrac{6d^{2}L_{f}^{2}}{m^{2}\delta^{2}b}\sum_{j=k^{\prime}}^{k}\brbra{\mbb E[\norm{\xbf^{j}-\bar{\xbf}^{j}}^{2}+\norm{\xbf^{j-1}-\bar{\xbf}^{j-1}}^{2}]}+\tfrac{6\eta^{2}d^{2}L_{f}^{2}}{m^{2}\delta^{2}b}\sum_{j=k^{\prime}}^{k}\mbb E[\norm{\bar{\vbf}^{j-1}}^{2}].
\end{split}
\label{eq:variance_bound}
\end{equation}
Moreover, for $k=lT$, $l=0,\cdots,L-1$, we have 
\begin{equation}
\mbb E[\norm{\bar{v}^{k}-\tfrac{1}{m}\sum_{i=1}^{m}\nabla f_{\delta}^{i}(x_{i}^{k})}^{2}]\leq\tfrac{\sigma^{2}}{b^{\prime}}.
\end{equation}
\end{lem}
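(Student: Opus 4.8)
The plan is to control $\bar v^k-\nabla f_\delta(\bar x^k)$ by splitting it into a \emph{stochastic} part and a \emph{consensus} part, and then to exploit the recursive (SPIDER/SARAH-type) structure of the $v_i$ update to telescope the stochastic part back to the last restart index $k'$. Writing $\epsilon^j := \bar v^j - \tfrac1m\tsum_{i=1}^m \nabla f_\delta^i(x_i^j)$, I would first apply $\norm{a+b}^2\le 2\norm a^2+2\norm b^2$ to obtain
\[
\mbb E[\norm{\bar v^k-\nabla f_\delta(\bar x^k)}^2]\le 2\mbb E[\norm{\epsilon^k}^2]+2\mbb E\Big[\norm{\tfrac1m\tsum_{i=1}^m(\nabla f_\delta^i(x_i^k)-\nabla f_\delta^i(\bar x^k))}^2\Big].
\]
The second (consensus) term is bounded by $\tfrac{L(\delta)^2}{m}\mbb E[\norm{\xbf^k-\bar\xbf^k}^2]$ via Jensen's inequality and the $L(\delta)$-Lipschitzness of each $\nabla f_\delta^i$, which yields the leading $\tfrac{2L(\delta)^2}{m}\mbb E[\norm{\xbf^k-\bar\xbf^k}^2]$ term.

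For the stochastic part, the key observation is that on a non-restart block $lT\le k'<k\le(l+1)T-1$ the update $v_i^j=v_i^{j-1}+g_i(x_i^j;S_i^j)-g_i(x_i^{j-1};S_i^j)$ reuses the \emph{same} batch $S_i^j$ at $x_i^j$ and $x_i^{j-1}$, so $\mbb E[\epsilon^j-\epsilon^{j-1}\mid\mcal F^j]=0$. Hence $\{\epsilon^j\}_{j\ge k'}$ is a martingale and its increments are orthogonal, giving the clean identity
\[
\mbb E[\norm{\epsilon^k}^2]=\mbb E[\norm{\epsilon^{k'}}^2]+\tsum_{j=k'+1}^k\mbb E[\norm{\epsilon^j-\epsilon^{j-1}}^2].
\]
I would then bound each increment node-by-node: since the batches are independent across $i$, $\mbb E[\norm{\epsilon^j-\epsilon^{j-1}}^2]=\tfrac1{m^2}\tsum_i\mbb E[\norm{(g_i(x_i^j;S_i^j)-g_i(x_i^{j-1};S_i^j))-(\nabla f_\delta^i(x_i^j)-\nabla f_\delta^i(x_i^{j-1}))}^2]$, and for each node the centered $b$-sample average reduces the variance by $1/b$ while the single-sample zeroth-order estimate is $\tfrac{dL_f}{\delta}$-Lipschitz, producing $\tfrac{d^2L_f^2}{m^2b\delta^2}\mbb E[\norm{\xbf^j-\xbf^{j-1}}^2]$. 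Finally I split $\xbf^j-\xbf^{j-1}$ as $(\xbf^j-\bar\xbf^j)+(\bar\xbf^j-\bar\xbf^{j-1})+(\bar\xbf^{j-1}-\xbf^{j-1})$ with $\norm{a+b+c}^2\le 3(\norm a^2+\norm b^2+\norm c^2)$ and use $\bar\xbf^j-\bar\xbf^{j-1}=-\eta\bar\ybf^j=-\eta\bar\vbf^{j-1}$ (from~\eqref{eq:x_bar_update-1} and~\eqref{eq:y_bar_v_bar_dgfm}) to turn the middle term into $\eta^2\norm{\bar\vbf^{j-1}}^2$. Summing over $j$ and extending the range to include the harmless nonnegative $j=k'$ term gives exactly the two remaining terms with constants $\tfrac{6d^2L_f^2}{m^2\delta^2 b}$ and $\tfrac{6\eta^2 d^2L_f^2}{m^2\delta^2 b}$.

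The second (restart) bound is the base case of this recursion and is direct: at $k=lT$ we have $\bar v^k=\tfrac1m\tsum_i g_i(x_i^k;S_i^{k\prime})$ with $S_i^{k\prime}$ a fresh batch of size $b'$ that is unbiased for $\nabla f_\delta^i(x_i^k)$ and independent across nodes, so the error variance is $\tfrac1{m^2}\tsum_i\mbb E[\norm{g_i(x_i^k;S_i^{k\prime})-\nabla f_\delta^i(x_i^k)}^2]\le\tfrac{\sigma^2}{mb'}\le\tfrac{\sigma^2}{b'}$.

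I expect the main obstacle to be rigorously justifying the martingale orthogonality—that $\mbb E[\langle\epsilon^{j}-\epsilon^{j-1},\epsilon^{j'}-\epsilon^{j'-1}\rangle]=0$ for $j\ne j'$ and $\mbb E[\langle\epsilon^{k'},\epsilon^j-\epsilon^{j-1}\rangle]=0$—since this is precisely what removes all cross terms and reduces an otherwise quadratically growing error into the clean telescoping sum above. This hinges entirely on the shared-batch design of the recursive estimator (the same $S_i^j$ evaluated at both $x_i^j$ and $x_i^{j-1}$), so I would set up the filtration $\mcal F^j$ carefully, ensuring $x_i^j,x_i^{j-1},v_i^{j-1}$ are $\mcal F^j$-measurable while $S_i^j$ is fresh, which makes each increment a genuine martingale difference and legitimizes both the telescoping and the per-node, $1/b$ variance reduction.
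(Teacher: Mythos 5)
Your proposal is correct and follows essentially the same route as the paper's proof: the same split $\norm{\bar v^k-\nabla f_\delta(\bar x^k)}^2\leq 2\norm{\epsilon^k}^2+2\norm{\tfrac1m\sum_i(\nabla f_\delta^i(x_i^k)-\nabla f_\delta^i(\bar x^k))}^2$ with the consensus term handled by $L(\delta)$-Lipschitzness, the same martingale-orthogonality step (which the paper writes as the one-step equality $\mbb E[\norm{\epsilon^k}^2]=\mbb E[\norm{\epsilon^{k-1}}^2]+\mbb E[\norm{\text{increment}}^2]$ and then telescopes), the same per-increment bound via node independence, the $1/b$ batch reduction and the $\tfrac{dL_f}{\delta}$-Lipschitzness of the single-sample estimator, the same three-way split of $\xbf^j-\xbf^{j-1}$ using $\bar{\xbf}^j-\bar{\xbf}^{j-1}=-\eta\bar{\vbf}^{j-1}$, and the same fresh-batch restart bound. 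The filtration issue you flag as the main obstacle is precisely the implicit justification behind the paper's equality step, so there is no gap.
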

%
\begin{proof}
For $lT+1\leq k\leq(l+1)T$, the update of $\bar{v}^{k}=\bar{v}^{k-1}+\tfrac{1}{m}\sum_{i=1}^{m}g_{i}(x_{i}^{k};S_{i}^{k})-g_{i}(x_{i}^{k-1};S_{i}^{k})$
leads to 
\[\begin{split} & \mbb E[\norm{\bar{v}^{k}-\tfrac{1}{m}\sum_{i=1}^{m}\nabla f_{\delta}^{i}(x_{i}^{k})}^{2}]\\
= & \mbb E[\norm{\bar{v}^{k-1}+\tfrac{1}{m}\sum_{i=1}^{m}g_{i}(x_{i}^{k};S_{i}^{k})-g_{i}(x_{i}^{k-1};S_{i}^{k})-\nabla f_{\delta}^{i}(x_{i}^{k})}^{2}]\\
= & \mbb E[\norm{\bar{v}^{k-1}-\tfrac{1}{m}\sum_{i=1}^{m}\nabla f_{\delta}^{i}(x_{i}^{k-1})}^{2}+\norm{\tfrac{1}{m}\sum_{i=1}^{m}\nabla f_{\delta}^{i}(x_{i}^{k-1})+g_{i}(x_{i}^{k};S_{i}^{k})-g_{i}(x_{i}^{k-1};S_{i}^{k})-\nabla f_{\delta}^{i}(x_{i}^{k})}^{2}].
\end{split}\]
Rearrange the above inequality, we have
\[
\begin{split} & \mbb E[\norm{\bar{v}^{k}-\tfrac{1}{m}\sum_{i=1}^{m}\nabla f_{\delta}^{i}(x_{i}^{k})}^{2}]-\mbb E[\norm{\bar{v}^{k-1}-\tfrac{1}{m}\sum_{i=1}^{m}\nabla f_{\delta}^{i}(x_{i}^{k-1})}^{2}]\\
\leq & \mbb E[\norm{\tfrac{1}{m}\sum_{i=1}^{m}\nabla f_{\delta}^{i}(x_{i}^{k-1})+g_{i}(x_{i}^{k};S_{i}^{k})-g_{i}(x_{i}^{k-1};S_{i}^{k})-\nabla f_{\delta}^{i}(x_{i}^{k})}^{2}]\\
= & \tfrac{1}{m^{2}}\sum_{i=1}^{m}\mbb E[\norm{\nabla f_{\delta}^{i}(x_{i}^{k-1})+g_{i}(x_{i}^{k};S_{i}^{k})-g_{i}(x_{i}^{k-1};S_{i}^{k})-\nabla f_{\delta}^{i}(x_{i}^{k})}^{2}]\\
\aleq & \tfrac{1}{m^{2}}\sum_{i=1}^{m}\mbb E[\norm{g_{i}(x_{i}^{k};S_{i}^{k})-g_{i}(x_{i}^{k-1};S_{i}^{k})}^{2}]\\
= & \tfrac{1}{m^{2}b}\sum_{i=1}^{m}\mbb E[\norm{g_{i}(x_{i}^{k};w_{i}^{k,1},\xi_{i}^{k,1})-g_{i}(x_{i}^{k-1};w_{i}^{k,1},\xi_{i}^{k,1})}^{2}]\\
\leq &  {\tfrac{d^{2}L_{f}^{2}}{m^{2}\delta^{2}b}}\mbb E[\norm{\xbf^{k}-\xbf^{k-1}}^{2}]= {\tfrac{d^{2}L_{f}^{2}}{m^{2}\delta^{2}b}}\mbb E[\norm{\xbf^{k}\pm\bar{\xbf}^{k}\pm\bar{\xbf}^{k-1}-\xbf^{k-1}}^{2}]\\
\leq &  {\tfrac{3d^{2}L_{f}^{2}}{m^{2}\delta^{2}b}}\brbra{\mbb E[\norm{\xbf^{k}-\bar{\xbf}^{k}}^{2}+\norm{\xbf^{k-1}-\bar{\xbf}^{k-1}}^{2}]}+\tfrac{3\eta^{2}d^{2}L_{f}^{2}}{m^{2}\delta^{2}b}\mbb E[\norm{\bar{\vbf}^{k-1}}^{2}],
\end{split}
\]
where $(a)$ holds by $\Var(X)\leq\mbb E[\norm X^{2}]$ for any random
vector $X$, which implies that for any $lT\leq k^{\prime}<k<(l+1)T$,
we have
\begin{equation}
\begin{split} & \mbb E[\norm{\bar{v}^{k}-\tfrac{1}{m}\sum_{i=1}^{m}\nabla f_{\delta}^{i}(x_{i}^{k})}^{2}]\\
\leq & \mbb E[\norm{\bar{v}^{k^{\prime}}-\tfrac{1}{m}\sum_{i=1}^{m}\nabla f_{\delta}^{i}(x_{i}^{k^{\prime}})}^{2}]\\
 & + {\tfrac{3d^{2}L_{f}^{2}}{m^{2}\delta^{2}b}}\sum_{j=k^{\prime}}^{k-1}\brbra{\mbb E[\norm{\xbf^{j+1}-\bar{\xbf}^{j+1}}^{2}+\norm{\xbf^{j}-\bar{\xbf}^{j}}^{2}]}+{ \tfrac{3\eta^{2}d^{2}L_{f}^{2}}{m^{2}\delta^{2}b}}\sum_{j=k^{\prime}}^{k-1}\mbb E[\norm{\bar{\vbf}^{k^{\prime}}}^{2}].
\end{split}
\label{eq:variance_bound01}
\end{equation}
Furthermore, we have
\begin{equation}
\mbb E[\norm{\tfrac{1}{m}\sum_{i=1}^{m}\nabla f_{\delta}^{i}(x_{i}^{k})-\nabla f_{\delta}^{i}(\bar{x}^{k})}^{2}]\aleq\tfrac{L(\delta)^{2}}{m}\mbb E[\sum_{i=1}^{m}\norm{x_{i}^{k}-\bar{x}^{k}}^{2}]=\tfrac{L(\delta)^{2}}{m}\mbb E[\norm{\xbf^{k}-\bar{\xbf}^{k}}^{2}],\label{eq:variance_bound02}
\end{equation}
where $(a)$ holds by $\norm{\nabla f_{\delta}^{i}(x_{i}^{k})-\nabla f_{\delta}^{i}(\bar{x}^{k})}\leq L(\delta)\norm{x_{i}^{k}-\bar{x}^{k}}$
and $\norm x_{1}\leq\sqrt{m}\norm x_{2},\forall x\in\mbb R^{m}$.
Hence, for any $lT\leq k^{\prime}<k<(l+1)T$, combining~\eqref{eq:variance_bound01}
and~\eqref{eq:variance_bound02} yields
\[
\begin{split} & \mbb E[\norm{\bar{v}^{k}-\nabla f_{\delta}(\bar{x}^{k})}^{2}]=\mbb E[\norm{\bar{v}^{k}-\tfrac{1}{m}\sum_{i=1}^{m}\nabla f_{\delta}^{i}(\bar{x}^{k})}^{2}]\\
\leq & \mbb E[2\norm{\bar{v}^{k}-\tfrac{1}{m}\sum_{i=1}^{m}\nabla f_{\delta}^{i}(x_{i}^{k})}^{2}]+\mbb E[2\norm{\tfrac{1}{m}\sum_{i=1}^{m}\nabla f_{\delta}^{i}(x_{i}^{k})-\nabla f_{\delta}^{i}(\bar{x}^{k})}^{2}]\\
\leq & \tfrac{2L(\delta)^{2}}{m}\mbb E[\norm{\xbf^{k}-\bar{\xbf}^{k}}^{2}]+2\mbb E[\norm{\bar{v}^{k^{\prime}}-\tfrac{1}{m}\sum_{i=1}^{m}\nabla f_{\delta}^{i}(x_{i}^{k^{\prime}})}^{2}]\\
 & + {\tfrac{6d^{2}L_{f}^{2}}{m^{2}\delta^{2}b}}\sum_{j=k^{\prime}}^{k-1}\brbra{\mbb E[\norm{\xbf^{j+1}-\bar{\xbf}^{j+1}}^{2}+\norm{\xbf^{j}-\bar{\xbf}^{j}}^{2}]}+{ \tfrac{6\eta^{2}d^{2}L_{f}^{2}}{m^{2}\delta^{2}b}}\sum_{j=k^{\prime}}^{k-1}\mbb E[\norm{\bar{\vbf}^{j}}^{2}].
\end{split}
\]

Furthermore, for $k=lT$, we have
\[
\mbb E[\norm{\bar{v}^{k}-\tfrac{1}{m}\sum_{i=1}^{m}\nabla f_{\delta}^{i}(x_{i}^{k})}^{2}]=\mbb E[\norm{\tfrac{1}{m}\sum_{i=1}^{m}g_{i}(x_{i}^{k};S_{i}^{k\prime})-\nabla f_{\delta}^{i}(x_{i}^{k})}^{2}]\leq\tfrac{\sigma^{2}}{b^{\prime}}.
\]
\end{proof}
\begin{nonumber_lem}
[The Whole Sequence Progress]\label{lem:one_epoch_progress}Suppose
$K=L\cdot T$, then for any $l=0,\cdots,L-1$, $t=0,\cdots,T$, we
have
\begin{equation}
\begin{split} & \sum_{k=0}^{LT-1}\mbb E[\norm{\bar{v}^{k}-\nabla f_{\delta}(\bar{x}^{k})}^{2}\\
\leq & \tfrac{2L(\delta)^{2}}{m}\sum_{k=0}^{LT-1}\mbb E[\norm{\xbf^{k}-\bar{\xbf}^{k}}^{2}]+2LT\cdot {\tfrac{\sigma^{2}}{b^{\prime}}}\\
 & +\tfrac{6d^{2}L_{f}^{2}T}{m^{2}\delta^{2}b}\sum_{k=1}^{LT-1}\brbra{\mbb E[\norm{\xbf^{k}-\bar{\xbf}^{k}}^{2}+\norm{\xbf^{k-1}-\bar{\xbf}^{k-1}}^{2}]}+\tfrac{6\eta^{2}d^{2}L_{f}^{2}T}{m^{2}\delta^{2}b}\sum_{k=0}^{LT-1}\mbb E[\norm{\bar{v}^{k}}^{2}].
\end{split}
\label{eq:the_whole_sequence_progress}
\end{equation}
\end{nonumber_lem}
%
\begin{proof}
Taking $k^{\prime}=lT$ in~\eqref{eq:variance_bound}, then for any
$k=lT+t$, $t=1,\cdots,T-1$, we have
\begin{equation}
\begin{split} & \mbb E[\norm{\bar{v}^{lT+t}-\nabla f_{\delta}(\bar{x}^{lT+t})}^{2}]\\
\leq & \tfrac{2L(\delta)^{2}}{m}\mbb E[\norm{\xbf^{lT+t}-\bar{\xbf}^{lT+t}}^{2}]+2\mbb E[\norm{\bar{v}^{lT}-\tfrac{1}{m}\sum_{i=1}^{m}\nabla f_{\delta}^{i}(x_{i}^{lT})}^{2}]\\
 & +\tfrac{6d^{2}L_{f}^{2}}{m^{2}\delta^{2}b}\sum_{j=k^{\prime}}^{k}\brbra{\mbb E[\norm{\xbf^{j}-\bar{\xbf}^{j}}^{2}+\norm{\xbf^{j-1}-\bar{\xbf}^{j-1}}^{2}]}+\tfrac{6\eta^{2}d^{2}L_{f}^{2}}{m^{2}\delta^{2}b}\sum_{j=k^{\prime}}^{k}\mbb E[\norm{\bar{\vbf}^{j-1}}^{2}]\\
\aleq & \tfrac{2L(\delta)^{2}}{m}\mbb E[\norm{\xbf^{lT+t}-\bar{\xbf}^{lT+t}}^{2}]+2\mbb E[\norm{\bar{v}^{lT}-\tfrac{1}{m}\sum_{i=1}^{m}\nabla f_{\delta}^{i}(x_{i}^{lT})}^{2}]\\
 & +\tfrac{6d^{2}L_{f}^{2}}{m^{2}\delta^{2}b}\sum_{k=lT}^{(l+1)T-2}\brbra{\mbb E[\norm{\xbf^{k+1}-\bar{\xbf}^{k+1}}^{2}+\norm{\xbf^{k}-\bar{\xbf}^{k}}^{2}]}+\tfrac{6\eta^{2}d^{2}L_{f}^{2}}{m^{2}\delta^{2}b}\sum_{k=lT}^{(l+1)T-2}\mbb E[\norm{\bar{\vbf}^{k}}^{2}],
\end{split}
\label{eq:variance_bound-1}
\end{equation}
where $(a)$ holds by replacing index from $j$ to $k$ and $\sum_{j=lT}^{lT+t-1}\brbra{\mbb E[\norm{\xbf^{j+1}-\bar{\xbf}^{j+1}}^{2}+\norm{\xbf^{j}-\bar{\xbf}^{j}}^{2}]}\leq\sum_{k=lT}^{(l+1)T-2}\brbra{\mbb E[\norm{\xbf^{k+1}-\bar{\xbf}^{k+1}}^{2}+\norm{\xbf^{k}-\bar{\xbf}^{k}}^{2}]}$.
Summing $\mbb E[\norm{\bar{v}^{lT}-\tfrac{1}{m}\sum_{i=1}^{m}\nabla f_{\delta}^{i}(x_{i}^{lT})}^{2}]$
and the above inequality over $t=1$ to $T-1$, and combining the
fact $\norm{\bar{\vbf}^{j}}^{2}=m\norm{\bar{v}^{j}}^{2}$ yields
\begin{align*}
\sum_{k=lT}^{(l+1)T-1}\mbb E[\norm{\bar{v}^{k}-\nabla f_{\delta}(\bar{x}^{k})}^{2}]\leq & \tfrac{2L(\delta)^{2}}{m}\sum_{k=lT+1}^{(l+1)T-1}\mbb E[\norm{\xbf^{k}-\bar{\xbf}^{k}}^{2}]+(2(T-1)+1)\cdot\mbb E[\norm{\bar{v}^{lT}-\tfrac{1}{m}\sum_{i=1}^{m}\nabla f_{\delta}^{i}(x_{i}^{lT})}^{2}]\\
 & +\tfrac{6d^{2}L_{f}^{2}T}{m^{2}\delta^{2}b}\sum_{k=lT}^{(l+1)T-2}\brbra{\mbb E[\norm{\xbf^{k+1}-\bar{\xbf}^{k+1}}^{2}+\norm{\xbf^{k}-\bar{\xbf}^{k}}^{2}]}+\tfrac{6\eta^{2}d^{2}L_{f}^{2}T}{m^{2}\delta^{2}b}\sum_{k=lT}^{(l+1)T-2}\mbb E[\norm{\bar{\vbf}^{k}}^{2}].
\end{align*}

Summing the above inequality over $l=0$ to $L-1$ and combining $\mbb E[\norm{\bar{v}^{lT}-\tfrac{1}{m}\sum_{i=1}^{m}\nabla f_{\delta}^{i}(x_{i}^{lT})}^{2}]\leq\tfrac{\sigma^{2}}{b^{\prime}}$,
we have
\[
\begin{split} & \sum_{k=0}^{LT-1}\mbb E[\norm{\bar{v}^{k}-\nabla f_{\delta}(\bar{x}^{k})}^{2}]\\
\leq & \tfrac{2L(\delta)^{2}}{m}\sum_{l=0}^{L-1}\sum_{k=lT+1}^{(l+1)T-1}\mbb E[\norm{\xbf^{k}-\bar{\xbf}^{k}}^{2}]+2LT\cdot\tfrac{\sigma^{2}}{b^{\prime}}\\
 & +\tfrac{6d^{2}L_{f}^{2}T}{m^{2}\delta^{2}b}\sum_{l=0}^{L-1}\sum_{k=lT}^{(l+1)T-2}\brbra{\mbb E[\norm{\xbf^{k+1}-\bar{\xbf}^{k+1}}^{2}+\norm{\xbf^{k}-\bar{\xbf}^{k}}^{2}]}+\tfrac{6\eta^{2}d^{2}L_{f}^{2}T}{m^{2}\delta^{2}b}\sum_{l=0}^{L-1}\sum_{k=lT}^{(l+1)T-2}\mbb E[\norm{\bar{\vbf}^{k}}^{2}],
\end{split}
\]
which implies the desired result.
\end{proof}
\begin{thm}
\label{thm: main_theorem}For sequence $\{\xbf^{k},\ybf^{k}\}$
generated by {\dgfm}, we have

\begin{equation}
\begin{split} & -\beta_{y}\mbb E[\norm{\ybf^{0}-\bar{\ybf}^{0}}^{2}]+\beta_{x}\mbb E[\norm{\xbf^{LT}-\bar{\xbf}^{LT}}^{2}-\norm{\xbf^{0}-\bar{\xbf}^{0}}^{2}]+\mbb E[f_{\delta}(\bar{x}^{LT})]-\mbb E[f_{\delta}(\bar{x}^{0})]\\
\leq & -\tfrac{\eta}{2}\sum_{k=0}^{LT-1}\mbb E[\norm{\nabla f_{\delta}(\bar{x}^{k})}^{2}]+\eta LT\cdot {\tfrac{\sigma^{2}}{b^{\prime}}}\\
 & -(\tfrac{\eta}{2}-\tfrac{L(\delta)\eta^{2}}{2}-\tfrac{3\eta^{3}d^{2}L_{f}^{2}T}{m^{2}\delta^{2}b}-3\beta_{y}m\eta^{2}(\tfrac{\rho^{2}L(\delta)^{2}}{\alpha_{1}}+\tfrac{\rho^{2}d^{2}L_{f}^{2}}{\delta^{2}}))\sum_{k=0}^{LT-1}\mbb E[\norm{\bar{v}^{k}}^{2}]\\
 & -\brbra{\beta_{x}(1-(1+\alpha_{2})\rho^{2})-6\beta_{y}(\tfrac{\rho^{2}L(\delta)^{2}}{\alpha_{1}}+\tfrac{\rho^{2}d^{2}L_{f}^{2}}{\delta^{2}})-(\tfrac{\eta L(\delta)^{2}}{m}+\tfrac{6\eta d^{2}L_{f}^{2}T}{m^{2}\delta^{2}b})}\sum_{k=0}^{LT-1}\mbb E[\norm{\xbf^{k}-\bar{\xbf}^{k}}^{2}]\\
 & -(\beta_{y}(1-\rho^{2}(1+\alpha_{1}))-\beta_{x}(1+\alpha_{2}^{-1})\rho^{2}\eta^{2})\sum_{k=0}^{LT-1}\mbb E[\norm{\ybf^{k}-\bar{\ybf}^{k}}^{2}]\\
 & -(\beta_{y}-\beta_{x}(1+\alpha_{2}^{-1})\rho^{2}\eta^{2})\mbb E[\norm{\ybf^{LT}-\bar{\ybf}^{LT}}^{2}]+{ 2\rho^{\Tcal}m(\sigma^{2}+L_{f}^{2})}\cdot\beta_{y}L.\text{}
\end{split}
\label{eq:all_consensus-1-1-1}
\end{equation}
\end{thm}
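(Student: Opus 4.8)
The plan is to fuse the three previously established estimates over one full horizon $k=0,\dots,LT-1$: the per-step descent of Lemma~\ref{lem:For-the-sequence}, the aggregate variance control~\eqref{eq:the_whole_sequence_progress}, and the combined consensus recursion~\eqref{eq:all_consensus}. Each isolates one source of error (optimization progress, gradient-estimation variance, and disagreement across nodes, respectively), and the theorem is obtained by summing them with the weights $\beta_x,\beta_y>0$ and matching like terms.

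First I would take expectations in Lemma~\ref{lem:For-the-sequence} and sum over $k=0,\dots,LT-1$. The telescope on the left yields $\mbb E[f_\delta(\bar x^{LT})]-\mbb E[f_\delta(\bar x^0)]$, while the right produces the leading term $-\tfrac{\eta}{2}\sum_k\mbb E[\norm{\nabla f_\delta(\bar x^k)}^2]$, an aggregate variance term $\tfrac{\eta}{2}\sum_k\mbb E[\norm{\bar v^k-\nabla f_\delta(\bar x^k)}^2]$, and a negative multiple of $\sum_k\mbb E[\norm{\bar v^k}^2]$. I would then eliminate the variance term by substituting~\eqref{eq:the_whole_sequence_progress}: this converts it into the constant budget $\eta LT\cdot\tfrac{\sigma^2}{b'}$ (the factor $2$ cancels against $\tfrac\eta2$), a consensus contribution $\tfrac{\eta L(\delta)^2}{m}+\tfrac{6\eta d^2L_f^2T}{m^2\delta^2 b}$ absorbed into the $\norm{\xbf^k-\bar\xbf^k}^2$ coefficient (after folding the consecutive disagreement terms $\norm{\xbf^k-\bar\xbf^k}^2$ and $\norm{\xbf^{k-1}-\bar\xbf^{k-1}}^2$ into a single sum), and an extra $\tfrac{3\eta^3 d^2 L_f^2 T}{m^2\delta^2 b}$ on $\sum_k\mbb E[\norm{\bar v^k}^2]$.

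Next I would add the telescoped form of~\eqref{eq:all_consensus}: its left side collapses to $\beta_y\mbb E[\norm{\ybf^{LT}-\bar\ybf^{LT}}^2-\norm{\ybf^0-\bar\ybf^0}^2]+\beta_x\mbb E[\norm{\xbf^{LT}-\bar\xbf^{LT}}^2-\norm{\xbf^0-\bar\xbf^0}^2]$, which supplies the boundary terms on the left of the claim after moving $\beta_y\mbb E[\norm{\ybf^{LT}-\bar\ybf^{LT}}^2]$ to the right. On the right of~\eqref{eq:all_consensus} I would use the identity $\norm{\bar\vbf^{k-1}}^2=m\norm{\bar v^{k-1}}^2$ to merge its $\bar v$ term into the overall $\norm{\bar v^k}^2$ coefficient, contributing $3\beta_y m\eta^2(\tfrac{\rho^2 L(\delta)^2}{\alpha_1}+\tfrac{\rho^2 d^2 L_f^2}{\delta^2})$; the term $\beta_x(1+\alpha_2^{-1})\rho^2\eta^2\sum_k\norm{\ybf^{k+1}-\bar\ybf^{k+1}}^2$ would be re-indexed to $\sum_{k=1}^{LT}\norm{\ybf^k-\bar\ybf^k}^2$ and combined with $\beta_y(\rho^2(1+\alpha_1)-1)\sum_k\norm{\ybf^k-\bar\ybf^k}^2$, which is exactly what separates off the isolated endpoint term $-(\beta_y-\beta_x(1+\alpha_2^{-1})\rho^2\eta^2)\mbb E[\norm{\ybf^{LT}-\bar\ybf^{LT}}^2]$. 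Collecting the coefficients of $\norm{\nabla f_\delta(\bar x^k)}^2$, $\norm{\bar v^k}^2$, $\norm{\xbf^k-\bar\xbf^k}^2$, and $\norm{\ybf^k-\bar\ybf^k}^2$, together with the constant $2\rho^{\Tcal}m(\sigma^2+L_f^2)\beta_y L$ from the cycle-restart term, then reproduces~\eqref{eq:all_consensus-1-1-1}.

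The main obstacle is purely the bookkeeping of the endpoint and index-shift mismatches rather than any new inequality: the variance recursion sums disagreement over $k$ and $k-1$ while the consensus recursion is stated with shifted ranges (e.g.\ $\norm{\bar\vbf^{k-1}}^2$ and $\norm{\ybf^{k+1}-\bar\ybf^{k+1}}^2$), so I must carefully pad or drop the boundary indices $k=0$ and $k=LT$, using the nonnegativity of the discarded squared norms so that every replacement only weakens the bound in the correct direction. Once the ranges are aligned, matching the coefficient groups is mechanical.
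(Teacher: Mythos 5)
Your proposal is correct and follows essentially the same route as the paper's own proof: sum the one-step descent of Lemma~\ref{lem:For-the-sequence} over $k=0,\dots,LT-1$, substitute the variance bound~\eqref{eq:the_whole_sequence_progress} (folding the shifted disagreement sums via nonnegativity), and add the combined consensus estimate~\eqref{eq:all_consensus} with $\norm{\bar{\vbf}^{k-1}}^{2}=m\norm{\bar{v}^{k-1}}^{2}$, matching coefficients and splitting off the $k=LT$ endpoint of the $\ybf$-consensus sum. The bookkeeping you describe, including the direction in which boundary terms may be padded or dropped, is exactly what the paper does.
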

%
\begin{proof}
By Lemma~\ref{lem:For-the-sequence}, for all $k$, we have
\[
f_{\delta}(\bar{x}^{k+1})\leq f_{\delta}(\bar{x}^{k})-\tfrac{\eta}{2}\norm{\nabla f_{\delta}(\bar{x}^{k})}^{2}+\tfrac{\eta}{2}\norm{\nabla f_{\delta}(\bar{x}^{k})-\bar{v}^{k}}^{2}-(\tfrac{\eta}{2}-\tfrac{L(\delta)\eta^{2}}{2})\norm{\bar{v}^{k}}^{2}.
\]

Taking expectations of both sides of the above inequality and summing
it over $k=0$ to $LT-1$, we have 
\[
\begin{split} & \mbb E[f_{\delta}(\bar{x}^{LT})]-\mbb E[f_{\delta}(\bar{x}^{0})]\\
\leq & -\tfrac{\eta}{2}\sum_{k=0}^{LT-1}\mbb E[\norm{\nabla f_{\delta}(\bar{x}^{k})}^{2}]-(\tfrac{\eta}{2}-\tfrac{L(\delta)\eta^{2}}{2})\sum_{k=0}^{LT-1}\mbb E[\norm{\bar{v}^{k}}^{2}]\\
 & +\tfrac{\eta}{2}\sum_{k=0}^{LT-1}\mbb E[\norm{\nabla f_{\delta}(\bar{x}^{k})-\bar{v}^{k}}^{2}]\\
\leq & -\tfrac{\eta}{2}\sum_{k=0}^{LT-1}\mbb E[\norm{\nabla f_{\delta}(\bar{x}^{k})}^{2}]-(\tfrac{\eta}{2}-\tfrac{L(\delta)\eta^{2}}{2})\sum_{k=0}^{LT-1}\mbb E[\norm{\bar{v}^{k}}^{2}]\\
 & +\tfrac{\eta L(\delta)^{2}}{m}\sum_{k=0}^{LT-1}\mbb E[\norm{\xbf^{k}-\bar{\xbf}^{k}}^{2}]+\eta LT\cdot {\tfrac{\sigma^{2}}{b^{\prime}}}\\
 & +\tfrac{3\eta d^{2}L_{f}^{2}}{m^{2}\delta^{2}b}\sum_{k=1}^{LT-1}\brbra{\mbb E[\norm{\xbf^{k}-\bar{\xbf}^{k}}^{2}+\norm{\xbf^{k-1}-\bar{\xbf}^{k-1}}^{2}]}+\tfrac{3\eta^{3}d^{2}L_{f}^{2}}{m^{2}\delta^{2}b}\sum_{k=0}^{LT-1}\mbb E[\norm{\bar{v}^{k}}^{2}].\\
\aleq & -\tfrac{\eta}{2}\sum_{k=0}^{LT-1}\mbb E[\norm{\nabla f_{\delta}(\bar{x}^{k})}^{2}]-(\tfrac{\eta}{2}-\tfrac{L(\delta)\eta^{2}}{2}-\tfrac{3\eta^{3}d^{2}L_{f}^{2}T}{m^{2}\delta^{2}b})\sum_{k=0}^{LT-1}\mbb E[\norm{\bar{v}^{k}}^{2}]+\eta LT\cdot {\tfrac{\sigma^{2}}{b^{\prime}}}\\
 & +(\tfrac{\eta L(\delta)^{2}}{m}+\tfrac{6\eta d^{2}L_{f}^{2}T}{m^{2}\delta^{2}b})\sum_{k=0}^{LT-1}\mbb E[\norm{\xbf^{k}-\bar{\xbf}^{k}}^{2}],
\end{split}
\]
where $(a)$ holds by $\sum_{k=0}^{LT-1}\mbb E[\norm{\xbf^{k-1}-\bar{\xbf}^{k-1}}^{2}]\leq\sum_{k=0}^{LT-1}\mbb E[\norm{\xbf^{k}-\bar{\xbf}^{k}}^{2}]$.
Summing the above inequality and~\eqref{eq:all_consensus}, we have

\begin{equation}
\begin{split} & -\beta_{y}\mbb E[\norm{\ybf^{0}-\bar{\ybf}^{0}}^{2}]+\beta_{x}\mbb E[\norm{\xbf^{LT}-\bar{\xbf}^{LT}}^{2}-\norm{\xbf^{0}-\bar{\xbf}^{0}}^{2}]+\mbb E[f_{\delta}(\bar{x}^{LT})]-\mbb E[f_{\delta}(\bar{x}^{0})]\\
\aleq & -\tfrac{\eta}{2}\sum_{k=0}^{LT-1}\mbb E[\norm{\nabla f_{\delta}(\bar{x}^{k})}^{2}]-(\tfrac{\eta}{2}-\tfrac{L(\delta)\eta^{2}}{2}-\tfrac{3\eta^{3}d^{2}L_{f}^{2}T}{m^{2}\delta^{2}b}-3\beta_{y}m\eta^{2}(\tfrac{\rho^{2}L(\delta)^{2}}{\alpha_{1}}+\tfrac{\rho^{2}d^{2}L_{f}^{2}}{\delta^{2}}))\sum_{k=0}^{LT-1}\mbb E[\norm{\bar{v}^{k}}^{2}]+\eta LT\cdot {\tfrac{\sigma^{2}}{b^{\prime}}}\\
 & -\brbra{\beta_{x}(1-(1+\alpha_{2})\rho^{2})-6\beta_{y}(\tfrac{\rho^{2}L(\delta)^{2}}{\alpha_{1}}+\tfrac{\rho^{2}d^{2}L_{f}^{2}}{\delta^{2}})-(\tfrac{\eta L(\delta)^{2}}{m}+\tfrac{6\eta d^{2}L_{f}^{2}T}{m^{2}\delta^{2}b})}\sum_{k=0}^{LT-1}\mbb E[\norm{\xbf^{k}-\bar{\xbf}^{k}}^{2}]\\
 & -(\beta_{y}(1-\rho^{2}(1+\alpha_{1}))-\beta_{x}(1+\alpha_{2}^{-1})\rho^{2}\eta^{2})\sum_{k=0}^{LT-1}\mbb E[\norm{\ybf^{k}-\bar{\ybf}^{k}}^{2}]\\
 & -(\beta_{y}-\beta_{x}(1+\alpha_{2}^{-1})\rho^{2}\eta^{2})\mbb E[\norm{\ybf^{LT}-\bar{\ybf}^{LT}}^{2}]+{ 2\rho^{\Tcal}m(\sigma^{2}+L_{f}^{2})}\cdot\beta_{y}L.
\end{split}
\label{eq:all_consensus-1}
\end{equation}
where $(a)$ holds by $ {\sum_{k=0}^{LT-1}}\mbb E[\norm{\bar{\vbf}^{k-1}}^{2}]\leq m {\sum_{k=0}^{LT-1}}\mbb E[\norm{\bar{v}^{k}}^{2}]$.
Then we complete our proof.
\end{proof}
%





\begin{cor}
    Set $\alpha_1 = \alpha_2 = \tfrac{1-\rho^2}{2\rho^2}$, $\delta=\mcal O(\vep)$, $\beta_x = \Ocal(\delta^{-1})$, $\beta_y = \Ocal(\delta)$, $b^{\prime} = \Ocal(\vep^{-2}), b=\mcal O(\vep^{-1})$, $\eta = \mcal O(\vep)$, $T=\mcal O(\vep^{-1})$, $L=\mcal O(\Delta_{\delta}\vep^{-2})$, $\mcal{T} = \mcal O(\log(\vep^{-1}))$ in {\dgfm}.  Then, it outputs a $(\delta,\vep)$-Goldstein stationary point of $f(\cdot)$ in expectation. The total stochastic zeroth-order complexity is at most $\mcal O(\Delta_{\delta}\delta^{-1}\vep^{-3})$ and the total number of communication rounds is at most $\Ocal(\Delta_{\delta}\vep^{-2}(\vep^{-1}+\log(\vep^{-1})))$.
\end{cor}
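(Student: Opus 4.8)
The plan is to specialize Theorem~\ref{thm: main_theorem} to the stated parameters, discard every term on its right-hand side whose bracketed coefficient can be shown nonnegative, and convert the surviving inequality into a bound on $\tfrac{1}{LT}\sum_{k}\mbb E[\norm{\nabla f_{\delta}(\bar{x}^{k})}^{2}]$. First I would invoke the initialization $\xbf^{0}=\bar{\xbf}^{0}$ and $\ybf^{0}=\bar{\ybf}^{0}=\zerobf$ to eliminate the two initialization terms on the left of~\eqref{eq:all_consensus-1-1-1}; the remaining left-hand side is $\beta_{x}\mbb E[\norm{\xbf^{LT}-\bar{\xbf}^{LT}}^{2}]+\mbb E[f_{\delta}(\bar{x}^{LT})]-\mbb E[f_{\delta}(\bar{x}^{0})]$, which is bounded below by $-\Delta_{\delta}$ since the first term is nonnegative and $f_{\delta}(\bar{x}^{LT})\geq\inf f_{\delta}$. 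Substituting $\alpha_{1}=\alpha_{2}=\tfrac{1-\rho^{2}}{2\rho^{2}}$ yields $1-\rho^{2}(1+\alpha_{i})=\tfrac{1-\rho^{2}}{2}$ and $1+\alpha_{i}^{-1}=\tfrac{1+\rho^{2}}{1-\rho^{2}}$, which I use throughout.

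The crux is to verify, under $\delta=\Ocal(\vep)$ and $L(\delta)=\Ocal(\delta^{-1})$, that the four coefficient groups are simultaneously nonnegative. Tracking orders, the $\norm{\nabla f_{\delta}(\bar{x}^{k})}^{2}$ coefficient is $\tfrac{\eta}{2}=\Theta(\vep)$; the $\norm{\bar{v}^{k}}^{2}$ coefficient equals $\tfrac{\eta}{2}$ minus $\tfrac{L(\delta)\eta^{2}}{2}$, $\tfrac{3\eta^{3}d^{2}L_{f}^{2}T}{m^{2}\delta^{2}b}$ and $3\beta_{y}m\eta^{2}(\tfrac{\rho^{2}L(\delta)^{2}}{\alpha_{1}}+\tfrac{\rho^{2}d^{2}L_{f}^{2}}{\delta^{2}})$, each $\Ocal(\vep)$ and forceable below $\tfrac{\eta}{6}$ by taking $\eta$'s constant small and $b$'s constant large; the consensus-$\xbf$ coefficient $C_{x}=\beta_{x}\tfrac{1-\rho^{2}}{2}-\Ocal(\vep^{-1})$ is $\Theta(\vep^{-1})$ and positive once $\beta_{x}$'s constant dominates those of $\beta_{y}$ and $\eta$; and both $\ybf$-consensus coefficients are nonnegative once $\beta_{y}\gtrsim\beta_{x}\eta^{2}$, which is consistent because $\beta_{y}=\Theta(\vep)$ and $\beta_{x}\eta^{2}=\Theta(\vep)$. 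I would record these in an order table mirroring~\eqref{eq:order1}.

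Discarding the nonnegative $\bar{v}$, $\ybf$ and $\ybf^{LT}$ terms and rearranging then gives $\tfrac{\eta}{2}\sum_{k=0}^{LT-1}\mbb E[\norm{\nabla f_{\delta}(\bar{x}^{k})}^{2}]+C_{x}\sum_{k=0}^{LT-1}\mbb E[\norm{\xbf^{k}-\bar{\xbf}^{k}}^{2}]\leq\Delta_{\delta}+\eta LT\tfrac{\sigma^{2}}{b^{\prime}}+2\rho^{\Tcal}m(\sigma^{2}+L_{f}^{2})\beta_{y}L$. Dividing by $\tfrac{\eta}{2}LT$, the right-hand side splits into $\tfrac{2\Delta_{\delta}}{\eta LT}$, $\tfrac{2\sigma^{2}}{b^{\prime}}$ and $\tfrac{4\rho^{\Tcal}m(\sigma^{2}+L_{f}^{2})\beta_{y}}{\eta T}$; with $\eta LT=\Theta(\Delta_{\delta}\vep^{-2})$, $b^{\prime}=\Theta(\vep^{-2})$ and $\rho^{\Tcal}=\Ocal(\vep)$ (which is exactly what $\Tcal=\Ocal(\log(\vep^{-1}))$ buys, since $\rho^{\Tcal}\leq\vep$ iff $\Tcal\geq\log(1/\vep)/\log(1/\rho)$), all three are $\Ocal(\vep^{2})$. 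Hence $\tfrac{1}{LT}\sum_{k}\mbb E[\norm{\nabla f_{\delta}(\bar{x}^{k})}^{2}]=\Ocal(\vep^{2})$ and, using $C_{x}/\eta=\Theta(\vep^{-2})$, also $\tfrac{1}{LT}\sum_{k}\mbb E[\norm{\xbf^{k}-\bar{\xbf}^{k}}^{2}]=\Ocal(\vep^{4})$. For the uniformly sampled output I would apply $\norm{\nabla f_{\delta}(x_{i}^{k})}^{2}\leq2\norm{\nabla f_{\delta}(\bar{x}^{k})}^{2}+2L(\delta)^{2}\norm{x_{i}^{k}-\bar{x}^{k}}^{2}$ and average over $i,k$, so that $\mbb E[\norm{\nabla f_{\delta}(x_{\mathrm{out}})}^{2}]\leq\tfrac{2}{LT}\sum_{k}\mbb E[\norm{\nabla f_{\delta}(\bar{x}^{k})}^{2}]+\tfrac{2L(\delta)^{2}}{mLT}\sum_{k}\mbb E[\norm{\xbf^{k}-\bar{\xbf}^{k}}^{2}]=\Ocal(\vep^{2})$, the second term being $L(\delta)^{2}\cdot\Ocal(\vep^{4})=\Ocal(\vep^{2})$. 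Jensen's inequality gives $\mbb E[\norm{\nabla f_{\delta}(x_{\mathrm{out}})}]=\Ocal(\vep)$, and since $\nabla f_{\delta}(x)\in\partial_{\delta}f(x)$ this certifies a $(\delta,\vep)$-Goldstein stationary point in expectation.

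Finally I would count resources. Among $K=LT$ iterations, $L$ are snapshot steps costing $b^{\prime}$ oracle calls per node and $LT-L$ are recursive steps costing $b$ each, giving $\Ocal(Lb^{\prime}+LTb)=\Ocal(\Delta_{\delta}\vep^{-4})=\Ocal(\Delta_{\delta}\delta^{-1}\vep^{-3})$ under $\delta=\Ocal(\vep)$; the snapshot steps each perform $\Tcal$ mixing rounds and the recursive steps $\Ocal(1)$, for $\Ocal(L\Tcal+LT)=\Ocal(\Delta_{\delta}\vep^{-2}(\vep^{-1}+\log(\vep^{-1})))$ communication rounds. The main obstacle is the coefficient feasibility in the second step: the lower bound on $\beta_{x}$ forced by $C_{x}>0$, the upper bound on $\beta_{y}$ forced by the $\bar{v}$ coefficient, and the lower bound on $\beta_{y}$ forced by the $\ybf$-consensus coefficients are coupled through $\beta_{x}\eta^{2}$, so one must check the admissible window for the constants is nonempty; I expect it to close by taking $\eta$'s constant small enough to break the coupling, exactly as in the {\dgfmlin} corollary.
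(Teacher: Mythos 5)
Your proposal follows essentially the same route as the paper's own proof: specialize Theorem~\ref{thm: main_theorem} with the given parameters, verify positivity and orders of the four bracketed coefficients (the paper's $(\clubsuit),(\spadesuit),(\blacklozenge),(\blacktriangle)$), drop the nonnegative terms, divide by $\tfrac{\eta}{2}LT$, control $\rho^{\Tcal}$ via $\Tcal=\Ocal(\log(\vep^{-1}))$, and transfer to the local iterates via $\norm{\nabla f_{\delta}(x_i^k)}^2\leq 2\norm{\nabla f_{\delta}(\bar{x}^k)}^2+2L(\delta)^2\norm{x_i^k-\bar{x}^k}^2$ together with $\nabla f_{\delta}(x)\in\partial_{\delta}f(x)$. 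It is correct, and in fact slightly more complete than the paper's write-up in that you explicitly retain and bound the $\eta LT\sigma^2/b'$ term (which the paper's corollary restatement silently drops) and carry out the oracle and communication counting that the paper leaves implicit.
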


\begin{proof}
It follows from Theorem~\ref{thm: main_theorem} that 
\begin{equation}
\begin{split} & \tfrac{\eta}{2}\sum_{k=0}^{LT-1}\mbb E[\norm{\nabla f_{\delta}(\bar{x}^{k})}^{2}]\\
\leq & \beta_{y}\mbb E[\norm{\ybf^{0}-\bar{\ybf}^{0}}^{2}]+\beta_{x}\mbb E[\norm{\xbf^{0}-\bar{\xbf}^{0}}^{2}-\norm{\xbf^{LT}-\bar{\xbf}^{LT}}^{2}]\\
 & +\mbb E[f_{\delta}(\bar{x}^{0})]-\mbb E[f_{\delta}(\bar{x}^{LT})]\\
 & -\underbrace{(\tfrac{\eta}{2}-\tfrac{L(\delta)\eta^{2}}{2}-\tfrac{3\eta^{3}d^{2}L_{f}^{2}T}{m^{2}\delta^{2}b}-3\beta_{y}m\eta^{2}(\tfrac{\rho^{2}L(\delta)^{2}}{\alpha_{1}}+\tfrac{\rho^{2}d^{2}L_{f}^{2}}{\delta^{2}}))}_{(\clubsuit)}\sum_{k=0}^{LT-1}\mbb E[\norm{\bar{v}^{k}}^{2}]\\
 & -\underbrace{\brbra{\beta_{x}(1-(1+\alpha_{2})\rho^{2})-6\beta_{y}(\tfrac{\rho^{2}L(\delta)^{2}}{\alpha_{1}}+\tfrac{\rho^{2}d^{2}L_{f}^{2}}{\delta^{2}})-(\tfrac{\eta L(\delta)^{2}}{m}+\tfrac{6\eta d^{2}L_{f}^{2}T}{m^{2}\delta^{2}b})}}_{(\spadesuit)}\sum_{k=0}^{LT-1}\mbb E[\norm{\xbf^{k}-\bar{\xbf}^{k}}^{2}]\\
 & -\underbrace{(\beta_{y}(1-\rho^{2}(1+\alpha_{1}))-\beta_{x}(1+\alpha_{2}^{-1})\rho^{2}\eta^{2})}_{(\blacklozenge)}\sum_{k=0}^{LT-1}\mbb E[\norm{\ybf^{k}-\bar{\ybf}^{k}}^{2}]\\
 & -\underbrace{(\beta_{y}-\beta_{x}(1+\alpha_{2}^{-1})\rho^{2}\eta^{2})}_{(\blacktriangle)}\mbb E[\norm{\ybf^{LT}-\bar{\ybf}^{LT}}^{2}]+{ 2\rho^{\Tcal}m(\sigma^{2}+L_{f}^{2})}\cdot\beta_{y}L.
\end{split}
\end{equation}
Since we choose proper initial point $y_i^0$ and $x_i^0$, then  we simplify the above inequality as 
\begin{align}
 & (\clubsuit)\sum_{k=0}^{LT-1}\mbb E[\norm{\bar{v}^{k}}^{2}]+(\spadesuit)\sum_{k=0}^{LT-1}\mbb E[\norm{\xbf^{k}-\bar{\xbf}^{k}}^{2}]+(\blacklozenge)\sum_{k=0}^{LT-1}\mbb E[\norm{\ybf^{k}-\bar{\ybf}^{k}}^{2}]+(\blacktriangle)\mbb E[\norm{\ybf^{LT}-\bar{\ybf}^{LT}}^{2}]+\tfrac{\eta}{2}\sum_{k=0}^{LT-1}\mbb E[\norm{\nabla f_{\delta}(\bar{x}^{k})}^{2}]\nonumber\\
\leq & \mbb E[f_{\delta}(\bar{x}^{0})]-\mbb E[f_{\delta}(\bar{x}^{LT})]+{ 2\rho^{\Tcal}m(\sigma^{2}+L_{f}^{2})}\cdot\beta_{y}L.\label{eq:dgfmres1}
\end{align}
Combining the parameters setting of $\alpha_1,L,T,\beta_x$, $\beta_y$ and $\eta$ yields $(\clubsuit),(\spadesuit),(\blacklozenge),(\blacktriangle)>0$ and 
\begin{equation}\label{eq:dgfmorder}
    (\clubsuit) = \mcal O(\vep),\ (\spadesuit) = \Ocal (\delta^{-1}),\ (\blacklozenge)=\Ocal (\delta),\ (\blacktriangle) = \Ocal(\delta).
\end{equation}
By~\eqref{eq:dgfmorder}, we can simplify~\eqref{eq:dgfmres1} as 
\begin{equation}\label{eq:dgfmres2}
(\spadesuit)\sum_{k=0}^{LT-1}\mbb E[\norm{\xbf^{k}-\bar{\xbf}^{k}}^{2}]+\tfrac{\eta}{2}\sum_{k=0}^{LT-1}\mbb E[\norm{\nabla f_{\delta}(\bar{x}^{k})}^{2}]\leq \Delta_{\delta} + { 2\rho^{\Tcal}m(\sigma^{2}+L_{f}^{2})}\cdot\beta_{y}L.
\end{equation}
Divide $\tfrac{\eta\cdot LT}{2}$ on both sides of~\eqref{eq:dgfmres2}, we have
\begin{equation}
    \tfrac{2 \cdot (\spadesuit)}{\eta\cdot LT}\sum_{k=0}^{LT-1}\mbb E[\norm{\xbf^{k}-\bar{\xbf}^{k}}^{2}]+\tfrac{1}{ LT}\sum_{k=0}^{LT-1}\mbb E[\norm{\nabla f_{\delta}(\bar{x}^{k})}^{2}]\leq \tfrac{2\Delta_{\delta}}{\eta \cdot LT} + \tfrac{4\rho^{\Tcal}m(\sigma^{2}+L_{f}^{2})\cdot\beta_{y}}{\eta \cdot T}.
\end{equation}
Combining the parameter setting of $\mcal T, \eta, L, T, \delta$, then we have $\tfrac{(\spadesuit)}{\eta} = \mcal O(\delta^{-2})$ and
\begin{equation}
    \tfrac{2 \cdot (\spadesuit)}{\eta\cdot LT}\sum_{k=0}^{LT-1}\mbb E[\norm{\xbf^{k}-\bar{\xbf}^{k}}^{2}]+\tfrac{1}{ LT}\sum_{k=0}^{LT-1}\mbb E[\norm{\nabla f_{\delta}(\bar{x}^{k})}^{2}] = \mcal O(\vep^2).
\end{equation}
By $\tfrac{(\spadesuit)}{\eta} = \mcal O(\delta^{-2})$, $\nabla f_{\delta}(x)\in \partial_{\delta}f(x)$ and $\norm{\nabla f_{\delta}(x_i^k)}^2\leq 2\norm{\nabla f_{\delta}(\bar{x}^k)}^2 + 2L(\delta)^{2}\norm{x_i^k-\bar{x}^k}^2$, then we complete our proof.

\end{proof}

\newpage{}

\bibliography{ref}